\documentclass[12pt,reqno]{amsart}

\usepackage[T1]{fontenc}
\usepackage{lmodern}
\usepackage{amssymb,mathrsfs}
\usepackage{microtype}
\usepackage[
  letterpaper,
  left=1in,
  right=1in,
  top=1in,
  bottom=1in,
  headheight=8pt,
  headsep=14pt,
  footskip=12pt
]{geometry}
\usepackage{xcolor}
\usepackage{hyperref}
\definecolor{ReferenceBlue}{RGB}{0,0,255}
\hypersetup{
  colorlinks=true,
  linkcolor=ReferenceBlue,
  citecolor=ReferenceBlue,
  urlcolor=ReferenceBlue,
  linktoc=page,
  breaklinks=true,
  bookmarksnumbered=true,
  bookmarksopen=true,
  pdfpagemode=UseOutlines,
  pdfstartview=FitH,
  pdftitle={Domains of Monomial Convergence in Banach Sequence Spaces},
  pdfauthor={Zhouzhe Wang},
  pdfsubject={Infinite-dimensional holomorphy; Banach sequence spaces. MSC 2020: 46G20 (Primary), 46B45 (Secondary)},
  pdfkeywords={infinite-dimensional holomorphy, monomial convergence, Banach sequence spaces, Reinhardt domains, homogeneous polynomials}
}

\newtheorem{theorem}{Theorem}[section]
\newtheorem{lemma}[theorem]{Lemma}
\newtheorem{proposition}[theorem]{Proposition}
\newtheorem{corollary}[theorem]{Corollary}
\theoremstyle{definition}
\newtheorem{definition}[theorem]{Definition}

\newtheorem{problem}[theorem]{Problem}
\theoremstyle{remark}
\newtheorem{remark}[theorem]{Remark}

\numberwithin{equation}{section}

\newcommand{\itemref}[1]{\hyperref[#1]{\textup{(\ref*{#1})}}}
\newcommand{\namedref}[2]{#1~\ref{#2}}
\newcommand{\proofstep}[1]{\par\medskip\noindent\textbf{Step #1}\par\nopagebreak[4]}
\title[Domains of Monomial Convergence]{Domains of Monomial Convergence\\in Banach Sequence Spaces}
\author{Zhouzhe Wang}
\address{Beijing International Center for Mathematical Research,
Peking University, Beijing 100871, China}
\email{wangzhouzhe@pku.edu.cn}
\date{}
\makeatletter
\renewcommand{\subsection}{\@startsection{subsection}{2}%
  \z@{.5\linespacing\@plus.7\linespacing}{.5em}%
  {\normalfont\bfseries}}
\@namedef{subjclassname@2020}{\textup{2020} Mathematics Subject Classification}
\makeatother
\subjclass[2020]{Primary 46G20; Secondary 46B45.}
\keywords{infinite-dimensional holomorphy, monomial convergence, Banach sequence spaces, Reinhardt domains, homogeneous polynomials}

\begin{document}

\begin{abstract}
A classical phenomenon distinguishing infinite-dimensional holomorphy from its finite-dimensional counterpart is that monomial expansions of holomorphic functions need not converge absolutely. In this paper, we systematically investigate monomial convergence on complex Banach sequence spaces.

We establish general structural results and give a negative answer to Lempert's problem. We then determine the domains of absolute monomial convergence for holomorphic functions on the classical Banach sequence spaces and obtain a complete classification for continuous homogeneous polynomials. As a further application, we establish a sharp comparison between two notions of convergence radius for homogeneous Taylor expansions on $\ell^1$. The methods developed here are of independent interest and may be applied in other places.
\end{abstract}

\maketitle
\tableofcontents

\section{Introduction}
\label{jie-yinyan}
A central question in infinite-dimensional holomorphy concerns the absolute convergence of monomial expansions. Unlike the finite-dimensional case, such convergence need not hold throughout a Reinhardt domain in a Banach sequence space. Monomial expansions also play a role in the study of the inhomogeneous Cauchy--Riemann equation and holomorphic approximation in infinite dimensions; see \cite{Lempert99,Lempert00}. They arise in Cauchy--Kowalevski and Holmgren-type theorems for functions of infinitely many variables; see \cite{YZ19}. Through the Bohr transform, they are closely connected with Hardy spaces and ordinary Dirichlet series; see \cite{BDFMS17,DGMSP19}. They have also been used to describe natural topologies and to
construct Schauder bases in spaces of holomorphic functions on
Banach sequence spaces; see \cite{Ryan87,DM13,GL26}.

Throughout, $X$ is a complex Banach sequence space, $(e_j)_{j\in\mathbb{N}}$ denotes its
coordinate unit vectors, and $B_X$ is its open unit ball. By a Reinhardt domain $O\subset X$, we mean a complete Reinhardt domain containing $\textbf{0}$. For such a domain $O\subset X$, we write $H(O)$, $H^\infty(O)$, and $H_b(O)$
for the scalar-valued holomorphic functions, the bounded holomorphic
functions, and the holomorphic functions of bounded type, respectively.
The precise conventions are collected in
\namedref{Section}{jie-yubei}. For $f\in H(O)$, its restrictions to
finite-dimensional coordinate subspaces determine  Taylor
coefficients $c_\alpha(f)$, where $\alpha=(\alpha_j)$ ranges over the
finitely supported sequences of nonnegative integers. With
$\textbf{z}^\alpha=\prod\limits_{j=1}^{\infty}z_j^{\alpha_j}$, the domain of monomial
convergence of a family $\mathcal{F}(O)\subset H(O)$ is defined by
\begin{equation}
\mathrm{dom}\mathcal{F}(O)
\triangleq
\left\{\textbf{z}\in O:
\sum\limits_\alpha |c_\alpha(f)\textbf{z}^\alpha|<\infty
\text{ for every }f\in\mathcal{F}(O)\right\}.
\label{shi-shoulianyu}
\end{equation}

\paragraph{\textbf{Lempert's question.}}
Lempert's work on the inhomogeneous Cauchy--Riemann equation in
infinite dimensions highlights the special role of $\ell^1$ in the
convergence of power series in infinitely many variables
\cite{Lempert99}. A remarkable manifestation of this phenomenon
is that, for every Reinhardt domain $O\subset\ell^1$, the monomial
expansion of each $f\in H(O)$ converges absolutely at every point
of $O$; see \cite[Theorem~3.1, p.~19]{DMP09}.
This naturally raises the question of whether such convergence
properties characterize $\ell^1$ among infinite-dimensional
Banach sequence spaces. Following
Lempert's question, formulated explicitly in
\cite[Section~7, p.~43]{DMP09} and subsequently discussed in
\cite[Section~2.4, p.~118]{DSP14}, we consider the following problem;
see also \cite[Section~20.5, p.~526]{DGMSP19}. Here $\cong$ denotes
isomorphism of complex Banach spaces.
\begin{problem}\label{wenti-quanshoulian}
    Is it true that
    $$
        \mathrm{dom}H(B_X)=B_X
        \quad\Longleftrightarrow\quad
        X\cong\ell^1?
    $$
\end{problem}

The polynomial analogue has a negative answer. \cite[Theorem~4.3, pp.~60--62]{DK05} constructed a complex Banach space that is not isomorphic to $\ell^1$. As observed in
\cite[Example~7.2, pp.~44--45]{DMP09}, the monomial expansion of every
continuous polynomial on this space converges absolutely at every point.
This does not, however, settle the corresponding question for all
holomorphic functions.

We give a negative answer to \namedref{Problem}{wenti-quanshoulian}.
\namedref{Theorem}{dingli-fanliquanshoulian} gives a complex Banach sequence
space $Y$ with a normalized $1$-symmetric Schauder basis which is not
isomorphic to $\ell^1$. On every Reinhardt domain in $Y$, each
holomorphic function is represented at every point by its absolutely
convergent monomial expansion. Moreover, for every nondecreasing function $\Psi:\mathbb{N}\to[1,+\infty)$ with $\Psi(n)\to+\infty$, the space $Y$ can be chosen so that
$$
\frac{1}{\Psi(n)}\|\textbf{z}\|_{\ell^1}
\leqslant
\|\textbf{z}\|_Y
\leqslant
\|\textbf{z}\|_{\ell^1}
$$
whenever $|\mathrm{supp}\textbf{z}|\leqslant n$. This strengthens the ``very close to $\ell^1$'' phenomenon emphasized in connection with Lempert's problem in \cite[ p.~389]{DMSS14}; see also the discussion following \cite[Theorem~2.21, pp.~5869-5870]{DGMM}.

\paragraph{\textbf{General principles.}}
We first address the following question, raised in \cite[Remark~4.8, p.~32]{DMP09}.
\begin{problem}\label{wenti-bibaobaohan}

Let $O\subset X$ be a Reinhardt domain, and let
$\mathcal{F}(O)\subset H(O)$ contain the restrictions
to $O$ of all continuous polynomials on $X$.
Is it true that
$$
\mathrm{dom}\mathcal{F}(O)
\subset
\overline{\mathrm{span}\{e_j:j\in\mathbb{N}\}}^{\,X}?
$$
\end{problem}

We give an affirmative answer to  \namedref{Problem}{wenti-bibaobaohan}.
In fact, \namedref{Theorem}{dingli-ercizhangai}
shows that, at every point outside the closed linear
span of the coordinate unit vectors, the monomial
expansion of some continuous quadratic polynomial
fails to converge absolutely. 

Next, \namedref{Proposition}{mingti-jubuhua} shows that if
$\overline{\mathrm{span}\{e_j:j\in\mathbb{N}\}}^{\,X}=X$,
then
\begin{equation}
	\mathrm{dom}H(O)
	=
	O\cap\mathrm{dom}H(X)
	\label{shi-linshiyinyong}
\end{equation}
for every Reinhardt domain $O\subset X$.
Thus, determining monomial convergence on arbitrary
Reinhardt domains in $X$ reduces to determining
monomial convergence for entire functions on $X$.

Finally, \namedref{Theorem}{dingli-shoulianpanju}
characterizes the identity $\mathrm{dom}H(X)=X$
in terms of uniform exponential coefficient estimates
for homogeneous polynomials.

\paragraph{\textbf{Classical sequence spaces.}}
\begin{problem}\label{wenti-jingdiankongjian}
Determine the monomial convergence sets
for holomorphic functions and homogeneous polynomials
on the classical Banach sequence spaces.
\end{problem}
For related results on monomial convergence of continuous
homogeneous polynomials and bounded holomorphic functions, see
\cite{Prengel05,DMP09,BDFMS17,BDS19,GMMS21}. For an earlier contribution to the bounded holomorphic case,
see \cite{DT91}.
For holomorphic functions of bounded type, see
\cite{Ryan87,DMP09,Mansilla19,GMMS21}.
For general holomorphic functions, see
\cite{Ryan87,Matos88,MN92,Lempert99,Prengel05,DMP09,BRS25}. For related results on monomial convergence of vector-valued
holomorphic functions, see \cite{DSP12}.

We obtain exact characterizations for the classical sequence spaces
in the following sense.
\namedref{Theorem}{dingli-jingdianshoulian} determines the monomial
convergence domains for entire functions on $\ell^p$,
$1\leqslant p<\infty$, and on $c_0$, while
\namedref{Theorem}{dingli-youjiexulieshoulian} treats the
nonseparable space $\ell^\infty$.
For every fixed $m\in\mathbb{N}$,
\namedref{Theorem}{dingli-duoxiangshishoulianyu} gives a complete
characterization of the monomial convergence domains of continuous
$m$-homogeneous polynomials on all these spaces.
Moreover, \namedref{Corollary}{tuilun-jingdianquanyu} gives the
corresponding characterizations for arbitrary Reinhardt domains in
$\ell^p$, $1\leqslant p<\infty$, and in $c_0$, whereas
\namedref{Proposition}{mingti-youjiexulieqiu} determines
$\mathrm{dom}H(B_{\ell^\infty})$.
Comparing \namedref{Theorem}{dingli-youjiexulieshoulian} with
\namedref{Proposition}{mingti-youjiexulieqiu} also shows that
\eqref{shi-jubuhua} in
\namedref{Proposition}{mingti-jubuhua} fails for the unit ball of
$\ell^\infty$.
\paragraph{\textbf{Main ideas of the proofs.}}
\begin{enumerate}
	\item  The idea behind our solution to \namedref{Problem}{wenti-bibaobaohan} is inspired by
	Littlewood's finite-dimensional construction
	\cite[Section~7, pp.~171--172]{Littlewood30}.
	We transfer this construction to the infinite-dimensional setting
	by composing the finite-dimensional bilinear forms with suitable
	linear maps built from carefully chosen linear functionals and, for each
	$\textbf{x}\in
	X\setminus\overline{\mathbb{C}^{(\mathbb{N})}}^{\,X}$,
	construct a quadratic polynomial whose monomial expansion is not
	absolutely convergent at $\textbf{x}$.
	\item The construction of $Y$ is based on that in
	\cite[Section~4, pp.~58--62]{DK05}, which was also used in
	\cite[Example~7.2, pp.~44--45]{DMP09}, with modifications needed here.
	The choice of parameters gives the coefficient estimate
	\eqref{shi-fanlixishuguji}, whose constants grow at most exponentially
	with the degree by \eqref{shi-cishigenjie}. Together with the Cauchy
	estimates for the homogeneous Taylor components, this exponential bound
	is sufficient to sum over the degrees and pass from homogeneous
	polynomials to general holomorphic functions.
\item  To determine $\mathrm{dom}H(O)$ for the classical sequence spaces, we first use \eqref{shi-linshiyinyong} to reduce the problem to determining $\mathrm{dom}H(X)$. The main ingredient in the proof of \namedref{Theorem}{dingli-jingdianshoulian} is \namedref{Lemma}{yinli-chongpaixishuguji}. We partition the degree-\(m\) multi-indices according to their degrees on an initial coordinate block and subsequent dyadic blocks. After deleting one occurrence of the largest index, we apply the counting estimate from \cite[Theorem~3.2]{BDS19} to obtain a summable bound over the blocks.
\end{enumerate}

The rest of the paper is organized as follows. \namedref{Section}{jie-yubei} collects the notation and preliminary
facts. \namedref{Section}{jie-yibanyuanli} contains the general
results on monomial convergence. The space $Y$ and its properties
are presented in \namedref{Section}{jie-fanligouzao}, and the results
for the classical sequence spaces are given in
\namedref{Section}{jie-jingdiankongjian}.

\section{Preliminaries}
\label{jie-yubei}

\label{duan-jibenjihao}

Let $\mathbb{N}=\{1,2,\cdots\}$ and $\mathbb{N}_0=\mathbb{N}\cup\{0\}$.
For each $n\in\mathbb{N}$, let $S_n$ denote the permutation group of
$\{1,\cdots,n\}$. For a sequence $\textbf{x}=(x_j)_{j\in\mathbb{N}}$,
write
$$
\mathrm{supp}\textbf{x}\triangleq\{j\in\mathbb{N}:x_j\neq0\},
\qquad
|\textbf{x}|\triangleq(|x_j|)_{j\in\mathbb{N}}.
$$
We use the notation
$$
\begin{aligned}
\mathbb{C}^{(\mathbb{N})}
&\triangleq\{\textbf{x}\in\mathbb{C}^{\mathbb{N}}:
                \mathrm{supp}\textbf{x}\text{ is finite}\},\\
\mathbb{N}_0^{(\mathbb{N})}
&\triangleq\{\alpha\in\mathbb{N}_0^{\mathbb{N}}:
                \mathrm{supp}\alpha\text{ is finite}\},
   \\ \left( \mathbb{Q} +\mathrm{i}\mathbb{Q} \right)^{(\mathbb{N})}&\triangleq\{\alpha\in\left( \mathbb{Q} +\mathrm{i}\mathbb{Q} \right)^{\mathbb{N}}:
   \mathrm{supp}\alpha\text{ is finite}\}.
\end{aligned}
$$
For $\textbf{x},\textbf{y}\in\mathbb{C}^{\mathbb{N}}$, the relation
$|\textbf{x}|\leqslant|\textbf{y}|$ means that $|x_j|\leqslant|y_j|$
for every $j\in\mathbb{N}$.

For $1\leqslant p<\infty$, we write
$$
\ell^p\triangleq
\left\{\textbf{z}=(z_j)_{j\in\mathbb{N}}\in\mathbb{C}^{\mathbb{N}}:
\sum\limits_{j=1}^{\infty}|z_j|^p<\infty\right\},
\qquad
\|\textbf{z}\|_{\ell^p}
\triangleq
\left(\sum\limits_{j=1}^{\infty}|z_j|^p\right)^{\frac{1}{p}}.
$$
We also write
$$
\ell^\infty\triangleq
\left\{\textbf{z}\in\mathbb{C}^{\mathbb{N}}:
\sup\limits_{j\in\mathbb{N}}|z_j|<\infty\right\},
\qquad
\|\textbf{z}\|_{\ell^\infty}
\triangleq
\sup\limits_{j\in\mathbb{N}}|z_j|.
$$
For each $n\in\mathbb{N}$, we identify
$\textbf{u}=(u_1,\cdots,u_n)\in\mathbb{C}^n$ with
$(u_1,\cdots,u_n,0,0,\cdots)\in\mathbb{C}^{(\mathbb{N})}$.
Accordingly, $\|\textbf{u}\|_{\ell^p}$ denotes the $\ell^p$-norm
of this zero extension whenever $1\leqslant p\leqslant\infty$.

Furthermore,
$$
c_0\triangleq
\{\textbf{z}\in\mathbb{C}^{\mathbb{N}}:z_j\to0\},
$$
equipped with the supremum norm.

We call a complex Banach space $(X,\|\cdot\|_X)$ a complex Banach
sequence space if
$$
\mathbb{C}^{(\mathbb{N})}\subset X\subset\mathbb{C}^{\mathbb{N}},
$$
its vector space operations are coordinatewise, and, for every $\textbf{x}\in X$ and every
$\textbf{y}\in\mathbb{C}^{\mathbb{N}}$ satisfying
$|\textbf{y}|\leqslant|\textbf{x}|$, one has
\begin{equation}
    \textbf{y}\in X,
    \qquad
    \|\textbf{y}\|_X\leqslant\|\textbf{x}\|_X.
    \label{shi-fanshudandiao}
\end{equation}

For each $i\in\mathbb{N}$, let $e_i$ denote the sequence whose $i$th
coordinate is $1$ and whose remaining coordinates are $0$.
By \eqref{shi-fanshudandiao},
\begin{equation}
|z_i|\|e_i\|_X=\|z_ie_i\|_X\leqslant\|\textbf{z}\|_X,
\qquad \textbf{z}\in X,\quad i\in\mathbb{N}.
\label{shi-zuobiaoguji}
\end{equation}
Since $\|e_i\|_X>0$, each coordinate functional $\textbf{z} \mapsto z_{i}$ is continuous.

For a finite set $U\subset\mathbb{N}$, define
\begin{equation}
P_U\textbf{x}\triangleq\sum\limits_{j\in U}x_je_j,
\qquad \textbf{x}\in X.
\label{shi-youxiantouying}
\end{equation}
 \eqref{shi-fanshudandiao} gives
\begin{equation}
P_U^2=P_U,
\qquad \|P_U\textbf{x}\|_X\leqslant\|\textbf{x}\|_X.
\label{shi-touyingsuosuo}
\end{equation}
For $n\in\mathbb{N}$, put
\begin{equation}
P_n\textbf{x}\triangleq\sum\limits_{j=1}^{n}x_je_j,
\qquad Q_n\textbf{x}\triangleq\textbf{x}-P_n\textbf{x}.
\label{shi-shouweitouying}
\end{equation}
We also set $P_0=0$ and $Q_0=I_X$, where $I_X$ is the identity on $X$.

We write $\overline{\mathbb{C}^{(\mathbb{N})}}^{\,X}$ for the closure of
$\mathbb{C}^{(\mathbb{N})}$ in $X$ with respect to the norm $\|\cdot\|_X$.
\begin{lemma}\label{yinli-fenkuaixiajie}
If $\textbf{x}=\left( x_{i} \right)_{i\in \mathbb{N}}\in X\setminus\overline{\mathbb{C}^{(\mathbb{N})}}^{\,X}$,
there exist $\delta>0$ and integers
$0=n_0<n_1<n_2<\cdots$ such that
$$
\|(P_{n_k}-P_{n_{k-1}})\textbf{x}\|_X>\delta,
\qquad k\in\mathbb{N}.
$$
\end{lemma}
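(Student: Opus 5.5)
The plan is to build the blocks inductively, using a single constant coming from the distance of $\textbf{x}$ to $\overline{\mathbb{C}^{(\mathbb{N})}}^{\,X}$.

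\emph{Step 1: a uniform lower bound on tails.} Since $\textbf{x}\notin\overline{\mathbb{C}^{(\mathbb{N})}}^{\,X}$ and $P_n\textbf{x}\in\mathbb{C}^{(\mathbb{N})}$, the number
$$
d\triangleq\inf_{n\in\mathbb{N}_0}\|Q_n\textbf{x}\|_X
=\inf_{n\in\mathbb{N}_0}\|\textbf{x}-P_n\textbf{x}\|_X
$$
is positive. Otherwise $P_n\textbf{x}\to\textbf{x}$ along a subsequence, which would put $\textbf{x}$ in the closure. I take $\delta\triangleq d/2$.

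\emph{Step 2: truncating a tail.} Let $m\in\mathbb{N}_0$. I claim there is $n>m$ with $\|(P_n-P_m)\textbf{x}\|_X>\delta$. Suppose instead that $\|(P_n-P_m)\textbf{x}\|_X\leqslant\delta$ for every $n>m$. The plan is to contradict $\|Q_m\textbf{x}\|_X\geqslant d=2\delta$, and this is the one point needing care. The obstacle is that in a general Banach sequence space the norm need not be a Fatou norm, so $\|Q_m\textbf{x}\|_X$ need not equal $\sup_n\|(P_n-P_m)\textbf{x}\|_X$. I would get around this with the triangle inequality and monotonicity \eqref{shi-fanshudandiao}. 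For $n>m$ we have
$$
Q_m\textbf{x}=(P_n-P_m)\textbf{x}+Q_n\textbf{x},
$$
which gives only $\|Q_m\textbf{x}\|_X\leqslant\delta+\|Q_n\textbf{x}\|_X$, and that is not yet a contradiction. I will therefore argue differently.

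\emph{Step 3: direct inductive construction.} Suppose $n_{k-1}$ has been chosen. Since $\|Q_{n_{k-1}}\textbf{x}\|_X\geqslant d$, there are two possibilities. If some $n>n_{k-1}$ satisfies $\|(P_n-P_{n_{k-1}})\textbf{x}\|_X>\delta$, I take $n_k$ to be such an $n$. Otherwise
$$
\|(P_n-P_{n_{k-1}})\textbf{x}\|_X\leqslant\delta \quad\text{for all } n>n_{k-1},
$$
and I derive a contradiction as follows. Put $\textbf{w}=Q_{n_{k-1}}\textbf{x}$. Then $\textbf{w}\notin\overline{\mathbb{C}^{(\mathbb{N})}}^{\,X}$, and for every $n$,
$$
\|\textbf{w}-P_n\textbf{w}\|_X=\|Q_{\max(n,n_{k-1})}\textbf{x}\|_X\geqslant d=2\delta.
$$
On the other hand, $P_n\textbf{w}=(P_n-P_{n_{k-1}})\textbf{x}$ for $n>n_{k-1}$, and these all have norm at most $\delta$. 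So $\textbf{w}$ lies at distance at least $2\delta$ from $P_n\textbf{w}$, yet $\|P_n\textbf{w}\|_X\leqslant\delta$.

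It remains to show this is impossible. Choose any $N>n_{k-1}$ and apply Step 1 to the tail. We have
$$
\|Q_N\textbf{x}\|_X\leqslant\|\textbf{w}\|_X,
\qquad
\|\textbf{w}\|_X\leqslant\|(P_N-P_{n_{k-1}})\textbf{x}\|_X+\|Q_N\textbf{x}\|_X.
$$
This is again not contradictory by itself. I expect this to be the main obstacle, and I would resolve it by shrinking $\delta$ rather than insisting on $d/2$. The claim needed is only that, for each $m$, $\sup_{n>m}\|(P_n-P_m)\textbf{x}\|_X>0$. That holds because $Q_m\textbf{x}\neq\textbf{0}$, since otherwise $\textbf{x}=P_m\textbf{x}$ would be finitely supported. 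Positivity alone does not give a uniform $\delta$, however.

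The fix I would commit to is to choose the first block so that it carries almost all of the mass. Take
$$
\delta\triangleq\inf_{m}\sup_{n>m}\|(P_n-P_m)\textbf{x}\|_X .
$$
To see $\delta>0$, I would argue by contradiction. If $\delta=0$, then there are indices $m_1<m_2<\cdots$ with $\sup_n\|(P_n-P_{m_i})\textbf{x}\|_X<2^{-i}$. Telescoping shows that $(P_{m_i}\textbf{x})$ is Cauchy, so it converges to some $\textbf{y}\in\overline{\mathbb{C}^{(\mathbb{N})}}^{\,X}$. Coordinatewise, by \eqref{shi-zuobiaoguji}, $\textbf{y}=\textbf{x}$, and this is a contradiction. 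With $\delta>0$ established, the inductive choice of $n_k$ with block norm exceeding $\delta/2$ is immediate from the definition of the supremum. I would then rename $\delta/2$ as $\delta$.
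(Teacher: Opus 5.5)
Your committed argument in the last paragraph is correct and is essentially the paper's proof. Both rest on the fact that $(P_n\textbf{x})$ (for you, a subsequence) cannot be Cauchy, because its limit would equal $\textbf{x}$ by continuity of the coordinate functionals; your choice $\delta=\inf_m\sup_{n>m}\|(P_n-P_m)\textbf{x}\|_X$ builds in from the start the anchoring at $n_{k-1}$ that the paper derives afterwards from \eqref{shi-fanshudandiao}, and that same property makes this supremum nonincreasing in $m$, which justifies your increasing choice of $m_i$. You were right to abandon $\delta=d/2$: for the lattice norm $\|\textbf{y}\|_{\ell^\infty}+2\limsup_n|y_n|$ on $\ell^\infty$ and $\textbf{x}=(1,1,\dots)$ one has $d=3$ while every block has norm $1$, so Steps~1--3 should simply be deleted.
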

\begin{proof}
If $\{P_n\textbf{x}\}_{n\in\mathbb{N}_0}$ is a Cauchy sequence in $X$, then,
since $X$ is complete, there exists
$\textbf{y}=(y_i)_{i\in\mathbb{N}}\in X$ such that
$
\lim\limits_{n\to\infty}P_n\textbf{x}=\textbf{y}.
$
Since the coordinate functionals are continuous, we have
$$
x_i=\lim\limits_{n\to\infty}(P_n\textbf{x})_i=y_i,
\qquad i=1,2,\cdots .
$$
Thus $\textbf{x}=\textbf{y}\in
\overline{\mathbb{C}^{(\mathbb{N})}}^{\,X}$, which contradicts
$\textbf{x}\in X\setminus
\overline{\mathbb{C}^{(\mathbb{N})}}^{\,X}$. The sequence $(P_{n}\textbf{x} )_{n\in \mathbb{N}_{0}}$ is not Cauchy in $X$. Consequently, there is $\delta>0$ such that for every
$N\in\mathbb{N}_0$ there are $b>a\geqslant N$ with
$\|(P_b-P_a)\textbf{x}\|_X>\delta$.
Since
$$
|(P_b-P_a)\textbf{x}|
\leqslant|(P_b-P_N)\textbf{x}|,
$$
\eqref{shi-fanshudandiao} gives
$\|(P_b-P_N)\textbf{x}\|_X>\delta$.
Starting with $n_0=0$ and choosing $n_k=b$ for $N=n_{k-1}$
gives the required sequence.
\end{proof}

\begin{lemma}\label{yinli-xuliekongjian}
The following assertions hold.
\begin{enumerate}
\item\label{xiang-touyingxingzhi} For every $\textbf{z}\in X$ and $n\in\mathbb{N}_0$, the projections in \eqref{shi-shouweitouying} satisfy
        \begin{equation}
            \|P_n\textbf{z}\|_X\leqslant\|\textbf{z}\|_X,
            \qquad
            \|Q_n\textbf{z}\|_X\leqslant\|\textbf{z}\|_X.
            \label{shi-shouweisuosuo}
        \end{equation}
        Moreover, for every
        $\textbf{z}\in\overline{\mathbb{C}^{(\mathbb{N})}}^{\,X}$,
        \begin{equation}
            \lim\limits_{n\to\infty}\|P_n\textbf{z}-\textbf{z}\|_X
            =
            \lim\limits_{n\to\infty}\|Q_n\textbf{z}\|_X
            =
            0.
            \label{shi-weibushoulian}
        \end{equation}

        \item\label{xiang-bibaokongjian}
        The space $\overline{\mathbb{C}^{(\mathbb{N})}}^{\,X}$
        equipped with the norm inherited from $X$ is a complex Banach sequence space.
        \item\label{xiang-wutiaojianji} The sequence $(e_i)_{i\in\mathbb{N}}$ is a $1$-unconditional Schauder basis of $X$ if and only if
    $
        X=\overline{\mathbb{C}^{(\mathbb{N})}}^{\,X}.
    $
        \item\label{xiang-kefenxing} The space $X$ is separable if and only if
    $
        X=\overline{\mathbb{C}^{(\mathbb{N})}}^{\,X}.
    $
    \end{enumerate}
\end{lemma}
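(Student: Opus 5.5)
For \itemref{xiang-touyingxingzhi}, I would argue directly from the lattice property \eqref{shi-fanshudandiao}. Since $|P_n\textbf{z}|\leqslant|\textbf{z}|$ and $|Q_n\textbf{z}|\leqslant|\textbf{z}|$ coordinatewise, both inequalities in \eqref{shi-shouweisuosuo} follow at once; the cases $n=0$ are trivial. For \eqref{shi-weibushoulian}, take $\textbf{z}$ in the closure and $\varepsilon>0$. Choose $\textbf{w}\in\mathbb{C}^{(\mathbb{N})}$ with $\|\textbf{z}-\textbf{w}\|_X<\varepsilon$, and let $N$ exceed the support of $\textbf{w}$. For $n\geqslant N$ we have $Q_n\textbf{w}=0$, so
$$
\|Q_n\textbf{z}\|_X=\|Q_n(\textbf{z}-\textbf{w})\|_X\leqslant\|\textbf{z}-\textbf{w}\|_X<\varepsilon .
$$

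For \itemref{xiang-bibaokongjian}, write $Z=\overline{\mathbb{C}^{(\mathbb{N})}}^{\,X}$. It is a closed subspace, hence Banach, and it contains $\mathbb{C}^{(\mathbb{N})}$. The only thing to check is solidity: if $\textbf{z}\in Z$ and $|\textbf{y}|\leqslant|\textbf{z}|$, then $\textbf{y}\in Z$. Certainly $\textbf{y}\in X$, and $P_n\textbf{y}\in\mathbb{C}^{(\mathbb{N})}$. Moreover $|Q_n\textbf{y}|\leqslant|Q_n\textbf{z}|$, so
$$
\|\textbf{y}-P_n\textbf{y}\|_X\leqslant\|Q_n\textbf{z}\|_X\to0
$$
by \eqref{shi-weibushoulian}. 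Hence $\textbf{y}\in Z$. The norm inequality is inherited from $X$.

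For \itemref{xiang-wutiaojianji}, suppose first that $(e_i)$ is a Schauder basis. Then every $\textbf{x}$ is the limit of its partial sums, so $\textbf{x}\in Z$. Conversely, assume $X=Z$. By \eqref{shi-weibushoulian}, $\textbf{x}=\sum x_ie_i$ for every $\textbf{x}$. The expansion is unique because the coordinate functionals are continuous. So $(e_i)$ is a Schauder basis. For $1$-unconditionality, take unimodular scalars $\theta_i$. For finite sums,
$$
\Bigl\|\sum\theta_ix_ie_i\Bigr\|_X=\Bigl\|\sum x_ie_i\Bigr\|_X
$$
by \eqref{shi-fanshudandiao} applied in both directions. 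The same holds for any subset $U$ by \eqref{shi-touyingsuosuo}. I would then also note that the series $\sum x_ie_i$ converges unconditionally. To see this, given $\varepsilon$, pick $N$ with $\|Q_N\textbf{x}\|<\varepsilon$. Every finite sum over indices greater than $N$ is dominated coordinatewise by $Q_N\textbf{x}$, so the Cauchy criterion for unconditional convergence holds.

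For \itemref{xiang-kefenxing}, if $X=Z$ then $\bigl(\mathbb{Q}+\mathrm{i}\mathbb{Q}\bigr)^{(\mathbb{N})}$ is a countable dense set, so $X$ is separable. The converse is the main obstacle. Suppose $\textbf{x}\in X\setminus Z$, and take $\delta$ and the blocks $n_k$ from \namedref{Lemma}{yinli-fenkuaixiajie}. For each subset $A\subset\mathbb{N}$, define $\textbf{x}_A$ as the restriction of $\textbf{x}$ to $\bigcup_{k\in A}(n_{k-1},n_k]$; it lies in $X$ by \eqref{shi-fanshudandiao}. Let $A\neq B$ and $k\in A\triangle B$. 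Then $|\textbf{x}_A-\textbf{x}_B|$ dominates the block $(P_{n_k}-P_{n_{k-1}})\textbf{x}$ coordinatewise, so $\|\textbf{x}_A-\textbf{x}_B\|_X>\delta$. This gives an uncountable $\delta$-separated family, so $X$ is not separable.
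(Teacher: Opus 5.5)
Your proposal is correct and follows essentially the same route as the paper. The projection bounds and the solidity of $\overline{\mathbb{C}^{(\mathbb{N})}}^{\,X}$ come from coordinatewise domination via \eqref{shi-fanshudandiao}, the tail estimate from $\|Q_n\textbf{z}\|_X\leqslant\|\textbf{z}-\textbf{w}\|_X$, and the separability converse from the uncountable $\delta$-separated family $\{\textbf{x}_A\}_{A\subset\mathbb{N}}$ built on the blocks of \namedref{Lemma}{yinli-fenkuaixiajie}; your extra remarks on uniqueness of coefficients and unconditional convergence of $\sum x_ie_i$ only make explicit what the paper leaves implicit in part \itemref{xiang-wutiaojianji}.
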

\begin{proof}

\proofstep{1}

    For every
$\textbf{z}\in X$,
    since
    $$
    |P_n\textbf{z}|\leqslant|\textbf{z}|,
    \qquad
    |Q_n\textbf{z}|\leqslant|\textbf{z}|,
    $$
    It follows from \eqref{shi-fanshudandiao} that \eqref{shi-shouweisuosuo} holds.

Fix $\textbf{z}\in\overline{\mathbb{C}^{(\mathbb{N})}}^{\,X}$ and
$\varepsilon>0$. Choose $\textbf{y}\in\mathbb{C}^{(\mathbb{N})}$ with
$\|\textbf{z}-\textbf{y}\|_X<\varepsilon$, and choose $N\in\mathbb{N}$
with $\mathrm{supp}\textbf{y}\subset\{1,\cdots,N\}$.
For $n\geqslant N$, one has $Q_n\textbf{y}=0$, so
\eqref{shi-shouweisuosuo} gives
$$
\|\textbf{z}-P_n\textbf{z}\|_X
=\|Q_n(\textbf{z}-\textbf{y})\|_X
\leqslant\|\textbf{z}-\textbf{y}\|_X<\varepsilon.
$$
This proves \eqref{shi-weibushoulian}.

\proofstep{2}

Let $\textbf{z}\in\overline{\mathbb{C}^{(\mathbb{N})}}^{\,X}$
and let $\textbf{w}\in\mathbb{C}^{\mathbb{N}}$ satisfy
$$
|\textbf{w}|\leqslant|\textbf{z}|.
$$
Since $X$ is a complex Banach sequence space, it follows that
$\textbf{w}\in X$. Moreover,
$$
|Q_n\textbf{w}|
\leqslant
|Q_n\textbf{z}|,
\qquad n=1,2,\cdots,
$$
and hence
$$
\|Q_n\textbf{w}\|_X
\leqslant
\|Q_n\textbf{z}\|_X,
\qquad n=1,2,\cdots.
$$
By \eqref{shi-weibushoulian}, we obtain
$$
\lim\limits_{n\to\infty}
\|P_n\textbf{w}-\textbf{w}\|_X
=
\lim\limits_{n\to\infty}
\|Q_n\textbf{w}\|_X
=
0.
$$
As $P_n\textbf{w}\in\mathrm{span}\{e_1,\cdots,e_n\}$ for every
$n\in\mathbb{N}$, it follows that
$
\textbf{w}\in
\overline{\mathbb{C}^{(\mathbb{N})}}^{\,X}.
$
Thus
$
\overline{\mathbb{C}^{(\mathbb{N})}}^{\,X}
$
equipped with the norm inherited from $X$ is a complex Banach sequence
space.

\proofstep{3}

The necessity of \itemref{xiang-wutiaojianji} is immediate. We prove the sufficiency. Assume that
$
X=\overline{\mathbb{C}^{(\mathbb{N})}}^{\,X}.
$
For any
$
\textbf{x}=(x_i)_{i\in\mathbb{N}}\in X, n\in\mathbb{N},
$
and any  $\lambda_1,\cdots,\lambda_n\in\mathbb{C}$ satisfying
$
|\lambda_i|\leqslant 1, i=1,\cdots,n,
$
we have
\begin{equation}
    \left\|
    \sum\limits_{i=1}^{n}\lambda_i x_i e_i
    \right\|_X
    \leqslant
    \left\|
    \sum\limits_{i=1}^{n}x_i e_i
    \right\|_X.\label{shi-zuobiaosuofang}
\end{equation}
On the other hand, by \eqref{shi-weibushoulian} and the assumption
$
X=\overline{\mathbb{C}^{(\mathbb{N})}}^{\,X},
$
it follows that $(e_i)_{i\in\mathbb{N}}$ is a Schauder basis of $X$. Together with the
 inequality \eqref{shi-zuobiaosuofang}, this shows that $(e_i)_{i\in\mathbb{N}}$ is a
$1$-unconditional Schauder basis of $X$. Thus the sufficiency of \itemref{xiang-wutiaojianji} follows.

\proofstep{4}

To prove \itemref{xiang-kefenxing}, suppose that $X$ is separable but
$\textbf{x}=\left( x_{j} \right)_{j\in \mathbb{N}}\in X\setminus\overline{\mathbb{C}^{(\mathbb{N})}}^{\,X}$.
Choose $\delta,(n_k)_{k\in \mathbb{N}_{0}}$ as in
\namedref{Lemma}{yinli-fenkuaixiajie}.
For each subset $A\subset\mathbb{N}$, define the sequence
$\textbf{x}^A=\left( x_{j}^{A} \right)_{j\in \mathbb{N}}$ by
$$
x_{j}^{A}=\begin{cases}x_{j},&j\in \bigcup\limits_{k\in A} \left\{ n_{k-1}+1,\cdots ,n_{k} \right\} ,\\ 0,&\text{otherwise} .\end{cases}
$$
Since $|\textbf{x}^A|\leqslant|\textbf{x}|$,
\eqref{shi-fanshudandiao} gives $\textbf{x}^A\in X$.
If $A\neq B$, choose a $k\in \mathbb{N}$ that belongs to exactly one of
$A$ and $B$.
It follows from \namedref{Lemma}{yinli-fenkuaixiajie} and \eqref{shi-fanshudandiao} that
$$
|(P_{n_k}-P_{n_{k-1}})\textbf{x}|
\leqslant|\textbf{x}^A-\textbf{x}^B|
\Rightarrow
\|\textbf{x}^A-\textbf{x}^B\|_X>\delta.
$$
Now $X$ contains an uncountable family of pairwise disjoint open sets
$
\left\{\textbf{x}^{A}+\frac{\delta}{3}B_X\right\}_{A\subset\mathbb{N}},
$
which contradicts the separability of $X$.

Conversely, suppose that
$
X=\overline{\mathbb{C}^{(\mathbb{N})}}^{\,X}.
$
Under the topology of $X$, since the countable set $\left( \mathbb{Q} +\mathrm{i}\mathbb{Q} \right)^{(\mathbb{N})}$
is dense in $\mathbb{C}^{(\mathbb{N})}$, it is also dense in $X$.
Therefore, $X$ is separable.

\end{proof}

\label{duan-yudedingyi}

Let $O\subset X$ be a nonempty open set. We call $O$ a Reinhardt domain if,
for every $\textbf{x}\in O$ and every $\textbf{y}\in X$ satisfying
$|\textbf{y}|\leqslant|\textbf{x}|$, one has $\textbf{y}\in O$.
This is also the
convention in \cite[Section~1, p.~15]{DMP09}.
Throughout the remainder of the paper, $O\subset X$ is a
Reinhardt domain. In particular, $\textbf{0}\in O$.
All holomorphic functions are complex-valued and holomorphic in the
sense of complex Fr\'echet differentiability.

For the space $H^\infty(O)$ introduced in
\namedref{Section}{jie-yinyan}, we use the supremum norm
\begin{equation}
    H^\infty(O)\triangleq
    \{f\in H(O):\|f\|_\infty<\infty\},
    \qquad
    \|f\|_\infty\triangleq
    \sup\limits_{\textbf{x}\in O}|f(\textbf{x})|.
    \label{shi-youjiequanchun}
\end{equation}
Then $H^\infty(O)$ is a Banach space with respect to the norm
\eqref{shi-youjiequanchun}.

A subset $A\subset O$ is called $O$-bounded if it is bounded in $X$ and
$
\mathrm{dist}(A,X\setminus O)>0,
$
where the distance is taken with respect to the norm of $X$, with the
convention that the distance to the empty set is $+\infty$.
We define
$$
H_b(O)\triangleq
\left\{
f\in H(O):
\sup\limits_{\textbf{x}\in A}|f(\textbf{x})|<\infty
\text{ for every $O$-bounded subset }A\subset O
\right\}.
$$
The space $H_b(O)$ is endowed with the locally convex topology generated
by the seminorms
$$
p_A(f)\triangleq
\sup\limits_{\textbf{x}\in A}|f(\textbf{x})|,
\qquad
A\subset O,\ \text{$O$-bounded}.
$$
Thus convergence in $H_b(O)$ is precisely uniform convergence on every
$O$-bounded subset of $O$.

For $r>0$ and a function defined on $rB_X$, write
$$
    \|f\|_r\triangleq
    \sup\limits_{\textbf{x}\in rB_X}|f(\textbf{x})|.
$$
In particular,
\begin{equation}
    \begin{aligned}
        H_b(X)
        &=
        \{f\in H(X):\|f\|_r<\infty
        \text{ for every }r>0\},\\
        H_b(B_X)
        &=
        \{f\in H(B_X):\|f\|_r<\infty
        \text{ for every }0<r<1\}.
    \end{aligned}
    \label{shi-youjiexing}
\end{equation}
Moreover, $H_b(X)$ and $H_b(B_X)$ are Fr\'echet spaces
(see \cite[Section~2, p.~2448]{CMV20}), and their topologies are generated,
respectively, by the seminorm families
\begin{equation}
    \bigl(\|\cdot\|_k\bigr)_{k\in\mathbb{N}}
    \qquad\text{and}\qquad
    \bigl(\|\cdot\|_{\frac{k}{k+1}}\bigr)_{k\in\mathbb{N}}.
    \label{shi-banfanshu}
\end{equation}
The local uniform limit of holomorphic functions is holomorphic; cf.
\cite[Proposition~9.13, p.~72]{Mu86}. Thus
\begin{equation}
    f_\nu\in H(O),\quad
    f_\nu\longrightarrow f\text{ locally uniformly on }O
    \quad\Longrightarrow\quad
    f\in H(O).
    \label{shi-quanchunjixian}
\end{equation}
For standard references on infinite-dimensional complex analysis, see
\cite{Di81,Di99,Mu86}.

For $\alpha\in\mathbb{N}_0^{(\mathbb{N})}$, we set
$$
|\alpha|
\triangleq
\sum\limits_{i=1}^{\infty}\alpha_i,
\qquad
\alpha!
\triangleq
\prod\limits_{i=1}^{\infty}\alpha_i!,
$$
and, for $\textbf{z}=(z_i)_{i\in\mathbb{N}}\in\mathbb{C}^{\mathbb{N}}$,
$$
\textbf{z}^{\alpha}
\triangleq
\prod\limits_{i=1}^{\infty}z_i^{\alpha_i}.
$$
We use the conventions $0^0=1,0\log 0=0,0!=1$ and that an empty product equals $1$.

For $\alpha\in\mathbb{N}_0^{(\mathbb{N})}$, $f\in H(O)$,
and any $n\in\mathbb{N}$ such that
$\mathrm{supp}\alpha\subset\{1,\cdots,n\}$, define
$$
c_{\alpha}(f)
\triangleq
\left.
\frac{1}{\alpha!}
\frac{\partial^{|\alpha|}}
{\partial z_1^{\alpha_1}\partial z_2^{\alpha_2}\cdots\partial z_n^{\alpha_n}}
f\left(\sum\limits_{i=1}^{n}z_i e_i\right)
\right|_{z_1=\cdots=z_n=0}.
$$
It is immediate that $c_{\alpha}(f)$ is independent of the choice of $n$.

The following proposition identifies when an absolutely convergent
monomial series represents the original function.
\begin{proposition}\label{mingti-jishubiaoshi}
    Let $f\in H(O)$ and let
    $\textbf{z}_0\in O\cap\overline{\mathbb{C}^{(\mathbb{N})}}^{\,X}$ satisfy
    \begin{equation}
        \sum\limits_{\alpha\in\mathbb{N}_0^{(\mathbb{N})}}
        \left|c_{\alpha}(f)\textbf{z}_0^{\alpha}\right|<\infty.
        \label{shi-juduishoulianjiading}
    \end{equation}
    Then, for every $\textbf{z}\in\mathbb{C}^{\mathbb{N}}$ satisfying
    $|\textbf{z}|\leqslant|\textbf{z}_0|$, one has
    \begin{equation}
        \sum\limits_{\alpha\in\mathbb{N}_0^{(\mathbb{N})}}
        \left|c_{\alpha}(f)\textbf{z}^{\alpha}\right|<\infty,
        \label{shi-juduishoulian}
    \end{equation}
    and
    \begin{equation}
        f(\textbf{z})
        =
        \sum\limits_{\alpha\in\mathbb{N}_0^{(\mathbb{N})}}
        c_{\alpha}(f)\textbf{z}^{\alpha}.
        \label{shi-hanshubiaoshi}
    \end{equation}
\end{proposition}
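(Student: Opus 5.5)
The absolute convergence \eqref{shi-juduishoulian} is immediate from $|c_\alpha(f)\textbf{z}^\alpha|\leqslant|c_\alpha(f)\textbf{z}_0^\alpha|$ for every $\alpha$. Before turning to \eqref{shi-hanshubiaoshi}, note that $|\textbf{z}|\leqslant|\textbf{z}_0|$ forces $\textbf{z}\in X$ by \eqref{shi-fanshudandiao}. Since $O$ is Reinhardt, also $\textbf{z}\in O$. Moreover, $\textbf{z}\in\overline{\mathbb{C}^{(\mathbb{N})}}^{\,X}$ by \namedref{Lemma}{yinli-xuliekongjian}\itemref{xiang-bibaokongjian}. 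Hence $\|Q_n\textbf{z}\|_X\to0$ by \eqref{shi-weibushoulian}, so $P_n\textbf{z}\to\textbf{z}$ in $X$.

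For the representation, the plan is to work first in finite dimensions and then pass to the limit in $n$. Fix $n$ and consider $g_n(u_1,\dots,u_n)\triangleq f\bigl(\sum_{i\leqslant n}u_ie_i\bigr)$. It is holomorphic on the open set $V_n=\{\textbf{u}\in\mathbb{C}^n:\sum u_ie_i\in O\}$, which is a complete Reinhardt domain in $\mathbb{C}^n$ containing $0$ because $O$ is Reinhardt. By definition, its Taylor coefficients at $0$ are exactly $c_\alpha(f)$ for $\mathrm{supp}\alpha\subset\{1,\dots,n\}$. The classical finite-dimensional fact is that a holomorphic function on a complete Reinhardt domain in $\mathbb{C}^n$ is represented on the whole domain by its absolutely convergent power series. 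This follows by a polydisc exhaustion: every point $\textbf{w}\in V_n$ has a neighbourhood $(1+\varepsilon)\textbf{w}$-type closed polydisc $\{|u_i|\leqslant(1+\varepsilon)|w_i|+\varepsilon\}$ inside $V_n$ by openness and completeness, and the Cauchy integral formula on that polydisc applies. Applying this to $\textbf{w}=(z_1,\dots,z_n)$ gives
$$
f(P_n\textbf{z})=\sum_{\mathrm{supp}\alpha\subset\{1,\dots,n\}}c_\alpha(f)\textbf{z}^\alpha .
$$

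Next, let $n\to\infty$. The right-hand side converges to $\sum_\alpha c_\alpha(f)\textbf{z}^\alpha$ by dominated convergence for series, using the summable majorant $|c_\alpha(f)\textbf{z}_0^\alpha|$ and the fact that every $\alpha$ has finite support. The left-hand side converges to $f(\textbf{z})$, since $P_n\textbf{z}\to\textbf{z}$ in $X$, $\textbf{z}\in O$, and $f$ is continuous on $O$ (being Fr\'echet holomorphic).

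The main obstacle is the finite-dimensional representation step. We must be careful that the whole segment of points $P_n\textbf{z}$ lies in $O$, which holds since $|P_n\textbf{z}|\leqslant|\textbf{z}|$. We also need the closed polydisc argument to be valid, in particular when some $w_i=0$. That case is why the additive $\varepsilon$ is included in the radii. Openness of $V_n$ together with compactness of the polydisc $\{|u_i|\leqslant|w_i|\}$ gives a uniform enlargement that stays inside $V_n$.
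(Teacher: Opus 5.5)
Your proof is correct and is essentially the paper's argument: domination by $|c_\alpha(f)\textbf{z}_0^\alpha|$ gives absolute convergence, Lemma~\ref{yinli-xuliekongjian}\,(2) together with the Reinhardt property places $\textbf{z}$ in $O\cap\overline{\mathbb{C}^{(\mathbb{N})}}^{\,X}$ so that $P_n\textbf{z}\to\textbf{z}$, and a slightly enlarged polydisc around $|P_n\textbf{z}|$ (from openness of $O$ plus the Reinhardt property) gives the finite-dimensional expansion of $f(P_n\textbf{z})$. You then pass to the limit using continuity of $f$ and the summable majorant; the only difference is cosmetic, since you phrase this step as dominated convergence for series while the paper directly bounds the tail over those $\alpha$ with $\mathrm{supp}\,\alpha\not\subset\{1,\cdots,n\}$.
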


\begin{proof}

Fix $\textbf{z}\in\mathbb{C}^{\mathbb{N}}$ with
$|\textbf{z}|\leqslant|\textbf{z}_0|$. By \namedref{Lemma}{yinli-xuliekongjian}~\itemref{xiang-bibaokongjian},
$\textbf{z}\in\overline{\mathbb{C}^{(\mathbb{N})}}^{\,X}$.
The defining property of $O$ gives $\textbf{z}\in O$.
Since $|\textbf{z}^{\alpha}|\leqslant|\textbf{z}_0^{\alpha}|$ for
every multi-index $\alpha$, \eqref{shi-juduishoulianjiading} implies \eqref{shi-juduishoulian}.

For each $n\in\mathbb{N}$, the defining property of $O$ gives
$|P_n\textbf{z}|\in O$. Since $O$ is open, there exists
$\varepsilon>0$ such that
$
\sum\limits_{j=1}^n (|z_j|+\varepsilon)e_j\in O.
$
By the Reinhardt property of $O$, the polydisc
$$
\left\{
\sum\limits_{j=1}^n \zeta_j e_j:
|\zeta_j|<|z_j|+\varepsilon,\quad 1\leqslant j\leqslant n
\right\}
$$
is contained in $O$. Hence the Taylor series at the origin of the
restriction of $f$ to $\mathrm{span} \left\{ e_{1},e_{2},\cdots ,e_{n} \right\}$ converges absolutely at $P_n\textbf{z}$,
and therefore
\begin{equation}
    f(P_n\textbf{z})=
    \sum\limits_{\substack{\alpha\in\mathbb{N}_0^{(\mathbb{N})}\\
            \mathrm{supp}\alpha\subset\{1,\cdots,n\}}}
    c_{\alpha}(f)\textbf{z}^{\alpha}.
    \label{shi-touyingzhankai}
\end{equation}
Here the finite-dimensional coefficients agree with $c_\alpha(f)$
by their definition; see also
\cite[Corollary~7.8, pp.~49--50]{Mu86}.
By \eqref{shi-weibushoulian} in \namedref{Lemma}{yinli-xuliekongjian}  and the continuity of $f$,
$f(P_n\textbf{z})\to f(\textbf{z})$.
On the other hand, absolute convergence in \eqref{shi-juduishoulian} gives
$$
\begin{aligned}
&\left|
\sum\limits_{\alpha\in\mathbb{N}_0^{(\mathbb{N})}}
 c_\alpha(f)\textbf{z}^{\alpha}
-\sum\limits_{\substack{\alpha\in\mathbb{N}_0^{(\mathbb{N})}\\
\mathrm{supp}\alpha\subset\{1,\cdots,n\}}}
 c_\alpha(f)\textbf{z}^{\alpha}\right|\leqslant
\sum\limits_{\substack{\alpha\in\mathbb{N}_0^{(\mathbb{N})}\\
\mathrm{supp}\alpha\not\subset\{1,\cdots,n\}}}
 |c_\alpha(f)\textbf{z}^{\alpha}|\longrightarrow0.
\end{aligned}
$$
Passing to the limit in \eqref{shi-touyingzhankai} proves
\eqref{shi-hanshubiaoshi}.
\end{proof}

\label{duan-qiciduoxiangshi}

We define
$
\mathcal{P}^{0}(X)\triangleq \mathbb{C},
$
and, for each $k\in\mathbb{N}$, let $\mathcal{P}^{k}(X)$ denote the space of all continuous $k$-homogeneous polynomials on $X$.

For $m\in\mathbb{N}_0$ and $P\in\mathcal{P}^m(X)$, we use the norm
\begin{equation}
\|P\|_{\mathcal{P}^m(X)}\triangleq\sup\limits_{\textbf{z}\in B_X}|P(\textbf{z})|
=\sup\limits_{\substack{\textbf{z}\in X\\\|\textbf{z}\|_X\leqslant1}}|P(\textbf{z})|<\infty.
\label{shi-duoxiangshifanshu}
\end{equation}
For $m=0$ this is the absolute value of the constant polynomial. The space $\mathcal{P}^m(X)$ is a
Banach space with respect to this norm. 
We omit the subscript when the ambient space and degree are clear.
For $m\in\mathbb{N}_0$, $n\in\mathbb{N}$, and $P\in\mathcal{P}^m(X)$ satisfying
$P=P\circ P_n$, its restriction to
$\mathrm{span}\{e_1,\cdots,e_n\}$ is a finite-dimensional
$m$-homogeneous polynomial. The definition of $c_\alpha$ therefore gives
\begin{equation}
P(\textbf{z})=
\sum\limits_{\substack{|\alpha|=m\\
\mathrm{supp}\alpha\subset\{1,\cdots,n\}}}
c_\alpha(P)\textbf{z}^\alpha,
\qquad \textbf{z}\in X.
\label{shi-youxianzhankai}
\end{equation}

The following lemma is the version, under the sequence-space
convention used here, of \cite[Lemma~4.2, pp.~953--954]{GMMS21};
see also \cite[Lemma~4.1, pp.~26--27]{DMP09} for the bounded
holomorphic case. We include a proof.

\begin{lemma}\label{yinli-bituguji}
Let $O=X$ or $O=B_X$, and let
$\textbf{z}\in\mathrm{dom}H_b(O)$. There exist $A>0$ and $r>0$, with $r<1$ if $O=B_X$, such that for every
$f\in H_b(O)$,
\begin{equation}
	\sum\limits_{\alpha\in\mathbb{N}_0^{(\mathbb{N})}}
	|c_\alpha(f)\textbf{z}^\alpha|
	\leqslant A\|f\|_r .
	\label{shi-bituguji}
\end{equation}
In particular, if $O=X$, there is $K\geqslant1$ such that for every
$m\in\mathbb{N}, P\in\mathcal{P}^m(X)$,
\begin{equation}
\sum\limits_{|\alpha|=m}|c_\alpha(P)\textbf{z}^\alpha|
\leqslant K^m\|P\|_{\mathcal{P}^m(X)}.
\label{shi-dianchuzhishijie}
\end{equation}
\end{lemma}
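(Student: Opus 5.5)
The plan is a Baire category argument in the Fréchet space $H_b(O)$, followed by Cauchy estimates for the second assertion.

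\textbf{Step 1: a closed covering of $H_b(O)$.} By \eqref{shi-youjiexing} and \eqref{shi-banfanshu}, $H_b(O)$ is a Fréchet space, with fundamental seminorms $\|\cdot\|_{r_k}$, where $r_k=k$ if $O=X$ and $r_k=\frac{k}{k+1}$ if $O=B_X$. Since $\textbf{z}\in\mathrm{dom}H_b(O)$, the quantity
$$
\Phi(f)\triangleq\sum_{\alpha}|c_\alpha(f)\textbf{z}^\alpha|
$$
is finite for every $f\in H_b(O)$. For $N\in\mathbb{N}$ put $F_N\triangleq\{f\in H_b(O):\Phi(f)\leqslant N\}$. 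Each $F_N$ is convex and balanced, and $\bigcup_N F_N=H_b(O)$.

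\textbf{Step 2: each $F_N$ is closed.} $\Phi$ is the supremum over finite sets $\Lambda$ of multi-indices of the partial sums $\Phi_\Lambda(f)=\sum_{\alpha\in\Lambda}|c_\alpha(f)\textbf{z}^\alpha|$. So it suffices that each $f\mapsto c_\alpha(f)$ is continuous on $H_b(O)$. Fix $\alpha$ with support in $\{1,\dots,n\}$ and choose $\rho>0$ so small that the closed polydisc of polyradius $\rho$ in $\mathrm{span}\{e_1,\dots,e_n\}$ lies inside $r_1B_X$. This is possible because $\|\sum\zeta_je_j\|_X\leqslant\sum|\zeta_j|\|e_j\|_X$. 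The finite-dimensional Cauchy inequality then gives
$$
|c_\alpha(f)|\leqslant\rho^{-|\alpha|}\|f\|_{r_1}.
$$
Hence every $\Phi_\Lambda$ is a continuous seminorm, $\Phi$ is lower semicontinuous, and $F_N$ is closed.

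\textbf{Step 3: Baire and the estimate \eqref{shi-bituguji}.} By the Baire category theorem some $F_N$ has nonempty interior. Because $F_N$ is convex and balanced, $F_N-F_N\subset 2F_N$, so $2F_N$ contains a neighbourhood of $0$. Such a neighbourhood contains a basic set $\{f:\|f\|_{r_k}\leqslant\varepsilon\}$ for some $k$ and $\varepsilon>0$; this uses that the seminorms are increasing in $k$. Since $\Phi$ is a seminorm, homogeneity gives $\Phi(f)\leqslant \frac{2N}{\varepsilon}\|f\|_{r_k}$ for all $f$. Taking $A=2N/\varepsilon$ and $r=r_k$ proves \eqref{shi-bituguji}, and $r<1$ when $O=B_X$.

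\textbf{Step 4: the polynomial bound \eqref{shi-dianchuzhishijie}.} Let $O=X$ and $P\in\mathcal{P}^m(X)$. Then $P\in H_b(X)$ and $\|P\|_r=r^m\|P\|_{\mathcal{P}^m(X)}$ by homogeneity. Moreover $c_\alpha(P)=0$ unless $|\alpha|=m$. Hence
$$
\sum_{|\alpha|=m}|c_\alpha(P)\textbf{z}^\alpha|\leqslant A r^m\|P\|_{\mathcal{P}^m(X)}\leqslant K^m\|P\|_{\mathcal{P}^m(X)}
$$
with $K=\max(1,A)\max(1,r)$, using $m\geqslant1$.

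\textbf{Main obstacle.} The delicate point is Step 2, namely that $\Phi$ is lower semicontinuous so that $F_N$ is closed. This rests on the continuity of the coefficient functionals $c_\alpha$ on $H_b(O)$. The rest is routine.
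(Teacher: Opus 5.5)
Your proof is correct. It uses the same two inputs as the paper: continuity of each coefficient functional $f\mapsto c_\alpha(f)$ on $H_b(O)$, obtained from a finite-dimensional Cauchy estimate on a small polydisc, and completeness of the Fréchet space $H_b(O)$. Your Step 4 is identical to the paper's, including the choice $K=\max\{1,A\}\max\{1,r\}$.

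The difference is the automatic-continuity principle. The paper forms the linear map $T_{\textbf{z}}f=(c_\alpha(f)\textbf{z}^\alpha)_\alpha$ from $H_b(O)$ into the Banach space $\ell^1(\mathbb{N}_0^{(\mathbb{N})})$. It checks that the graph is closed coordinatewise, using continuity of each $c_\alpha$, and then quotes the closed graph theorem for Fréchet spaces. You instead apply Baire directly to the closed, absolutely convex sublevel sets of the lower semicontinuous seminorm $\Phi=\|T_{\textbf{z}}(\cdot)\|_{\ell^1}$. This amounts to reproving, in this instance, that Fréchet spaces are barrelled.

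Your route is self-contained and never needs the target space $\ell^1(\mathbb{N}_0^{(\mathbb{N})})$ or its completeness. The paper's is shorter once the closed graph theorem is cited. Since that theorem is itself a Baire argument, the two proofs are close relatives. Both finish the same way, by extracting a single seminorm $\|\cdot\|_{r_k}$ from the increasing family to obtain \eqref{shi-bituguji}.
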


\begin{proof}
First, every coefficient functional $f\mapsto c_\alpha(f)$ is
continuous on $H_b(O)$. To see this, fix $s>0$, with $s<1$ when
$O=B_X$. Choose $n\geqslant1$ such that
$\mathrm{supp}\alpha\subset\{1,\cdots,n\}$.
Since, for every $\left| w_{j} \right| \leqslant \frac{s}{2n\| e_{j}\|_{X}} ,j=1,2,\cdots ,n$, we have:
$$
\left\|\sum\limits_{j=1}^{n}w_je_j\right\|_X
\leqslant\sum\limits_{j=1}^{n}\frac{s}{2n\| e_{j}\|_{X}}\|e_j\|_X=\frac{s}{2}.
$$
The Cauchy integral formula gives
$$
\begin{aligned}\left| c_{\alpha}(f) \right|&=\left| \frac{1}{(2\pi \mathrm{i})^{n}} \int\limits_{|w_{1}|=\frac{s}{2n\| e_{1}\|_{X}}} \cdots \int\limits_{|w_{n}|=\frac{s}{2n\| e_{n}\|_{X}}} \frac{f\left( \sum\limits_{j=1}^{n} w_{j}e_{j} \right)}{\prod\limits_{j=1}^{n} w_{j}^{\alpha_{j} +1}} \, \mathrm{d} w_{n}\cdots \mathrm{d} w_{1} \right|\\ &\leqslant \frac{1}{(2\pi )^{n}} \int\limits_{|w_{1}|=\frac{s}{2n\| e_{1}\|_{X}}} \cdots \int\limits_{|w_{n}|=\frac{s}{2n\| e_{n}\|_{X}}} \frac{\left| f\left( \sum\limits_{j=1}^{n} w_{j}e_{j} \right) \right|}{\prod\limits_{j=1}^{n} \left| w_{j} \right|^{\alpha_{j} +1}} \, \left| \mathrm{d} w_{n} \right| \cdots \left| \mathrm{d} w_{1} \right|\\ &\leqslant \frac{\|f\|_{s}}{\left( 2\pi \right)^{n}} \prod\limits_{j=1}^{n} \frac{2\pi \left( \frac{s}{2n\| e_{j}\|_{X}} \right)}{\left( \frac{s}{2n\| e_{j}\|_{X}} \right)^{\alpha_{j} +1}} =\|f\|_{s} \prod\limits_{j=1}^{n} \frac{1}{\left( \frac{s}{2n\| e_{j}\|_{X}} \right)^{\alpha_{j}}}.\end{aligned}
$$
Thus $f\mapsto c_\alpha(f)$ is continuous with respect to the
seminorm $\|\cdot\|_s$, and therefore continuous on $H_b(O)$.

The set $\mathbb{N}_0^{(\mathbb{N})}$ is countable. Let
$\ell^1(\mathbb{N}_0^{(\mathbb{N})})$ be the Banach space of
absolutely summable complex families indexed by this set, with norm
$\|(a_\alpha)_\alpha\|_1=\sum\limits_{\alpha\in \mathbb{N}_{0}^{(\mathbb{N})}}|a_\alpha|$.
 $\textbf{z}\in\mathrm{dom}H_b(O)$ makes
$$
T_{\textbf{z}}:H_b(O)\longrightarrow
\ell^1(\mathbb{N}_0^{(\mathbb{N})}),
\qquad
T_{\textbf{z}}f=(c_\alpha(f)\textbf{z}^\alpha)_{\alpha\in\mathbb{N}_{0}^{(\mathbb{N})}},
$$
a well-defined linear map. Its graph is closed. Indeed, if
$f_j\to f$ in $H_b(O)$ and $T_{\textbf{z}}f_j\to(a_\alpha)_\alpha$
in $\ell^1(\mathbb{N}_{0}^{(\mathbb{N})})$. For every $\alpha \in \mathbb{N}_{0}^{(\mathbb{N})}$, we have
$$c_{\alpha}\left( f \right) \textbf{z}^{\alpha} =\lim\limits_{j\to \infty} c_{\alpha}\left( f_{j} \right) \textbf{z}^{\alpha} =\lim\limits_{j\to \infty} \left( T_{\textbf{z}}f_{j} \right)_{\alpha} =a_{\alpha}.$$

Both spaces are Fr\'echet. Hence the closed graph theorem \cite[52.10 and 52.12, p.~577]{KM97} implies that $T_{\textbf{z}}$ is continuous.

The seminorms in
\eqref{shi-banfanshu} form an increasing sequence. Hence, there exist
$A>0$ and $r>0$, with $r<1$ if $O=B_X$, such that
\eqref{shi-bituguji} holds.

Assume now that $O=X$ and let
$P\in\mathcal{P}^m(X)\subset H_{b}\left( X \right)$, where $m\in\mathbb{N}$. Since $P$ is
$m$-homogeneous, $c_\alpha(P)=0$ whenever $|\alpha|\neq m$, and
homogeneity gives
$\|P\|_r=r^m\|P\|_{\mathcal{P}^m(X)}$. Hence
$$
\sum\limits_{|\alpha|=m}|c_\alpha(P)\textbf{z}^{\alpha}|
\leqslant
Ar^m\|P\|_{\mathcal{P}^m(X)}.
$$
Set
$K\triangleq\max\{1,A\}\max\{1,r\}$. Since $m\geqslant1$,
$$
Ar^m
\leqslant
\max\{1,A\}^m\max\{1,r\}^m
=
K^m.
$$
Therefore
$$
\sum\limits_{|\alpha|=m}|c_\alpha(P)\textbf{z}^{\alpha}|
\leqslant
K^m\|P\|_{\mathcal{P}^m(X)},
$$
which proves
\eqref{shi-dianchuzhishijie}.
\end{proof}

\section{General principles of monomial convergence}
\label{jie-yibanyuanli}

\begin{theorem}\label{dingli-ercizhangai}
    If $\mathbb{C}^{(\mathbb{N})}$ is not dense in $X$, then  for each $\textbf{x} \in X\setminus \overline{\mathbb{C}^{(\mathbb{N})}}^{\,X}$, there exists
    $G\in\mathcal{P}^{2}(X)$ such that
    \begin{equation}
        \sum\limits_{\substack{
                \alpha\in\mathbb{N}_0^{(\mathbb{N})}\\
                |\alpha|=2
        }}
        \left|c_{\alpha}(G)\textbf{x}^{\alpha}\right|
        =+\infty.
        \label{shi-ercifasan}
    \end{equation}
\end{theorem}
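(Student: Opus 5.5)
The plan is to build $G$ as a norm-convergent series of quadratic forms of the type Littlewood used in finite dimensions, composed with linear functionals that detect $\textbf{x}$ on disjoint coordinate blocks. Fix $\textbf{x}\in X\setminus\overline{\mathbb{C}^{(\mathbb{N})}}^{\,X}$, and let $\delta>0$ and $0=n_0<n_1<\cdots$ be as in \namedref{Lemma}{yinli-fenkuaixiajie}. Write $U_k=\{n_{k-1}+1,\cdots,n_k\}$ for the $k$th block.

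\emph{Step 1: the functionals.} For each $k$, Hahn--Banach gives a functional $\psi_k$ on $X$ with $\|\psi_k\|\leqslant1$ and $\psi_k(P_{U_k}\textbf{x})=\|P_{U_k}\textbf{x}\|_X>\delta$. Set $\varphi_k\triangleq\psi_k\circ P_{U_k}$. By \eqref{shi-touyingsuosuo}, $\|\varphi_k\|\leqslant1$. Moreover $\varphi_k(\textbf{y})=\sum_{i\in U_k}b_iy_i$ with $b_i=\psi_k(e_i)$, and $\varphi_k(\textbf{x})>\delta$. Hence the map $T\textbf{y}=(\varphi_k(\textbf{y}))_k$ satisfies $\|T\textbf{y}\|_{\ell^\infty}\leqslant\|\textbf{y}\|_X$. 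It also gives the lower bound
$$\sum_{i\in U_k}|b_ix_i|\geqslant|\varphi_k(\textbf{x})|>\delta .$$

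\emph{Step 2: Littlewood blocks.} For $N\in\mathbb{N}$, let $A_N=(a^{(N)}_{kl})$ be the $N\times N$ Fourier matrix $a_{kl}=e^{2\pi\mathrm{i}kl/N}$. It is symmetric with unimodular entries and $\|A_N\|_{\ell^2\to\ell^2}=\sqrt N$. Consequently, for $\|u\|_{\ell^\infty},\|v\|_{\ell^\infty}\leqslant1$,
$$\Bigl|\sum_{k,l}a_{kl}u_ku_l\Bigr|\leqslant\sqrt N\,\|u\|_{\ell^2}^2\leqslant N^{3/2}.$$
Take $N_j=4^j$ and split $\mathbb{N}$ into consecutive disjoint sets $I_j$ of blocks with $|I_j|=N_j$. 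Put $c_j=N_j^{-3/2}j^{-2}$ and
$$G(\textbf{y})\triangleq\sum_{j}c_j\sum_{k,l\in I_j}a^{(N_j)}_{kl}\varphi_k(\textbf{y})\varphi_l(\textbf{y}).$$
Each summand is a continuous $2$-homogeneous polynomial with norm at most $c_jN_j^{3/2}=j^{-2}$, by Step 1. The series therefore converges in $\mathcal{P}^2(X)$, and $G\in\mathcal{P}^2(X)$.

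\emph{Step 3: divergence.} The functionals $c_\alpha$ are continuous on $\mathcal{P}^2(X)$ (this was shown in the proof of \namedref{Lemma}{yinli-bituguji}), so coefficients can be computed termwise. Take $i\in U_k$ and $i'\in U_l$ with $k\neq l$ both in $I_j$. Since the blocks are disjoint, only the $(k,l)$ and $(l,k)$ terms contribute to $z_iz_{i'}$, so the symmetry of $A_{N_j}$ gives
$$c_{e_i+e_{i'}}(G)=2c_ja_{kl}b_ib_{i'},\qquad |c_{e_i+e_{i'}}(G)|=2c_j|b_ib_{i'}|.$$
Restricting the sum in \eqref{shi-ercifasan} to such off-diagonal pairs and applying the lower bound from Step 1 to both factors gives
$$\sum_{|\alpha|=2}|c_\alpha(G)\textbf{x}^\alpha|\geqslant\sum_j c_j\sum_{k\neq l\in I_j}\delta^2=\sum_j c_j(N_j^2-N_j)\delta^2\gtrsim\delta^2\sum_j\frac{2^j}{j^2}=+\infty .$$

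The main obstacle is the balancing in Step 2. One needs the operator norm on $\ell^\infty\times\ell^\infty$ to grow strictly more slowly than the $\ell^1$ mass of the coefficients, namely $N^{3/2}$ against $N^2$; this is exactly the Littlewood phenomenon. One must also check that the coefficient of $z_iz_{i'}$ picks up no cancellation. That is guaranteed by two facts: distinct blocks use distinct coordinates, and the matrix is symmetric, so the two contributions $a_{kl}$ and $a_{lk}$ add rather than cancel.
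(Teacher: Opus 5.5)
Your proof is correct and follows essentially the same route as the paper: Hahn--Banach functionals on the blocks from \namedref{Lemma}{yinli-fenkuaixiajie}, composed with Littlewood-type Fourier matrices of size $4^j$ and summed with decaying weights, followed by a termwise coefficient computation. The only difference is cosmetic. You use a symmetric matrix on a single group of blocks and rely on symmetry so that the $(k,l)$ and $(l,k)$ contributions add. The paper instead lets rows and columns range over disjoint groups of blocks, so each off-diagonal monomial comes from exactly one term; with weights $2^{-j}$ this gives $\|G\|\leqslant 1$, and each level contributes exactly $\delta^2$.
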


\begin{proof}

    \proofstep{1}

By \namedref{Lemma}{yinli-fenkuaixiajie}, choose $\delta>0$
and integers $0=n_0<n_1<n_2<\cdots$ such that
$$
\|(P_{n_k}-P_{n_{k-1}})\textbf{x}\|_X>\delta,
\qquad k\in\mathbb{N}.
$$

    For $k\in\mathbb{N}$, put
    $$
    E_k
    \triangleq
    \{n_{k-1}+1,\cdots,n_k\},
    \qquad
    P_{E_{k}}\textbf{u}
    \triangleq
    \sum\limits_{i\in E_k}u_i e_i.
    $$
    Then the sets $E_k$ are pairwise disjoint and
$
    \bigcup\limits_{k=1}^{\infty}E_k=\mathbb{N}.
$
    Moreover,
$
    \|P_{E_{k}}\textbf{u}\|_X
    \leqslant
    \|\textbf{u}\|_X,   \textbf{u}\in X,
$
    and
$
    \|P_{E_{k}}\textbf{x}\|_X>\delta.
    $

    By the complex Hahn--Banach theorem, for each $k\in\mathbb{N}$ there
    exists $\psi_k\in X^*$ such that
    $$
    \|\psi_k\|_{X^*}=1,
    \qquad
    \psi_k(P_{E_{k}}\textbf{x})
    =
    \|P_{E_{k}}\textbf{x}\|_X.
    $$

    Then, for every $\textbf{u}\in X$,
\begin{equation}
    \psi_{k} (P_{E_{k}}\textbf{u} )=\sum\limits_{i\in E_{k}} \psi_{k} \left( e_{i} \right) u_{i}.\label{shi-fenkuaifanhan}
\end{equation}
    In particular,
    \begin{equation}\label{shi-fenkuaixiajie}
    \sum\limits_{i\in E_{k}} |\psi_{k} \left( e_{i} \right) x_{i}|\geqslant \left| \sum\limits_{i\in E_{k}} \psi_{k} \left( e_{i} \right) x_{i} \right| =|\psi_{k} (P_{E_k}\textbf{x} )|>
        \delta,
        \qquad
        k\in\mathbb{N}.
    \end{equation}

    \proofstep{2}

    For $j\in\mathbb{N}$, set
    $$
h_{r,s}^{(j)}\triangleq e^{\frac{2\pi \mathrm{i} (r-1)(s-1)}{4^{j}}},\qquad 1\leqslant r,s\leqslant 4^{j}.
    $$
    Then
    $
    |h_{r,s}^{(j)}|=1
$
    and
    $$
    \sum\limits_{r=1}^{4^{j}} \overline{h_{r,s}^{(j)}} h_{r,t}^{(j)}=\begin{cases}4^{j},&s=t,\\ 0,&s\neq t.\end{cases}
    $$
    Consequently, for every
    $\xi_1,\cdots,\xi_{4^{j}}\in\mathbb{C}$,
    \begin{equation}\label{shi-zhengjiaohengdeng}
    \begin{aligned}\sum\limits_{r=1}^{4^{j}} \left| \sum\limits_{s=1}^{4^{j}} h_{r,s}^{(j)}\xi_{s} \right|^{2}&=\sum\limits_{r=1}^{4^{j}} \left( \sum\limits_{s=1}^{4^{j}} h_{r,s}^{(j)}\xi_{s} \sum\limits_{s^{\prime}=1}^{4^{j}} \overline{h_{r,s^{\prime}}^{(j)}} \cdot \overline{\xi_{s^{\prime}}} \right)\\ &=\sum\limits_{s=1}^{4^{j}} \sum\limits_{s^{\prime}=1}^{4^{j}} \sum\limits_{r=1}^{4^{j}} \left( h_{r,s}^{(j)}\overline{h_{r,s^{\prime}}^{(j)}} \xi_{s} \overline{\xi_{s^{\prime}}} \right)\\ &=\sum\limits_{s=1}^{4^{j}} \left( 4^{j}\xi_{s} \overline{\xi_{s}} \right) =4^{j}\sum\limits_{s=1}^{4^{j}} |\xi_{s} |^{2}.\end{aligned}
    \end{equation}

    Put
    $$
    d_0\triangleq0,
    \qquad
    d_j\triangleq d_{j-1}+2\cdot4^{j},
    \qquad
    j\in\mathbb{N}.
    $$
    For every $j\in\mathbb{N}$, define a bilinear form
    $A_j:X\times X\to\mathbb{C}$ by
    $$
A_{j}(\textbf{u} ,\textbf{v} )\triangleq \frac{1}{8^{j}} \sum\limits_{r,s=1}^{4^{j}} h_{r,s}^{(j)}\psi_{d_{j-1}+r} \left( P_{E_{d_{j-1}+r}}\textbf{u} \right) \psi_{d_{j-1}+4^{j}+s} \left( P_{E_{d_{j-1}+4^{j}+s}}\textbf{v} \right) .
    $$
    By the Cauchy--Schwarz inequality and
    \eqref{shi-zhengjiaohengdeng},
    $$
\begin{aligned}|A_{j}(\textbf{u} ,\textbf{v} )|&\leqslant \frac{1}{8^{j}} \left( \sum\limits_{r=1}^{4^{j}} |\psi_{d_{j-1}+r} \left( P_{E_{d_{j-1}+r}}\textbf{u} \right) |^{2} \right)^{\frac{1}{2}}\\ &\qquad \cdot \left( \sum\limits_{r=1}^{4^{j}} \left| \sum\limits_{s=1}^{4^{j}} h_{r,s}^{(j)}\psi_{d_{j-1}+4^{j}+s} \left( P_{E_{d_{j-1}+4^{j}+s}}\textbf{v} \right) \right|^{2} \right)^{\frac{1}{2}}\\ &=\frac{1}{4^{j}} \left( \sum\limits_{r=1}^{4^{j}} |\psi_{d_{j-1}+r} \left( P_{E_{d_{j-1}+r}}\textbf{u} \right) |^{2} \right)^{\frac{1}{2}} \left( \sum\limits_{s=1}^{4^{j}} |\psi_{d_{j-1}+4^{j}+s} \left( P_{E_{d_{j-1}+4^{j}+s}}\textbf{v} \right) |^{2} \right)^{\frac{1}{2}}\\ &\leqslant \| \textbf{u} \|_{X} \| \textbf{v} \|_{X} .\end{aligned}
    $$

    Hence the series
    $$
    A(\textbf{u},\textbf{v})
    \triangleq
    \sum\limits_{j=1}^{\infty}2^{-j}A_j(\textbf{u},\textbf{v})
    $$
    converges absolutely for every
    $(\textbf{u},\textbf{v})\in X\times X$, and
    $$
    |A(\textbf{u},\textbf{v})|
    \leqslant
    \|\textbf{u}\|_X\|\textbf{v}\|_X.
    $$
    Thus $A$ is a continuous bilinear form on $X\times X$.

    Define
$
    G(\textbf{u})
    \triangleq
    A(\textbf{u},\textbf{u}),   \textbf{u}\in X.
$
    Then
$
    G\in\mathcal{P}^{2}(X)
$
    and
    $$
    |G(\textbf{u})|
    \leqslant
    \|\textbf{u}\|_X^2,
    \qquad
    \textbf{u}\in X.
    $$

    \proofstep{3}

On every finite-dimensional coordinate subspace, only finitely many
summands in the definition of $G$ are nonzero. By \eqref{shi-fenkuaifanhan},

    $$
\begin{aligned}G\left( \textbf{u} \right)&=\sum\limits_{j=1}^{\infty} 2^{-j}A_{j}(\textbf{u} ,\textbf{u} )=\sum\limits_{j=1}^{\infty} \frac{1}{16^{j}} \sum\limits_{r,s=1}^{4^{j}} h_{r,s}^{(j)}\psi_{d_{j-1}+r} \left( P_{E_{d_{j-1}+r}}\textbf{u} \right) \psi_{d_{j-1}+4^{j}+s} \left( P_{E_{d_{j-1}+4^{j}+s}}\textbf{u} \right)\\ &=\sum\limits_{j=1}^{\infty} \frac{1}{16^{j}} \sum\limits_{r,s=1}^{4^{j}} h_{r,s}^{(j)}\sum\limits_{\begin{gathered}p\in E_{d_{j-1}+r}\\ q\in E_{d_{j-1}+4^{j}+s}\end{gathered}} \psi_{d_{j-1}+r} \left( e_{p} \right) \psi_{d_{j-1}+4^{j}+s} \left( e_{q} \right) u_{p}u_{q}.\end{aligned}
    $$

    Let $j\in\mathbb{N}$ and $1\leqslant r,s\leqslant 4^{j}$, and suppose that
    $$
    p\in E_{d_{j-1}+r},
    \qquad
    q\in E_{d_{j-1}+4^{j}+s}.
    $$
    Then $p<q$. Moreover, for every fixed pair $p<q,p,q\in \mathbb{N}$, there is at most one
    triple $(j,r,s)$ satisfying these conditions.  Indeed, let $k_p,k_q\in\mathbb{N}$ be uniquely determined by
    $p\in E_{k_p}$ and $q\in E_{k_q}$. If
    $$
    p\in E_{d_{j-1}+r},
    \qquad
    q\in E_{d_{j-1}+4^j+s},
    $$
    then
    $$
    k_p=d_{j-1}+r,
    \qquad
    k_q=d_{j-1}+4^j+s.
    $$
    Since the intervals
$
    \{d_{j-1}+1,\cdots,d_j\},j\in\mathbb{N},
  $
    are pairwise disjoint, the first equality determines $j$ and $r$ uniquely, and the
    second then determines $s$ uniquely. Hence the monomial
    $u_pu_q$ occurs in exactly one of the corresponding summands, and
    \begin{equation}\label{shi-fasanduoxiangshixishu}
    c_{e_{p}+e_{q}}(G)=\frac{1}{16^{j}} h_{r,s}^{(j)}\psi_{d_{j-1}+r} \left( e_{p} \right) \psi_{d_{j-1}+4^{j}+s} \left( e_{q} \right) .
    \end{equation}
    All the remaining monomial coefficients of degree $2$ are zero.

    Using \eqref{shi-fasanduoxiangshixishu} and
    \eqref{shi-fenkuaixiajie}, we obtain
    $$
    \begin{aligned}[b]
\sum\limits_{\substack{
                \alpha\in\mathbb{N}_0^{(\mathbb{N})}\\
                |\alpha|=2
        }}
        |c_\alpha(G)\textbf{x}^{\alpha}|
        &=\sum\limits_{j=1}^{\infty} \frac{1}{16^{j}} \sum\limits_{r,s=1}^{4^{j}} \left[ \sum\limits_{p\in E_{d_{j-1}+r}} \left| \psi_{d_{j-1}+r} \left( e_{p} \right) x_{p} \right| \cdot \sum\limits_{q\in E_{d_{j-1}+4^{j}+s}} \left| \psi_{d_{j-1}+4^{j}+s} \left( e_{q} \right) x_{q} \right| \right]
        \\&\geqslant
    \sum\limits_{j=1}^{\infty} \frac{1}{16^{j}} \sum\limits_{r,s=1}^{4^{j}} \delta^{2} =\sum\limits_{j=1}^{\infty} \delta^{2} =+\infty.
    \end{aligned}
    $$
\end{proof}

\begin{remark}\label{zhuji-bibaobaohan}
For every Reinhardt domain $O\subset X$, \namedref{Theorem}{dingli-ercizhangai} gives for each $\mathcal{P}^{2} \left( X \right) \subset \mathcal{F} \left( O \right) \subset H\left( O \right)$,
\begin{equation}
\mathrm{dom} \mathcal{F}(O)
\subset\overline{\mathrm{span}\{e_j:j\geqslant1\}}^{\,X}.
\label{shi-bibaobaohan}
\end{equation}
This gives an affirmative answer
to \namedref{Problem}{wenti-bibaobaohan}.
\end{remark}

For $n\in\mathbb{N}_0$, let $\mathscr{P}_n$ be the space of
continuous polynomials on $X$ depending on finitely many coordinates
$x_j$ with $j>n$. Constants are included in every
$\mathscr{P}_n$.

For $n\in\mathbb{N}_0$, put
$$
Y_n\triangleq Q_nX=\ker P_n,
\qquad
\Lambda_n\triangleq\{\beta\in\mathbb{N}_0^{(\mathbb{N})}:
\beta_1=\cdots=\beta_n=0\}.
$$
The space $Y_n$ is closed and has the norm inherited from $X$. For $n=0$, one has $Y_0=X$ and
$\Lambda_0=\mathbb{N}_0^{(\mathbb{N})}$.

For $r>0$, $g\in H(rB_{Y_n})$, and $\beta\in\Lambda_n$, define
$c_\beta(g)\triangleq c_\beta(g\circ Q_n)$, where
$g\circ Q_n\in H(rB_X)$ by \eqref{shi-shouweisuosuo}.
This agrees with differentiation with respect to the coordinate variables $x_j,j>n$. 
\begin{lemma}\label{yinli-weibuguji}
	Suppose that $\overline{\mathbb{C}^{(\mathbb{N})}}^{\,X}=X$ and
	$\textbf{z}\in\mathrm{dom}H(X)$. For every $r>0$, there exist
	$N\in\mathbb{N}$ and $C>0$ such that for every $n\geqslant N, p\in\mathscr{P}_n$,
\begin{equation}
	\sum\limits_{\alpha\in\mathbb{N}_0^{(\mathbb{N})}}
	|c_\alpha(p)\textbf{z}^\alpha|
	\leqslant C\|p\|_r.
	\label{shi-weibutongyijie}
\end{equation}
\end{lemma}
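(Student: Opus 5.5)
The plan is to argue by contradiction: if the uniform bound \eqref{shi-weibutongyijie} fails, glue together polynomials that depend on pairwise disjoint blocks of coordinates into one entire function, and show that its monomial expansion diverges at $\textbf{z}$. Fix $r>0$ and suppose the conclusion fails. Then for every $N\in\mathbb{N}$ and every $C>0$ there are $n\geqslant N$ and $p\in\mathscr{P}_n$ with $\sum_\alpha|c_\alpha(p)\textbf{z}^\alpha|>C\|p\|_r$.

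\emph{Normalization.} If $\|p\|_r=0$, then $p$ vanishes on $rB_X$, so all its coefficients vanish and the left side is $0$. Hence $\|p\|_r>0$, and after scaling we may assume $\|p\|_r=2^{-k}$ while $\sum_\alpha|c_\alpha(p)\textbf{z}^\alpha|>k+1$. We next remove the constant term: put $\tilde p=p-p(\textbf{0})$. Since $|p(\textbf{0})|\leqslant\|p\|_r$, we get $\|\tilde p\|_r\leqslant 2^{1-k}$ and $\sum_\alpha|c_\alpha(\tilde p)\textbf{z}^\alpha|>k$. Every nonconstant monomial of $\tilde p$ is supported in the finite set of coordinates $j>n$ on which $p$ depends.

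\emph{Inductive choice of blocks.} We choose $n_1<m_1<n_2<m_2<\cdots$ and polynomials $p_k$ recursively. At step $k$, take $n_k>m_{k-1}$ and $p_k\in\mathscr{P}_{n_k}$ as above, with $p_k(\textbf{0})=0$, $\|p_k\|_r\leqslant 2^{1-k}$, and $\sum_\alpha|c_\alpha(p_k)\textbf{z}^\alpha|>k$. Let $m_k$ be large enough that $p_k$ depends only on the coordinates in $(n_k,m_k]$. Then $p_k=p_k\circ Q_{n_k}$, and the monomials of the different $p_k$ have pairwise disjoint supports.

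\emph{$f=\sum_k p_k$ is entire.} This is the main point, since the $p_k$ are only controlled on $rB_X$. Fix $\textbf{x}\in X$. Since $\overline{\mathbb{C}^{(\mathbb{N})}}^{\,X}=X$, \eqref{shi-weibushoulian} gives $\|Q_n\textbf{x}\|_X\to0$. For $\textbf{w}$ with $\|\textbf{w}-\textbf{x}\|_X<r/2$, \eqref{shi-shouweisuosuo} yields
$$
\|Q_{n_k}\textbf{w}\|_X\leqslant\|Q_{n_k}\textbf{x}\|_X+\|\textbf{w}-\textbf{x}\|_X<r
$$
for all $k\geqslant k_0(\textbf{x})$. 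For such $k$ this gives $|p_k(\textbf{w})|=|p_k(Q_{n_k}\textbf{w})|\leqslant\|p_k\|_r\leqslant 2^{1-k}$. The series therefore converges uniformly on a neighbourhood of each point, and $f\in H(X)$ by \eqref{shi-quanchunjixian}.

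\emph{Coefficients of $f$.} Fix $\alpha$ with support in $\{1,\cdots,n\}$. On a small polydisc in $\mathrm{span}\{e_1,\cdots,e_n\}$ the partial sums converge uniformly to $f$. The Cauchy estimate in the proof of \namedref{Lemma}{yinli-bituguji} then gives $c_\alpha(f)=\sum_k c_\alpha(p_k)$. By disjointness of supports, at most one term of this sum is nonzero. Hence $c_\alpha(f)=c_\alpha(p_k)$ when $\alpha$ is supported in $(n_k,m_k]$, and $c_\alpha(f)=0$ for every $\alpha$ not supported in a single block. Consequently
$$
\sum_\alpha|c_\alpha(f)\textbf{z}^\alpha|\geqslant\sum_k\sum_\alpha|c_\alpha(p_k)\textbf{z}^\alpha|\geqslant\sum_k k=+\infty,
$$
which contradicts $\textbf{z}\in\mathrm{dom}H(X)$.

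The one step that is not bookkeeping is the local uniform convergence. It uses the hypothesis $\overline{\mathbb{C}^{(\mathbb{N})}}^{\,X}=X$ exactly through $\|Q_n\textbf{x}\|_X\to0$, and this is why the bound can hold only for $n\geqslant N$ rather than for all $n$.
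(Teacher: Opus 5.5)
Your proof is correct and follows essentially the same route as the paper. Both arguments assume the tail bound fails and extract polynomials with vanishing constant term on disjoint coordinate blocks, with $\|p_k\|_r$ summable but coefficient sums at $\textbf{z}$ not summable. Both then glue these into an entire function via $\|Q_n\textbf{x}\|_X\to0$ and read off a divergent monomial sum at $\textbf{z}$. The only differences are cosmetic: the paper normalizes to $\|p_j\|_r\leqslant 2^{-j}$ with coefficient sums merely exceeding $1$, and uses $\frac{r}{3}$-balls where you use $\frac{r}{2}$-balls.
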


\begin{proof}
	
	Fix $r>0$. For $p\in\mathscr{P}_0$, set
	$$
	S_{\textbf{z}}(p)\triangleq
	\sum\limits_{\alpha\in\mathbb{N}_0^{(\mathbb{N})}}
	|c_\alpha(p)\textbf{z}^{\alpha}|<\infty.
	$$

	Since $\mathscr{P}_{n+1}\subset\mathscr{P}_n$, it suffices to prove
	the estimate \eqref{shi-weibutongyijie} for one value of $n$.
	
\proofstep{1}
	
	Suppose that no such estimate holds. Set $m_0=0$. For $j\in\mathbb{N}$, after $m_{j-1}$
	has been chosen, the failure of \eqref{shi-weibutongyijie} on
	$\mathscr{P}_{m_{j-1}}$ gives a nonzero 
	$q_j\in\mathscr{P}_{m_{j-1}}$ such that
	$$
	S_{\textbf{z}}(q_j)
	>
	(1+2^{j+1})\|q_j\|_r.
	$$
	Choose $m_j>m_{j-1}$ so that $q_j$ depends only on the coordinates
	$x_{m_{j-1}+1},\cdots ,x_{m_{j}}$, and define
	$$
	p_j\triangleq
	\frac{q_j-q_j(\textbf{0})}{2^{j+1}\|q_j\|_r}.
	$$
	Since $|q_j(\textbf{0})|\leqslant\|q_j\|_r$, we have
	\begin{equation}
		p_j(\textbf{0})=0,
		\qquad
		\|p_j\|_r\leqslant2^{-j},
		\qquad
		S_{\textbf{z}}(p_{j})=\frac{S_{\textbf{z}}(q_{j})-\left| q_{j}\left( \textbf{0} \right) \right|}{2^{j+1}\|q_j\|_r} >1.
		\label{shi-fasanxiangguifan}
	\end{equation}
	
	Consider the series
	\begin{equation}
		F(\textbf{x})\triangleq\sum\limits_{j=1}^{\infty}p_j(\textbf{x}).
		\label{shi-zhenghanshupinjie}
	\end{equation}
	Fix $\textbf{a}\in X$. By \eqref{shi-weibushoulian}, choose
	$N_0\in\mathbb{N}$ with $\|Q_{N_0}\textbf{a}\|_X<\frac{r}{3}$, and then
	choose $j_0$ so that $m_{j-1}\geqslant N_0$ for every $j\geqslant j_0$.
	If
	$\textbf{x}\in\textbf{a}+\frac{r}{3} B_X$ and $j\geqslant j_0$, then
	$$
	\begin{aligned}
		\|(P_{m_j}-P_{m_{j-1}})\textbf{x}\|_X
		&\leqslant\|Q_{N_0}\textbf{x}\|_X\leqslant\|Q_{N_0}\textbf{a}\|_X+\|\textbf{x}-\textbf{a}\|_X
		<\frac{2r}{3}.
	\end{aligned}
	$$
	The first inequality follows from $\left| ( P_{m_{j}}-P_{m_{j-1}}) \textbf{x} \right| \leqslant \left| Q_{N_{0}}\textbf{x} \right|$ and \eqref{shi-fanshudandiao}. Since $p_j$ depends only on the coordinates $x_{m_{j-1}+1},\cdots ,x_{m_{j}}$, \eqref{shi-fasanxiangguifan} gives
	$$
		|p_j(\textbf{x})|
		=
		\bigl|p_j((P_{m_j}-P_{m_{j-1}})\textbf{x})\bigr|
		\leqslant2^{-j},
		\quad
		\textbf{x}\in\textbf{a}+\frac{r}{3} B_X,\quad j\geqslant j_0.
	$$
	Thus \eqref{shi-zhenghanshupinjie} converges locally uniformly
	on $X$. Since every $p_j$ is a continuous polynomial,
	\eqref{shi-quanchunjixian} gives $F\in H(X)$.
	
\proofstep{2}

We have
	$$
\begin{aligned}\sum\limits_{\alpha \in \mathbb{N}_{0}^{(\mathbb{N} )}} |c_{\alpha}(F)\textbf{z}^{\alpha} |&=\left| \sum\limits_{j=1}^{\infty} p_{j}\left( \textbf{0} \right) \right| +\sum\limits_{j=1}^{\infty} \left( S_{\textbf{z}}(p_{j})-\left| p_{j}\left( \textbf{0} \right) \right| \right)\\ &=\sum\limits_{j=1}^{\infty} S_{\textbf{z}}(p_{j})=+\infty .\end{aligned}
	$$
	This contradicts $\textbf{z}\in\mathrm{dom}H(X)$.
	Thus \eqref{shi-weibutongyijie} holds.
\end{proof}
\begin{proposition}\label{mingti-jubuhua}
Suppose that $\overline{\mathbb{C}^{(\mathbb{N})}}^{\,X}=X$.
For every Reinhardt domain $O\subset X$,
\begin{equation}
\mathrm{dom}H(O)=O\cap\mathrm{dom}H(X).
\label{shi-jubuhua}
\end{equation}
More precisely, for each $\textbf{z}\in O$ the following conditions
are equivalent:
\begin{enumerate}
\item\label{xiang-yuneishoulian} $\textbf{z}\in\mathrm{dom}H(O)$.
\item\label{xiang-zhenghanshushoulian} $\textbf{z}\in\mathrm{dom}H(X)$.
\item\label{xiang-weibupanju} For every $r>0$, there exist $N\in\mathbb{N}$ and $C>0$ such that
\begin{equation}
\sum\limits_{\alpha\in\mathbb{N}_0^{(\mathbb{N})}}
|c_\alpha(p)\textbf{z}^\alpha|
\leqslant C\|p\|_r,
\qquad p\in\mathscr{P}_N.
\label{shi-weibupanju}
\end{equation}
\end{enumerate}
\end{proposition}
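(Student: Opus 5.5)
The plan is to prove the cycle $(1)\Rightarrow(2)\Rightarrow(3)\Rightarrow(1)$ for a fixed $\textbf{z}\in O$; this yields \eqref{shi-jubuhua}.

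For $(1)\Rightarrow(2)$: if $f\in H(X)$, then $f|_O\in H(O)$. The coefficients $c_\alpha$ are computed on finite-dimensional coordinate subspaces near $\textbf{0}\in O$, so $c_\alpha(f|_O)=c_\alpha(f)$. Hence $\mathrm{dom}H(O)\subset\mathrm{dom}H(X)$. The implication $(2)\Rightarrow(3)$ is exactly \namedref{Lemma}{yinli-weibuguji}, which requires the density hypothesis $\overline{\mathbb{C}^{(\mathbb{N})}}^{\,X}=X$.

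The substantive direction is $(3)\Rightarrow(1)$. Fix $\textbf{z}\in O$ and $f\in H(O)$.

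\textbf{Choose the scales.} Since $O$ is open, pick $\rho>0$ with $\textbf{z}+2\rho B_X\subset O$. By density, pick $N_0$ with $\|Q_{N_0}\textbf{z}\|_X<\rho/2$.

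\textbf{Split off the head variables.} Write $\textbf{z}=\textbf{a}+\textbf{b}$ with $\textbf{a}=P_{N}\textbf{z}$ and $\textbf{b}=Q_N\textbf{z}$, where $N\geqslant N_0$ is the integer that $(3)$ provides for $r=\rho$. For a point $\textbf{w}=\sum_{j\leqslant N}w_je_j$ in a slightly enlarged polydisc $D$ around $|\textbf{a}|$ in the first $N$ coordinates, consider the tail function $g_{\textbf{w}}(\textbf{y})=f(\textbf{w}+Q_N\textbf{y})$. The point is that $\textbf{w}+Q_N\textbf{y}$ lies in $O$ whenever $\|Q_N\textbf{y}\|_X<\rho$. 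To see this, I plan to use the Reinhardt property to reduce to $|\textbf{w}|\leqslant(1+\varepsilon)|\textbf{a}|$ coordinatewise, and keep $\varepsilon$ small using openness of $O$ at the compact set of such $\textbf{w}$.

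\textbf{Expand in the head and tail variables.} Expand $f$ as a power series in the $N$ head variables with coefficients $f_\gamma\in H(\rho B_{Y_N})$, given by Cauchy integrals over the torus $\partial D$. The Cauchy estimate gives $\|f_\gamma\|_{\rho'}\leqslant M R^{-\gamma}$, where $R$ is the polyradius of $D$ (slightly larger than $|\textbf{a}|$ in every coordinate, with nonzero entries even where $a_j=0$), $\rho'<\rho$, and $M$ is a bound for $|f|$ on the relevant set.

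\textbf{Main obstacle: obtaining $M<\infty$.} We need $f$ bounded on $\{\textbf{w}+\textbf{y}:\textbf{w}\in\overline D,\ \textbf{y}\in Y_N,\ \|\textbf{y}\|_X\leqslant\rho'\}$. This set is not compact, and $f$ is merely holomorphic, hence only locally bounded. I would handle this by replacing $\rho$ by a smaller radius. Use local boundedness of $f$ near the compact set $\overline D\subset O$ together with compactness of $\overline D$ to find one $\rho$ with $f$ bounded on $\overline D+\rho B_X$; the neighbourhoods of $\overline D$ obtained this way cover a uniform thickening. Only then invoke $(3)$ with this $r=\rho$, getting $N$ and $C$, and arrange $N\geqslant N_0$ so that $\|\textbf{b}\|_X$ is small.

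\textbf{Approximate by polynomials.} Each $f_\gamma$ is holomorphic on $\rho B_{Y_N}$, not a polynomial in $\mathscr{P}_N$. Approximate $f_\gamma$ by its truncations $f_\gamma\circ P_{[N+1,n]}$ Taylor-expanded to degree $k$. For these polynomials, \eqref{shi-weibupanju} gives $\sum_\beta|c_\beta\,\textbf{b}^\beta|\leqslant C\|f_\gamma\|_\rho$. Pass to the limit with Fatou: each coefficient $c_\beta(f_\gamma)$ is eventually attained by the truncations, using Cauchy estimates on homogeneous components and $P_n\to I$ strongly. This yields
\[
\sum_{\beta\in\Lambda_N}|c_\beta(f_\gamma)\textbf{b}^\beta|\leqslant C\|f_\gamma\|_\rho .
\]
Any loss from the Taylor truncation can be absorbed by shrinking $\rho$ slightly in advance.

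\textbf{Sum over all multi-indices.} Using $c_{\gamma+\beta}(f)=c_\beta(f_\gamma)$, we get
\[
\sum_\alpha|c_\alpha(f)\textbf{z}^\alpha|\leqslant\sum_\gamma|\textbf{a}^\gamma|\,CMR^{-\gamma}<\infty,
\]
since $|a_j|<R_j$ for every $j\leqslant N$. Hence $\textbf{z}\in\mathrm{dom}H(O)$.
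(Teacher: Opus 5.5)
The implications (1)$\Rightarrow$(2)$\Rightarrow$(3) are handled correctly. So are the head/tail Cauchy decomposition with $c_{\gamma+\beta}(f)=c_\beta(f_\gamma)$ and the final geometric summation. The gap is in (3)$\Rightarrow$(1): your choice of parameters is circular, and that is exactly where the real work lies.

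Your polydisc $D$ lives in the first $N$ coordinates. You extract the radius $\rho$ and the bound $M$ from compactness of $\overline D$, and only afterwards does (3) with $r=\rho$ produce $N$. In general that $N$ exceeds the one used to build $D$. The head must then absorb more coordinates, which gives a new $\overline D$, possibly a smaller $\rho$, a new $N$ from (3), and so on. Nothing in your argument yields a single radius and a single bound valid for head polydiscs in arbitrarily many coordinates.

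Your choice of $N_0$ with $\|Q_{N_0}\textbf{z}\|_X<\rho/2$ only secures membership in $O$ uniformly in $N$, not boundedness. A function $f\in H(O)$ need not be bounded on $\textbf{z}+2\rho B_X$. Without a uniform bound, the tail functions $f_\gamma$ are not known to be bounded on the ball at which (3) is applied.

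The missing idea, which is how the paper proceeds, is that the density hypothesis makes the order interval $K=\{\textbf{x}\in X:|\textbf{x}|\leqslant|\textbf{z}|\}$ compact. Each $P_mK$ is a compact polydisc, and $\|Q_m\textbf{x}\|_X\leqslant\|Q_m\textbf{z}\|_X\to0$ uniformly for $\textbf{x}\in K$, so a diagonal argument applies.

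Choose $t>1$ with $t\textbf{z}\in O$. Compactness of $tK\subset O$ and local boundedness give $r,M>0$, independent of any truncation level, with $tK+2rB_X\subset O$ and $|f|\leqslant M$ there. For every $n$, the head torus $\{\sum_{i\in I}tz_ie^{\mathrm{i}\theta_i}e_i\}$ with $I=\{i\leqslant n:z_i\neq0\}$ lies in $tK$. Only now invoke (3) with this $r$, and take $n\geqslant N$. The head coefficient functions are then bounded by $M$ on $rB_{Y_n}$, and the Cauchy factor is $t^{-|\gamma|}$. Discarding coordinates with $z_i=0$ also removes the need for the extra radii in your $R$.

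With this ordering, your Taylor-truncation step is a legitimate substitute for the paper's Fej\'er-kernel argument in \namedref{Lemma}{yinli-feijiebijin}. Apply (3) at radius $\frac{r}{2}$. The degree-$k$ Taylor truncations of $f_\gamma\circ(P_m-P_n)$ are bounded by $2\sup|f_\gamma|$ on $\frac{r}{2}B_X$. They carry exactly the coefficients $c_\beta(f_\gamma)$ with $\mathrm{supp}\,\beta\subset\{n+1,\cdots,m\}$ and $|\beta|\leqslant k$, so monotone convergence in $m,k$ finishes.
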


\begin{lemma}\label{yinli-feijiebijin}
Let $\textbf{z}\in X$, $n\in\mathbb{N}_0$, and $r,C>0$.
Suppose that
\begin{equation}
	\sum\limits_{\alpha\in\mathbb{N}_0^{(\mathbb{N})}}
	|c_\alpha(p)\textbf{z}^{\alpha}|
	\leqslant C\|p\|_r,
	\qquad p\in\mathscr{P}_n.\label{shi-weibubijinjiading}
\end{equation}
Then
\begin{equation}
\sum\limits_{\beta\in\Lambda_n}
|c_\beta(g)(Q_n\textbf{z})^\beta|
\leqslant C\|g\|_\infty,
\qquad g\in H^\infty(rB_{Y_n}).
\label{shi-weibuquanchunguji}
\end{equation}
\end{lemma}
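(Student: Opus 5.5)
To prove \eqref{shi-weibuquanchunguji}, the plan is to approximate $g\in H^\infty(rB_{Y_n})$ by polynomials in finitely many variables, apply the hypothesis \eqref{shi-weibubijinjiading} to each approximant, and pass to the limit one finite family of multi-indices at a time. By monotone convergence it suffices to show
\[
\sum_{\beta\in F}|c_\beta(g)(Q_n\textbf{z})^\beta|\leqslant C\|g\|_\infty
\]
for every finite set $F\subset\Lambda_n$. For $\beta\in\Lambda_n$ we have $(Q_n\textbf{z})^\beta=\textbf{z}^\beta$. So the left-hand side is a finite sum of coefficients of $g$ multiplied by the fixed numbers $\textbf{z}^\beta$.

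Fix $F$ and choose $M>n$ with $\mathrm{supp}\beta\subset\{1,\cdots,M\}$ for all $\beta\in F$. Set $E=\{n+1,\cdots,M\}$ and consider the finite-dimensional function $h(\textbf{x})=g(P_E\textbf{x})$. By \eqref{shi-touyingsuosuo}, it is defined for $\textbf{x}\in rB_X$ and bounded by $\|g\|_\infty$ there. It depends only on the coordinates $x_{n+1},\cdots,x_M$, and $c_\beta(h)=c_\beta(g)$ for $\beta\in F$. On the open set $U=\{\textbf{w}\in\mathbb{C}^E:\|\sum_{j\in E}w_je_j\|_X<r\}$, a bounded complete Reinhardt domain in $\mathbb{C}^E$, $h$ is a bounded holomorphic function.

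Use the Fej\'er (Ces\`aro) means of its Taylor series. For $\textbf{w}\in U$ let
\[
\sigma_N h(\textbf{w})=\int_{\mathbb{T}^E}h\bigl((\theta_jw_j)_{j}\bigr)K_N(\theta)\,d\theta,
\]
where $K_N$ is the product Fej\'er kernel, which is nonnegative with integral $1$. Since $U$ is invariant under coordinatewise rotations, $\sigma_Nh$ is a polynomial in $w_j$, $j\in E$, with $\sup_U|\sigma_Nh|\leqslant\sup_U|h|\leqslant\|g\|_\infty$. Its coefficients are $\prod_{j}(1-\frac{\beta_j}{N+1})^+c_\beta(h)$, and these converge to $c_\beta(g)$ as $N\to\infty$.

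Now set $p_N(\textbf{x})=\sigma_Nh(P_E\textbf{x})$. It is a continuous polynomial on $X$ depending only on finitely many coordinates $x_j$ with $j>n$, so $p_N\in\mathscr{P}_n$. Again using \eqref{shi-touyingsuosuo}, $\|p_N\|_r\leqslant\|g\|_\infty$. The hypothesis \eqref{shi-weibubijinjiading} therefore gives
\[
\sum_{\beta\in F}|c_\beta(p_N)\textbf{z}^\beta|\leqslant C\|g\|_\infty.
\]
Letting $N\to\infty$, and using that $F$ is finite and the coefficients converge, yields the desired bound.

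The main obstacle is justifying that the Fej\'er means preserve the sup bound and are genuine polynomials with the stated coefficients. Because $h$ is holomorphic on a Reinhardt domain containing the origin, $\theta\mapsto h(\theta\textbf{w})$ has Fourier coefficients $c_\beta(h)\textbf{w}^\beta$ supported on nonnegative indices, computed by the Cauchy formula on a small polydisc and extended by the identity principle. Rotation invariance of $U$, which holds because $X$ is a Banach sequence space, keeps the rotated points inside $U$. Alternatively, if this seems delicate, one can use Taylor truncations on dilates $\rho h$, $\rho<1$, where homogeneous expansions converge uniformly on $U$ by the Cauchy estimates. However, partial sums need not be uniformly bounded by $\sup|h|$, so Fej\'er means are the preferable choice.
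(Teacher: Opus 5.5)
Your proposal is correct and follows essentially the same route as the paper. Both arguments restrict $g$ to finitely many coordinates $x_j$ with $j>n$, take Fej\'er means to get polynomials in $\mathscr{P}_n$ whose $\|\cdot\|_r$-norm is at most $\|g\|_\infty$, apply the hypothesis to those polynomials, and then let the Fej\'er weights tend to $1$.

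The only difference is the kernel. The paper uses the one-variable kernel $K_d(\theta)$ applied to $g(e^{\mathrm{i}\theta}(P_m-P_n)\textbf{x})$. Its weights $1-\frac{|\beta|}{d+1}$ depend only on the total degree, so only circular invariance is needed. Your product kernel on $\mathbb{T}^E$ gives weights $\prod_j(1-\frac{\beta_j}{N+1})^+$ and uses the full Reinhardt invariance, which is equally available in a Banach sequence space.
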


\begin{proof}
For $m>n$ and $d\in\mathbb{N}_0$, put
$$
p_{m,d}(\textbf{x})
\triangleq
\sum\limits_{\substack{\beta\in\Lambda_n\\
\mathrm{supp}\beta\subset\{n+1,\cdots,m\}\\
|\beta|\leqslant d}}
\left(1-\frac{|\beta|}{d+1}\right)
c_\beta(g)\textbf{x}^\beta\in\mathscr{P}_n.
$$
Define
$$
K_d(\theta)
\triangleq
\frac{1}{d+1}\left|\sum\limits_{k=0}^{d}e^{\mathrm{i}k\theta}\right|^2,
\qquad \theta\in[0,2\pi].
$$
We have
\begin{equation}
    K_d(\theta)
    =
    \sum\limits_{\ell=-d}^{d}
    \left(1-\frac{|\ell|}{d+1}\right)e^{\mathrm{i}\ell\theta},
    \qquad
    K_d\geqslant0,
    \qquad
    \frac{1}{2\pi}\int\limits_0^{2\pi}K_d(\theta)\,\mathrm{d}\theta=1,\label{shi-feijiehe}
\end{equation}
and for each $k\in\mathbb{N}_0$, 
\begin{equation}\label{shi-feijiexishu}
    \begin{aligned}
        \frac{1}{2\pi}\int\limits_0^{2\pi}
        K_d(\theta)e^{\mathrm{i}k\theta}\,\mathrm{d}\theta
        &=\sum\limits_{\ell=-d}^{d}\left(1-\frac{|\ell|}{d+1}\right)
        \frac{1}{2\pi}\int\limits_0^{2\pi}
        e^{\mathrm{i}(\ell+k)\theta}\,\mathrm{d}\theta=
        \begin{cases}
            1-\frac{k}{d+1},&0\leqslant k\leqslant d,\\[2pt]
            0,&k>d.
        \end{cases}
    \end{aligned}
\end{equation}

Fix $g\in H^\infty(rB_{Y_n})$. For $\textbf{x}\in rB_X$, by \eqref{shi-feijiexishu}, we have
\begin{equation}\label{shi-feijiejifen}
	\begin{aligned}
		p_{m,d}(\textbf{x})
		&=
		\sum\limits_{\substack{\beta\in\Lambda_n\\
				\mathrm{supp}\beta\subset\{n+1,\cdots,m\}\\
				|\beta|\leqslant d}}
		\left(1-\frac{|\beta|}{d+1}\right)
		c_\beta(g)\textbf{x}^\beta\\
		&=
		\frac{1}{2\pi}
		\sum\limits_{\substack{\beta\in\Lambda_n\\
				\mathrm{supp}\beta\subset\{n+1,\cdots,m\}}}
		c_\beta(g)\textbf{x}^\beta
		\int\limits_0^{2\pi}
		K_d(\theta)e^{\mathrm{i}|\beta|\theta}\,\mathrm{d}\theta\\
		&=
		\frac{1}{2\pi}\int\limits_0^{2\pi}
		K_d(\theta)
		\sum\limits_{\substack{\beta\in\Lambda_n\\
				\mathrm{supp}\beta\subset\{n+1,\cdots,m\}}}
		c_\beta(g)
		\bigl(e^{\mathrm{i}\theta}(P_m-P_n)\textbf{x}\bigr)^\beta
		\,\mathrm{d}\theta\\
		&=
		\frac{1}{2\pi}\int\limits_0^{2\pi}
		K_d(\theta)\,
		g\bigl(e^{\mathrm{i}\theta}(P_m-P_n)\textbf{x}\bigr)
		\,\mathrm{d}\theta.
	\end{aligned}
\end{equation}
By \eqref{shi-feijiehe} and \eqref{shi-feijiejifen},
for every $\textbf{x}\in rB_X$,
$$
\begin{aligned}
|p_{m,d}(\textbf{x})|
&\leqslant\frac{1}{2\pi}\int\limits_0^{2\pi}
 K_d(\theta)
 \left|g\bigl(e^{\mathrm{i}\theta}(P_m-P_n)\textbf{x}\bigr)\right|
 \,\mathrm{d}\theta\leqslant\frac{\|g\|_\infty}{2\pi}
\int\limits_0^{2\pi}K_d(\theta)\,\mathrm{d}\theta
=\|g\|_\infty,
\end{aligned}
$$
which gives $\|p_{m,d}\|_r\leqslant\|g\|_\infty$. Thus
\eqref{shi-weibubijinjiading} gives
$$
\sum\limits_{\substack{\beta\in\Lambda_n\\
\mathrm{supp}\beta\subset\{n+1,\cdots,m\}\\
|\beta|\leqslant d}}
\left(1-\frac{|\beta|}{d+1}\right)
|c_\beta(g)(Q_n\textbf{z})^\beta|\leqslant C\| p_{m,d}\|_{r}
\leqslant C\|g\|_\infty.
$$
Letting $m,d\to\infty$ gives 
\eqref{shi-weibuquanchunguji}.
\end{proof}

\begin{proof}[Proof of \namedref{Proposition}{mingti-jubuhua}]

Restriction of an entire function to $O$ does not change its
coefficients, so \itemref{xiang-yuneishoulian} implies \itemref{xiang-zhenghanshushoulian}. At $\textbf{z}=\textbf{0}$ all
three conditions hold. Fix a nonzero $\textbf{z}\in O$.
By \namedref{Lemma}{yinli-weibuguji},
\itemref{xiang-zhenghanshushoulian} implies \itemref{xiang-weibupanju}.
Now suppose that \itemref{xiang-weibupanju} holds.
The finite-coordinate decomposition used below is inspired by the proofs of
\cite[Proposition~3.4, pp.~21--22]{DMP09} and
\cite[Proposition~5.4, pp.~961--962]{GMMS21}.

\proofstep{1}

Fix $f\in H(O)$, and define
$$
K\triangleq\{\textbf{x}\in X:|\textbf{x}|\leqslant|\textbf{z}|\}.
$$
The set $K$ is closed by continuity of the coordinate functionals.
For each $m\in\mathbb{N}$, we have
$$
P_mK=
\left\{\sum\limits_{j=1}^{m}w_je_j:
|w_j|\leqslant|z_j|\ (1\leqslant j\leqslant m)\right\}.
$$
Thus $P_mK$ is compact. Let $\left\{ \textbf{x}_{n} \right\}_{n\in \mathbb{N}} \subset K$ be arbitrary. By a diagonal argument, there exist a subsequence $\left\{ \textbf{x}_{n_{k}} \right\}_{k\in \mathbb{N}}$ and a sequence $\left\{ \textbf{x}^{\left( m \right)} \right\}_{m=1}^{\infty} \subset K$ such that
$$
\lim\limits_{k\to \infty} P_{m}\textbf{x}_{n_{k}} =\textbf{x}^{\left( m \right)} ,
\qquad m=1,2,\cdots.
$$
For $m_1,m_2\in \mathbb{N}$, we have
$$
\begin{aligned}
	\|\textbf{x}^{\left( m_{1} \right)} -\textbf{x}^{\left( m_{2} \right)}\|_{X}
	&=\lim\limits_{k\to \infty} \|P_{m_{1}}\textbf{x}_{n_{k}} -P_{m_{2}}\textbf{x}_{n_{k}}\|_{X}\\
	&=\lim\limits_{k\to \infty} \|Q_{m_{1}}\textbf{x}_{n_{k}} -Q_{m_{2}}\textbf{x}_{n_{k}}\|_{X}\\
	&\leqslant \|Q_{m_{1}}\textbf{z}\|_X
	+\|Q_{m_{2}}\textbf{z}\|_X.
\end{aligned}
$$
Hence, by \eqref{shi-weibushoulian} in \namedref{Lemma}{yinli-xuliekongjian}, $\left\{ \textbf{x}^{\left( m \right)} \right\}_{m\in\mathbb{N}}$ is a Cauchy sequence in $X$. Since $X$ is complete, there exists $\textbf{x}\in K$ such that $\textbf{x}^{\left( m \right)}\to \textbf{x}$. For $k,m\in\mathbb{N}$, we have
$$
\begin{aligned}
	\|\textbf{x}_{n_{k}} -\textbf{x}\|_{X}
	&\leqslant \|\textbf{x}^{\left( m \right)} -\textbf{x}\|_{X}
	+\|\textbf{x}^{\left( m \right)} -P_{m}\textbf{x}_{n_{k}}\|_{X}
	+\|Q_{m}\textbf{x}_{n_{k}}\|_{X}\\
	&\leqslant \|\textbf{x}^{\left( m \right)} -\textbf{x}\|_{X}
	+\|\textbf{x}^{\left( m \right)} -P_{m}\textbf{x}_{n_{k}}\|_{X}
	+\|Q_{m}\textbf{z}\|_{X}.
\end{aligned}
$$
Therefore,
$$
0\leqslant \limsup\limits_{k\to \infty}
\|\textbf{x}_{n_{k}} -\textbf{x}\|_{X}
\leqslant
\|\textbf{x}^{\left( m \right)} -\textbf{x}\|_{X}
+\|Q_{m}\textbf{z}\|_{X}.
$$
Letting $m\to\infty$ and applying \eqref{shi-weibushoulian} in \namedref{Lemma}{yinli-xuliekongjian}, we obtain
$\textbf{x}_{n_{k}}\to \textbf{x}$. Therefore, $K$ is compact.

Choose $t>1$ with $t\textbf{z}\in O$, using openness of $O$.
The Reinhardt property gives $tK\subset O$.
For each $\textbf{a}\in tK$, openness of $O$ and continuity of $f$
give $\delta_{\textbf{a}},M_{\textbf{a}}>0$ such that
$$
\textbf{a}+3\delta_{\textbf{a}}B_X\subset O,
\qquad
|f|\leqslant M_{\textbf{a}}
\quad\text{on }\textbf{a}+3\delta_{\textbf{a}}B_X.
$$
By compactness, choose $\textbf{a}_1,\cdots,\textbf{a}_s\in tK$
whose balls $\textbf{a}_i+\delta_{\textbf{a}_i}B_X$ cover $tK$.
Set
$$
r\triangleq\frac{1}{2}\min\limits_{1\leqslant i\leqslant s}
\delta_{\textbf{a}_i},
\qquad
M\triangleq\max\limits_{1\leqslant i\leqslant s}M_{\textbf{a}_i}.
$$
If $\textbf{w}=\textbf{y}+\textbf{h}$ with $\textbf{y}\in tK$ and
$\|\textbf{h}\|_X<2r$, choose $i$ with
$\|\textbf{y}-\textbf{a}_i\|_X<\delta_{\textbf{a}_i}$. Then
$$
\|\textbf{w}-\textbf{a}_i\|_X
<\delta_{\textbf{a}_i}+2r
\leqslant2\delta_{\textbf{a}_i}<3\delta_{\textbf{a}_i}.
$$
Consequently, for each $\textbf{w}\in tK+2rB_X$, 
\begin{equation}
tK+2rB_X\subset O,
\qquad |f(\textbf{w})|\leqslant M.
\label{shi-jinlinyuyoujie}
\end{equation}

\proofstep{2}

Choose $N,C$ as in \eqref{shi-weibupanju} for the value of
$r$ in \eqref{shi-jinlinyuyoujie}. Since
$\mathscr{P}_n\subset\mathscr{P}_N$ for $n\geqslant N$, the same
estimate holds on every such $\mathscr{P}_n$. 
Choose $n\geqslant N$ so that
$$
I\triangleq\{i\in\{1,\cdots,n\}:z_i\neq0\}\neq\emptyset.
$$
Put
$$
\Gamma_I\triangleq\{\gamma\in\mathbb{N}_0^{(\mathbb{N})}:
\mathrm{supp}\gamma\subset I\}.
$$

For $\gamma=\left( \gamma_{i} \right)_{i\in \mathbb{N}}\in\Gamma_I$ and $\textbf{y}\in rB_{Y_n}$, define
\begin{equation}
g_\gamma(\textbf{y})\triangleq
\frac{1}{(2\pi)^{|I|}}\int\limits_{[0,2\pi]^I}
f(\sum\limits_{i\in I}tz_i e^{\mathrm{i}\theta_i}e_i+\textbf{y})
e^{-\mathrm{i}\sum\limits_{i\in I}\gamma_i\theta_i}\,\prod\limits_{i\in I}\mathrm{d}\theta_i.
\label{shi-toubuxishuhanshu}
\end{equation}
\eqref{shi-jinlinyuyoujie} gives
\begin{equation}
g_\gamma\in H^\infty(rB_{Y_n}),
\qquad\|g_\gamma\|_\infty\leqslant M.
\label{shi-toubuxishuhanshujie}
\end{equation}
For $\gamma\in\Gamma_I$ and $\beta\in\Lambda_n$, we claim that
\begin{equation}
	c_\beta(g_\gamma)
	=
	t^{|\gamma|}\textbf{z}^{\gamma}c_{\gamma+\beta}(f).
	\label{shi-xishutiqu}
\end{equation}
Put
$$
J\triangleq\mathrm{supp}\beta,
\Gamma_J\triangleq
\{\nu\in\mathbb{N}_0^{(\mathbb{N})}:
\mathrm{supp}\nu\subset J\}.
$$
Then $I\cap J=\emptyset$.
Choose $\left\{ \delta_{j} \right\}_{j\in J} \subset \left( 0,+\infty \right)$ such that
$\sum\limits_{j\in J}\delta_j\|e_j\|_X<r$.
We have
$$
\left\{
\sum\limits_{i\in I}u_i e_i+\sum\limits_{j\in J}v_j e_j:
|u_i|\leqslant t|z_i|, i\in I, \
|v_j|\leqslant\delta_j,\ j\in J
\right\}
\subset tK+rB_X\subset O.
$$
For $\textbf{v}=\sum\limits_{j\in J}v_j e_j$ with
$|v_j|\leqslant\delta_j$, the  Taylor expansion gives
$$
f(\sum\limits_{i\in I}tz_i e^{\mathrm{i}\theta_i}e_i+\textbf{v})
=
\sum\limits_{\eta\in\Gamma_I}
\sum\limits_{\nu\in\Gamma_J}
c_{\eta+\nu}(f)
t^{|\eta|}\textbf{z}^{\eta}\textbf{v}^{\nu}
e^{\mathrm{i}\sum\limits_{i\in I}\eta_i\theta_i}.
$$
We have

$$
\begin{aligned}
	g_\gamma(\textbf{v})
	&=
	\frac{1}{(2\pi)^{|I|}}
	\int\limits_{[0,2\pi]^I}
	f(\sum\limits_{i\in I}tz_i e^{\mathrm{i}\theta_i}e_i+\textbf{v})
	e^{-\mathrm{i}\sum\limits_{i\in I}\gamma_i\theta_i}\,\prod\limits_{i\in I}\mathrm{d}\theta_i\\
	&=
	\sum\limits_{\eta\in\Gamma_I}
	\sum\limits_{\nu\in\Gamma_J}
	c_{\eta+\nu}(f)t^{|\eta|}\textbf{z}^{\eta}\textbf{v}^{\nu}
	\frac{1}{(2\pi)^{|I|}}
	\int\limits_{[0,2\pi]^I}
	e^{\mathrm{i}\sum\limits_{i\in I}(\eta_i-\gamma_i)\theta_i}\,\prod\limits_{i\in I}\mathrm{d}\theta_i
\\&=
	t^{|\gamma|}\textbf{z}^{\gamma}
	\sum\limits_{\nu\in\Gamma_J}
	c_{\gamma+\nu}(f)\textbf{v}^{\nu}.
\end{aligned}
$$

Comparing the coefficients of $\textbf{v}^{\beta}$ proves
\eqref{shi-xishutiqu}.

Applying the \eqref{shi-weibuquanchunguji} of \namedref{Lemma}{yinli-feijiebijin} to
$g_\gamma$ and using
\eqref{shi-toubuxishuhanshujie} and
\eqref{shi-xishutiqu}, we obtain
\begin{equation}\label{shi-hunhexishujie}
	\begin{aligned}
		t^{|\gamma|}
		\sum\limits_{\beta\in\Lambda_n}
		|c_{\gamma+\beta}(f)\textbf{z}^{\gamma+\beta}|
		&=
		\sum\limits_{\beta\in\Lambda_n}
		|c_\beta(g_\gamma)(Q_n\textbf{z})^\beta|\leqslant CM.
	\end{aligned}
\end{equation}

If $\alpha_i>0$ for some
$i\in\{1,\cdots,n\}\setminus I$, then $\textbf{z}^{\alpha}=0$.
Every other multi-index $\alpha\in\mathbb{N}_0^{(\mathbb{N})}$
has a unique decomposition $\alpha=\gamma+\beta$ with
$\gamma\in\Gamma_I$ and $\beta\in\Lambda_n$.
Consequently, by \eqref{shi-hunhexishujie}, we have
\begin{equation}
\begin{aligned}
	&\sum\limits_{\alpha \in \mathbb{N}_{0}^{(\mathbb{N} )}} |c_{\alpha}(f)\textbf{z}^{\alpha} |=\sum\limits_{\gamma \in \Gamma_{I}} \sum\limits_{\beta \in \Lambda_{n}} |c_{\gamma +\beta}(f)\textbf{z}^{\gamma +\beta} |\\ &\quad \leqslant CM\sum\limits_{\gamma \in \Gamma_{I}} t^{-\left| \gamma \right|}=CM\prod\limits_{i\in I} \left( \sum\limits_{k=0}^{\infty} t^{-k} \right) =\frac{CM}{\left( 1-\frac{1}{t} \right)^{\left| I \right|}} <\infty ,\end{aligned}
\label{shi-jubuqiuhe}
\end{equation}which gives $\textbf{z}\in\mathrm{dom}H(O)$.
Thus \itemref{xiang-weibupanju} implies \itemref{xiang-yuneishoulian}, proving all three equivalences and
\eqref{shi-jubuhua}.
\end{proof}

\begin{corollary}\label{tuilun-quanyushoulian}
If
\begin{equation}
\mathrm{dom}H(B_X)=B_X,
\label{shi-qiushoulianjiading}
\end{equation}
then, for every Reinhardt domain $O\subset X$,
\begin{equation}
\mathrm{dom}H(O)=O.
\label{shi-quanyushoulian}
\end{equation}
\end{corollary}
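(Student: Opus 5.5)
The plan is to reduce everything to entire functions via \namedref{Proposition}{mingti-jubuhua}. Doing so requires showing that $\mathbb{C}^{(\mathbb{N})}$ is dense in $X$ and that $\mathrm{dom}H(X)=X$.

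\textbf{Density.} The hypothesis \eqref{shi-qiushoulianjiading} says $\mathrm{dom}H(B_X)=B_X$. Since $\mathcal{P}^2(X)$ restricted to $B_X$ lies in $H(B_X)$, \namedref{Remark}{zhuji-bibaobaohan} (i.e.\ \namedref{Theorem}{dingli-ercizhangai}) gives
$$
B_X=\mathrm{dom}H(B_X)\subset\overline{\mathbb{C}^{(\mathbb{N})}}^{\,X}.
$$
The closure is a linear subspace containing the unit ball, so it equals $X$. Hence $\overline{\mathbb{C}^{(\mathbb{N})}}^{\,X}=X$, and \namedref{Proposition}{mingti-jubuhua} is available.

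\textbf{$\mathrm{dom}H(X)=X$.} Fix $\textbf{z}\in X$ and choose $s>\|\textbf{z}\|_X$. Let $f\in H(X)$ and set $g(\textbf{w})\triangleq f(s\textbf{w})$. Then $g\in H(X)$, so its restriction to $B_X$ lies in $H(B_X)$. Since $\textbf{z}/s\in B_X=\mathrm{dom}H(B_X)$, we get
$$
\sum_\alpha|c_\alpha(g)(\textbf{z}/s)^\alpha|<\infty .
$$
The chain rule on finite-dimensional coordinate subspaces gives $c_\alpha(g)=s^{|\alpha|}c_\alpha(f)$. Therefore
$$
c_\alpha(g)(\textbf{z}/s)^\alpha=c_\alpha(f)\textbf{z}^\alpha,
$$
and the series for $f$ at $\textbf{z}$ converges absolutely. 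Thus $\textbf{z}\in\mathrm{dom}H(X)$.

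\textbf{Conclusion.} For any Reinhardt domain $O$, \eqref{shi-jubuhua} now gives
$$
\mathrm{dom}H(O)=O\cap\mathrm{dom}H(X)=O\cap X=O,
$$
which is \eqref{shi-quanyushoulian}.

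The only mildly delicate points are the density step, where the ball must be placed inside the closed span, and the coefficient scaling $c_\alpha(f(s\,\cdot))=s^{|\alpha|}c_\alpha(f)$. The scaling is immediate from the definition of $c_\alpha$ via partial derivatives at $\textbf{0}$. No real obstacle is expected, since \namedref{Proposition}{mingti-jubuhua} carries the substantive work.
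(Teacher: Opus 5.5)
Your proposal is correct and follows essentially the same route as the paper: density of $\mathbb{C}^{(\mathbb{N})}$ via \namedref{Remark}{zhuji-bibaobaohan}, then $\mathrm{dom}H(X)=X$ by the scaling $c_\alpha(f(s\,\cdot))=s^{|\alpha|}c_\alpha(f)$, and finally \namedref{Proposition}{mingti-jubuhua}. Your explicit remark that a closed linear subspace containing $B_X$ must be all of $X$ fills in the one step the paper leaves implicit.
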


\begin{proof}

By \eqref{shi-qiushoulianjiading} and
\namedref{Remark}{zhuji-bibaobaohan},
$B_X\subset\overline{\mathbb{C}^{(\mathbb{N})}}^{\,X}$,
it follows that
$\overline{\mathbb{C}^{(\mathbb{N})}}^{\,X}=X$.

Fix $\textbf{x}\in X$ and $F\in H(X)$, and choose
$R>\|\textbf{x}\|_X$. Define
$$
F_R(\textbf{y})\triangleq F(R\textbf{y}),
\qquad\textbf{y}\in B_X.
$$
Then $F_R\in H(B_X)$, and 
$
c_\alpha(F_R)=R^{|\alpha|}c_\alpha(F),
\alpha\in\mathbb{N}_0^{(\mathbb{N})}.
$
Since $\frac{\textbf{x}}{R}\in B_X$, assumption \eqref{shi-qiushoulianjiading} implies
$$
\sum\limits_{\alpha\in\mathbb{N}_0^{(\mathbb{N})}}
|c_\alpha(F)\textbf{x}^\alpha|
=
\sum\limits_{\alpha\in\mathbb{N}_0^{(\mathbb{N})}}
\left|c_\alpha(F_R)\left(\frac{\textbf{x}}{R}\right)^\alpha\right|
<\infty.
$$
Thus $\textbf{x}\in\mathrm{dom}H(X)$, and hence
$\mathrm{dom}H(X)=X$.
\namedref{Proposition}{mingti-jubuhua} now yields \eqref{shi-quanyushoulian}.

\end{proof}

For $r>0$ and $h\in H^\infty(rB_X)$, let $H_m$ be its homogeneous
Taylor polynomials at zero:
\begin{equation}
	H_0(\textbf{u})\triangleq h(\textbf{0}),
	\qquad
	H_m(\textbf{u})\triangleq
	\frac{1}{m!}D^mh(\textbf{0})[\textbf{u},\cdots,\textbf{u}]
	\quad(m\geqslant1).
	\label{shi-qicitaile}
\end{equation}
Here $D^mh(\textbf{0})$ is the continuous symmetric complex
$m$-linear derivative; see \cite[Theorem~7.19, pp.~88--89]{KM97}.
It is easy to see that
\begin{equation}
	c_\alpha(H_m)=c_\alpha(h),\qquad |\alpha|=m.
	\label{shi-qicixishuyizhi}
\end{equation}
Fix $\textbf{u}\in B_X$ and $m\in\mathbb{N}_0$. The function $\lambda\mapsto h(\lambda\textbf{u})$ is holomorphic on $|\lambda|<r$. By \eqref{shi-qicitaile}, the chain rule and the one-variable Cauchy integral formula, for every $0<\rho<r$ we have
$$
H_m(\textbf{u})
=\frac{1}{m!}\left.\frac{\mathrm{d}^m}{\mathrm{d}\lambda^m}h(\lambda\textbf{u})\right|_{\lambda=0}
=\frac{1}{2\pi \mathrm{i}}\int\limits_{|\lambda|=\rho}
\frac{h(\lambda\textbf{u})}{\lambda^{m+1}}\,\mathrm{d}\lambda.
$$
Consequently,
$$
|H_m(\textbf{u})|
\leqslant
\frac{1}{2\pi\rho^{m+1}}\int\limits_{|\lambda|=\rho}
|h(\lambda\textbf{u})|\,|\mathrm{d}\lambda|
\leqslant
\frac{2\pi\rho}{2\pi\rho^{m+1}}\|h\|_\infty
=\frac{\|h\|_\infty}{\rho^m}.
$$
Letting $\rho\uparrow r$ and taking the supremum over $\textbf{u}\in B_X$, we obtain
\begin{equation}
	\|H_m\|_{\mathcal{P}^m(X)}
	=\sup\limits_{\textbf{u}\in B_X}|H_m(\textbf{u})|
	\leqslant\frac{\|h\|_\infty}{r^m},
	\qquad m\in\mathbb{N}_0.
	\label{shi-qicikexiguji}
\end{equation}

The forward implication in the following result also follows from
\cite[Theorem~5.1\,(4) and equation~(5.1), p.~34]{DMP09}.
Both implications are proved below.
\begin{theorem}\label{dingli-shoulianpanju}
One has
\begin{equation}
\mathrm{dom}H(X)=X
\label{shi-zhenghanshuquanshoulian}
\end{equation}
if and only if there exists $C\geqslant1$ such that, for every
$m,n\in\mathbb{N}$ and every $P\in\mathcal{P}^m(X)$ satisfying
$P=P\circ P_n$,
\begin{equation}
\sup\limits_{\textbf{z}\in B_X}
\sum\limits_{\substack{\alpha\in\mathbb{N}_0^{(\mathbb{N})}\\|\alpha|=m}}
|c_\alpha(P)\textbf{z}^\alpha|
\leqslant C^m\sup\limits_{\textbf{z}\in B_X}|P(\textbf{z})|.
\label{shi-tongyizhishiguji}
\end{equation}
Here $P_n$ is given on all of $X$ by the formula in \eqref{shi-shouweitouying},
and $C$ is independent of $m$, $n$, and $P$.
\end{theorem}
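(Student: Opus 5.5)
For the forward implication, I will assume $\mathrm{dom}H(X)=X$. By \namedref{Remark}{zhuji-bibaobaohan}, $\overline{\mathbb{C}^{(\mathbb{N})}}^{\,X}=X$. For each $\textbf{z}\in X$, \namedref{Lemma}{yinli-bituguji} (since $\mathrm{dom}H(X)\subset\mathrm{dom}H_b(X)$) gives a constant $K_{\textbf{z}}$ with $\sum_{|\alpha|=m}|c_\alpha(P)\textbf{z}^\alpha|\leqslant K_{\textbf{z}}^m\|P\|$. The constant is pointwise, and the task is to make it uniform on $B_X$.

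I plan a Baire argument. Let
$$F_K\triangleq\Bigl\{\textbf{z}\in X:\sum_{|\alpha|=m}|c_\alpha(P)\textbf{z}^\alpha|\leqslant K^m\|P\|\ \text{for all } m\text{ and } P\in\mathcal{P}^m(X)\Bigr\}.$$
Each $F_K$ is closed. Indeed, for fixed $P$ and a finite set of $\alpha$'s, the partial sums are continuous in $\textbf{z}$, and the full sum is a supremum of such partial sums. Since $X=\bigcup_K F_K$, some $F_K$ contains a ball $\textbf{a}+\varepsilon B_X$.

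To transfer this to a ball around $\textbf{0}$, I use the Reinhardt structure. If $|\textbf{w}|\leqslant|\textbf{z}|$ with $\textbf{z}\in F_K$, then $\textbf{w}\in F_K$. So it suffices to show that each $\textbf{u}\in\varepsilon B_X$ is dominated coordinatewise by a combination of points in the ball. A convexity trick should work: the function $\textbf{z}\mapsto\sum|c_\alpha\textbf{z}^\alpha|$ is a sum of $|$polynomial$|$ in the moduli. Instead, I will write $\textbf{u}=\tfrac12\bigl((\textbf{a}+\textbf{u})-(\textbf{a}-\textbf{u})\bigr)$. This gives $|u_j|\leqslant\max(|a_j+u_j|,|a_j-u_j|)$. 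Choosing coordinatewise whichever term is larger produces a point $\textbf{v}$ with $|v_j|=\max(\cdot)$. Its norm is not obviously controlled, so the argument needs a further step.

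A cleaner route is to write each $\textbf{u}$ as $\tfrac12(\textbf{z}_1-\textbf{z}_2)$ with $\textbf{z}_i=\textbf{a}\pm\textbf{u}$. Then I expand $\textbf{u}^\alpha$ via the polarization-like bound
$$|\textbf{u}^\alpha|\leqslant\prod_j\bigl(\tfrac{|z_{1,j}|+|z_{2,j}|}{2}\bigr)^{\alpha_j},$$
and expand the product as a sum of $2^m$ mixed terms. Each mixed term is dominated by $\textbf{w}^\alpha$, where $\textbf{w}$ takes each coordinate from $\textbf{z}_1$ or $\textbf{z}_2$ according to a subset $S$. Then $\|\textbf{w}\|\leqslant\|\textbf{z}_1\|+\|\textbf{z}_2\|$, and homogeneity gives $\sum|c_\alpha\textbf{w}^\alpha|\leqslant(\text{const}\cdot K)^m\|P\|$. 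This last step is where I expect the main obstacle: the mixed terms do not correspond to a single $\textbf{w}$ per $\alpha$. I would try to handle it by applying, for each sign pattern, the estimate on $F_K$ scaled by the lattice norm. Alternatively, I would prove uniformity using \namedref{Lemma}{yinli-bituguji} applied to the closed-graph operator on $\mathcal{P}^m$ with the constant extracted from Baire on $X$ itself. Once uniformity on $\varepsilon B_X$ holds, scaling gives \eqref{shi-tongyizhishiguji} with $C=K/\varepsilon$, restricted in particular to $P=P\circ P_n$.

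For the converse, I assume \eqref{shi-tongyizhishiguji}. First, it extends to all $P\in\mathcal{P}^m(X)$ at points $\textbf{z}\in B_X\cap\mathbb{C}^{(\mathbb{N})}$: if $\mathrm{supp}\,\textbf{z}\subset\{1,\dots,n\}$, apply the estimate to $P\circ P_n$. This operation leaves the relevant coefficients unchanged, since $c_\alpha(P\circ P_n)=c_\alpha(P)$ for $\mathrm{supp}\,\alpha\subset\{1,\dots,n\}$, and $\|P\circ P_n\|\leqslant\|P\|$ by \eqref{shi-shouweisuosuo}. Now fix $\textbf{x}\in X$ and $f\in H(X)$, and let $R>\|\textbf{x}\|$. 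By the Reinhardt property, $|P_n\textbf{x}|\leqslant|\textbf{x}|$. Letting $n\to\infty$ with monotone convergence gives
$$\sum_{|\alpha|=m}|c_\alpha(P)\textbf{x}^\alpha|\leqslant C^m\|\textbf{x}\|^m\|P\|$$
for every $P$, without needing density.

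Next, I take $h=f$ on $\rho B_X$ with $\rho=2C\|\textbf{x}\|+1$. Since $f$ is entire it is bounded on some ball around $\textbf{0}$, but not necessarily on $\rho B_X$, so the scaling must be arranged so that only a small ball is used. For this I apply the previous step to $F(\lambda\,\cdot)$: choosing $s$ with $f$ bounded by $M$ on $sB_X$, the function $H_m$ satisfies \eqref{shi-qicikexiguji} with radius $s$. Then
$$\sum_\alpha|c_\alpha(f)\textbf{x}^\alpha|\leqslant M\sum_m\bigl(C\|\textbf{x}\|/s\bigr)^m,$$
which is finite only if $\|\textbf{x}\|<s/C$. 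This gives $\textbf{x}\in\mathrm{dom}H(X)$ only for $\textbf{x}$ in a neighborhood $\mathcal{N}$ of $\textbf{0}$, with $\mathcal{N}$ depending on $f$. To cover all of $X$, I will use \eqref{shi-tongyizhishiguji} to get density of $\mathbb{C}^{(\mathbb{N})}$ and then Proposition~\ref{mingti-jubuhua}(iii). Alternatively, I will use that the estimate for every entire $f$ near $\textbf{0}$ forces entire functions restricted to polydiscs around $\textbf{x}$ to be handled via $f(\textbf{x}+\cdot)$, and I expect this globalization to be the second delicate point.
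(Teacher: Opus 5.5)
Your skeleton matches the paper's: Baire category plus solidity for the forward implication, and for the converse the extension of the estimate to all $P$ and all points followed by Proposition~\ref{mingti-jubuhua}. Your sets $F_K$, built from all $P\in\mathcal{P}^m(X)$, are closed by lower semicontinuity and cover $X$ by Lemma~\ref{yinli-bituguji}, so they work as well as the paper's $E_k$. However, the decisive step is missing in both directions.

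In the forward direction, the gap is the re-centering from the Baire ball $\textbf{a}+\varepsilon B_X\subset F_K$ to $\varepsilon B_X$, and you discarded the construction that does it. Given $\textbf{u}\in\varepsilon B_X$, choose signs $\sigma_j\in\{-1,1\}$ with $|a_j+\sigma_ju_j|=\max\{|a_j+u_j|,|a_j-u_j|\}$, and set $\textbf{v}\triangleq\textbf{a}+(\sigma_ju_j)_{j\in\mathbb{N}}$. You do not need to control $\|\textbf{v}\|_X$; what matters is $\textbf{v}\in\textbf{a}+\varepsilon B_X$. Since $|\textbf{v}-\textbf{a}|=|\textbf{u}|$, \eqref{shi-fanshudandiao} gives $\|\textbf{v}-\textbf{a}\|_X=\|\textbf{u}\|_X<\varepsilon$. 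Moreover $|v_j|\geqslant\frac{1}{2}(|a_j+u_j|+|a_j-u_j|)\geqslant|u_j|$, so solidity gives $\textbf{u}\in F_K$. Scaling then yields \eqref{shi-tongyizhishiguji} with $C=\max\{1,K/\varepsilon\}$. The paper instead aligns phases, $u_j=\frac{a_j}{|a_j|}|h_j|$, so that $|\textbf{a}+\textbf{u}|=|\textbf{a}|+|\textbf{h}|$. The mixed-term expansion you turned to does not give one dominating point per multi-index, as you noticed. A closed-graph argument only produces constants depending on the point. So as written this direction is unproved.

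In the converse, your bound $\sum_{|\alpha|=m}|c_\alpha(P)\textbf{x}^\alpha|\leqslant C^m\|\textbf{x}\|_X^m\|P\|$ for all $P\in\mathcal{P}^m(X)$ and $\textbf{x}\in X$ is correct; it is Lemma~\ref{yinli-gujituiguang}. You also rightly see that summing the Taylor series of $f$ itself only converges for $\|\textbf{x}\|_X<s/C$. The globalization, however, is only announced. The missing step is to apply that bound at the tail $Q_N\textbf{z}$ and to test on $\mathscr{P}_N$ rather than on $f$:

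First, obtain $\overline{\mathbb{C}^{(\mathbb{N})}}^{\,X}=X$ from the bound with $m=2$ and Theorem~\ref{dingli-ercizhangai}.

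Then, for fixed $\textbf{z}$ and $r>0$, use \eqref{shi-weibushoulian} to choose $N$ with $C\|Q_N\textbf{z}\|_X<\frac{r}{2}$.

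For $p\in\mathscr{P}_N$ with homogeneous parts $H_m$, only coordinates beyond $N$ occur, so $\sum_\alpha|c_\alpha(p)\textbf{z}^\alpha|=\sum_m\sum_{|\alpha|=m}|c_\alpha(H_m)(Q_N\textbf{z})^\alpha|$.

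Since $p$ is bounded on every ball, \eqref{shi-qicikexiguji} gives $\|H_m\|\leqslant\|p\|_r/r^m$ for this arbitrary $r$. The whole sum is therefore at most $\|p\|_r\sum_m 2^{-m}=2\|p\|_r$.

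This is \eqref{shi-weibupanju}, and Proposition~\ref{mingti-jubuhua} then handles every $f\in H(X)$. That proposition supplies exactly the localization you were trying to carry out by hand.
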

\begin{remark}
  The relation between
\eqref{shi-tongyizhishiguji}
and Lempert's problem is discussed in
\cite[discussion following Theorem~2.21, p.~5870]{DGMM}.
\end{remark}
\begin{lemma}\label{yinli-gujituiguang}
Suppose that there is $C\geqslant1$ such that \eqref{shi-tongyizhishiguji} holds for every $m,n\in\mathbb{N}$ and every $P\in\mathcal{P}^m(X)$ satisfying $P=P\circ P_n$. Then, for every $m\in\mathbb{N}$, $P\in\mathcal{P}^m(X)$, and $\textbf{z}\in X$,
\begin{equation}
\sum\limits_{|\alpha|=m}|c_\alpha(P)\textbf{z}^\alpha|
\leqslant C^m\|\textbf{z}\|_X^m
\sup\limits_{\textbf{w}\in B_X}|P(\textbf{w})|.
\label{shi-yibanduoxiangshijie}
\end{equation}
Moreover, $\overline{\mathbb{C}^{(\mathbb{N})}}^{\,X}=X$.
\end{lemma}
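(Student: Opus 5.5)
The plan is to first prove \eqref{shi-yibanduoxiangshijie} by truncating $P$ to finitely many coordinates and using homogeneity, and then to obtain the density statement from \namedref{Theorem}{dingli-ercizhangai}.

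\textbf{Truncation.} Fix $m\in\mathbb{N}$, $P\in\mathcal{P}^m(X)$ and $\textbf{z}\in X$. If $\textbf{z}=\textbf{0}$, both sides of \eqref{shi-yibanduoxiangshijie} vanish, because $|\alpha|=m\geqslant1$ forces $\textbf{z}^\alpha=0$. So assume $\textbf{z}\neq\textbf{0}$. For $n\in\mathbb{N}$ put $P^{(n)}\triangleq P\circ P_n$. This is a continuous $m$-homogeneous polynomial, and $P^{(n)}\circ P_n=P^{(n)}$ since $P_n^2=P_n$. By \eqref{shi-shouweisuosuo}, $P_n$ maps $B_X$ into $B_X$, so
$$\sup_{\textbf{w}\in B_X}|P^{(n)}(\textbf{w})|\leqslant\sup_{\textbf{w}\in B_X}|P(\textbf{w})|.$$
From the definition of $c_\alpha$ via restriction to $\mathrm{span}\{e_1,\cdots,e_N\}$, we get $c_\alpha(P^{(n)})=c_\alpha(P)$ when $\mathrm{supp}\,\alpha\subset\{1,\cdots,n\}$, and $c_\alpha(P^{(n)})=0$ otherwise. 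Indeed, $P^{(n)}$ and $P$ coincide on $\mathrm{span}\{e_1,\cdots,e_n\}$, and $P^{(n)}$ does not depend on the remaining variables.

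\textbf{Applying the hypothesis.} Given $\varepsilon>0$, set $\textbf{w}\triangleq\textbf{z}/\bigl((1+\varepsilon)\|\textbf{z}\|_X\bigr)\in B_X$. Applying \eqref{shi-tongyizhishiguji} to $P^{(n)}$ and evaluating the left-hand supremum at $\textbf{w}$ gives
$$\sum_{\substack{|\alpha|=m\\ \mathrm{supp}\,\alpha\subset\{1,\cdots,n\}}}|c_\alpha(P)\textbf{w}^\alpha|\leqslant C^m\sup_{B_X}|P|.$$
Since $\textbf{w}^\alpha=\textbf{z}^\alpha/\bigl((1+\varepsilon)\|\textbf{z}\|_X\bigr)^m$ for $|\alpha|=m$, multiplying through yields the truncated sum bounded by $C^m(1+\varepsilon)^m\|\textbf{z}\|_X^m\sup_{B_X}|P|$. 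Now let $\varepsilon\downarrow0$. Then let $n\to\infty$: every finitely supported $\alpha$ eventually has support in $\{1,\cdots,n\}$, so monotone convergence of the nonnegative partial sums gives \eqref{shi-yibanduoxiangshijie}.

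\textbf{Density.} Suppose $\mathbb{C}^{(\mathbb{N})}$ were not dense in $X$, and pick $\textbf{x}\in X\setminus\overline{\mathbb{C}^{(\mathbb{N})}}^{\,X}$. \namedref{Theorem}{dingli-ercizhangai} provides $G\in\mathcal{P}^2(X)$ for which the sum in \eqref{shi-ercifasan} is $+\infty$. On the other hand, \eqref{shi-yibanduoxiangshijie} with $m=2$, $P=G$ and $\textbf{z}=\textbf{x}$ bounds that sum by $C^2\|\textbf{x}\|_X^2\|G\|<\infty$, a contradiction. Hence $\overline{\mathbb{C}^{(\mathbb{N})}}^{\,X}=X$.

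The only point needing care is the coefficient identity for $P\circ P_n$. I would check it directly from the definition of $c_\alpha$, taking the dimension in that definition to be at least $\max(n,\max\mathrm{supp}\,\alpha)$; otherwise the argument is routine.
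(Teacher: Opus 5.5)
Your proposal is correct and follows the paper's proof essentially step for step. You scale $\textbf{z}$ into $B_X$ (the paper uses $s\downarrow\|\textbf{z}\|_X$ where you use $(1+\varepsilon)\|\textbf{z}\|_X$), apply \eqref{shi-tongyizhishiguji} to $P\circ P_n$ using $\sup_{B_X}|P\circ P_n|\leqslant\sup_{B_X}|P|$ and the coefficient identity, let $n\to\infty$, and finally take $m=2$ against \namedref{Theorem}{dingli-ercizhangai} to get density.
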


\begin{proof}

Assume that \eqref{shi-tongyizhishiguji} holds.  If $P\in\mathcal{P}^m(X)$ depends only on the
first $n$ coordinates, apply  \eqref{shi-tongyizhishiguji} to $\frac{\textbf{z}}{s}$, where
$s>\|\textbf{z}\|_X$. Multiplying by $s^m$ and letting
$s\downarrow\|\textbf{z}\|_X$ gives
\begin{equation}
\sum\limits_{|\alpha|=m}|c_\alpha(P)\textbf{z}^\alpha|
\leqslant C^m\|\textbf{z}\|_X^m
\sup\limits_{\textbf{w}\in B_X}|P(\textbf{w})|,
\qquad\textbf{z}\in X.
\label{shi-qicisuofang}
\end{equation}

Let $P\in\mathcal{P}^m(X)$ now be arbitrary. By \eqref{shi-qicisuofang}, we obtain
$$
\begin{aligned}
	&\sum\limits_{|\alpha|=m}
	\left|c_\alpha(P\circ P_n)\textbf{z}^\alpha\right|
	=
	\sum\limits_{\substack{|\alpha|=m\\
			\mathrm{supp}\alpha\subset\{1,2,\cdots,n\}}}
	\left|c_\alpha(P)\textbf{z}^\alpha\right|\\
	& \quad\leqslant
	C^m\|\textbf{z}\|_X^m
	\sup\limits_{\textbf{w}\in B_X}
	\left|P(P_n\textbf{w})\right|
\leqslant
	C^m\|\textbf{z}\|_X^m
	\sup\limits_{\textbf{w}\in B_X}
	\left|P(\textbf{w})\right|.
\end{aligned}
$$
Letting $n\to\infty$ gives \eqref{shi-yibanduoxiangshijie}.
Taking $m=2$ in \eqref{shi-yibanduoxiangshijie} and applying
\namedref{Theorem}{dingli-ercizhangai} gives
$\overline{\mathbb{C}^{(\mathbb{N})}}^{\,X}=X$.
\end{proof}

\begin{proof}[Proof of \namedref{Theorem}{dingli-shoulianpanju}]
	
	\proofstep{1}
	
	Assume that \eqref{shi-zhenghanshuquanshoulian} holds. For $\textbf{z}\in X$ and
	$P\in\mathscr{P}_0$, put
$
	S_{\textbf{z}}(P)
	\triangleq
	\sum\limits_{\alpha\in\mathbb{N}_0^{(\mathbb{N})}}
	|c_\alpha(P)\textbf{z}^\alpha|.
$
	For $k\in\mathbb{N}$, define
	\begin{equation}
		E_k
		\triangleq
		\bigcap\limits_{m,n\in\mathbb{N}}
		\bigcap\limits_{\substack{
				P\in\mathcal{P}^m(X)\\
				P=P\circ P_n
		}}
		\left\{
		\textbf{z}\in X:
		S_{\textbf{z}}(P)
		\leqslant
		k^m\|P\|_{\mathcal{P}^m(X)}
		\right\}.
		\label{shi-beierbiji}
	\end{equation}

Recall \eqref{shi-zuobiaoguji}, the coordinate functionals are continuous. Since $P$ depends on only finitely many coordinates, it follows from \eqref{shi-youxianzhankai} that the map
$\textbf{z}\mapsto S_{\textbf{z}}(P)$
is continuous on $X$. Hence $E_k$ is closed.

	For $\textbf{z}\in X$ and
	$\textbf{w}\in\mathbb{C}^{\mathbb{N}}$, \eqref{shi-fanshudandiao} yields
	$$
	|\textbf{w}|\leqslant|\textbf{z}|
	\quad\Longrightarrow\quad
	\textbf{w}\in X,\qquad
	S_{\textbf{w}}(P)\leqslant S_{\textbf{z}}(P)
	\quad(P\in\mathscr{P}_0),
	$$which gives
	\begin{equation}
		\textbf{z}\in E_k,\qquad
		|\textbf{w}|\leqslant|\textbf{z}|
		\quad\Longrightarrow\quad
		\textbf{w}\in E_k.
		\label{shi-bijiguxing}
	\end{equation}
	
	Fix $\textbf{z}\in X$. By the \eqref{shi-dianchuzhishijie} of \namedref{Lemma}{yinli-bituguji}, there exists
	$A_{\textbf{z}}\geqslant1$ such that for every $m\in\mathbb{N},
	P\in\mathcal{P}^m(X)$,
	$$
	S_{\textbf{z}}(P)
	\leqslant
	A_{\textbf{z}}^m\|P\|_{\mathcal{P}^m(X)}.
	$$
Notice that $\textbf{z}\in E_{\lceil A_{\textbf{z}}\rceil}$, hence
$X=\bigcup\limits_{k=1}^{\infty}E_k$.
	Since $X$ is Banach,  Baire's theorem
	(see \cite[52.12, p.~577]{KM97}) yields
	$K\in\mathbb{N}$, $\textbf{a}=\left( a_{j} \right)_{j\in \mathbb{N}}\in X$, and $\rho>0$ such that
	\begin{equation}
		\textbf{a}+\rho B_X\subseteq E_K.
		\label{shi-beierneidian}
	\end{equation}
	
	Fix $\textbf{h}\in\rho B_X$ and define
	$\textbf{u}=(u_j)_{j\in\mathbb{N}}$ by
	$$
	u_j
	\triangleq
	\begin{cases}
		\frac{a_j}{|a_j|}|h_j|,&a_j\neq0,\\[4pt]
		|h_j|,&a_j=0.
	\end{cases}
	$$
	By \eqref{shi-fanshudandiao},
	$$
	|\textbf{u}|=|\textbf{h}|,\qquad
	\textbf{u}\in X,\qquad
	\|\textbf{u}\|_X=\|\textbf{h}\|_X<\rho,\qquad	|\textbf{a}+\textbf{u}|
	=
	|\textbf{a}|+|\textbf{h}|
	\geqslant|\textbf{h}|.
	$$
	Combining \eqref{shi-beierneidian} and \eqref{shi-bijiguxing},
	we obtain 
	$$
	\textbf{a}+\textbf{u}
	\in\textbf{a}+\rho B_X
	\subseteq E_K\Rightarrow \textbf{h} \in E_{K}\Rightarrow\rho B_X\subseteq E_K.
	$$

	Set $C\triangleq\max\left\{1,\frac{K}{\rho}\right\}$.
	For $m,n\in\mathbb{N}$ and $P\in\mathcal{P}^m(X)$
	satisfying $P=P\circ P_n$, one has
	$$
\sup\limits_{\textbf{z} \in B_{X}} S_{\textbf{z}}(P)=\frac{1}{\rho^{m}} \sup\limits_{\textbf{z} \in B_{X}} S_{\rho \textbf{z}}(P)\leqslant \left( \frac{K}{\rho} \right)^{m} \| P\|_{\mathcal{P}^{m} (X)} \leqslant C^{m}\| P\|_{\mathcal{P}^{m} (X)} ,
	$$
	which proves \eqref{shi-tongyizhishiguji}.
	
	\proofstep{2}
	
	Assume that \eqref{shi-tongyizhishiguji} holds, with $C\geqslant1$ fixed.
	By \namedref{Lemma}{yinli-gujituiguang}, \eqref{shi-yibanduoxiangshijie} holds
	and $\overline{\mathbb{C}^{(\mathbb{N})}}^{\,X}=X$. 	Fix $\textbf{z}\in X$ and $r>0$. By  equation \eqref{shi-weibushoulian} in \namedref{Lemma}{yinli-xuliekongjian},
	choose $N\in\mathbb{N}$ such that
$
	C\|Q_N\textbf{z}\|_X<\frac{r}{2}.
$
	Let $p\in\mathscr{P}_N$. By the definition of $\mathscr{P}_N$,
	there exist $L\in\mathbb{N}$ and $d\in\mathbb{N}_0$, with $L>N$,
	such that
	$$
	p(\textbf{w})
	=
	\sum\limits_{m=0}^{d}
	\sum\limits_{\substack{
			|\alpha|=m\\
			\mathrm{supp}\alpha\subseteq\{N+1,\cdots,L\}
	}}
	c_\alpha(p)\textbf{w}^{\alpha},
	\qquad
	\textbf{w}\in X.
	$$
	For $m=0,\cdots,d$, define
	$$
	H_m(\textbf{w})
	\triangleq
	\sum\limits_{\substack{
			|\alpha|=m\\
			\mathrm{supp}\alpha\subseteq\{N+1,\cdots,L\}
	}}
	c_\alpha(p)\textbf{w}^{\alpha},
	\qquad
	\textbf{w}\in X.
	$$
By \eqref{shi-qicikexiguji},
	$$
	\|H_m\|_{\mathcal{P}^m(X)}
	\leqslant
	\frac{\sup\limits_{\textbf{w}\in rB_X}|p(\textbf{w})|}{r^m}
	=
	\frac{\|p\|_r}{r^m},
	\qquad
	0\leqslant m\leqslant d.
	$$
	Combining \eqref{shi-yibanduoxiangshijie} with
\eqref{shi-qicixishuyizhi}, we obtain
	$$
	\begin{aligned}
		S_{\textbf{z}}(p)
		&=
		\sum\limits_{m=0}^{d}
		\sum\limits_{|\alpha|=m}
		|c_\alpha(H_m)\textbf{z}^{\alpha}|=
		\sum\limits_{m=0}^{d}
		\sum\limits_{|\alpha|=m}
		|c_\alpha(H_m)(Q_N\textbf{z})^{\alpha}|\\
		&=
		|p(\textbf{0})|
		+
		\sum\limits_{m=1}^{d}
		\sum\limits_{|\alpha|=m}
		|c_\alpha(H_m)(Q_N\textbf{z})^{\alpha}|\\
		&\leqslant
		\|p\|_r
		+
		\sum\limits_{m=1}^{d}
		C^m\|Q_N\textbf{z}\|_X^m
		\|H_m\|_{\mathcal{P}^m(X)}\\
		&\leqslant
		\|p\|_r
		\sum\limits_{m=0}^{d}
		\left(
		\frac{C\|Q_N\textbf{z}\|_X}{r}
		\right)^m\leqslant
		\|p\|_r
		\sum\limits_{m=0}^{\infty}\frac{1}{2^m}
		=
		2\|p\|_r.
	\end{aligned}
	$$
Thus, for every $r>0$, there exists $N\in\mathbb{N}$ such that
\eqref{shi-weibupanju} holds with constant $C=2$ for all $p\in\mathscr{P}_N$.
Applying \namedref{Proposition}{mingti-jubuhua}, we obtain
$\textbf{z}\in\mathrm{dom}H(X)$ which gives \eqref{shi-zhenghanshuquanshoulian}.
\end{proof}

\section{A negative solution to Lempert's problem}
\label{jie-fanligouzao}

\begin{definition}\label{dingyi-fanlikongjian}
    Define
    \begin{equation}
        a_0\triangleq 1,
        \qquad
        a_r\triangleq 2^{a_{r-1}+1},
        \qquad r=1,2,\cdots.
        \label{shi-cengjicanshu}
    \end{equation}
    For $r=1,2,\cdots$, set
    \begin{equation}
        \kappa(r)
        \triangleq
        \max\left\{
        j\in\mathbb{N}_0:a_j\leqslant r
        \right\}.
        \label{shi-cengjisuoyin}
    \end{equation}

Given a nondecreasing function
$
    \Psi:\mathbb{N}\longrightarrow[1,+\infty)
  $
    satisfying
$
    \lim\limits_{n\to\infty}\Psi(n)=+\infty,
$
    define inductively
    \begin{equation}
        t_0\triangleq 1, t_r
        \triangleq
        \min\left\{
        \left(1+\frac{1}{r}\right)t_{r-1},
        \,2t_{\kappa(r)},
        \,\Psi(a_{r-1})
        \right\},
        \qquad r=1,2,\cdots.
        \label{shi-huanmancanshu}
    \end{equation}

    For each $r\in\mathbb{N}_0$, define
    \begin{equation}
        \mathcal{G}_r
        \triangleq
        \left\{
        \frac{t_r}{a_r}
        \sum\limits_{j\in F}\epsilon_j e_j:
        F\subset\mathbb{N},
        \ |F|=a_r,
        \ \epsilon_j\in\{-1,1\}
        \ \text{for every }j\in F
        \right\},
        \qquad
        \mathcal{G}
        \triangleq
        \bigcup\limits_{r\in\mathbb{N}_0}\mathcal{G}_r.
        \label{shi-yuanziji}
    \end{equation}
    For $\textbf{z}\in\mathbb{C}^{(\mathbb{N})}$, define
    $$
    \|\textbf{z}\|_Y
    \triangleq
    \inf
    \left\{
    \sum\limits_{v=1}^{s}|\lambda_v|:
    \begin{array}{l}
        s\in\mathbb{N},\;
        \lambda_v\in\mathbb{R},\;
        \textbf{u}_v\in\mathcal{G},\quad v=1,\cdots,s,\\[2mm]
        |\textbf{z}|=\displaystyle\sum\limits_{v=1}^{s}\lambda_v\textbf{u}_v
    \end{array}
    \right\}.
    $$
  
\end{definition}

\begin{lemma}\label{yinli-canshuxingzhi}
The sequence $(t_r)_{r\in \mathbb{N}_{0}}$ defined by \eqref{shi-huanmancanshu} satisfies for every $r\in\mathbb{N}$,
\begin{align}
1&\leqslant t_{r-1}\leqslant t_r\leqslant r+1,
\label{shi-huanmancanshujie}\\
\frac{t_r}{a_r}&\leqslant\frac{1}{2}\frac{t_{r-1}}{a_{r-1}}\leqslant\frac{1}{2},
\label{shi-yuanzigaodu}\\
\lim\limits_{r\to +\infty} t_{r}&=+\infty,\label{shi-jixian}
\end{align}
\end{lemma}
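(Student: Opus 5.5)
We prove the three assertions by induction on $r$, using only the recursion \eqref{shi-huanmancanshu} and elementary growth properties of $(a_r)$.

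\textbf{Preliminaries on $a_r$ and $\kappa$.} From \eqref{shi-cengjicanshu}, $a_0=1$, $a_1=4$, and $a_r=2^{a_{r-1}+1}\geqslant 4a_{r-1}$, since $2^{x+1}\geqslant 4x$ for integers $x\geqslant1$. Hence $(a_r)$ is strictly increasing and $a_r>r$ for all $r\in\mathbb{N}_0$. Consequently $\kappa(r)$ in \eqref{shi-cengjisuoyin} is well defined, because $a_0=1\leqslant r$. It also satisfies $\kappa(r)<r$, so $t_r$ is defined in terms of earlier values. Finally, $\kappa$ is nondecreasing and $\kappa(r)\to\infty$, since $\kappa(a_j)=j$.

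\textbf{Proof of \eqref{shi-huanmancanshujie}.} We show by strong induction that $t_{r-1}\leqslant t_r$ and $t_r\leqslant \Psi(a_{r-1})$, by checking that each of the three terms in the minimum is at least $t_{r-1}$.
\begin{enumerate}
\item The first term: $(1+\frac1r)t_{r-1}\geqslant t_{r-1}$ trivially.
\item The second term, case $r=1$: $\kappa(1)=0$, so $2t_0=2\geqslant1=t_0$.
\item The second term, case $r\geqslant2$: the recursion gives $t_{r-1}\leqslant 2t_{\kappa(r-1)}$. Since $\kappa(r-1)\leqslant\kappa(r)\leqslant r-1$, the monotonicity already established up to index $r-1$ gives $2t_{\kappa(r-1)}\leqslant 2t_{\kappa(r)}$.
\item The third term, case $r=1$: $\Psi(a_0)\geqslant1=t_0$ because $\Psi\geqslant1$.
\item The third term, case $r\geqslant2$: $t_{r-1}\leqslant\Psi(a_{r-2})\leqslant\Psi(a_{r-1})$ because $\Psi$ is nondecreasing.
\end{enumerate}
Thus $t_{r-1}\leqslant t_r$, and $1=t_0\leqslant t_r$ follows. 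The upper bound comes from $t_r\leqslant(1+\frac1r)t_{r-1}$, which telescopes to $t_r\leqslant\prod_{k=1}^r\frac{k+1}{k}=r+1$. I expect this induction, specifically the bookkeeping for the $2t_{\kappa(r)}$ term, to be the only delicate point.

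\textbf{Proof of \eqref{shi-yuanzigaodu}.} Since $t_r\leqslant 2t_{\kappa(r)}\leqslant 2t_{r-1}$ and $a_r\geqslant4a_{r-1}$, we get
$$
\frac{t_r}{a_r}\leqslant\frac{2t_{r-1}}{4a_{r-1}}=\frac12\,\frac{t_{r-1}}{a_{r-1}}.
$$
Iterating from $t_0/a_0=1$ gives $\frac{t_{r-1}}{a_{r-1}}\leqslant1$, which yields the second inequality.

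\textbf{Proof of \eqref{shi-jixian}.} The sequence is nondecreasing, so argue by contradiction: suppose $t_r\uparrow L<\infty$, where $L\geqslant1$.
\begin{enumerate}
\item Since $\Psi(a_{r-1})\to\infty$, eventually $\Psi(a_{r-1})>L\geqslant t_r$, so the third term is not the minimum.
\item Since $\kappa(r)\to\infty$, eventually $2t_{\kappa(r)}>\frac32L>L\geqslant t_r$, so the second term is not the minimum either.
\end{enumerate}
Hence there is $R$ with $t_r=(1+\frac1r)t_{r-1}$ for all $r\geqslant R$. This gives $t_r=t_{R-1}\frac{r+1}{R}\to\infty$, a contradiction.
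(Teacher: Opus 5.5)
Your proof is correct and follows essentially the same route as the paper: induction over the three terms of the minimum for monotonicity, telescoping $(1+\frac1r)$ for the bound $t_r\leqslant r+1$, the growth $a_r\geqslant 4a_{r-1}$ for the decay of $\frac{t_r}{a_r}$, and a contradiction showing that a bounded sequence would eventually satisfy $t_r=(1+\frac1r)t_{r-1}$. The only minor variation is in \eqref{shi-yuanzigaodu}: you bound $t_r\leqslant 2t_{\kappa(r)}\leqslant 2t_{r-1}$, which needs monotonicity, whereas the paper uses $t_r\leqslant(1+\frac1r)t_{r-1}\leqslant 2t_{r-1}$, which does not.
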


\begin{proof}
First, it follows immediately from $a_i>i$, $i=1,2,\cdots$, that
$$
0\leqslant \kappa(i)<i,\qquad i=1,2,\cdots,
$$
and hence $(t_r)_{r\in\mathbb{N}_0}$ is well defined. Clearly, $t_{1}\geqslant t_{0}\geqslant 1$, $\kappa$ is increasing and
$$
\lim\limits_{r\to+\infty}\kappa(r)=+\infty.
$$
We prove by induction that
$$
1\leqslant t_{r-1}\leqslant t_r.
$$

Suppose that $t_0,t_1,\cdots,t_{r-1}$ have already been constructed. Then, for $r\geqslant 2$, it follows from \eqref{shi-huanmancanshu} that
$$
t_{r-1}\leqslant 2t_{\kappa(r-1)}
\leqslant 2t_{\kappa(r)},
\qquad
t_{r-1}\leqslant \Psi(a_{r-2})
\leqslant \Psi(a_{r-1}),
$$
and hence
$$
t_r\geqslant t_{r-1}\geqslant 1.
$$
This proves that
$$
1\leqslant t_{r-1}\leqslant t_r
$$
for every $r\in\mathbb{N}$. Now, for $r\in\mathbb{N}$, \eqref{shi-huanmancanshu} gives
$$
t_r
=\prod\limits_{i=1}^{r}\frac{t_i}{t_{i-1}}
\leqslant
\prod\limits_{i=1}^{r}\left(1+\frac{1}{i}\right)
=r+1,
$$
which proves \eqref{shi-huanmancanshujie}.

By the evident inequality
$
a_i\geqslant 4a_{i-1},i=1,2,\cdots,
$
and \eqref{shi-huanmancanshu}, we obtain
$$
\frac{t_r}{a_r}
\leqslant
\frac{\left(1+\frac{1}{r}\right)t_{r-1}}{4a_{r-1}}
\leqslant
\frac{1}{2}\frac{t_{r-1}}{a_{r-1}}
\leqslant
\frac{1}{2},
$$
which proves \eqref{shi-yuanzigaodu}.

Suppose that
$$
\lim\limits_{r\to\infty}t_r=t<\infty.
$$
Then, by \eqref{shi-huanmancanshu} and
$$
\lim\limits_{r\to\infty}2t_{\kappa(r)}=2t,
\qquad
\lim\limits_{r\to\infty}\Psi(a_{r-1})=+\infty,
$$
we have, for all sufficiently large $r$,
$$
t_r=\left(1+\frac{1}{r}\right)t_{r-1}.
$$
This clearly implies
$$
\lim\limits_{r\to\infty}t_r=+\infty,
$$
a contradiction. Thus we have proved \eqref{shi-jixian}.
\end{proof}

\begin{lemma}\label{yinli-fanshugouzao}
The expression $\|\cdot\|_Y$ in \namedref{Definition}{dingyi-fanlikongjian} defines a norm on $\mathbb{C}^{(\mathbb{N})}$ satisfying
\begin{equation}
\|\textbf{z}\|_{\ell^\infty}\leqslant\|\textbf{z}\|_Y\leqslant\|\textbf{z}\|_{\ell^1},
\qquad \textbf{z}\in\mathbb{C}^{(\mathbb{N})}.
\label{shi-fanlifanshubijiao}
\end{equation}
Moreover, whenever $\textbf{x},\textbf{y}\in\mathbb{C}^{(\mathbb{N})}$ and $|\textbf{x}|\leqslant|\textbf{y}|$,
\begin{equation}
\|\textbf{x}\|_Y\leqslant\|\textbf{y}\|_Y.
\label{shi-fanlifanshudandiao}
\end{equation}
\end{lemma}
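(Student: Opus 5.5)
The norm $\|\cdot\|_Y$ is a Minkowski-type functional of the set of atoms $\mathcal{G}$, applied to $|\textbf{z}|$. The plan is to first establish the two comparison inequalities \eqref{shi-fanlifanshubijiao}, since the lower bound also gives definiteness. Then I prove the lattice monotonicity \eqref{shi-fanlifanshudandiao}. The remaining norm axioms follow from the definition almost formally.

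\emph{Upper bound and finiteness.} The case $r=0$ in \eqref{shi-yuanziji} gives $a_0=t_0=1$, so $\mathcal{G}_0$ contains each $e_j$ (take $F=\{j\}$, $\epsilon_j=1$). For $\textbf{z}\in\mathbb{C}^{(\mathbb{N})}$ we can therefore write $|\textbf{z}|=\sum_{j\in\mathrm{supp}\textbf{z}}|z_j|e_j$. This shows the infimum is over a nonempty set and yields $\|\textbf{z}\|_Y\leqslant\|\textbf{z}\|_{\ell^1}$.

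\emph{Lower bound.} Every $\textbf{u}\in\mathcal{G}_r$ satisfies $\|\textbf{u}\|_{\ell^\infty}=t_r/a_r\leqslant 1$, using \eqref{shi-yuanzigaodu} for $r\geqslant1$ and $t_0/a_0=1$. Hence any representation $|\textbf{z}|=\sum_v\lambda_v\textbf{u}_v$ gives
$$
|z_j|\leqslant\sum_v|\lambda_v|\,|(\textbf{u}_v)_j|\leqslant\sum_v|\lambda_v|
$$
for every $j$. Taking the infimum proves $\|\textbf{z}\|_{\ell^\infty}\leqslant\|\textbf{z}\|_Y$, and in particular $\|\textbf{z}\|_Y=0$ forces $\textbf{z}=0$.

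\emph{Homogeneity and the triangle inequality.} Homogeneity is immediate: $|c\textbf{z}|=|c|\,|\textbf{z}|$, so representations scale by $|c|$. The triangle inequality is the real content. We have $|\textbf{x}+\textbf{y}|\leqslant|\textbf{x}|+|\textbf{y}|$, and adding representations of $|\textbf{x}|$ and $|\textbf{y}|$ gives a representation of $|\textbf{x}|+|\textbf{y}|$ with cost at most $\|\textbf{x}\|_Y+\|\textbf{y}\|_Y+2\varepsilon$. So the triangle inequality reduces to monotonicity: if $\textbf{0}\leqslant\textbf{w}\leqslant\textbf{w}'$ coordinatewise and $\textbf{w}'$ admits a representation of cost $c$, then $\textbf{w}$ admits one of cost at most $c$. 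This monotonicity step is the main obstacle. It also directly gives \eqref{shi-fanlifanshudandiao}.

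\emph{Monotonicity via sign averaging.} Because the $\epsilon_j$ in \eqref{shi-yuanziji} range over $\{-1,1\}$, the set $\mathcal{G}$ is invariant under coordinatewise sign changes $D_\sigma:\textbf{u}\mapsto(\sigma_j u_j)$ with $\sigma_j\in\{\pm1\}$. Fix $\textbf{w}'=\sum_v\lambda_v\textbf{u}_v$, supported in a finite set $S$, and let $\textbf{w}_S$ with $0\leqslant w_j\leqslant w'_j$ be given. It suffices to treat one coordinate at a time, by induction over $j\in S$: replace $w'_j$ by $\theta w'_j$ with $\theta\in[0,1]$. Write $\textbf{w}'=\textbf{w}'_{\neq j}+w'_je_j$ and let $D$ flip the sign of coordinate $j$. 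Then
$$
\tfrac{1+\theta}{2}\textbf{w}'+\tfrac{1-\theta}{2}D\textbf{w}'=\textbf{w}'_{\neq j}+\theta w'_je_j.
$$
Apply $D$ termwise to the representation of $\textbf{w}'$; since $D\textbf{u}_v\in\mathcal{G}$, this gives a representation of the left side with cost
$$
\Bigl(\tfrac{1+\theta}{2}+\tfrac{1-\theta}{2}\Bigr)\sum_v|\lambda_v|=\sum_v|\lambda_v|.
$$
One subtlety remains. For $j\notin\mathrm{supp}$ of a given atom nothing changes, and the support size $|F|=a_r$ is preserved, so atoms stay atoms. The coefficients $\lambda_v$ are real, so the convex combination remains a legitimate real-linear representation. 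Iterating over the finitely many $j\in S$ yields $\|\textbf{w}\|_Y\leqslant\|\textbf{w}'\|_Y$. This completes the triangle inequality and \eqref{shi-fanlifanshudandiao}.
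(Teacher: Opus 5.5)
Your proposal is correct and follows essentially the paper's route. You use $e_j\in\mathcal{G}_0$ for the upper bound, $\|\textbf{u}\|_{\ell^\infty}\leqslant 1$ on atoms for the lower bound, sign-flip averaging (via the $\pm1$ invariance of $\mathcal{G}$) for the monotonicity \eqref{shi-fanlifanshudandiao}, and concatenation plus monotonicity for the triangle inequality. The only cosmetic difference is that the paper averages over all coordinates of $F$ at once with the product weights $\prod_{j\in F}\frac{1+\delta_j\theta_j}{2}$ in \eqref{shi-fuhaopingjun}, while you iterate the one-coordinate version, which is the same identity factored coordinate by coordinate.
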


\begin{proof}

$e_j\in\mathcal{G}_0$ for every $j\in\mathbb{N}$. By \eqref{shi-yuanzigaodu}, every
$\textbf{u}\in\mathcal{G}$ satisfies $\|\textbf{u}\|_{\ell^\infty}\leqslant 1$.
Thus, for each  representation
$|\textbf{z}|=\sum\limits_{v=1}^s\lambda_v\textbf{u}_v,\textbf{z}=\left( z_{j} \right)_{j\in \mathbb{N}} \in \mathbb{C}^{(\mathbb{N})}$,
$$
\| \textbf{z} \|_{\ell^{\infty}} \leqslant \sum\limits_{v=1}^{s} |\lambda_{v} |\| \textbf{u}_{v}\|_{\ell^{\infty}} \leqslant \sum\limits_{v=1}^{s} |\lambda_{v} |.
$$
Taking the infimum, we obtain $\|\textbf{z}\|_{\ell^{\infty}} \leqslant \|\textbf{z}\|_{Y}$. By $|\textbf{z}|=\sum\limits_{j\in\mathrm{supp}\textbf{z}}|z_j|e_j$, we have$$\|\textbf{z}\|_{Y} \leqslant \sum\limits_{j\in \mathrm{supp} \textbf{z}} \left| z_{j} \right| =\|\textbf{z}\|_{\ell^{1}},$$which gives \eqref{shi-fanlifanshubijiao}.

Let $r\in\mathbb{N}_0$ and
$
\textbf{u}=\frac{t_r}{a_r}\sum\limits_{j\in F}\epsilon_j e_j\in\mathcal{G}_r.
$
For any  $\theta_j\in[-1,1]$, $j\in F$, one has
\begin{equation}
    \frac{t_r}{a_r}\sum\limits_{j\in F}\theta_j\epsilon_j e_j
    =
    \sum\limits_{\delta=\left( \delta_{j} \right)_{j\in F}\in\{-1,1\}^{F}}
    \left(\prod\limits_{j\in F}\frac{1+\delta_j\theta_j}{2}\right)
    \left(\frac{t_r}{a_r}\sum\limits_{j\in F}\delta_j\epsilon_j e_j\right).\label{shi-fuhaopingjun}
\end{equation}
Indeed, we have $$
\begin{aligned}
    \sum\limits_{\delta\in\{-1,1\}^{F}}
    \prod\limits_{j\in F}\frac{1+\delta_j\theta_j}{2}
    &=
    \prod\limits_{j\in F}
    \left(
    \sum\limits_{\delta_j\in\{-1,1\}}
    \frac{1+\delta_j\theta_j}{2}
    \right) =
    \prod\limits_{j\in F}
    \left(
    \frac{1+\theta_j}{2}
    +
    \frac{1-\theta_j}{2}
    \right)
    =1.
\end{aligned}
$$
Furthermore, for every fixed $k\in F$,
$$
\begin{aligned}
    \sum\limits_{\delta\in\{-1,1\}^{F}}
    \delta_k
    \prod\limits_{j\in F}\frac{1+\delta_j\theta_j}{2}
    &=
    \left(
    \sum\limits_{\delta_k\in\{-1,1\}}
    \delta_k\frac{1+\delta_k\theta_k}{2}
    \right)
    \prod\limits_{\substack{j\in F\\ j\neq k}}
    \left(
    \sum\limits_{\delta_j\in\{-1,1\}}
    \frac{1+\delta_j\theta_j}{2}
    \right) \\
    &=
    \left(
    \frac{1+\theta_k}{2}
    -
    \frac{1-\theta_k}{2}
    \right)
    \prod\limits_{\substack{j\in F\\ j\neq k}}1 =
    \theta_k.
\end{aligned}
$$
Thus $$\begin{aligned}
    &\sum\limits_{\delta \in \{ -1,1\}^{F}} \left( \prod\limits_{j\in F} \frac{1+\delta_{j} \theta_{j}}{2} \right) \left( \frac{t_{r}}{a_{r}} \sum\limits_{j\in F} \delta_{j} \epsilon_{j} e_{j} \right)\\ &\  =\frac{t_{r}}{a_{r}} \sum\limits_{k\in F} \epsilon_{k} e_{k}\sum\limits_{\delta \in \{ -1,1\}^{F}} \left( \delta_{k} \prod\limits_{j\in F} \frac{1+\delta_{j} \theta_{j}}{2} \right)\\ &\  =\frac{t_{r}}{a_{r}} \sum\limits_{k\in F} \theta_{k} \epsilon_{k} e_{k},\end{aligned}$$
which proves \eqref{shi-fuhaopingjun}.

Now let $\textbf{x},\textbf{y}\in\mathbb{C}^{(\mathbb{N})}$ satisfy
$
|\textbf{x}|\leqslant|\textbf{y}|.
$ Put

$$
\theta_j\triangleq
\begin{cases}
    \frac{|x_j|}{|y_j|},&y_j\neq 0,\\[1mm]
    0,&y_j=0,
\end{cases}
\qquad j\in\mathbb{N}.
$$

Then $0\leqslant\theta_j\leqslant1$ and
$
|x_j|=\theta_j|y_j|, j\in\mathbb{N}.
$ Fix a representation

$$
|\textbf{y}|=\sum\limits_{v=1}^{s}\lambda_v\textbf{u}_v,
\qquad
\lambda_v\in\mathbb{R},\quad \textbf{u}_v\in\mathcal{G}.
$$

For each $v=1,2,\cdots,s$, write
$
\textbf{u}_v=
\frac{t_{r_v}}{a_{r_v}}
\sum\limits_{j\in F_v}\epsilon_{v,j}e_j.
$ By \eqref{shi-fuhaopingjun}, there exist nonnegative numbers
$\mu_{v,\delta}$, $\delta\in\{-1,1\}^{F_v}$, satisfying $
\sum\limits_{\delta\in\{-1,1\}^{F_v}}\mu_{v,\delta}=1,
$ such that
$$
\frac{t_{r_v}}{a_{r_v}}
\sum\limits_{j\in F_v}\theta_j\epsilon_{v,j}e_j
=
\sum\limits_{\delta\in\{-1,1\}^{F_v}}
\mu_{v,\delta}
\left(
\frac{t_{r_v}}{a_{r_v}}
\sum\limits_{j\in F_v}\delta_j\epsilon_{v,j}e_j
\right),
$$

where every sequence in parentheses belongs to
$\mathcal{G}_{r_v}\subset\mathcal{G}$. Consequently,
$$
\begin{aligned}
    |\textbf{x}|
    &=
    \sum\limits_{v=1}^{s}\lambda_v
    \frac{t_{r_v}}{a_{r_v}}
    \sum\limits_{j\in F_v}\theta_j\epsilon_{v,j}e_j\\
    &=
    \sum\limits_{v=1}^{s}
    \sum\limits_{\delta\in\{-1,1\}^{F_v}}
    \lambda_v\mu_{v,\delta}
    \left(
    \frac{t_{r_v}}{a_{r_v}}
    \sum\limits_{j\in F_v}\delta_j\epsilon_{v,j}e_j
    \right).
\end{aligned}
$$

Thus the last expression is a representation of
$|\textbf{x}|$, and the sum of the absolute values of its coefficients satisfies

$$
\begin{aligned}
    \sum\limits_{v=1}^{s}
    \sum\limits_{\delta\in\{-1,1\}^{F_v}}
    |\lambda_v|\mu_{v,\delta}
    &=
    \sum\limits_{v=1}^{s}
    |\lambda_v|
    \sum\limits_{\delta\in\{-1,1\}^{F_v}}
    \mu_{v,\delta}=
    \sum\limits_{v=1}^{s}|\lambda_v|.
\end{aligned}
$$

Hence $
\|\textbf{x}\|_Y
\leqslant
\sum\limits_{v=1}^{s}|\lambda_v|.
$ Since the  representation of $|\textbf{y}|$ was arbitrary, taking the infimum yields \eqref{shi-fanlifanshudandiao}.

Concatenating representations of $|\textbf{z}|$ and
$|\textbf{w}|$, and then applying
\eqref{shi-fanlifanshudandiao}, gives
$$
\|\textbf{z}+\textbf{w}\|_Y
\leqslant
\left\||\textbf{z}|+|\textbf{w}|\right\|_Y
\leqslant
\|\textbf{z}\|_Y+\|\textbf{w}\|_Y.
$$
Moreover, the definition gives
$\|\lambda\textbf{z}\|_Y=|\lambda|\|\textbf{z}\|_Y$
for $\lambda\in\mathbb{C}$, and positive definiteness follows from
\eqref{shi-fanlifanshubijiao}. Hence $\|\cdot\|_Y$ is a norm on
$\mathbb{C}^{(\mathbb{N})}$.
\end{proof}
\begin{definition}\label{dingyi-fanlikongjian-Y}
	Define
	$$
	Y
	\triangleq
	\left\{
	\textbf{z}\in c_0:
	(P_n\textbf{z})_{n\in\mathbb{N}}
	\text{ is a Cauchy sequence in }
	(\mathbb{C}^{(\mathbb{N})},\|\cdot\|_{Y})
	\right\}.
	$$
	For $\textbf{z}\in Y$, put
	\begin{equation}
		\|\textbf{z}\|_Y
		\triangleq
		\lim\limits_{n\to\infty}\|P_n\textbf{z}\|_{Y}.
		\label{shi-fanlikongjianfanshu}
	\end{equation}
\end{definition}
\begin{proposition}\label{mingti-fanlixingzhi}
Let $Y$ be the space defined in \namedref{Definition}{dingyi-fanlikongjian-Y}. Then the following
assertions hold.
\begin{enumerate}
\item\label{xiang-fanlikongjian}
$Y$ is a complex Banach sequence space and
$\mathbb{C}^{(\mathbb{N})}$ is dense in $Y$.

\item\label{xiang-duichenji}
$(e_j)_{j\in\mathbb{N}}$ is a normalized $1$-symmetric Schauder basis
of $Y$. More precisely, $\|e_j\|_Y=1$ for every $j\in\mathbb{N}$, and
\begin{equation}
\left\|\sum\limits_{j=1}^{n}\omega_jz_je_{\pi(j)}\right\|_Y
=
\left\|\sum\limits_{j=1}^{n}z_je_j\right\|_Y
\label{shi-zhihuanxiangwei}
\end{equation}
for every $n\in\mathbb{N}$, every permutation $\pi$ of $\mathbb{N}$,
and all $z_j,\omega_j\in\mathbb{C}$ with $|\omega_j|=1$,
$j=1,\cdots,n$. 

\item\label{xiang-shibuyuxubu}
For every $\textbf{z}\in Y$,
\begin{equation}
|\textbf{z}|
\in Y\cap\mathbb{R}^{\mathbb{N}},
\qquad
\left\||\textbf{z}|\right\|_Y=\|\textbf{z}\|_Y.
\label{shi-qumobaofan}
\end{equation}
Moreover,
\begin{equation}
\left\{\textbf{x}\in Y\cap\mathbb{R}^{\mathbb{N}}:
\|\textbf{x}\|_Y\leqslant1\right\}
=
\overline{\mathrm{conv}\mathcal{G}}^{\,Y},
\label{shi-shiqiutubao}
\end{equation}
where $\mathrm{conv}\mathcal{G}$ denotes the set of finite
convex combinations of elements of $\mathcal{G}$.

\end{enumerate}
\end{proposition}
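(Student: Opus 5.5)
We treat the three assertions in order, working throughout with the finitely supported norm from \namedref{Lemma}{yinli-fanshugouzao} and extending by limits.

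\textbf{Assertion (1).} First we check that \eqref{shi-fanlikongjianfanshu} is well defined. If $(P_n\textbf{z})$ is $\|\cdot\|_Y$-Cauchy, then $\bigl|\|P_n\textbf{z}\|_Y-\|P_m\textbf{z}\|_Y\bigr|\leqslant\|(P_n-P_m)\textbf{z}\|_Y$, so the limit exists. For $\textbf{z}\in\mathbb{C}^{(\mathbb{N})}$ it agrees with the old norm. Next, monotonicity \eqref{shi-fanlifanshudandiao} gives $\|P_n\textbf{z}\|_Y\leqslant\|P_m\textbf{z}\|_Y$ for $n\leqslant m$, so the limit is in fact a supremum. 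Now let $\textbf{z}\in Y$ and $|\textbf{y}|\leqslant|\textbf{z}|$. Then $|(P_m-P_n)\textbf{y}|\leqslant|(P_m-P_n)\textbf{z}|$, so $(P_n\textbf{y})$ is Cauchy, with $\textbf{y}\in c_0$ and $\|\textbf{y}\|_Y\leqslant\|\textbf{z}\|_Y$. This is \eqref{shi-fanshudandiao}.

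The norm axioms pass to the limit. Definiteness follows from $\|\textbf{z}\|_{\ell^\infty}=\lim\|P_n\textbf{z}\|_{\ell^\infty}\leqslant\|\textbf{z}\|_Y$. For density, note that $\|\textbf{z}-P_m\textbf{z}\|_Y=\lim_n\|(P_n-P_m)\textbf{z}\|_Y$, which is small for large $m$ by the Cauchy property.

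For completeness, take a Cauchy sequence $(\textbf{z}^{(k)})$ in $Y$.
\begin{itemize}
\item It is Cauchy in $\ell^\infty$, so it converges coordinatewise and uniformly to some $\textbf{z}\in c_0$.
\item For each fixed $n$, the vectors $P_n\textbf{z}^{(k)}$ lie in a finite-dimensional space, so $P_n\textbf{z}^{(k)}\to P_n\textbf{z}$ in $\|\cdot\|_Y$.
\item The standard $\varepsilon/3$ argument, using $\|(P_m-P_n)\textbf{w}\|_Y\leqslant\|\textbf{w}\|_Y$, shows that $(P_n\textbf{z})$ is Cauchy and that $\textbf{z}^{(k)}\to\textbf{z}$.
\end{itemize}
Finally, $\mathbb{C}^{(\mathbb{N})}\subset Y\subset\mathbb{C}^{\mathbb{N}}$ is clear.

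\textbf{Assertion (2).} We have $\|e_j\|_Y\leqslant\|e_j\|_{\ell^1}=1$ and $\|e_j\|_Y\geqslant\|e_j\|_{\ell^\infty}=1$. For \eqref{shi-zhihuanxiangwei}, the modulus of $\sum_j\omega_jz_je_{\pi(j)}$ is the permuted vector $\sum_j|z_j|e_{\pi(j)}$. The set $\mathcal{G}$ is invariant under every coordinate permutation, since $F\mapsto\pi(F)$ preserves $|F|$ and the sign patterns. Applying $\pi$ to a representation of $|\textbf{z}|$ therefore gives a representation of the permuted vector with the same coefficient sum, and $\pi^{-1}$ gives the reverse inequality. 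The Schauder basis property then follows from (1) together with \namedref{Lemma}{yinli-xuliekongjian}~\itemref{xiang-wutiaojianji}.

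\textbf{Assertion (3).} Since the definition depends only on $|\textbf{z}|$, we get \eqref{shi-qumobaofan} first for finitely supported vectors and then, using $P_n|\textbf{z}|=|P_n\textbf{z}|$, in the limit.

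For \eqref{shi-shiqiutubao}, the inclusion ``$\supseteq$'' holds because each $\textbf{u}\in\mathcal{G}$ has $\|\textbf{u}\|_Y\leqslant1$ (it is its own representation, up to signs and monotonicity), and the real unit ball is convex and closed.

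The main obstacle is ``$\subseteq$'', since the definition represents only $|\textbf{x}|$, while $\textbf{x}$ is real with arbitrary signs, and the coefficients $\lambda_v$ may be negative. The plan is as follows.
\begin{itemize}
\item Since $-\mathcal{G}=\mathcal{G}$ (flip all the $\epsilon_j$), negative coefficients can be absorbed. Any representation $|\textbf{x}|=\sum_v\lambda_v\textbf{u}_v$ becomes one with $\lambda_v\geqslant0$ and the same coefficient sum.
\item Multiplying by the sign vector $\sigma_j=\mathrm{sgn}\,x_j$ (take $1$ where $x_j=0$) maps $\mathcal{G}$ to itself coordinatewise. This gives $\textbf{x}=\sum_v\lambda_v(\sigma\textbf{u}_v)$ with $\sigma\textbf{u}_v\in\mathcal{G}$.
\item If $\textbf{x}$ is finitely supported with $\|\textbf{x}\|_Y<1$, choose a representation with $\sum_v\lambda_v<1$. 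Padding with $\tfrac{1}{2}(\textbf{u}-\textbf{u})$ for a suitable $\textbf{u}\in\mathcal{G}$ raises the sum to exactly $1$, so $\textbf{x}\in\mathrm{conv}\,\mathcal{G}$.
\item If $\|\textbf{x}\|_Y\leqslant1$ in general, approximate $\textbf{x}$ by $(1-\tfrac{1}{k})P_k\textbf{x}$, which converges to $\textbf{x}$ in $Y$ by (1). This gives $\textbf{x}\in\overline{\mathrm{conv}\,\mathcal{G}}^{\,Y}$.
\end{itemize}
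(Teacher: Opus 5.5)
Your proposal is correct and follows essentially the same route as the paper. It uses the same $\ell^\infty$-domination and $\varepsilon$-argument for completeness, and the invariance of $\mathcal{G}$ under coordinate permutations and sign changes for (2). For \eqref{shi-shiqiutubao} it uses the same device: flip the signs of the atoms to absorb negative coefficients and the sign pattern of $\textbf{x}$, pad with a symmetric pair $\pm\textbf{u}$, and pass to the limit along truncations. The only cosmetic difference is that you work with $\|\textbf{x}\|_Y<1$ and approximate by $(1-\frac{1}{k})P_k\textbf{x}$, whereas the paper rescales by $1+\varepsilon$ and then truncates.
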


\begin{proof}

\proofstep{1}

First, it is easy to see that $Y$ is a complex linear space containing $\mathbb{C}^{(\mathbb{N})}$, and that $\|\cdot\|_Y$ is a seminorm on $Y$.

For every $\textbf{z}\in Y$ and $n\in\mathbb{N}$,
\eqref{shi-fanlifanshubijiao} gives
$$
\|P_n\textbf{z}\|_{\ell^\infty}
\leqslant
\|P_n\textbf{z}\|_{Y}.
$$
Since $\textbf{z}\in c_0$,
$
P_n\textbf{z}\to\textbf{z}
$
in $\ell^\infty$. Therefore,
\begin{equation}
	\|\textbf{z}\|_{\ell^\infty}
	\leqslant
	\|\textbf{z}\|_Y,
	\qquad
	\textbf{z}\in Y.
	\label{shi-fanliqianru}
\end{equation}
In particular, $\|\textbf{z}\|_Y=0$ implies
$\textbf{z}=\textbf{0}$, so $\|\cdot\|_Y$ is a norm on $Y$.

We next prove that $Y$ is complete. We first note that, for $m\geqslant n$, \eqref{shi-fanlifanshudandiao} yields
$$
\|P_n\textbf{z}\|_{Y}
\leqslant
\|P_m\textbf{z}\|_{Y}.
$$
Letting $m\to\infty$ proves \begin{equation}
\|P_n\textbf{z}\|_{Y}
\leqslant
\|\textbf{z}\|_{Y},n=1,2,\cdots.\label{shi-fanlitouyingsuosuo}
\end{equation}

Let
$(\textbf{z}^{(k)})_{k\in\mathbb{N}}$
be a Cauchy sequence in $Y$. By \eqref{shi-fanliqianru},
it is Cauchy in $c_0$. Since $c_0$ is complete, there exists
$\textbf{z}\in c_0$ such that
\begin{equation}
	\lim\limits_{k\to\infty}
	\|\textbf{z}^{(k)}-\textbf{z}\|_{\ell^\infty}=0.
	\label{shi-fanliclingjixian}
\end{equation}
Fix $\varepsilon>0$. Choose $K\in\mathbb{N}$ such that
\begin{equation}
	\|\textbf{z}^{(k)}-\textbf{z}^{(l)}\|_Y<\varepsilon,
	\qquad
	k,l\geqslant K.
	\label{shi-fanlibibeilie}
\end{equation}
By \eqref{shi-fanlitouyingsuosuo},
\begin{equation}
\|P_n(\textbf{z}^{(k)}-\textbf{z}^{(l)})\|_{Y}
<\varepsilon,
\qquad
n\in\mathbb{N},\quad k,l\geqslant K.\label{1}
\end{equation}
For fixed $n$ and $k\geqslant K$, \eqref{shi-fanlifanshubijiao} and
\eqref{shi-fanliclingjixian} give
$$
\begin{aligned}
	\|P_n(\textbf{z}^{(l)}-\textbf{z})\|_{Y}
	&\leqslant
	\|P_n(\textbf{z}^{(l)}-\textbf{z})\|_{\ell^1}\leqslant
	n\|\textbf{z}^{(l)}-\textbf{z}\|_{\ell^\infty}
	\longrightarrow0
	\qquad(l\to\infty).
\end{aligned}
$$
Consequently, letting $l\to\infty$ in \eqref{1} yields
\begin{equation}
	\|P_n(\textbf{z}^{(k)}-\textbf{z})\|_{Y}
	\leqslant\varepsilon,
	\qquad
	n\in\mathbb{N},\quad k\geqslant K.
	\label{shi-fanlijixianyoujiekongzhi}
\end{equation}

Fix $k\geqslant K$. For $n>m$, by
\eqref{shi-fanlifanshudandiao} and
\eqref{shi-fanlijixianyoujiekongzhi},
$$
\begin{aligned}
	\|(P_n-P_m)\textbf{z}\|_{Y}
	&\leqslant
	\|(P_n-P_m)(\textbf{z}-\textbf{z}^{(k)})\|_{Y}
	+
	\|(P_n-P_m)\textbf{z}^{(k)}\|_{Y}\\
	&\leqslant
	\|P_n(\textbf{z}-\textbf{z}^{(k)})\|_{Y}
	+
	\|(P_n-P_m)\textbf{z}^{(k)}\|_{Y}\\
	&\leqslant
	\varepsilon+
	\|(P_n-P_m)\textbf{z}^{(k)}\|_{Y}.
\end{aligned}
$$
Since $\textbf{z}^{(k)}\in Y$, the sequence
$(P_n\textbf{z}^{(k)})_{n\in\mathbb{N}}$ is Cauchy with respect to
$\|\cdot\|_{Y}$. Hence
$$
\limsup\limits_{m,n\to\infty}
\|(P_n-P_m)\textbf{z}\|_{Y}
\leqslant\varepsilon.
$$Hence
$(P_n\textbf{z})_{n\in\mathbb{N}}$ is Cauchy with respect to
$\|\cdot\|_{Y}$. Therefore $\textbf{z}\in Y$.

Now \eqref{shi-fanlijixianyoujiekongzhi} and the definition of
$\|\cdot\|_Y$ give
$$
\|\textbf{z}^{(k)}-\textbf{z}\|_Y
=
\lim\limits_{n\to\infty}
\|P_n(\textbf{z}^{(k)}-\textbf{z})\|_{Y}
\leqslant\varepsilon,
\qquad
k\geqslant K.
$$
Thus
$
\textbf{z}^{(k)}\to\textbf{z}
$
in $Y$, and $Y$ is complete.

We next verify the coordinatewise monotonicity. Let
$\textbf{z}\in Y$ and
$\textbf{w}\in\mathbb{C}^{\mathbb{N}}$ satisfy
$$
|\textbf{w}|\leqslant|\textbf{z}|.
$$
Since $\textbf{z}\in c_0$, one has $\textbf{w}\in c_0$. For $n>m$,  \eqref{shi-fanlifanshudandiao} gives
$$
\|(P_n-P_m)\textbf{w}\|_{Y}
\leqslant
\|(P_n-P_m)\textbf{z}\|_{Y}.
$$
Since $(P_n\textbf{z})_{n\in\mathbb{N}}$ is Cauchy with respect to
$\|\cdot\|_{Y}$, so is
$(P_n\textbf{w})_{n\in\mathbb{N}}$. Thus $\textbf{w}\in Y$.
Moreover,
$$
\|P_n\textbf{w}\|_{Y}
\leqslant
\|P_n\textbf{z}\|_{Y},
\qquad n\in\mathbb{N},
$$
and therefore
$
\|\textbf{w}\|_Y
\leqslant
\|\textbf{z}\|_Y.
$
Hence $Y$ satisfies \eqref{shi-fanshudandiao}.

Finally, let $\textbf{z}\in Y$. Since
$(P_n\textbf{z})_{n\in\mathbb{N}}$ is Cauchy with respect to
$\|\cdot\|_{Y}$,
$$
\begin{aligned}
	\|\textbf{z}-P_n\textbf{z}\|_Y
	&=
	\lim\limits_{m\to\infty}
	\|P_m(\textbf{z}-P_n\textbf{z})\|_{Y}=
	\lim\limits_{m\to\infty}
	\|P_m\textbf{z}-P_n\textbf{z}\|_{Y}
	\longrightarrow0
	\qquad(n\to\infty).
\end{aligned}
$$
Thus $\mathbb{C}^{(\mathbb{N})}$ is dense in $Y$. This proves
\itemref{xiang-fanlikongjian}.

\proofstep{2}

By \itemref{xiang-fanlikongjian} and \namedref{Lemma}{yinli-xuliekongjian}~\itemref{xiang-wutiaojianji}, the sequence
$(e_j)_{j\in\mathbb{N}}$ is a $1$-unconditional Schauder basis of $Y$.
By \eqref{shi-fanlifanshubijiao}, $\|e_j\|_Y=1$ for every
$j\in\mathbb{N}$. A coordinate permutation maps $\mathcal{G}$ onto
itself, and hence, by the definition of $\|\cdot\|_Y$, preserves the
norm. By \eqref{shi-fanlifanshudandiao},
multiplication of the coordinates by scalars of modulus $1$ also
preserves the norm, since it does not change the coordinatewise
absolute value. This proves  \itemref{xiang-duichenji}.

\proofstep{3}
It is easy to see that \eqref{shi-qumobaofan} holds and $$\|\textbf{u}\|_{Y} \leqslant 1,\qquad \textbf{u}\in \mathcal{G}.$$

The set
$Y\cap\mathbb{R}^{\mathbb{N}}$ is closed in $Y$, because convergence
in $Y$ implies coordinatewise convergence. Hence
$$
\overline{\mathrm{conv}\mathcal{G}}^{\,Y}
\subset
\left\{\textbf{x}\in Y\cap\mathbb{R}^{\mathbb{N}}:
\|\textbf{x}\|_Y\leqslant1\right\}.
$$
To prove the reverse inclusion, first take 
$\textbf{x}\in \mathbb{R}^{(\mathbb{N})}$ with $\|\textbf{x}\|_Y\leqslant1$ and fix
$\varepsilon>0$. Choose a representation
$$
|\textbf{x}|=\sum\limits_{v=1}^{s}\lambda_v\textbf{u}_v,
\qquad
\sum\limits_{v=1}^{s}|\lambda_v|<1+\varepsilon.
$$
For every $j\in\mathbb{N}$, choose $\theta_j\in\{-1,1\}$ such that
$x_j=\theta_j|x_j|$, taking $\theta_j=1$ when $x_j=0$. For
$v=1,\cdots,s$, write $\textbf{u}_v=(u_{v,j})_{j\in\mathbb{N}}$ and define
$$
\textbf{w}_v\triangleq
\sum\limits_{j\in\mathbb{N}}\theta_ju_{v,j} e_j.
$$
One has
$\textbf{w}_v\in\mathcal{G}$. Now define
$$
\widetilde{\textbf{u}}_v\triangleq
\begin{cases}
	\textbf{w}_v,&\lambda_v\geqslant0,\\
	-\textbf{w}_v,&\lambda_v<0.
\end{cases}
$$
We have
$\widetilde{\textbf{u}}_v\in\mathcal{G}$. Moreover,
$$
\begin{aligned}
	\textbf{x}
	&=\sum\limits_{j\in\mathbb{N}}\theta_j|x_j|e_j=\sum\limits_{j\in\mathbb{N}}
	\theta_j
	\left(
	\sum\limits_{v=1}^{s}\lambda_vu_{v,j}
	\right)e_j\\
	&=\sum\limits_{v=1}^{s}\lambda_v
	\sum\limits_{j\in\mathbb{N}}\theta_ju_{v,j} e_j=\sum\limits_{v=1}^{s}\lambda_v \textbf{w}_v
	=\sum\limits_{v=1}^{s}|\lambda_v|\widetilde{\textbf{u}}_v.
\end{aligned}
$$
Note 
$$\frac{\textbf{x}}{1+\varepsilon} =\sum\limits_{v=1}^{s} \frac{|\lambda_{v} |}{1+\varepsilon} \widetilde{\textbf{u}}_{v} +\frac{1}{2} \left( 1-\frac{\sum\limits_{v=1}^{s} |\lambda_{v} |}{1+\varepsilon} \right) e_{1}+\frac{1}{2} \left( 1-\frac{\sum\limits_{v=1}^{s} |\lambda_{v} |}{1+\varepsilon} \right) \left( -e_{1} \right) \in \mathrm{conv}\mathcal{G}.$$
Letting $\varepsilon\to0$ proves
\begin{equation}
\textbf{x}\in\overline{\mathrm{conv}\mathcal{G}}^{\,Y},
\qquad
\textbf{x}\in\mathbb{R}^{(\mathbb{N})},\quad \|\textbf{x}\|_Y\leqslant1.
\label{shi-youxianshitubao}
\end{equation}
Finally, if $\textbf{x}\in Y\cap\mathbb{R}^{\mathbb{N}}$ and
$\|\textbf{x}\|_Y\leqslant1$, then each $P_n\textbf{x}$ satisfies \eqref{shi-youxianshitubao}. Moreover, since $P_{n}\textbf{x} \to \textbf{x}$, we have $\textbf{x} \in \overline{\mathrm{conv} \mathcal{G}}^{Y}$. This proves \eqref{shi-shiqiutubao}, and hence \itemref{xiang-shibuyuxubu}.
\end{proof}

\begin{proposition}\label{mingti-jibenhanshu}\label{mingti-jiejinyikehe}
Define
$$
\varphi(n)\triangleq
\left\|\sum\limits_{j=1}^{n}e_j\right\|_Y,
\qquad n\in\mathbb{N}.
$$
Then, for every nonempty finite set $F\subset\mathbb{N}$ and every
choice of $\omega_j \in \mathbb{C}$ with $|\omega_j|=1$, $j\in F$,
\begin{equation}
\left\|\sum\limits_{j\in F}\omega_je_j\right\|_Y=\varphi(|F|).
\label{shi-xiangweifanshu}
\end{equation}
Moreover,
\begin{equation}
\varphi(a_r)=\frac{a_r}{t_r},
\qquad r\in\mathbb{N}_0,
\label{shi-yuanzifanshu}
\end{equation}
and for all $n\geqslant a_{r-1},n,r\in\mathbb{N}$,
\begin{equation}
t_r\leqslant\frac{2r}{\varphi(r)},
\qquad
\varphi(n)\leqslant\frac{n}{t_{r-1}}.
\label{shi-canshuyujibenhanshu}
\end{equation}
For every
$n\in\mathbb{N}$ and every $\textbf{z}\in Y$ satisfying
$|\mathrm{supp}\textbf{z}|\leqslant n$, it holds that
\begin{equation}
\frac{1}{\Psi(n)}\|\textbf{z}\|_{\ell^1}
\leqslant\|\textbf{z}\|_Y\leqslant\|\textbf{z}\|_{\ell^1}.
\label{shi-youxianweibijiao}
\end{equation}
\end{proposition}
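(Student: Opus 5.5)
The plan is to get every upper bound from explicit representations by elements of $\mathcal{G}$, and every lower bound from a single duality principle. Call a linear functional $\ell(\textbf{x})=\sum_j b_jx_j$ with $b_j\geqslant0$ finitely supported \emph{admissible} if $|\ell(\textbf{u})|\leqslant1$ for every $\textbf{u}\in\mathcal{G}$. For any representation $|\textbf{z}|=\sum_v\lambda_v\textbf{u}_v$ one then has $\ell(|\textbf{z}|)\leqslant\sum_v|\lambda_v|$, so $\ell(|\textbf{z}|)\leqslant\|\textbf{z}\|_Y$ for $\textbf{z}\in\mathbb{C}^{(\mathbb{N})}$. This passes to finitely supported $\textbf{z}\in Y$ directly, because $\|\textbf{z}\|_Y=\|P_n\textbf{z}\|_Y$ for large $n$. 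The identity \eqref{shi-xiangweifanshu} is immediate from \eqref{shi-zhihuanxiangwei} in \namedref{Proposition}{mingti-fanlixingzhi}: choose a permutation sending $F$ onto $\{1,\cdots,|F|\}$ and absorb the unimodular factors $\omega_j$.

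For \eqref{shi-yuanzifanshu}, take $|F|=a_r$. The upper bound $\varphi(a_r)\leqslant a_r/t_r$ holds because $\sum_{j\in F}e_j=\frac{a_r}{t_r}\textbf{u}$ with $\textbf{u}\in\mathcal{G}_r$. For the lower bound I will use $\ell(\textbf{x})=\frac{1}{t_r}\sum_{j\in F}x_j$. On $\textbf{u}\in\mathcal{G}_s$ with support $F'$, we have $|\ell(\textbf{u})|\leqslant\frac{t_s}{t_ra_s}\min\{a_s,a_r\}$.
\begin{itemize}
\item If $s\leqslant r$, this is at most $t_s/t_r\leqslant1$ by \eqref{shi-huanmancanshujie}.
\item If $s>r$, it equals $\frac{t_s/a_s}{t_r/a_r}\leqslant1$ by iterating \eqref{shi-yuanzigaodu}.
\end{itemize}
So $\ell$ is admissible, and $\ell(\sum_F e_j)=a_r/t_r$ gives the reverse inequality.

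For the second inequality in \eqref{shi-canshuyujibenhanshu}, let $n\geqslant a_{r-1}$ and put $k=a_{r-1}$. Consider the $n$ cyclic windows $W_i=\{i,i+1,\cdots,i+k-1\}\bmod n$ in $\{1,\cdots,n\}$. Each index lies in exactly $k$ windows, so
$$
\sum_{j=1}^ne_j=\frac1k\sum_{i=1}^n\sum_{j\in W_i}e_j .
$$
The triangle inequality together with \eqref{shi-yuanzifanshu} then gives $\varphi(n)\leqslant\frac nk\cdot\frac{k}{t_{r-1}}=\frac n{t_{r-1}}$. The same argument works for any $n\geqslant a_q$ and yields $\varphi(n)\leqslant n/t_q$. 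For the first inequality, apply this with $n=r$ and $q=\kappa(r)$, which is allowed since $a_{\kappa(r)}\leqslant r$. This gives $t_{\kappa(r)}\leqslant r/\varphi(r)$, and then \eqref{shi-huanmancanshu} gives $t_r\leqslant 2t_{\kappa(r)}\leqslant 2r/\varphi(r)$.

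For \eqref{shi-youxianweibijiao}, the upper bound is \eqref{shi-fanlifanshubijiao}. For the lower bound, let $S=\mathrm{supp}\textbf{z}$ with $|S|\leqslant n$, and choose $r\geqslant1$ with $a_{r-1}\leqslant n<a_r$; this is possible since $a_0=1$. Test with the admissible candidate $\ell(\textbf{x})=\frac1{t_r}\sum_{j\in S}x_j$. On $\mathcal{G}_s$ it is bounded by $\frac{t_s}{t_ra_s}\min\{a_s,n\}$.
\begin{itemize}
\item If $s\leqslant r-1$, the bound is at most $t_s/t_r\leqslant1$.
\item If $s\geqslant r$, it is at most $\frac{n}{t_r}\cdot\frac{t_s}{a_s}\leqslant\frac{n}{t_r}\cdot\frac{t_r}{a_r}<1$.
\end{itemize}
Hence $\|\textbf{z}\|_Y\geqslant\ell(|\textbf{z}|)=\|\textbf{z}\|_{\ell^1}/t_r$. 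Finally, $t_r\leqslant\Psi(a_{r-1})\leqslant\Psi(n)$ by \eqref{shi-huanmancanshu} and the monotonicity of $\Psi$. The only delicate points are the admissibility checks, specifically the case split $s\lessgtr r$ via \eqref{shi-yuanzigaodu}, and the choice of $r$ so that the constraint $t_r\leqslant\Psi(a_{r-1})$ in \eqref{shi-huanmancanshu} is exactly what is used.
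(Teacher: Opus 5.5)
Your proof is correct, but it takes a genuinely different route from the paper. The paper first proves the exact identity \eqref{shi-jibenhanshugongshi} and then reads off every assertion from it: the value $\varphi(a_r)$, the monotonicity \eqref{shi-jibenhanshudandiao}, the closed form \eqref{shi-jibenhanshufenduan}, and hence \eqref{shi-canshuyujibenhanshu} and \eqref{shi-youxianweibijiao}. That identity rests on two ingredients. A permutation-averaging argument gives $\frac{\varphi(n)}{n}\|\textbf{z}\|_{\ell^1}\leqslant\|\textbf{z}\|_Y$ for $\textbf{z}$ supported in $\{1,\cdots,n\}$. The closed-convex-hull description \eqref{shi-shiqiutubao} of the real unit ball, together with continuity of $\textbf{x}\mapsto\sum_{j\leqslant n}x_j$, then identifies $\frac{n}{\varphi(n)}$ with $\sup_{\textbf{u}\in\mathcal{G}}\bigl|\sum_{j\leqslant n}u_j\bigr|=\sup_k\frac{t_k}{a_k}\min\{n,a_k\}$. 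Your weak-duality inequality $\ell(|\textbf{z}|)\leqslant\|\textbf{z}\|_Y$ is exactly what the paper extracts from \eqref{shi-shiqiutubao}, but you get it in one line from the infimum defining $\|\cdot\|_Y$. Your explicit decompositions (a single atom, and the cyclic windows, which in effect show that $n/\varphi(n)$ is nondecreasing) replace the symmetrization step. Your argument is therefore more elementary and self-contained, and it yields the same constant $t_r$ in $\|\textbf{z}\|_{\ell^1}\leqslant t_r\|\textbf{z}\|_Y$. What the paper's route buys, beyond what the proposition asserts, is a single exact formula for $\varphi(n)$ at every $n$. It also identifies $n/\varphi(n)$ as the optimal constant $C$ in $\|\textbf{z}\|_{\ell^1}\leqslant C\|\textbf{z}\|_Y$ for $\textbf{z}$ supported on at most $n$ coordinates.
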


\begin{proof}

We first establish the identity
\begin{equation}
\frac{n}{\varphi(n)}
=
\sup\limits_{\substack{\textbf{z}\in Y,\ \|\textbf{z}\|_Y\leqslant1\\
\mathrm{supp}\textbf{z}\subset\{1,\cdots,n\}}}
\|\textbf{z}\|_{\ell^1}
=
\sup\limits_{k\in\mathbb{N}_0}\frac{t_k}{a_k}\min\{n,a_k\},
\qquad n\in\mathbb{N}.
\label{shi-jibenhanshugongshi}
\end{equation}

Fix $n\in\mathbb{N}$ and let $\textbf{z} \in Y$ with $\mathrm{supp}\ \textbf{z}\subset\{1,2,\cdots,n\}$. For each permutation $\pi\in S_n$, set

$$
\textbf{z}^{\pi}
\triangleq
\sum\limits_{j=1}^{n}|z_j|e_{\pi(j)}.
$$
We have
$$
\frac{1}{n!}\sum\limits_{\pi\in S_n}\textbf{z}^{\pi}
=
\frac{\|\textbf{z}\|_{\ell^1}}{n}
\sum\limits_{k=1}^{n}e_k.
$$

By \eqref{shi-zhihuanxiangwei}, for every $\pi\in S_n$,

$$
\|\textbf{z}^{\pi}\|_Y
=
\||\textbf{z}|\|_Y
=
\|\textbf{z}\|_Y.
$$

Consequently, using the triangle inequality and the definition of $\varphi(n)$,

$$
\begin{aligned}
    \frac{\varphi(n)}{n}\|\textbf{z}\|_{\ell^1}
    &=
    \left\|
    \frac{\|\textbf{z}\|_{\ell^1}}{n}
    \sum\limits_{k=1}^{n}e_k
    \right\|_Y=
    \left\|
    \frac{1}{n!}\sum\limits_{\pi\in S_n}\textbf{z}^{\pi}
    \right\|_Y\leqslant
    \frac{1}{n!}\sum\limits_{\pi\in S_n}\|\textbf{z}^{\pi}\|_Y=
    \|\textbf{z}\|_Y.
\end{aligned}
$$

Note that $$\left\| \frac{1}{\varphi (n)} \sum\limits_{j=1}^{n} e_{j} \right\|_{Y} =1,\left\| \frac{1}{\varphi (n)} \sum\limits_{j=1}^{n} e_{j} \right\|_{\ell^{1}} =\frac{n}{\varphi \left( n \right)}.$$Thus the first equality in
\eqref{shi-jibenhanshugongshi} follows. Hence

\begin{equation}
1\leqslant\frac{n}{\varphi(n)}
\leqslant\frac{n+1}{\varphi(n+1)},
\qquad n=1,2,\cdots.
\label{shi-jibenhanshudandiao}
\end{equation}
We now prove that\begin{equation}
	\frac{n}{\varphi(n)}
	=
	\sup\limits_{\textbf{u}=\left( u_{j} \right)_{j\in \mathbb{N}}\in\mathcal{G}}\left|\sum\limits_{j=1}^{n}u_j\right|.\label{shi-1}
\end{equation}
For every $\textbf{x}=(x_i)_{i\in\mathbb{N}}\in\mathrm{conv}\mathcal{G}$, write
$$
\textbf{x}=\sum\limits_{v=1}^{s}\lambda_v \textbf{u}_v,
\qquad
\textbf{u}_v\in\mathcal{G},\quad
\lambda_v\geqslant0,\quad v=1,2,\cdots,s,
\qquad
\sum\limits_{v=1}^{s}\lambda_v=1.
$$
Then
$$
\begin{aligned}
	\left|\sum\limits_{i=1}^{n}x_i\right|
	&=
	\left|\sum\limits_{i=1}^{n}\sum\limits_{v=1}^{s}\lambda_vu_{v,i}\right|
	=
	\left|\sum\limits_{v=1}^{s}\lambda_v\sum\limits_{i=1}^{n}u_{v,i}\right|\\
	&\leqslant
	\sum\limits_{v=1}^{s}\lambda_v
	\left|\sum\limits_{i=1}^{n}u_{v,i}\right|
	\leqslant
	\sup\limits_{\textbf{u}=(u_j)_{j\in\mathbb{N}}\in\mathcal{G}}
	\left|\sum\limits_{j=1}^{n}u_j\right|.
\end{aligned}
$$
Consequently,
$$
\sup\limits_{\textbf{x}=(x_j)_{j\in\mathbb{N}}\in\mathrm{conv}\mathcal{G}}
\left|\sum\limits_{j=1}^{n}x_j\right|
=
\sup\limits_{\textbf{u}=(u_j)_{j\in\mathbb{N}}\in\mathcal{G}}
\left|\sum\limits_{j=1}^{n}u_j\right|.
$$

The real linear functional
$\textbf{x}\mapsto\sum\limits_{j=1}^{n}x_j$ is continuous, since \eqref{shi-fanliqianru} gives
$$
\left|\sum\limits_{j=1}^{n}x_j\right|
\leqslant n\|\textbf{x}\|_{\ell^\infty}
\leqslant n\|\textbf{x}\|_Y.
$$
Combining this with \eqref{shi-shiqiutubao}, we obtain
$$
\sup\limits_{\substack{\textbf{x}\in Y\cap\mathbb{R}^{\mathbb{N}}\\
		\|\textbf{x}\|_Y\leqslant1}}
\left|\sum\limits_{j=1}^{n}x_j\right|=\sup\limits_{\textbf{x}=(x_j)_{j\in\mathbb{N}}\in\overline{\mathrm{conv}\mathcal{G}}^{\,Y}}
\left|\sum\limits_{j=1}^{n}x_j\right|
=
\sup\limits_{\textbf{u}=(u_j)_{j\in\mathbb{N}}\in\mathcal{G}}
\left|\sum\limits_{j=1}^{n}u_j\right|.
$$
For every $\textbf{x}\in Y\cap\mathbb{R}^{\mathbb{N}}$ satisfying $\|\textbf{x}\|_{Y}\leqslant1$, we have
$$
\left|\sum\limits_{j=1}^{n}x_j\right|
\leqslant
\sum\limits_{j=1}^{n}|x_j|
=
\left\|P_n\textbf{x}\right\|_{\ell^1}
\leqslant
\frac{n}{\varphi(n)},
$$which gives \eqref{shi-1}.

For each $k\in\mathbb{N}_0$, a sequence in $\mathcal{G}_k$ has
$a_k$ nonzero coordinates, all of absolute value $\frac{t_k}{a_k}$.
Hence
$$
\sup\limits_{\textbf{u}\in\mathcal{G}_k}\left|\sum\limits_{j=1}^{n}u_j\right|
=\frac{t_k}{a_k}\min\{n,a_k\}.
$$
Taking the supremum over $k$ proves the second
equality in \eqref{shi-jibenhanshugongshi}.

For $n=a_r$, if $k\leqslant r$, then
$$
\frac{t_k}{a_k}\min\{a_r,a_k\}=t_k\leqslant t_r.
$$
If $k>r$, then \eqref{shi-yuanzigaodu} gives
$$
\frac{t_k}{a_k}\min\{a_r,a_k\}
=\frac{t_k}{a_k}a_r\leqslant t_r.
$$
so
$\varphi(a_r)=\frac{a_r}{t_r}$, which proves
\eqref{shi-yuanzifanshu}. Formula \eqref{shi-xiangweifanshu} follows from
\eqref{shi-zhihuanxiangwei}.

Since $a_{\kappa(r)}\leqslant r$, the term $k=\kappa(r)$ in
\eqref{shi-jibenhanshugongshi} gives
$
\frac{r}{\varphi(r)}\geqslant t_{\kappa(r)}.
$
Combining this with $t_{r}\leqslant 2t_{\kappa \left( r \right)}$ yields
$t_r\leqslant\frac{2r}{\varphi(r)}$.
For every integer $n\geqslant a_{r-1}$,
\eqref{shi-jibenhanshudandiao} and
\eqref{shi-yuanzifanshu} give
$$
\frac{n}{\varphi(n)}
\geqslant\frac{a_{r-1}}{\varphi(a_{r-1})}=t_{r-1}.
$$
Thus \eqref{shi-canshuyujibenhanshu} holds.

Fix $n\in\mathbb{N}$.
Since $\left( a_{r} \right)_{r\in \mathbb{N}_0}$ is strictly increasing, $a_0=1$, and $a_r\to+\infty$,
there is a unique $r\in\mathbb{N}$ such that
$$
a_{r-1}\leqslant n<a_r.
$$
For $0\leqslant k\leqslant r-1$, we have
$$
\frac{t_k}{a_k}\min\{n,a_k\}=t_k\leqslant t_{r-1},
$$
with equality when $k=r-1$. For $k\geqslant r$,
\eqref{shi-yuanzigaodu} gives
$$
\frac{t_k}{a_k}\min\{n,a_k\}
=n\frac{t_k}{a_k}
\leqslant\frac{n}{a_r}t_r,
$$
with equality when $k=r$. Therefore
\eqref{shi-jibenhanshugongshi} yields
\begin{equation}
\frac{n}{\varphi(n)}
=\max\left\{t_{r-1},\frac{n}{a_r}t_r\right\}.
\label{shi-jibenhanshufenduan}
\end{equation}
For the choice \eqref{shi-huanmancanshu}, use $t_{r-1}\leqslant t_r$, $n<a_r$,
and the monotonicity of $\Psi$ to obtain

\begin{equation}
\frac{n}{\varphi(n)}
\leqslant t_r
\leqslant\Psi(a_{r-1})
\leqslant\Psi(n).
\label{shi-yudingjiejin}
\end{equation}

Let $\textbf{z}\in Y$ satisfy
$|\mathrm{supp}\textbf{z}|\leqslant n$.
If $\textbf{z}=\textbf{0}$, then
\eqref{shi-youxianweibijiao} is immediate.
Otherwise choose a permutation $\pi$ of $\mathbb{N}$ such that
$$
\mathrm{supp}\textbf{z}
\subset\{\pi(1),\cdots,\pi(n)\},
$$
and put
$$
\textbf{w}\triangleq\sum\limits_{j=1}^{n}z_{\pi(j)}e_j.
$$
Applying \eqref{shi-jibenhanshugongshi} to
$\frac{\textbf{w}}{\|\textbf{w}\|_Y}$ and then using
\eqref{shi-yudingjiejin} and \eqref{shi-zhihuanxiangwei}, we obtain
$$
\|\textbf{z}\|_{\ell^1}
=\|\textbf{w}\|_{\ell^1}
\leqslant\frac{n}{\varphi(n)}\|\textbf{w}\|_Y
\leqslant\Psi(n)\|\textbf{z}\|_Y.
$$
Since $\textbf{z}$ has finite support, the opposite estimate
$\|\textbf{z}\|_Y\leqslant\|\textbf{z}\|_{\ell^1}$ follows from
\eqref{shi-fanlifanshubijiao}.
This proves \eqref{shi-youxianweibijiao}.
\end{proof}

\begin{corollary}\label{tuilun-yangeqianru}\label{tuilun-jiruoshoulian}
The inclusions
$$
\ell^1\subsetneq Y\subsetneq c_0
$$
are continuous, and
\begin{equation}
\|e_j\|_Y=1,\qquad j\in\mathbb{N},
\qquad
e_j\rightharpoonup0\quad\text{in }Y.
\label{shi-jiruoshoulian}
\end{equation}
\end{corollary}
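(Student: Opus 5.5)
The plan is to establish, in order, the continuity of the two inclusions, their strictness, and then the statements about $e_j$ in \eqref{shi-jiruoshoulian}.

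\textbf{Continuity of the inclusions.} For $\ell^1\subset Y$, take $\textbf{z}\in\ell^1$. By \eqref{shi-fanlifanshubijiao},
$$\|(P_n-P_m)\textbf{z}\|_Y\leqslant\|(P_n-P_m)\textbf{z}\|_{\ell^1}.$$
Hence $(P_n\textbf{z})_n$ is Cauchy in $\|\cdot\|_Y$. Since also $\textbf{z}\in c_0$, we get $\textbf{z}\in Y$, and passing to the limit gives $\|\textbf{z}\|_Y\leqslant\|\textbf{z}\|_{\ell^1}$. For $Y\subset c_0$, the inclusion holds by the definition of $Y$, and continuity is \eqref{shi-fanliqianru}. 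The identity $\|e_j\|_Y=1$ was already noted in Proposition~\ref{mingti-fanlixingzhi}.

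\textbf{Strictness.} To show $\ell^1\neq Y$, I would argue by contradiction. If $Y=\ell^1$ as sets, then the inclusion $\ell^1\to Y$ is a continuous bijection between Banach spaces. The open mapping theorem then gives a constant $c>0$ with $\|\textbf{z}\|_{\ell^1}\leqslant c\|\textbf{z}\|_Y$. Testing this on $\sum_{j=1}^{a_r}e_j$ and using \eqref{shi-yuanzifanshu} gives $a_r\leqslant c\,a_r/t_r$, that is, $t_r\leqslant c$ for all $r$. This contradicts \eqref{shi-jixian}.

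To show $Y\neq c_0$, the same argument gives $\|\textbf{z}\|_Y\leqslant c\|\textbf{z}\|_{\ell^\infty}$. Testing on $\sum_{j=1}^{a_r}e_j$ yields $a_r/t_r\leqslant c$. But $t_r\leqslant r+1$ by \eqref{shi-huanmancanshujie}, while $a_r$ grows much faster than $r+1$, which is a contradiction. One could instead build explicit elements, for example a block sum $\sum_r \frac{1}{r^2 t_r}\,\textbf{1}_{B_r}$ over disjoint blocks $B_r$ of size $a_r$, but the open mapping argument is shorter.

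\textbf{Weak convergence $e_j\rightharpoonup0$.} This is the main step. Fix $\psi\in Y^*$ and put $\omega_j=\overline{\mathrm{sgn}\,\psi(e_j)}$. By \eqref{shi-xiangweifanshu},
$$\sum_{j\in F}|\psi(e_j)|=\psi\Big(\sum_{j\in F}\omega_je_j\Big)\leqslant\|\psi\|\,\varphi(|F|).$$
Suppose $|\psi(e_j)|\geqslant\varepsilon$ for infinitely many $j$. Choosing $F$ among those $j$ with $|F|=a_r$ gives
$$\varepsilon a_r\leqslant\|\psi\|\,\frac{a_r}{t_r},$$
so $t_r\leqslant\|\psi\|/\varepsilon$ for every $r$, again contradicting \eqref{shi-jixian}. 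Therefore $\psi(e_j)\to0$. The same estimate also shows directly that $\varphi(n)/n\to0$, since $\varphi(n)/n\leqslant 1/t_{r-1}$ by \eqref{shi-canshuyujibenhanshu}. The only point requiring care is to pick the unimodular $\omega_j$ so that the functional values add without cancellation; this works because $1$-symmetry allows arbitrary signs.
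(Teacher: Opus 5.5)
Your proposal is correct and follows the paper's proof essentially step for step: you bound the $Y$-norm by the $\ell^1$-norm via \eqref{shi-fanlifanshubijiao}, apply the inverse mapping theorem and test on $\sum_{j=1}^{a_r}e_j$ using \eqref{shi-yuanzifanshu}, and prove weak nullity with unimodular-weighted test vectors supported on $a_r$ indices. The only cosmetic difference is in the step $Y\neq c_0$, where you compare $t_r\leqslant r+1$ with the growth of $a_r$, whereas the paper uses \eqref{shi-yuanzigaodu} to get $a_r/t_r\geqslant 2^r$.
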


\begin{proof}

If $\textbf{z}\in\ell^1$, then
\eqref{shi-fanlifanshubijiao} implies
$$
\|P_{n}\textbf{z} -P_{m}\textbf{z}\|_{Y} =\left\|\sum\limits_{j=m+1}^n z_j e_j\right\|_Y
\leqslant\sum\limits_{j=m+1}^n|z_j|,
\qquad n>m,n,m\in \mathbb{N}_0.
$$
Hence $\textbf{z}\in Y$. It follows that $\ell^1\subset Y$ and
$$
\|\textbf{z}\|_Y\leqslant\|\textbf{z}\|_{\ell^1},
\qquad \textbf{z}\in\ell^1.
$$
Together with \eqref{shi-fanliqianru}, this proves that both
inclusions in \namedref{Corollary}{tuilun-yangeqianru} are continuous.

If $Y=\ell^1$ as sets, the inverse mapping theorem applied to the
continuous inclusion $\ell^1\hookrightarrow Y$ gives a constant
$C>0$ such that
$$
\|\textbf{z}\|_{\ell^1}\leqslant C\|\textbf{z}\|_Y,
\qquad \textbf{z}\in\ell^1.
$$
Taking $\textbf{z}=\sum\limits_{j=1}^{a_r}e_j$ and using
\eqref{shi-yuanzifanshu}, we obtain $t_r\leqslant C$ for every
$r\in\mathbb{N}_0$, contrary to $t_r\to+\infty$. Hence
$\ell^1\subsetneq Y$.

If $Y=c_0$ as sets, the inverse mapping theorem applied to the
continuous inclusion $Y\hookrightarrow c_0$ gives a constant $C>0$
such that
$$
\|\textbf{z}\|_Y\leqslant C\|\textbf{z}\|_{\ell^\infty},
\qquad \textbf{z}\in Y.
$$
Taking again $\textbf{z}=\sum\limits_{j=1}^{a_r}e_j$ gives
$\frac{a_r}{t_r}\leqslant C$. However, by \eqref{shi-yuanzigaodu}, 
$$
\lim\limits_{r\to +\infty} \frac{a_{r}}{t_{r}} \geqslant \lim\limits_{r\to +\infty} 2^{r}=+\infty.
$$
This contradiction proves $Y\subsetneq c_0$.

Let $\psi\in Y^*$. Suppose that $\psi(e_j)$ does not tend to
zero. Then there exist $\varepsilon>0$ and an infinite set
$J\subset\mathbb{N}$ such that
$|\psi(e_j)|\geqslant\varepsilon$ for every $j\in J$.
For each $r\in\mathbb{N}$, choose $F_r\subset J$ with
$|F_r|=a_r$. By \eqref{shi-xiangweifanshu} and \eqref{shi-yuanzifanshu},
$$
\left\|
\frac{t_r}{a_r}\sum\limits_{j\in F_r}
\frac{\overline{\psi(e_j)}}{|\psi(e_j)|}e_j
\right\|_Y=1.
$$
Therefore,
$$
\varepsilon t_r
\leqslant
\frac{t_r}{a_r}\sum\limits_{j\in F_r}|\psi(e_j)|
=
\left|\psi\left(
\frac{t_r}{a_r}\sum\limits_{j\in F_r}
\frac{\overline{\psi(e_j)}}{|\psi(e_j)|}e_j
\right)\right|
\leqslant\|\psi\|_{Y^*}.
$$
This contradicts $t_r\to+\infty$. Hence
$\psi(e_j)\to0$ for every $\psi\in Y^*$, and therefore
$e_j\rightharpoonup0$ in $Y$. Together with
$\|e_j\|_Y=1$ from \eqref{shi-fanlifanshubijiao}, this establishes
\eqref{shi-jiruoshoulian}.
\end{proof}

\begin{remark}\label{zhuji-butonggou}
    The space $\ell^1$ has the Schur property; that is, every weakly convergent
    sequence in $\ell^1$ converges in norm
    (see, for example, \cite[Theorem~2.3.6, p.~37]{AK16}).
    Consequently, \eqref{shi-jiruoshoulian} implies that $Y$ is not isomorphic to
    $\ell^1$.
\end{remark}

For $P\in \mathscr{P}_0\cap \mathcal{P}^{m} \left( Y\right)$, define
\begin{equation}
P^{\#}(\textbf{z})\triangleq
\sum\limits_{\substack{\alpha\in\mathbb{N}_0^{(\mathbb{N})}\\|\alpha|=m}}
|c_\alpha(P)|\textbf{z}^{\alpha}.
\label{shi-zhixishuduoxiangshi}
\end{equation}
By \eqref{shi-youxianzhankai}, only finitely many
coefficients in \eqref{shi-zhixishuduoxiangshi} are
nonzero. By \eqref{shi-fanliqianru}, $P^{\#}$ is a continuous
$m$-homogeneous polynomial on $Y$. 

\begin{lemma}\label{yinli-zhichijishu}
Let $m,d\in\mathbb{N},P\in \mathscr{P}_0\cap \mathcal{P}^{m} \left( Y\right)$ and let $\textbf{z}\in \mathbb{C}^{(\mathbb{N})}$ with $\left| \mathrm{supp} \textbf{z} \right| =d$. Then
\begin{equation}
P^{\#}(|\textbf{z}|)
\leqslant\binom{m+d-1}{m}^{\frac{1}{2}}\|P\|_{\mathcal{P}^m(Y)}\|\textbf{z}\|_Y^m.
\label{shi-zhichijishu}
\end{equation}
\end{lemma}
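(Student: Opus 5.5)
The plan is to restrict $P$ to the $d$-dimensional coordinate subspace spanned by $\{e_j:j\in\mathrm{supp}\textbf{z}\}$, use a Cauchy--Schwarz/orthogonality argument on the torus to bound the coefficient sum, and finally compare the $\ell^\infty$-type sup with the $Y$-norm. Write $S=\mathrm{supp}\textbf{z}$, $|S|=d$. Only multi-indices $\alpha$ with $\mathrm{supp}\alpha\subset S$ contribute to $P^{\#}(|\textbf{z}|)$. The number of $\alpha$ with $|\alpha|=m$ and $\mathrm{supp}\alpha\subset S$ is $\binom{m+d-1}{m}$.

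Cauchy--Schwarz over these $\alpha$ gives
$$
P^{\#}(|\textbf{z}|)=\sum_{\substack{|\alpha|=m\\ \mathrm{supp}\alpha\subset S}}|c_\alpha(P)\textbf{z}^\alpha|
\leqslant\binom{m+d-1}{m}^{\frac12}\Bigl(\sum_{\alpha}|c_\alpha(P)|^2|\textbf{z}^\alpha|^2\Bigr)^{\frac12}.
$$
Parseval on the torus $\mathbb{T}^S$ identifies the square sum with an average:
$$
\sum_\alpha|c_\alpha(P)|^2|\textbf{z}^\alpha|^2=\int_{[0,2\pi]^S}\Bigl|P\Bigl(\sum_{j\in S}z_je^{\mathrm{i}\theta_j}e_j\Bigr)\Bigr|^2\frac{\mathrm{d}\theta}{(2\pi)^d}.
$$
This uses the expansion \eqref{shi-youxianzhankai} applied to $P\circ P_S$. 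That restriction is a finite polynomial in the variables $z_j$, $j\in S$, whose coefficients agree with those of $P$ for $\mathrm{supp}\alpha\subset S$, by the definition of $c_\alpha$.

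For each $\theta$, the vector $\textbf{w}_\theta=\sum_{j\in S}z_je^{\mathrm{i}\theta_j}e_j$ satisfies $|\textbf{w}_\theta|=|\textbf{z}|$. Hence $\|\textbf{w}_\theta\|_Y=\|\textbf{z}\|_Y$, by \eqref{shi-zhihuanxiangwei} or directly by \eqref{shi-fanshudandiao}. Homogeneity then gives $|P(\textbf{w}_\theta)|\leqslant\|P\|_{\mathcal{P}^m(Y)}\|\textbf{z}\|_Y^m$. Bounding the integrand pointwise by this, the average is at most $\|P\|^2\|\textbf{z}\|_Y^{2m}$, which yields \eqref{shi-zhichijishu}.

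The only step requiring care is the Parseval identity. It holds because $\theta\mapsto P(\textbf{w}_\theta)=\sum_\alpha c_\alpha(P)\textbf{z}^\alpha e^{\mathrm{i}\langle\alpha,\theta\rangle}$ is a trigonometric polynomial whose distinct frequencies $\alpha$ give orthonormal characters. I do not expect any serious obstacle; the statement is essentially the finite-dimensional Bohnenblust--Hille-type trivial bound, and the bound is independent of the particular space $Y$.
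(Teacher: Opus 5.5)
Your proposal is correct and follows essentially the paper's argument: the paper also counts the $\binom{m+d-1}{m}$ multi-indices supported in $\mathrm{supp}\,\textbf{z}$, and uses Parseval on the torus to identify $\sum_\alpha|c_\alpha(P)|^2\textbf{x}^{2\alpha}$ with the average of $|P|^2$ over coordinatewise rotations. It then bounds this average by $\|P\|^2$ using the invariance of $\|\cdot\|_Y$ under unimodular multipliers and concludes with Cauchy--Schwarz. The only cosmetic difference is that the paper first normalizes to $\textbf{x}=|\textbf{z}|/\|\textbf{z}\|_Y$, whereas you carry the factor $\|\textbf{z}\|_Y^m$ through homogeneity.
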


\begin{proof}

Without loss of generality, assume that $\textbf{z} \neq \textbf{0}$ and let $\mathrm{supp}\textbf{z}=\{j_1,\cdots,j_d\}$.  Consider $\textbf{x}=\left( x_{i} \right)_{i\in \mathbb{N}}\triangleq\frac{|\textbf{z}|}{\|\textbf{z}\|_Y}$.  Clearly, \eqref{shi-zhichijishu} is equivalent to
\begin{equation}
	P^{\#}(\textbf{x})\leqslant\binom{m+d-1}{m}^{\frac{1}{2}}\|P\|_{\mathcal{P}^m(Y)}.\label{shi-11}
\end{equation}
Consider
\begin{equation}
	\mathcal{A}\triangleq
	\{\alpha\in\mathbb{N}_0^{(\mathbb{N})}:|\alpha|=m,\ \mathrm{supp}\alpha\subset \mathrm{supp}\textbf{z}\},
	\qquad |\mathcal{A}|=\binom{m+d-1}{m}.
	\label{shi-danxiangshishu}
\end{equation}
We have
\begin{equation}
	\begin{aligned}
		&\frac{1}{(2\pi )^{d}} \int\limits_{[0,2\pi ]^{d}} \left| P\left( \sum\limits_{\ell =1}^{d} x_{j_{\ell}}e^{\mathrm{i} \theta_{\ell}}e_{j_{\ell}} \right) \right|^{2} \mathrm{d} \theta_{1} \cdots \mathrm{d} \theta_{d}\\ &\  =\frac{1}{(2\pi )^{d}} \int\limits_{[0,2\pi ]^{d}} \left| \sum\limits_{\alpha \in \mathcal{A}} c_{\alpha}(P)\textbf{x}^{\alpha} e^{\mathrm{i} \sum\limits_{\ell =1}^{d} \alpha_{j_{\ell}} \theta_{\ell}} \right|^{2} \mathrm{d} \theta_{1} \cdots \mathrm{d} \theta_{d}\\ &\  =\frac{1}{(2\pi )^{d}} \sum\limits_{\alpha ,\beta \in \mathcal{A}} c_{\alpha}(P)\overline{c_{\beta}(P)} \textbf{x}^{\alpha +\beta} \int\limits_{[0,2\pi ]^{d}} e^{\mathrm{i} \sum\limits_{\ell =1}^{d} (\alpha_{j_{\ell}} -\beta_{j_{\ell}} )\theta_{\ell}}\mathrm{d} \theta_{1} \cdots \mathrm{d} \theta_{d}\\ &\  =\frac{1}{(2\pi )^{d}} \sum\limits_{\alpha \in \mathcal{A}} \left| c_{\alpha}(P) \right|^{2} \textbf{x}^{2\alpha} \int\limits_{[0,2\pi ]^{d}} 1\mathrm{d} \theta_{1} \cdots \mathrm{d} \theta_{d} =\sum\limits_{\alpha \in \mathcal{A}} \left| c_{\alpha}(P) \right|^{2} \textbf{x}^{2\alpha}\\ &\  \leqslant \frac{1}{(2\pi )^{d}} \int\limits_{[0,2\pi ]^{d}} \|P\|_{\mathcal{P}^{m} \left( Y \right)}^{2} \cdot \left\| \sum\limits_{\ell =1}^{d} x_{j_{\ell}}e^{\mathrm{i} \theta_{\ell}}e_{j_{\ell}} \right\|_{Y}^{2m} \mathrm{d} \theta_{1} \cdots \mathrm{d} \theta_{d}\\ &\  =\frac{1}{(2\pi )^{d}} \int\limits_{[0,2\pi ]^{d}} \|P\|_{\mathcal{P}^{m} \left( Y \right)}^{2} \cdot \|\textbf{x}\|_{Y}^{2m} \mathrm{d} \theta_{1} \cdots \mathrm{d} \theta_{d} =\|P\|_{\mathcal{P}^{m} \left( Y \right)}^{2},\end{aligned}
	\label{shi-xishupingfanghe}
\end{equation}
where the penultimate equality follows from \eqref{shi-qumobaofan}.  Then the Cauchy--Schwarz inequality, \eqref{shi-danxiangshishu}, and \eqref{shi-xishupingfanghe} give
\begin{equation}
	\begin{aligned}
		P^{\#}(\textbf{x})
		&=\sum\limits_{\alpha\in\mathcal{A}}|c_\alpha(P)|\textbf{x}^\alpha
		\leqslant|\mathcal{A}|^{\frac{1}{2}}
		\left(\sum\limits_{\alpha\in\mathcal{A}}|c_\alpha(P)|^2\textbf{x}^{2\alpha}\right)^{\frac{1}{2}}\leqslant\binom{m+d-1}{m}^{\frac{1}{2}}\|P\|_{\mathcal{P}^m(Y)},
	\end{aligned}
	\label{shi-xishuhepingfangjie}
\end{equation}
which gives \eqref{shi-zhichijishu}.

\end{proof}

\begin{lemma}\label{yinli-suijixishuhua}
Let $m\in\mathbb{N}$ and let $P\in \mathscr{P}_0\cap \mathcal{P}^{m} \left( Y\right)$. 
$\textbf{x}=\left( x_{i} \right)_{i\in \mathbb{N}}\in\mathbb{C}^{(\mathbb{N})}\cap[0,+\infty)^{\mathbb{N}}$
satisfies \begin{equation}
P^{\#}(\textbf{x})
\leqslant\frac{m^m}{m!}\binom{3m-1}{m}^{\frac{1}{2}}
\left(\|\textbf{x}\|_Y+
\frac{\varphi(m)}{m}\|\textbf{x}\|_{\ell^1}\right)^m
\|P\|_{\mathcal{P}^m(Y)}.
\label{shi-xishuhuaguji}
\end{equation}
\end{lemma}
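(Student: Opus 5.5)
The plan is to reduce \eqref{shi-xishuhuaguji} to \namedref{Lemma}{yinli-zhichijishu} with support size $d=2m$, since $\binom{m+2m-1}{m}=\binom{3m-1}{m}$. The reduction will be a probabilistic sampling argument, which is also where the factor $\frac{m^m}{m!}$ should come from. By homogeneity we may assume $\textbf{x}\neq\textbf{0}$. Put $s\triangleq\|\textbf{x}\|_{\ell^1}$ and
$$
U\triangleq\left\{j:x_j>\frac{s}{m}\right\},
$$
so that $|U|<m$. Write $\textbf{x}=\textbf{u}+\textbf{v}$, where $\textbf{u}$ is the restriction of $\textbf{x}$ to $U$ and $\textbf{v}$ the restriction to the complement. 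By \eqref{shi-fanlifanshudandiao}, $\|\textbf{u}\|_Y\leqslant\|\textbf{x}\|_Y$. The part $\textbf{u}$ is kept deterministically, and only $\textbf{v}$ will be randomized.

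\textbf{Sampling step.} Draw indices $j_1,\cdots,j_m$ independently, with $j_k=j$ with probability $\frac{v_j}{s}$, and with probability $\frac{\|\textbf{u}\|_{\ell^1}}{s}$ a ``null'' outcome that contributes nothing. Set
$$
\textbf{R}\triangleq\textbf{u}+\frac{s}{m}\sum\limits_{k=1}^{m}e_{j_k}
$$
(null draws omitted). Then $|\mathrm{supp}\textbf{R}|\leqslant 2m$. The key inequality to verify is
$$
\frac{m!}{m^m}P^{\#}(\textbf{x})\leqslant\mathbb{E}\,P^{\#}(\textbf{R}),
$$
which holds because every coefficient $|c_\alpha(P)|$ is nonnegative. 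Split $\alpha=\gamma+\beta$ with $\mathrm{supp}\gamma\subset U$. If the counts of the draws on $\mathrm{supp}\beta$ are exactly $\beta$ (a multinomial event), then $\textbf{R}^\beta=(s/m)^{|\beta|}\beta^\beta$. The probability of this event is at least $\frac{m!}{\beta!(m-|\beta|)!}\prod_j(v_j/s)^{\beta_j}$, up to the factor coming from the remaining draws. Combining this with $\beta^\beta\geqslant\beta!$ and $\textbf{R}^\gamma\geqslant\textbf{u}^\gamma$ should give $\mathbb{E}\,\textbf{R}^\alpha\geqslant\frac{m!}{m^m}\textbf{x}^\alpha$ termwise. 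The bookkeeping of the $m-|\beta|$ leftover draws is needed here; if it is awkward, I would let the leftover draws land in the null outcome, or scale $\textbf{u}$ by an extra constant.

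\textbf{Norm step.} By \namedref{Lemma}{yinli-zhichijishu}, for each realization
$$
P^{\#}(\textbf{R})\leqslant\binom{3m-1}{m}^{\frac{1}{2}}\|P\|_{\mathcal{P}^m(Y)}\|\textbf{R}\|_Y^m,
$$
so it remains to show
$$
\|\textbf{R}\|_Y\leqslant\|\textbf{x}\|_Y+\frac{\varphi(m)}{m}s
$$
almost surely. If the draws are distinct, this follows from the triangle inequality, \eqref{shi-xiangweifanshu} applied to $\frac{s}{m}\sum e_{j_k}$, and \eqref{shi-fanlifanshudandiao} for fewer than $m$ indices. The main obstacle is repeated draws: a coordinate with count $c\geqslant2$ has value $\frac{cs}{m}$, and $\bigl\|\sum_k e_{j_k}\bigr\|_Y$ can exceed $\varphi(m)$ (for instance when all draws coincide).

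To overcome this, I would first try to absorb repeated coordinates into the deterministic part. Since each off-$U$ coordinate satisfies $v_j\leqslant\frac{s}{m}$, the event that all $\beta$-counts are realized could instead be charged to a vector whose repeated positions are dominated by $\textbf{x}$ scaled appropriately. Failing that, I would replace multinomial sampling by sampling without replacement, recovering the lost $\beta^\beta/\beta!$ through the inequality $\textbf{v}^\beta\leqslant\frac{\beta^\beta}{\beta!}\cdot\frac{m!}{m^m}\cdot(\text{distinct-sample weight})$ relative to $(s/m)^{|\beta|}$, which uses $v_j\leqslant\frac{s}{m}$. Either variant should produce realizations of support at most $2m$ and norm at most $\|\textbf{x}\|_Y+\frac{\varphi(m)}{m}\|\textbf{x}\|_{\ell^1}$. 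Taking expectations and combining the sampling and norm steps then gives \eqref{shi-xishuhuaguji}.
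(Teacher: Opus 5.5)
Your architecture matches the paper's. You keep the fewer than $m$ coordinates exceeding $s/m$ deterministically, randomize the rest with multinomial weights $v_j/s$ plus a null outcome, compare $P^{\#}$ termwise using nonnegativity of the coefficients, and apply \namedref{Lemma}{yinli-zhichijishu} with $d\leqslant 2m$. Your termwise comparison does go through for your $\textbf{R}$, since the leftover factor is $\bigl(1-\sum_{j\in\mathrm{supp}\beta}v_j/s\bigr)^{m-|\beta|}\geqslant(1-|\beta|/m)^{m-|\beta|}$.

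\textbf{The gap is the norm step.} For $\textbf{R}$ with multiplicities, the almost-sure bound $\|\textbf{R}\|_Y\leqslant\|\textbf{x}\|_Y+\frac{\varphi(m)}{m}s$ is false, and neither remedy you list is specified precisely enough to check. Take $\textbf{x}=\frac{s}{m}\sum_{j=1}^{m}e_j$. Then $U=\emptyset$ and the right-hand side equals $\frac{2s}{m}\varphi(m)$. With probability $m^{1-m}>0$ all draws coincide, giving $\|\textbf{R}\|_Y=s$. Since $\varphi(m)/m\to0$ by \eqref{shi-canshuyujibenhanshu} and \eqref{shi-jixian}, this violates the bound for large $m$. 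You apply \namedref{Lemma}{yinli-zhichijishu} realization by realization and give no separate control of such realizations, so the argument does not close.

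\textbf{The missing idea is to record only the \emph{set} of drawn indices.} Let $F$ be the set of distinct non-null values among $j_1,\cdots,j_m$, and put $\widetilde{\textbf{R}}\triangleq\textbf{u}+\frac{s}{m}\sum_{j\in F}e_j$.
\begin{itemize}
\item Since $|F|\leqslant m$, the triangle inequality, \eqref{shi-xiangweifanshu}, and the monotonicity of $\varphi$ (from \eqref{shi-fanlifanshudandiao}) give $\|\widetilde{\textbf{R}}\|_Y\leqslant\|\textbf{x}\|_Y+\frac{\varphi(m)}{m}s$ for every realization. Also $|\mathrm{supp}\widetilde{\textbf{R}}|\leqslant 2m$.
\item Multiplicities now never have to be realized. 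For $\alpha=\gamma+\beta$ with $S\triangleq\mathrm{supp}\beta$, use only the event that each $j\in S$ is drawn exactly once. Its probability is $\frac{m!}{(m-|S|)!}\prod_{j\in S}\frac{v_j}{s}\bigl(1-\sum_{j\in S}\frac{v_j}{s}\bigr)^{m-|S|}\geqslant\frac{m!}{m^m}\prod_{j\in S}\frac{m v_j}{s}$, using $v_j\leqslant\frac{s}{m}$ and $(m-|S|)^{m-|S|}\geqslant(m-|S|)!$.
\item On this event $\widetilde{\textbf{R}}^{\alpha}=\textbf{u}^{\gamma}(s/m)^{|\beta|}$. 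Hence $\mathbb{E}\,\widetilde{\textbf{R}}^{\alpha}\geqslant\frac{m!}{m^m}\textbf{u}^{\gamma}\prod_{j\in S}(s/m)^{\beta_j-1}v_j\geqslant\frac{m!}{m^m}\textbf{x}^{\alpha}$.
\item Exponents $\beta_j\geqslant2$ are paid for by $v_j^{\beta_j-1}\leqslant(s/m)^{\beta_j-1}$, a second use of $v_j\leqslant\frac{s}{m}$, instead of by $\beta^\beta\geqslant\beta!$.
\end{itemize}
This is exactly the paper's proof, via the weights $\lambda_{i_1,\cdots,i_m}$, the sets $F_{i_1,\cdots,i_m}$, and the family $\mathcal I_S$. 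Your ``sampling without replacement'' remark points this way. However, the inequality you write there is garbled (the factor $\frac{m!}{m^m}$ sits on the wrong side), and no sampling distribution is actually specified.
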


\begin{proof}

Without loss of generality, assume that $\textbf{x} \neq \textbf{0}$. Put
\begin{equation}
	I\triangleq
	\left\{j\in \mathbb{N}:x_j\geqslant\frac{\|\textbf{x}\|_{\ell^1}}{m}\right\},
	\qquad
	J\triangleq
	\left\{j\in \mathbb{N}:0<x_j<\frac{\|\textbf{x}\|_{\ell^1}}{m}\right\}.
	\label{shi-daxiaozuobiao}
\end{equation}
Then $I\cup J=\mathrm{supp}\textbf{x}$ is finite. Moreover,
$$
\frac{\|\textbf{x}\|_{\ell^1}}{m}|I|
\leqslant\sum\limits_{j\in I}x_j
\leqslant\|\textbf{x}\|_{\ell^1},
$$
and hence $|I|\leqslant m$. For
$(i_1,\cdots,i_m)\in(J\cup\{0\})^m$, define
\begin{equation}
	\begin{aligned}	\lambda_{i_1,\cdots,i_m}&\triangleq
		\left(
		1-\sum\limits_{j\in J}\frac{x_j}{\|\textbf{x}\|_{\ell^1}}
		\right)^{|\{1\leqslant \nu \leqslant m:i_\nu=0\}|}
		\prod\limits_{\substack{1\leqslant\nu\leqslant m\\ i_\nu\neq0}}
		\frac{x_{i_\nu}}{\|\textbf{x}\|_{\ell^1}},\\F_{i_1,\cdots,i_m}&	\triangleq\{i_\nu:1\leqslant\nu\leqslant m,\ i_\nu\neq0\},\end{aligned}
	\label{shi-quanzhongzhichi}
\end{equation}
and set
\begin{equation}
	\textbf{y}_{i_1,\cdots,i_m}
	\triangleq
	\sum\limits_{j\in I}x_je_j+
	\frac{\|\textbf{x}\|_{\ell^1}}{m}
	\sum\limits_{j\in F_{i_1,\cdots,i_m}}e_j.
	\label{shi-xishuxiangliang}
\end{equation}
Since
$
0\leqslant
\sum\limits_{j\in J}\frac{x_j}{\|\textbf{x}\|_{\ell^1}}
\leqslant1,
$
all $\lambda_{i_1,\cdots,i_m}$ are nonnegative, and 
\begin{equation}
	\begin{aligned}
		\sum\limits_{(i_1,\cdots,i_m)\in(J\cup\{0\})^m}
		\lambda_{i_1,\cdots,i_m}
		&=
		\left(
		1-\sum\limits_{j\in J}\frac{x_j}{\|\textbf{x}\|_{\ell^1}}
		+\sum\limits_{j\in J}\frac{x_j}{\|\textbf{x}\|_{\ell^1}}
		\right)^m
		=1.
	\end{aligned}
	\label{shi-quanzhongweiyi}
\end{equation}
For every $(i_1,\cdots,i_m)\in(J\cup\{0\})^m$, one has
$|F_{i_1,\cdots,i_m}|\leqslant m$. Thus, by \eqref{shi-fanshudandiao} and \eqref{shi-xiangweifanshu},
\begin{equation}
	\begin{aligned}
		|\mathrm{supp}\textbf{y}_{i_1,\cdots,i_m}|
		&\leqslant|I|+|F_{i_1,\cdots,i_m}|
		\leqslant2m,\\
		\|\textbf{y}_{i_1,\cdots,i_m}\|_Y
		&\leqslant
		\left\|\sum\limits_{j\in I}x_je_j\right\|_Y
		+\frac{\|\textbf{x}\|_{\ell^1}}{m}
		\left\|\sum\limits_{j\in F_{i_1,\cdots,i_m}}e_j\right\|_Y\\
		&\leqslant
		\|\textbf{x}\|_Y+
		\frac{\varphi(m)}{m}\|\textbf{x}\|_{\ell^1}.
	\end{aligned}
	\label{shi-xishufanshujie}
\end{equation}

Fix $\alpha\in\mathbb{N}_0^{(\mathbb{N})}$ with $|\alpha|=m$ and
$\textbf{x}^{\alpha}>0$, and put
$
S\triangleq J\cap\mathrm{supp}\alpha.
$
Define
$$
\mathcal I_S
\triangleq
\left\{
(i_1,\cdots,i_m)\in(J\cup\{0\})^m:
\left| \{ \nu \in \{ 1,\cdots ,m\} :i_{\nu}=j\} \right|=1
\ \text{for every }j\in S
\right\}.
$$
Then, by \eqref{shi-quanzhongzhichi},
\begin{align}
	\sum\limits_{(i_1,\cdots,i_m)\in\mathcal I_S}
	\lambda_{i_1,\cdots,i_m}
	&=
	\frac{m!}{(m-|S|)!}
	\prod\limits_{j\in S}
	\frac{x_j}{\|\textbf{x}\|_{\ell^1}}
	\nonumber\\
	&\quad\times
	\sum\limits_{\substack{
			(i_1,\cdots,i_{m-|S|})\\
			\in((J\setminus S)\cup\{0\})^{m-|S|}
	}}
	\prod\limits_{\substack{1\leqslant \nu\leqslant m-|S|\\ i_\nu=0}}
	\left(
	1-\sum\limits_{j\in J}
	\frac{x_j}{\|\textbf{x}\|_{\ell^1}}
	\right)
	\prod\limits_{\substack{1\leqslant \nu\leqslant m-|S|\\ i_\nu\neq0}}
	\frac{x_{i_\nu}}{\|\textbf{x}\|_{\ell^1}}
	\nonumber\\
	&=
	\frac{m!}{(m-|S|)!}
	\prod\limits_{j\in S}
	\frac{x_j}{\|\textbf{x}\|_{\ell^1}}
	\left(
	1-\sum\limits_{j\in J}
	\frac{x_j}{\|\textbf{x}\|_{\ell^1}}
	+
	\sum\limits_{j\in J\setminus S}
	\frac{x_j}{\|\textbf{x}\|_{\ell^1}}
	\right)^{m-|S|}
	\nonumber\\
	&=
	\frac{m!}{(m-|S|)!}
	\prod\limits_{j\in S}
	\frac{x_j}{\|\textbf{x}\|_{\ell^1}}
	\left(
	1-\sum\limits_{j\in S}
	\frac{x_j}{\|\textbf{x}\|_{\ell^1}}
	\right)^{m-|S|}.
	\label{shi-zhidingzuobiaoquan}
\end{align}
By \eqref{shi-daxiaozuobiao},
$
\sum\limits_{j\in S}\frac{x_j}{\|\textbf{x}\|_{\ell^1}}
\leqslant\frac{|S|}{m},
$
and therefore \eqref{shi-zhidingzuobiaoquan} is at least
\begin{equation}
	\begin{aligned}
		&\frac{m!}{(m-|S|)!}
		\prod\limits_{j\in S}\frac{x_j}{\|\textbf{x}\|_{\ell^1}}
		\left(\frac{m-|S|}{m}\right)^{m-|S|}\\
		&\qquad=
		\frac{m!}{m^m}
		\frac{(m-|S|)^{m-|S|}}{(m-|S|)!}
		\prod\limits_{j\in S}
		\frac{m x_j}{\|\textbf{x}\|_{\ell^1}}
		\geqslant
		\frac{m!}{m^m}
		\prod\limits_{j\in S}
		\frac{m x_j}{\|\textbf{x}\|_{\ell^1}}.
	\end{aligned}
	\label{shi-zhidingzuobiaoquanjie}
\end{equation}

It follows from \eqref{shi-zhidingzuobiaoquanjie} that
\begin{equation}
	\begin{aligned}
		&\sum\limits_{(i_1,\cdots,i_m)\in(J\cup\{0\})^m}
		\lambda_{i_1,\cdots,i_m}
		\textbf{y}_{i_1,\cdots,i_m}^{\alpha}\geqslant
		\sum\limits_{(i_1,\cdots,i_m)\in\mathcal I_S}
		\lambda_{i_1,\cdots,i_m}
		\textbf{y}_{i_1,\cdots,i_m}^{\alpha}
		\\&\qquad =
		\prod\limits_{j\in I}x_j^{\alpha_j}
		\prod\limits_{j\in S}
		\left(
		\frac{\|\textbf{x}\|_{\ell^1}}{m}
		\right)^{\alpha_j}
		\sum\limits_{(i_1,\cdots,i_m)\in\mathcal I_S}
		\lambda_{i_1,\cdots,i_m}
		\\
		&\qquad\geqslant
		\frac{m!}{m^m}
		\prod\limits_{j\in I}x_j^{\alpha_j}
		\prod\limits_{j\in S}
		\left(
		\frac{\|\textbf{x}\|_{\ell^1}}{m}
		\right)^{\alpha_j}
		\frac{m x_j}{\|\textbf{x}\|_{\ell^1}}
		\\
		&\qquad=
		\frac{m!}{m^m}
		\prod\limits_{j\in I}x_j^{\alpha_j}
		\prod\limits_{j\in S}
		\left(
		\frac{\|\textbf{x}\|_{\ell^1}}{m}
		\right)^{\alpha_j-1}x_j
		\\
		&\qquad\geqslant
		\frac{m!}{m^m}
		\prod\limits_{j\in I}x_j^{\alpha_j}
		\prod\limits_{j\in S}x_j^{\alpha_j}
		=
		\frac{m!}{m^m}\textbf{x}^{\alpha},
	\end{aligned}
	\label{shi-dandiaoxishuquanjie}
\end{equation}where the last inequality follows from $0<x_{j}<\frac{\|\textbf{x}\|_{\ell^{1}}}{m}, j\in S\subset J$. If
$\textbf{x}^{\alpha}=0$, \eqref{shi-dandiaoxishuquanjie} also holds. Multiplying
\eqref{shi-dandiaoxishuquanjie} by $|c_\alpha(P)|$ and summing over
$|\alpha|=m$ yields
\begin{equation}
	\frac{m!}{m^m}P^{\#}(\textbf{x})
	\leqslant
	\sum\limits_{(i_1,\cdots,i_m)\in(J\cup\{0\})^m}
	\lambda_{i_1,\cdots,i_m}
	P^{\#}(\textbf{y}_{i_1,\cdots,i_m}).
	\label{shi-jiaquanduoxiangshijie}
\end{equation}
By \namedref{Lemma}{yinli-zhichijishu} and \eqref{shi-xishufanshujie}, for every
$(i_1,\cdots,i_m)\in(J\cup\{0\})^m$,
$$
P^{\#}(\textbf{y}_{i_1,\cdots,i_m})
\leqslant
\binom{3m-1}{m}^{\frac{1}{2}}
\left(
\|\textbf{x}\|_Y+
\frac{\varphi(m)}{m}\|\textbf{x}\|_{\ell^1}
\right)^m
\|P\|_{\mathcal{P}^m(Y)}.
$$
Combining this estimate with \eqref{shi-quanzhongweiyi} and
\eqref{shi-jiaquanduoxiangshijie} gives
$$
P^{\#}(\textbf{x})
\leqslant
\frac{m^m}{m!}\binom{3m-1}{m}^{\frac{1}{2}}
\left(
\|\textbf{x}\|_Y+
\frac{\varphi(m)}{m}\|\textbf{x}\|_{\ell^1}
\right)^m
\|P\|_{\mathcal{P}^m(Y)},
$$
which is \eqref{shi-xishuhuaguji}.
\end{proof}

For a complex normed space $E$ and $m\in\mathbb{N}$, let
$\mathcal{L}^m(E)$ denote the space of continuous complex $m$-linear
forms on $E$, with norm
\begin{equation}
\|B\|_{\mathcal{L}^m(E)}\triangleq
\sup\limits_{\substack{\textbf{z}^{(1)},\cdots,\textbf{z}^{(m)}\in E\\
\|\textbf{z}^{(j)}\|_E\leqslant1\ (1\leqslant j\leqslant m)}}
|B(\textbf{z}^{(1)},\cdots,\textbf{z}^{(m)})|.
\label{shi-duoxianxingfanshu}
\end{equation}
When no ambiguity can arise, we write $\|B\|$ for
$\|B\|_{\mathcal{L}^m(E)}$.

A form $B\in\mathcal{L}^m(Y)$ depends on finitely many coordinates
if there is a finite nonempty set $W\subset\mathbb{N}$ such that
$$
B(\textbf{z}^{(1)},\cdots,\textbf{z}^{(m)})
=B(P_W\textbf{z}^{(1)},\cdots,P_W\textbf{z}^{(m)}).
$$
For such a $m$-linear form, put
$b_{i_1,\cdots,i_m}\triangleq B(e_{i_1},\cdots,e_{i_m})$. Then
\begin{equation}
B(\textbf{z}^{(1)},\cdots,\textbf{z}^{(m)})
=\sum\limits_{(i_1,\cdots,i_m)\in W^m}
b_{i_1,\cdots,i_m}z_{i_1}^{(1)}\cdots z_{i_m}^{(m)},
\label{shi-duoxianxingzhankai}
\end{equation}
and we define
\begin{equation}
B^{\#}(\textbf{z}^{(1)},\cdots,\textbf{z}^{(m)})
\triangleq
\sum\limits_{(i_1,\cdots,i_m)\in W^m}
|b_{i_1,\cdots,i_m}|z_{i_1}^{(1)}\cdots z_{i_m}^{(m)}.
\label{shi-zhixishuxingshi}
\end{equation}
By \eqref{shi-fanliqianru}, this is a continuous complex $m$-linear form. A real or complex $m$-linear form $B$ is symmetric if
\begin{equation}
B(\textbf{z}^{(\pi(1))},\cdots,\textbf{z}^{(\pi(m))})
=B(\textbf{z}^{(1)},\cdots,\textbf{z}^{(m)}),
\qquad \pi\in S_m.
\label{shi-duoxianxingduichen}
\end{equation}
Put $$\mathcal{G}^{+}\triangleq
\{\textbf{u}=\left( u_{j} \right)_{j\in \mathbb{N}}\in\mathcal{G}:u_j\geqslant0\text{ for every }j\in\mathbb{N}\}.$$

\begin{lemma}\label{yinli-yuanziqufan}
	Let $m\in\mathbb{N}$ and let $B\in\mathcal{L}^m(Y)$ depend on
	finitely many coordinates. Then
	\begin{equation}
		\|B^{\#}\|_{\mathcal{L}^m(Y)}
		=
		\sup\limits_{\textbf{u}^{(1)},\cdots,\textbf{u}^{(m)}\in\mathcal{G}^{+}}
		B^{\#}
		\left(
		\textbf{u}^{(1)},\cdots,\textbf{u}^{(m)}
		\right).
		\label{shi-yuanziqufan}
	\end{equation}
	If $B$ is symmetric and
	$P(\textbf{z})\triangleq B(\textbf{z},\cdots,\textbf{z})$, then
	\begin{equation}
		P^{\#}(\textbf{z})
		=
		B^{\#}(\textbf{z},\cdots,\textbf{z}),
		\qquad
		\|P\|_{\mathcal{P}^m(Y)}
		\leqslant
		\|B\|_{\mathcal{L}^m(Y)},
		\label{shi-duijiaoxishu}
	\end{equation}
	and, if
	$$
	\textbf{u}^{(1)},\cdots,\textbf{u}^{(m)}\in Y,
	\qquad
	\textbf{u}^{(j)}=|\textbf{u}^{(j)}|,
	\quad 1\leqslant j\leqslant m,
	$$
	then
	\begin{equation}
		B^{\#}
		\left(
		\textbf{u}^{(1)},\cdots,\textbf{u}^{(m)}
		\right)
		\leqslant
		\frac{m^m}{m!}
		P^{\#}
		\left(
		\frac{1}{m}
		\sum\limits_{j=1}^{m}\textbf{u}^{(j)}
		\right).
		\label{shi-duichenpingjun}
	\end{equation}
\end{lemma}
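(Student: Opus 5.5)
We prove the three assertions of the lemma in the order stated.

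\textbf{The identity \eqref{shi-yuanziqufan}.} The inequality ``$\geqslant$'' is immediate, since every $\textbf{u}\in\mathcal{G}^{+}$ satisfies $\|\textbf{u}\|_Y\leqslant1$ (Proposition~\ref{mingti-fanlixingzhi}). For ``$\leqslant$'', fix $\textbf{z}^{(1)},\cdots,\textbf{z}^{(m)}$ in the unit ball of $Y$. Because the coefficients $|b_{i_1,\cdots,i_m}|$ are nonnegative,
\[
|B^{\#}(\textbf{z}^{(1)},\cdots,\textbf{z}^{(m)})|\leqslant B^{\#}(|\textbf{z}^{(1)}|,\cdots,|\textbf{z}^{(m)}|),
\]
and $\||\textbf{z}^{(j)}|\|_Y=\|\textbf{z}^{(j)}\|_Y$ by \eqref{shi-qumobaofan}. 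Since $B^{\#}$ depends only on the coordinates in $W$, we may replace each $|\textbf{z}^{(j)}|$ by $P_W|\textbf{z}^{(j)}|$, a nonnegative finitely supported vector of norm at most $1$.

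The proof of Lemma~\ref{yinli-fanshugouzao} with $\textbf{y}=\textbf{z}$, together with the proof of Proposition~\ref{mingti-fanlixingzhi}, gives the following: for every $\varepsilon>0$ there is a representation $|\textbf{x}|=\sum_v\lambda_v\textbf{u}_v$ with $\sum_v|\lambda_v|<1+\varepsilon$. The atoms $\textbf{u}_v$ may carry signs, so we pass to $|\textbf{u}_v|\in\mathcal{G}^{+}$. Coordinatewise this yields
\[
|\textbf{x}|\leqslant\sum_v|\lambda_v|\,|\textbf{u}_v|.
\]
Multilinearity and positivity of $B^{\#}$ on nonnegative vectors then give
\[
B^{\#}(\textbf{x}^{(1)},\cdots,\textbf{x}^{(m)})\leqslant(1+\varepsilon)^m\sup_{\mathcal{G}^{+}}B^{\#}.
\]
This monotonicity step is the only delicate point, and it holds because every coefficient of $B^{\#}$ is nonnegative. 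Letting $\varepsilon\to0$ proves \eqref{shi-yuanziqufan}.

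\textbf{The relations \eqref{shi-duijiaoxishu}.} The bound $\|P\|\leqslant\|B\|$ is trivial. For the identity $P^{\#}(\textbf{z})=B^{\#}(\textbf{z},\cdots,\textbf{z})$, expand $P$ using \eqref{shi-duoxianxingzhankai}. The coefficient $c_\alpha(P)$ equals $\frac{m!}{\alpha!}\,b_{i_1,\cdots,i_m}$ for any tuple $(i_1,\cdots,i_m)$ of type $\alpha$; this uses symmetry, under which all such tuples have equal coefficients. Hence $|c_\alpha(P)|=\frac{m!}{\alpha!}|b_{\mathbf{i}}|$, and regrouping the terms of $B^{\#}(\textbf{z},\cdots,\textbf{z})$ by type gives exactly $P^{\#}(\textbf{z})$.

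\textbf{The estimate \eqref{shi-duichenpingjun}.} Note that $B^{\#}$ is symmetric, because $|b|$ inherits the symmetry of $b$. Apply the polarization identity to $B^{\#}$ with nonnegative weights:
\[
P^{\#}\Bigl(\sum_j\textbf{u}^{(j)}\Bigr)=\sum_{k_1+\cdots+k_m=m}\frac{m!}{k_1!\cdots k_m!}\,B^{\#}\bigl((\textbf{u}^{(1)})^{k_1},\cdots,(\textbf{u}^{(m)})^{k_m}\bigr).
\]
Every term on the right is nonnegative, since both the coefficients and the vectors are nonnegative. The term with all $k_j=1$ equals $m!\,B^{\#}(\textbf{u}^{(1)},\cdots,\textbf{u}^{(m)})$, so
\[
m!\,B^{\#}(\textbf{u}^{(1)},\cdots,\textbf{u}^{(m)})\leqslant P^{\#}\Bigl(\sum_j\textbf{u}^{(j)}\Bigr)=m^m\,P^{\#}\Bigl(\frac1m\sum_j\textbf{u}^{(j)}\Bigr)
\]
by $m$-homogeneity. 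This is \eqref{shi-duichenpingjun}.
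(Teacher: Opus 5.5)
Your proposal is correct. For \eqref{shi-duijiaoxishu} and \eqref{shi-duichenpingjun} it is essentially the paper's argument. The paper expands $B^{\#}(\sum_j\textbf{u}^{(j)},\cdots,\sum_j\textbf{u}^{(j)})$ over all index tuples $(j_1,\cdots,j_m)$ and keeps only the $m!$ permutation terms. That is the same as isolating the term $k_1=\cdots=k_m=1$ in your multinomial expansion.

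The genuine difference is in the upper bound in \eqref{shi-yuanziqufan}.

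\emph{The paper's route.} It bounds $B^{\#}$ by $K$ in four stages:
\begin{enumerate}
\item on $\mathcal{G}$, via $|B^{\#}(\textbf{u}^{(1)},\cdots,\textbf{u}^{(m)})|\leqslant B^{\#}(|\textbf{u}^{(1)}|,\cdots,|\textbf{u}^{(m)}|)$;
\item on $\mathrm{conv}\,\mathcal{G}$, by multilinearity;
\item on the real unit ball, by the identification \eqref{shi-shiqiutubao} of that ball with $\overline{\mathrm{conv}\,\mathcal{G}}^{\,Y}$ plus continuity;
\item on the complex unit ball, by passing to $|\textbf{z}|$.
\end{enumerate}

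\emph{Your route.} You use that $B^{\#}$ sees only the coordinates in $W$, so you can truncate by $P_W$ to finitely supported nonnegative vectors. For those, the norm is literally the atomic infimum. You then remove the signs of the atoms through the coordinatewise domination $|\textbf{x}|\leqslant\sum_v|\lambda_v|\,|\textbf{u}_v|$, with $|\textbf{u}_v|\in\mathcal{G}^{+}$, and monotonicity of $B^{\#}$ on the positive cone.

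\emph{What each buys.} Your argument bypasses \eqref{shi-shiqiutubao} and any closure or continuity step. Positivity of the coefficients of $B^{\#}$ does the work of the sign-flipping trick ($\textbf{w}_v$, $\widetilde{\textbf{u}}_v$) that the paper uses to prove \eqref{shi-shiqiutubao}. The cost is an $\varepsilon$ and the truncation $P_W$. The paper's route instead reuses a structural fact about $Y$ that it needs anyway.

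One small point: you justify the near-optimal representation by citing the proofs of Lemma~\ref{yinli-fanshugouzao} and Proposition~\ref{mingti-fanlixingzhi}, which is more roundabout than necessary. It is just the definition of $\|\cdot\|_Y$ on $\mathbb{C}^{(\mathbb{N})}$ as an infimum. Add the observation that, for finitely supported vectors, the norm in Definition~\ref{dingyi-fanlikongjian-Y} coincides with it, since $P_n\textbf{x}=\textbf{x}$ for large $n$.
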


\begin{proof}
	Set
	$$
	K\triangleq
	\sup\limits_{\textbf{u}^{(1)},\cdots,\textbf{u}^{(m)}\in\mathcal{G}^{+}}
	B^{\#}\left(\textbf{u}^{(1)},\cdots,\textbf{u}^{(m)}\right).
	$$
	For $\textbf{u}^{(1)},\cdots,\textbf{u}^{(m)}\in\mathcal{G}$, by
	\eqref{shi-zhixishuxingshi},
	$$
	\left|
	B^{\#}\left(\textbf{u}^{(1)},\cdots,\textbf{u}^{(m)}\right)
	\right|
	\leqslant
	B^{\#}\left(
	|\textbf{u}^{(1)}|,\cdots,|\textbf{u}^{(m)}|
	\right)
	\leqslant K.
	$$
	If
	$$
	\textbf{x}^{(j)}
	=
	\sum\limits_{\nu=1}^{N_j}
	\lambda_{\nu}^{(j)}\textbf{u}_{\nu}^{(j)}
	\in\mathrm{conv}\mathcal{G},
	\qquad
	\lambda_{\nu}^{(j)}\geqslant0,
	\qquad
	\sum\limits_{\nu=1}^{N_j}\lambda_{\nu}^{(j)}=1,j=1,2,\cdots,m,
	$$
	then multilinearity gives
	\begin{align*}
		\left|
		B^{\#}\left(\textbf{x}^{(1)},\cdots,\textbf{x}^{(m)}\right)
		\right|
		&\leqslant
		\sum\limits_{\nu_1=1}^{N_1}\cdots
		\sum\limits_{\nu_m=1}^{N_m}
		\left(
		\prod\limits_{j=1}^{m}\lambda_{\nu_j}^{(j)}
		\right)
		\left|
		B^{\#}\left(
		\textbf{u}_{\nu_1}^{(1)},\cdots,
		\textbf{u}_{\nu_m}^{(m)}
		\right)
		\right|
		\\
		&\leqslant
		K
		\sum\limits_{\nu_1=1}^{N_1}\cdots
		\sum\limits_{\nu_m=1}^{N_m}
		\prod\limits_{j=1}^{m}\lambda_{\nu_j}^{(j)}
		=
		K.
	\end{align*}
	Thus, by \eqref{shi-shiqiutubao} and continuity,
	$$
	\left|
	B^{\#}\left(\textbf{x}^{(1)},\cdots,\textbf{x}^{(m)}\right)
	\right|
	\leqslant K
	$$
	whenever $\textbf{x}^{(j)}\in Y\cap\mathbb{R}^{\mathbb{N}}$ and
	$\|\textbf{x}^{(j)}\|_{Y}\leqslant1$, $j=1,\cdots,m$.
	
	Now let $\textbf{z}^{(j)}\in Y$ with
	$\|\textbf{z}^{(j)}\|_{Y}\leqslant1,j=1,2,\cdots,m$. By
	\eqref{shi-qumobaofan},
	$$
	|\textbf{z}^{(j)}|\in Y\cap\mathbb{R}^{\mathbb{N}},
	\qquad
	\left\||\textbf{z}^{(j)}|\right\|_{Y}
	=
	\|\textbf{z}^{(j)}\|_{Y}
	\leqslant1,
	$$
	and therefore
	$$
	\left|
	B^{\#}\left(\textbf{z}^{(1)},\cdots,\textbf{z}^{(m)}\right)
	\right|
	\leqslant
	B^{\#}\left(
	|\textbf{z}^{(1)}|,\cdots,|\textbf{z}^{(m)}|
	\right)
	\leqslant K.
	$$
	Hence
	$\|B^{\#}\|_{\mathcal{L}^{m}(Y)}\leqslant K$.
	Conversely, by \eqref{shi-shiqiutubao},
	$$
	\mathcal{G}^{+}
	\subset
	\mathcal{G}
	\subset
	\overline{\mathrm{conv}\mathcal{G}}^{\,Y}
	=
	\left\{
	\textbf{x}\in Y\cap\mathbb{R}^{\mathbb{N}}:
	\|\textbf{x}\|_{Y}\leqslant1
	\right\},
	$$
	so that $K\leqslant\|B^{\#}\|_{\mathcal{L}^{m}(Y)}$. This proves
	\eqref{shi-yuanziqufan}.
	
By the symmetry of $B$ and the definitions of $P^{\#}$ and $B^{\#}$, \eqref{shi-duijiaoxishu} follows immediately.
	
	Finally, let $\textbf{u}^{(j)}\in Y$ satisfy
	$\textbf{u}^{(j)}=|\textbf{u}^{(j)}|$, $j=1,\cdots,m$. Since
	$B^{\#}$ has nonnegative coefficients,
	$$
	B^{\#}\left(
	\textbf{u}^{(j_1)},\cdots,\textbf{u}^{(j_m)}
	\right)
	\geqslant0
	\qquad
	(1\leqslant j_1,\cdots,j_m\leqslant m).
	$$
	Hence, using symmetry,
	\begin{align*}
		&m^{m}
		P^{\#}\left(
		\frac{1}{m}\sum\limits_{j=1}^{m}\textbf{u}^{(j)}
		\right)
		=
		B^{\#}\left(
		\sum\limits_{j=1}^{m}\textbf{u}^{(j)},
		\cdots,
		\sum\limits_{j=1}^{m}\textbf{u}^{(j)}
		\right)
=
		\sum\limits_{j_1,\cdots,j_m=1}^{m}
		B^{\#}\left(
		\textbf{u}^{(j_1)},\cdots,\textbf{u}^{(j_m)}
		\right)
		\\
		&\qquad\geqslant
		\sum\limits_{\pi\in S_m}
		B^{\#}\left(
		\textbf{u}^{(\pi(1))},\cdots,\textbf{u}^{(\pi(m))}
		\right)=
		m!\,
		B^{\#}\left(
		\textbf{u}^{(1)},\cdots,\textbf{u}^{(m)}
		\right),
	\end{align*}
	which yields \eqref{shi-duichenpingjun}.
\end{proof}

\begin{lemma}\label{yinli-youxianwang}
Let $E$ be a complex normed space of dimension $d\geqslant1$, and
let $m\geqslant2$ be an integer. There is a finite set
$\mathcal{N}\subset\overline{B_E}$ such that
\begin{equation}
|\mathcal{N}|\leqslant(1+4m)^{2d},
\qquad
\|C\|_{\mathcal{L}^{m-1}(E)}
\leqslant\frac{2m}{m+1}
\max\limits_{\xi_{1} ,\cdots ,\xi_{m-1} \in \mathcal{N}} |C(\xi_{1} ,\cdots ,\xi_{m-1} )|
\label{shi-youxianwangguji}
\end{equation}
for every $C\in\mathcal{L}^{m-1}(E)$.
\end{lemma}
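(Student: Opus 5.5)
We will take $\mathcal{N}$ to be a maximal $\varepsilon$-separated subset of the closed unit ball $\overline{B_E}$, with $\varepsilon\triangleq\frac{1}{2m}$. Regarding $E$ as a real normed space of dimension $2d$, the standard volume argument bounds $|\mathcal{N}|$. The balls $\xi+\frac{\varepsilon}{2}B_E$, $\xi\in\mathcal{N}$, are pairwise disjoint and contained in $(1+\frac{\varepsilon}{2})B_E$. Comparing Lebesgue measure in $\mathbb{R}^{2d}$ gives
$$
|\mathcal{N}|\leqslant\left(\frac{1+\varepsilon/2}{\varepsilon/2}\right)^{2d}=\left(1+\frac{2}{\varepsilon}\right)^{2d}=(1+4m)^{2d}.
$$
By maximality, every $\textbf{z}\in\overline{B_E}$ satisfies $\|\textbf{z}-\xi\|_E\leqslant\varepsilon$ for some $\xi\in\mathcal{N}$. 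This is the first half of \eqref{shi-youxianwangguji}.

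For the norm estimate, we write $M\triangleq\|C\|_{\mathcal{L}^{m-1}(E)}$; it is finite and attained because $\dim E<\infty$. We also write $\mu\triangleq\max_{\xi_j\in\mathcal{N}}|C(\xi_1,\cdots,\xi_{m-1})|$. Fix $\textbf{z}^{(1)},\cdots,\textbf{z}^{(m-1)}\in\overline{B_E}$ and choose $\xi_j\in\mathcal{N}$ with $\|\textbf{z}^{(j)}-\xi_j\|_E\leqslant\varepsilon$. We then telescope:
$$
C(\textbf{z}^{(1)},\cdots,\textbf{z}^{(m-1)})-C(\xi_1,\cdots,\xi_{m-1})
=\sum_{k=1}^{m-1}C(\xi_1,\cdots,\xi_{k-1},\textbf{z}^{(k)}-\xi_k,\textbf{z}^{(k+1)},\cdots,\textbf{z}^{(m-1)}).
$$
In the $k$th term, all arguments lie in $\overline{B_E}$ except $\textbf{z}^{(k)}-\xi_k$, whose norm is at most $\varepsilon$. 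Hence each term has modulus at most $\varepsilon M$, and
$$
|C(\textbf{z}^{(1)},\cdots,\textbf{z}^{(m-1)})|\leqslant\mu+(m-1)\varepsilon M.
$$

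Taking the supremum over the $\textbf{z}^{(j)}$ gives $M\leqslant\mu+\frac{m-1}{2m}M$. Rearranging yields $M\cdot\frac{m+1}{2m}\leqslant\mu$, which is exactly $M\leqslant\frac{2m}{m+1}\mu$. The only delicate points are the choice of $\varepsilon$ and the volume count in real dimension $2d$. The hypothesis $m\geqslant2$ is needed only so that $\mathcal{L}^{m-1}(E)$ consists of forms with at least one argument.
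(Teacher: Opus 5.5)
Your proposal is correct and is essentially the paper's proof: a maximal $\frac{1}{2m}$-separated subset of $\overline{B_E}$, counted by comparing volumes in real dimension $2d$, followed by the same telescoping bound $\|C\|\leqslant \mu+\frac{m-1}{2m}\|C\|$. The only cosmetic difference is that the paper obtains existence of a maximal separated family from compactness of $\overline{B_E}$, whereas you implicitly get it from the uniform volume bound on the size of separated sets; both work.
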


\begin{proof}
	\proofstep{1}
	
	For $\textbf{a}\in E$ and $r>0$, write
	$$
	B_E(\textbf{a},r)
	\triangleq
	\left\{
	\textbf{x}\in E:
	\|\textbf{x}-\textbf{a}\|_E<r
	\right\}.
	$$
	Consider the largest $q\in\mathbb{N}$ for which there exist
	$\xi^{(1)},\cdots,\xi^{(q)}\in E$ satisfying
	$$
	\|\xi^{(j)}\|_E\leqslant1,
	\qquad
	1\leqslant j\leqslant q,
	$$
	and
	$$
	\|\xi^{(i)}-\xi^{(j)}\|_E>\frac{1}{2m},
	\qquad
	1\leqslant i<j\leqslant q.
	$$
	Such a largest $q$ exists since $\overline{B_E}$ is compact.
	
	The sets $\left\{ B_{E}\left( \xi^{(j)} ,\frac{1}{4m} \right) \right\}_{j=1}^{q}$are pairwise disjoint. Moreover, if
	$\textbf{x}\in B_E\left(\xi^{(j)},\frac{1}{4m}\right)$, then
	$$
	\|\textbf{x}\|_E
	\leqslant
	\|\textbf{x}-\xi^{(j)}\|_E
	+\|\xi^{(j)}\|_E
	<
	1+\frac{1}{4m}.
	$$
	Hence
	$$
	\bigcup\limits_{j=1}^{q}
	B_E\left(\xi^{(j)},\frac{1}{4m}\right)
	\subset
	\left(1+\frac{1}{4m}\right)B_E.
	$$
	
	Let $\mathrm{vol}_{2d}$ denote Lebesgue measure on the
	underlying real $2d$-dimensional vector space of $E$. We have
	$$
	\begin{aligned}
		&q\left(\frac{1}{4m}\right)^{2d}
		\mathrm{vol}_{2d}(B_E)
		=
		\sum\limits_{j=1}^{q}
		\mathrm{vol}_{2d}
		\left(
		B_E\left(\xi^{(j)},\frac{1}{4m}\right)
		\right)
		\\
		&\quad=
		\mathrm{vol}_{2d}
		\left(
		\bigcup\limits_{j=1}^{q}
		B_E\left(\xi^{(j)},\frac{1}{4m}\right)
		\right)
		\\
		&\quad\leqslant
		\mathrm{vol}_{2d}
		\left(
		\left(1+\frac{1}{4m}\right)B_E
		\right)
		=
		\left(1+\frac{1}{4m}\right)^{2d}
		\mathrm{vol}_{2d}(B_E).
	\end{aligned}
	$$
	Therefore
$
	q
	\leqslant
	\left(
	\frac{1+\frac{1}{4m}}
	{\frac{1}{4m}}
	\right)^{2d}
	=
	(1+4m)^{2d}.
$ Put
	$$
	\mathcal{N}
	\triangleq
	\left\{
	\xi^{(1)},\cdots,\xi^{(q)}
	\right\}.
	$$
	Then
$
	|\mathcal{N}|
	\leqslant
	(1+4m)^{2d}.
$

	\proofstep{2}
	
	Let $C\in\mathcal{L}^{m-1}(E)$, and put
$
	M_C
	\triangleq
\max\limits_{\xi_{1} ,\cdots ,\xi_{m-1} \in \mathcal{N}} |C(\xi_{1} ,\cdots ,\xi_{m-1} )|.
$
	Fix
	$\textbf{x}^{(1)},\cdots,\textbf{x}^{(m-1)}
	\in\overline{B_E}$.
		By the maximality of $q$, for every $1\leqslant j\leqslant m-1$ there exists
	$\xi^{(k_{j})}\in\mathcal{N}$ such that
	$$
	\|\textbf{x}^{(j)}-\xi^{(k_{j})}\|_E
	\leqslant
	\frac{1}{2m}.
	$$
	By multilinearity,
	$$
	\begin{aligned}
		&\left|
		C\left(
		\textbf{x}^{(1)},\cdots,\textbf{x}^{(m-1)}
		\right)
		-
		C\left(
		\xi^{(k_1)},\cdots,\xi^{(k_{m-1})}
		\right)
		\right|
		\\
		&\quad\leqslant
		\sum\limits_{j=1}^{m-1}
		\left|
		C\left(
		\xi^{(k_1)},\cdots,\xi^{(k_{j-1})},
		\textbf{x}^{(j)}-\xi^{(k_j)},
		\textbf{x}^{(j+1)},\cdots,\textbf{x}^{(m-1)}
		\right)
		\right|
		\\
		&\quad\leqslant
		\sum\limits_{j=1}^{m-1}
		\|C\|_{\mathcal{L}^{m-1}(E)}
		\left\|
		\textbf{x}^{(j)}-\xi^{(k_j)}
		\right\|_E\leqslant
		\frac{m-1}{2m}
		\|C\|_{\mathcal{L}^{m-1}(E)}.
	\end{aligned}
	$$
	It follows that
	$$
\begin{aligned}\left| C\left( \textbf{x}^{(1)} ,\cdots ,\textbf{x}^{(m-1)} \right) \right|&\leqslant \left| C\left( \xi^{(k_{1})} ,\cdots ,\xi^{(k_{m-1})} \right) \right| +\frac{m-1}{2m} \| C\|_{\mathcal{L}^{m-1} (E)}\\ &\leqslant M_{C}+\frac{m-1}{2m} \| C\|_{\mathcal{L}^{m-1} (E)} .\end{aligned}
	$$
Hence
	$$
	\|C\|_{\mathcal{L}^{m-1}(E)}
	\leqslant
	M_C
	+
	\frac{m-1}{2m}
	\|C\|_{\mathcal{L}^{m-1}(E)}.
	$$
	Therefore
	$$
	\|C\|_{\mathcal{L}^{m-1}(E)}
	\leqslant
	\frac{2m}{m+1}
	\max\limits_{\xi^{(1)},\cdots,\xi^{(m-1)}\in\mathcal{N}}
	\left|
	C\left(\xi^{(1)},\cdots,\xi^{(m-1)}\right)
	\right|.
	$$
	This proves \eqref{shi-youxianwangguji}.
\end{proof}

For $m\in\mathbb{N}$, put
\begin{equation}
\theta_m\triangleq1+\frac{\varphi(m)}{m}t_{m^2-1},
\qquad K_1\triangleq1,
\qquad
K_m\triangleq
\left(\frac{m^m}{m!}\right)^2
\binom{3m-1}{m}^{\frac{1}{2}}\theta_m^m
,\quad m=2,3,\cdots.
\label{shi-guinachangshu}
\end{equation}

\begin{proposition}\label{mingti-duoxianxingguji}
For every $m\in\mathbb{N}$ and every symmetric
$B\in\mathcal{L}^m(Y)$ depending on finitely many coordinates,
\begin{equation}
\|B^{\#}\|_{\mathcal{L}^m(Y)}
\leqslant K_m\|B\|_{\mathcal{L}^m(Y)}.
\label{shi-duoxianxingguji}
\end{equation}
\end{proposition}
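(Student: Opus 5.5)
The plan is to reduce \eqref{shi-duoxianxingguji} to evaluation on nonnegative atoms, and then to split according to the sizes of the atoms. The threshold $m^2-1$ is suggested by the constant $\theta_m$ in \eqref{shi-guinachangshu}. For $m=1$, $B$ is a linear functional $\psi$ depending on finitely many coordinates. For $\textbf{u}\in\mathcal{G}_r^{+}$ with support $F$, flipping signs gives an element $\frac{t_r}{a_r}\sum_{j\in F}\epsilon_je_j\in\mathcal{G}_r$ of norm $\leqslant1$ with $\psi$-value $\frac{t_r}{a_r}\sum_{j\in F}|\psi(e_j)|=B^{\#}(\textbf{u})$. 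Hence $K_1=1$ works by \eqref{shi-yuanziqufan}. From now on let $m\geqslant2$. By \eqref{shi-yuanziqufan} it suffices to bound $B^{\#}(\textbf{u}^{(1)},\cdots,\textbf{u}^{(m)})$ for $\textbf{u}^{(j)}\in\mathcal{G}^{+}_{r_j}$.

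\textbf{Small atoms.} Suppose $r_j\leqslant m^2-1$ for all $j$. Set $\bar{\textbf{u}}\triangleq\frac1m\sum_j\textbf{u}^{(j)}$ and $P(\textbf{z})=B(\textbf{z},\cdots,\textbf{z})$. Then \eqref{shi-duichenpingjun} gives $B^{\#}(\textbf{u}^{(1)},\cdots,\textbf{u}^{(m)})\leqslant\frac{m^m}{m!}P^{\#}(\bar{\textbf{u}})$. Next apply \namedref{Lemma}{yinli-suijixishuhua} to $\bar{\textbf{u}}$, together with $\|P\|\leqslant\|B\|$ from \eqref{shi-duijiaoxishu}. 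We have $\|\bar{\textbf{u}}\|_Y\leqslant1$, and by \eqref{shi-huanmancanshujie}
$$
\|\bar{\textbf{u}}\|_{\ell^1}\leqslant\max_j\|\textbf{u}^{(j)}\|_{\ell^1}=\max_jt_{r_j}\leqslant t_{m^2-1}.
$$
Hence $\|\bar{\textbf{u}}\|_Y+\frac{\varphi(m)}{m}\|\bar{\textbf{u}}\|_{\ell^1}\leqslant\theta_m$. This produces exactly the bound $\left(\frac{m^m}{m!}\right)^2\binom{3m-1}{m}^{\frac12}\theta_m^m\|B\|=K_m\|B\|$.

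\textbf{Some large atom.} Suppose some $r_j\geqslant m^2$. By symmetry, let $\textbf{u}^{(1)}$ have the largest index $r=r_1\geqslant m^2$ and support $F$, $|F|=a_r$. Write
$$
B^{\#}(\textbf{u}^{(1)},\cdots,\textbf{u}^{(m)})=\frac{t_r}{a_r}\sum_{j\in F}C_j^{\#}(\textbf{u}^{(2)},\cdots,\textbf{u}^{(m)}),\qquad C_j\triangleq B(e_j,\cdot,\cdots,\cdot).
$$
Each $C_j$ is a symmetric $(m-1)$-linear form depending on finitely many coordinates. Restrict the $C_j$ to $E\triangleq\mathrm{span}\{e_i:i\in\bigcup_{k\geqslant2}\mathrm{supp}\,\textbf{u}^{(k)}\}$ and use \namedref{Lemma}{yinli-youxianwang}:
$$
\|C_j|_E\|\leqslant\frac{2m}{m+1}\max_{\xi\in\mathcal{N}^{m-1}}|C_j(\xi)|\leqslant2\sum_{\xi\in\mathcal{N}^{m-1}}|C_j(\xi)|.
$$
For each fixed $\xi$, choose signs $\epsilon_j$ aligning the phases of $C_j(\xi)=B(e_j,\xi)$. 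Since $\frac{t_r}{a_r}\sum_{j\in F}\epsilon_je_j$ has norm $1$, this gives $\frac{t_r}{a_r}\sum_{j\in F}|C_j(\xi)|\leqslant\|B\|$. Combining this with the induction hypothesis $\|C_j^{\#}\|\leqslant K_{m-1}\|C_j\|$ yields
$$
B^{\#}(\textbf{u}^{(1)},\cdots,\textbf{u}^{(m)})\leqslant2K_{m-1}|\mathcal{N}|^{m-1}\|B\|,
$$
with $|\mathcal{N}|^{m-1}\leqslant(1+4m)^{2(m-1)\dim E}$. The remaining task is to compare $2K_{m-1}|\mathcal{N}|^{m-1}$ with $K_m$.

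\textbf{Main obstacle.} The difficulty is that $\dim E$ can be as large as $(m-1)a_r$ when the other atoms are equally large. In that regime the net cardinality cannot be absorbed, even though the double-exponential growth $a_r=2^{a_{r-1}+1}$ would absorb $(1+4m)^{O(m\,a_{r-1})}$. I would first try to repair this by a different normalisation: spend a factor $\frac{t_r}{a_r}$ per large atom, and treat all large atoms simultaneously by choosing signs on a product of supports. The target is to reduce the norm of the large-atom part to the $\ell^\infty$-type bound $\|\textbf{u}\|_{\ell^\infty}=\frac{t_r}{a_r}\leqslant2^{-r}$. The net then only needs to cover the span of the small-atom supports, of dimension at most $m\,a_{m^2-1}$, whereas each large atom index $\geqslant m^2$ gives a gain of $\frac{a_r}{t_r}$, which dominates. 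If this works, the final step is to check that the combined constant is dominated by $K_m$ of \eqref{shi-guinachangshu}, which has ample room through the factor $\theta_m^m$. I expect this accounting in the large-atom case to be the real difficulty of the proof.
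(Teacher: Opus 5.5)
Your base case and your small-atom case ($\max_j r_j\leqslant m^2-1$) are correct and coincide with the paper's. In the case $r\triangleq\max_j r_j\geqslant m^2$, your set-up is also the paper's: expand $B^{\#}$ along the support $F$ of the largest atom, apply the induction hypothesis to $C_k=B(e_k,\cdot,\cdots,\cdot)$, $k\in F$, and use the net of \namedref{Lemma}{yinli-youxianwang}. The gap is that the estimate you extract from this set-up is too lossy, and the proposal stops before supplying a working fix.

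Replacing $\max_{\xi}|C_k(\xi)|$ by $\sum_{\xi\in\mathcal{N}^{m-1}}|C_k(\xi)|$ makes the net cardinality a multiplicative factor. The sign-alignment bound $\frac{t_r}{a_r}\sum_{k\in F}|C_k(\xi)|\leqslant\|B\|$ controls only one $\xi$ at a time, and you identify no compensating ``gain $\frac{a_r}{t_r}$''. Product sign patterns on several supports cannot align the phases of a general coefficient array; this is the Littlewood-type obstruction behind \namedref{Theorem}{dingli-ercizhangai}. Nor does $K_m$ have spare room. Since $t_{m^2-1}\leqslant2t_m\leqslant\frac{4m}{\varphi(m)}$, one has $\theta_m\leqslant5$, so $\frac{K_m}{K_{m-1}}$ grows at most exponentially in $m$. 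Your factor $2|\mathcal{N}|^{m-1}$, by contrast, is unbounded in $r$ as it stands. Even after your repair it can be of tower size in $m$: a $\frac{1}{2m}$-net of a ball of complex dimension $d$ has at least $(2m)^{2d}$ points, and $d$ may be as large as $a_{m^2-1}$.

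The missing idea is to keep, for each $k\in F$, a single maximizing net tuple $\Theta(k)$, and to partition $F=\bigsqcup_\nu D_\nu$ according to the value of $\Theta$. Aligning phases on each $D_\nu$ separately gives $\sum_{k\in D_\nu}\|C_k|_E\|\leqslant\frac{2m}{m+1}\|B\|\varphi(|D_\nu|)$. The estimate $\varphi(n)\leqslant\frac{n}{t_{r-2}}+a_{r-2}$ then bounds $\frac{t_r}{a_r}\sum_\nu\varphi(|D_\nu|)$ by $\frac{t_r}{t_{r-2}}+\frac{t_rMa_{r-2}}{a_r}$, where $M\leqslant|\mathcal{N}|^{m-1}$ is the number of parts. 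The net size now enters only an additive error term.

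For that error term to be small you need $\dim E\leqslant(m-1)a_{r-2}$, which is exactly your ``main obstacle'' when the other atoms are also large. The paper removes it with the averaging identity \eqref{shi-jiangcengpingjun}. An atom of level $r_j>r-2$ equals $\frac{t_{r_j}}{t_{r-2}}$ times the average of all nonnegative level-$(r-2)$ atoms supported in its support. By multilinearity and positivity of $B^{\#}$, it may therefore be replaced by a single such atom, at a total multiplicative cost of at most $\bigl(\frac{t_r}{t_{r-2}}\bigr)^{m-1}\leqslant\bigl(\frac{r+1}{r-1}\bigr)^{m-1}$.

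Dropping only to level $r-1$ would not suffice: the net count $(1+4m)^{2(m-1)^2a_{r-1}}$ is not beaten by $a_r=2^{a_{r-1}+1}$. At level $r-2$ one has $a_r=2^{2^{a_{r-2}+1}+1}$, which gives $\frac{t_ra_{r-2}}{a_r}(1+4m)^{2(m-1)^2a_{r-2}}\leqslant\frac{1}{16}$. Then $r\geqslant m^2$ yields a coefficient $\frac{2m}{m+1}\bigl(\frac{r+1}{r-1}\bigr)^{m-1}\bigl(\frac{r+1}{r-1}+\frac{1}{16}\bigr)<4\leqslant\frac{K_m}{K_{m-1}}$ in front of $K_{m-1}\|B\|$, and this closes the induction.
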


\begin{proof}
The argument is inspired by
\cite[Lemma~4.2 and  Theorem~4.3, pp.~59--62]{DK05}.
We first verify that
\begin{equation}
K_m\geqslant4K_{m-1},\qquad m\geqslant2.
\label{shi-changshuzengzhang}
\end{equation}
Firstly,
$$K_{2}=4\sqrt{10} \theta_{2}^{2} \geqslant 4\sqrt{10} >4K_{1},$$Then for $m\geqslant 3$, since $\left( t_{r} \right)_{r\in \mathbb{N}_{0}} ,\left( \varphi \left( n \right) \right)_{n\in \mathbb{N}}$ are increasing, we have
$$
\begin{aligned}\theta_{m}&=1+\frac{\varphi \left( m \right)}{m} t_{m^{2}-1}\geqslant 1+\frac{\varphi \left( m-1 \right)}{m} t_{\left( m-1 \right)^{2} -1}\\ &=1+\frac{\left( \theta_{m-1} -1 \right) \left( m-1 \right)}{m} =\frac{1}{m} +\frac{m-1}{m} \theta_{m-1}.\end{aligned}
$$
Hence
$$\log \theta_{m} \geqslant \log \left( \frac{1}{m} \cdot 1+\left( 1-\frac{1}{m} \right) \theta_{m-1} \right) \geqslant \frac{1}{m} \log 1+\left( 1-\frac{1}{m} \right) \log \theta_{m-1},$$
that is, $\theta_m^m\geqslant\theta_{m-1}^{m-1}$. Moreover, for $m\geqslant 3$, we have
$$
\frac{\frac{m^{m}}{m!}}{\frac{\left( m-1 \right)^{m-1}}{\left( m-1 \right) !}} =\left( 1+\frac{1}{m-1} \right)^{m-1} \geqslant 2,\frac{\binom{3m-1}{m}}{\binom{3m-4}{m-1}} =\frac{3\left( 3m-1 \right) \left( 3m-2 \right)}{2m\left( 2m-1 \right)} >1 .
$$
These inequalities prove \eqref{shi-changshuzengzhang}.

We now argue by induction on $m$. For $m=1$, put
$$
B\left( \textbf{z} \right) = \sum\limits_{i=1}^{n} B\left( e_{i} \right) z_{i},\textbf{z} =\left( z_{i} \right)_{i\in \mathbb{N}} \in Y, \omega_j\triangleq
\begin{cases}
\frac{\overline{B(e_j)}}{|B(e_j)|},&B(e_j)\neq0,\\[2pt]
1,&B(e_j)=0,
\end{cases}j=1,2,\cdots.
$$For any $\textbf{z} \in Y$, by \eqref{shi-zhihuanxiangwei} and \eqref{shi-fanshudandiao}, we have
$$\begin{aligned}\left| B^{\#}\left( \textbf{z} \right) \right|&=\left| \sum\limits_{i=1}^{n} \left| B\left( e_{i} \right) \right| z_{i} \right| =\left| \sum\limits_{i=1}^{n} \omega_{i}B\left( e_{i} \right) z_{i} \right|\\ &\leqslant \|B\| \cdot \left\| \sum\limits_{i=1}^{n} \omega_{i}z_{i}e_{i} \right\|_{Y} =\|B\| \cdot \left\| \sum\limits_{i=1}^{n} z_{i}e_{i} \right\|_{Y}\\ &\leqslant \|B\| \cdot \|\textbf{z}\|_{Y},\end{aligned}$$
and hence $\|B^{\#}\|\leqslant\|B\|$. This proves \eqref{shi-duoxianxingguji} in the case $m=1$.
Assume that $m\geqslant 2$ and that \eqref{shi-duoxianxingguji} holds with $m$ replaced by $m-1$. Fix $B$ as in the statement and set
$P(\textbf{z})\triangleq B(\textbf{z},\cdots,\textbf{z})$.
By \namedref{Lemma}{yinli-yuanziqufan} \eqref{shi-yuanziqufan},
it suffices to estimate
$B^{\#}(\textbf{u}^{(1)},\cdots,\textbf{u}^{(m)})$ for vectors of the form
\begin{equation}
	\textbf{u}^{(j)}
	=
	\frac{t_{r_j}}{a_{r_j}}
	\sum\limits_{k\in F_j}e_k,
	\qquad
	r_j\in\mathbb{N}_0,\quad
	F_j\subset\mathbb{N},\quad
	|F_j|=a_{r_j},
	\qquad
	1\leqslant j\leqslant m.
	\label{shi-guinayuanzi}
\end{equation}
By \namedref{Proposition}{mingti-jiejinyikehe}
\begin{equation}
\|\textbf{u}^{(j)}\|_Y=1,
\qquad \|\textbf{u}^{(j)}\|_{\ell^1}=t_{r_j},\qquad j=1,2,\cdots,m.
\label{shi-yuanziliangfanshu}
\end{equation}
Put $r\triangleq\max\limits_{1\leqslant j\leqslant m}r_j$.

\proofstep{1}
If $r<m^2$, set $\textbf{x}\triangleq\frac{1}{m}\sum\limits_{j=1}^{m}\textbf{u}^{(j)}$.
Then
$\textbf{x}\in\mathbb{C}^{(\mathbb{N})}\cap[0,+\infty)^{\mathbb{N}}$.
By \eqref{shi-yuanziliangfanshu}, the triangle inequality, and
monotonicity of $(t_s)_{s\in\mathbb{N}_0}$, we have
$$
\begin{aligned}
	\|\textbf{x}\|_Y
	\leqslant\frac{1}{m}\sum\limits_{j=1}^{m}\|\textbf{u}^{(j)}\|_Y=1,\qquad	\|\textbf{x}\|_{\ell^1}
	=\frac{1}{m}\sum\limits_{j=1}^{m}t_{r_j}
	\leqslant t_r\leqslant t_{m^2-1}.
\end{aligned}
$$
Applying \namedref{Lemma}{yinli-suijixishuhua} \eqref{shi-xishuhuaguji} to $\textbf{x}$ and using
the definition of $\theta_m$ in \eqref{shi-guinachangshu}, we obtain
$$
\begin{aligned}
	P^{\#}(\textbf{x})
	&\leqslant
	\frac{m^m}{m!}\binom{3m-1}{m}^{\frac{1}{2}}
	\left(
	\|\textbf{x}\|_Y+
	\frac{\varphi(m)}{m}\|\textbf{x}\|_{\ell^1}
	\right)^m\|P\|\leqslant
	\frac{m^m}{m!}\binom{3m-1}{m}^{\frac{1}{2}}
	\theta_m^m\|P\|.
\end{aligned}
$$
Consequently, \namedref{Lemma}{yinli-yuanziqufan} \eqref{shi-duichenpingjun}, \eqref{shi-duijiaoxishu},
and \eqref{shi-guinachangshu} yield
\begin{equation}
	\begin{aligned}
		B^{\#}(\textbf{u}^{(1)},\cdots,\textbf{u}^{(m)})
		&\leqslant\frac{m^m}{m!}P^{\#}(\textbf{x})\leqslant
		\left(\frac{m^m}{m!}\right)^2
		\binom{3m-1}{m}^{\frac{1}{2}}\theta_m^m\|B\|
		=K_m\|B\|.
	\end{aligned}
	\label{shi-xiaocengguji}
\end{equation}

\proofstep{2}
Suppose that $r\geqslant m^2$. Without loss of generality, assume that $r_m=r$.
Since  $r-2\geqslant2$,
the defining inequalities \eqref{shi-huanmancanshu} for $\left( t_{r} \right)_{r\in \mathbb{N}_{0}}$ give
\begin{equation}
\frac{t_r}{t_{r-2}}
=\frac{t_r}{t_{r-1}}\frac{t_{r-1}}{t_{r-2}}
\leqslant\left(1+\frac{1}{r}\right)
\left(1+\frac{1}{r-1}\right)=\frac{r+1}{r-1}.
\label{shi-liangcengbizhi}
\end{equation}
Let $J\triangleq\{j\in\{1,\cdots,m-1\}:r_j>r-2\}$.
For $j\in J$,
\begin{equation}
	\begin{aligned}
		&\frac{t_{r_j}}{t_{r-2}}\binom{a_{r_j}}{a_{r-2}}^{-1}
		\sum\limits_{\substack{D\subset F_j\\ |D|=a_{r-2}}}
		\left(\frac{t_{r-2}}{a_{r-2}}\sum\limits_{k\in D}e_k\right)\\
		&\quad=
		\frac{t_{r_j}}{a_{r-2}}\binom{a_{r_j}}{a_{r-2}}^{-1}
		\sum\limits_{k\in F_j}
		\left|\left\{D\subset F_j:|D|=a_{r-2},\ k\in D\right\}\right|e_k\\
		&\quad=
		\frac{t_{r_j}}{a_{r-2}}\binom{a_{r_j}}{a_{r-2}}^{-1}
		\binom{a_{r_j}-1}{a_{r-2}-1}\sum\limits_{k\in F_j}e_k
		=
		\frac{t_{r_j}}{a_{r_j}}\sum\limits_{k\in F_j}e_k
		=
		\textbf{u}^{(j)}.
	\end{aligned}
	\label{shi-jiangcengpingjun}
\end{equation}
For $1\leqslant j<m$, define the finite nonempty set
$$
\mathcal E_j\triangleq
\begin{cases}
\left\{\frac{t_{r-2}}{a_{r-2}}\sum\limits_{k\in D}e_k:
D\subset F_j,\ |D|=a_{r-2}\right\},&j\in J,\\[2pt]
\{\textbf{u}^{(j)}\},&j\notin J,
\end{cases}
\qquad
\Lambda\triangleq\prod\limits_{j\in J}\frac{t_{r_j}}{t_{r-2}}.
$$
Note that $$
|\mathcal E_j|
=
\begin{cases}
	\displaystyle \binom{a_{r_j}}{a_{r-2}}, & j\in J,\\[6pt]
	1, & j\notin J.
\end{cases}$$
Choose $\textbf{v}^{(j)}\in\mathcal E_j$, $1\leqslant j<m$ such that $$
B^{\#}\left(
\textbf{v}^{(1)},\cdots,\textbf{v}^{(m-1)},\textbf{u}^{(m)}
\right)
= \max\limits_{\left(\textbf{w}^{(1)},\cdots,\textbf{w}^{(m-1)}\right)
	\in\prod\limits_{j=1}^{m-1}\mathcal E_j}
B^{\#}\left(
\textbf{w}^{(1)},\cdots,\textbf{w}^{(m-1)},\textbf{u}^{(m)}
\right).
$$ Set
$\textbf{v}^{(m)}=\textbf{u}^{(m)}$. Every $\textbf{v}^{(j)}$ with $j<m$ belongs to
$\mathcal G^+\cap\bigcup\limits_{k=0}^{r-2}\mathcal G_k$.
By multilinearity and \eqref{shi-jiangcengpingjun},
\begin{equation}
	\begin{aligned}
		B^{\#}(\textbf{u}^{(1)},\cdots,\textbf{u}^{(m)})
		&=
		\left(\prod\limits_{j\in J}\frac{t_{r_j}}{t_{r-2}}\right)
		B^{\#}\left(
		\frac{1}{|\mathcal E_1|}
		\sum\limits_{\textbf{w}^{(1)}\in\mathcal E_1}\textbf{w}^{(1)},
		\cdots,
		\frac{1}{|\mathcal E_{m-1}|}
		\sum\limits_{\textbf{w}^{(m-1)}\in\mathcal E_{m-1}}\textbf{w}^{(m-1)},
		\textbf{u}^{(m)}
		\right)\\
		&=
		\frac{\Lambda}{\prod\limits_{j=1}^{m-1}|\mathcal E_j|}
		\sum\limits_{\substack{\textbf{w}^{(j)}\in\mathcal E_j\\1\leqslant j<m}}
		B^{\#}\left(
		\textbf{w}^{(1)},\cdots,\textbf{w}^{(m-1)},\textbf{u}^{(m)}
		\right)\\
		&\leqslant
		\frac{\Lambda}{\prod\limits_{j=1}^{m-1}|\mathcal E_j|}
		\left(\prod\limits_{j=1}^{m-1}|\mathcal E_j|\right)
		B^{\#}\left(
		\textbf{v}^{(1)},\cdots,\textbf{v}^{(m)}
		\right)\\
		&=
		\Lambda B^{\#}\left(
		\textbf{v}^{(1)},\cdots,\textbf{v}^{(m)}
		\right).
	\end{aligned}
	\label{shi-jiangcengguji}
\end{equation}
Monotonicity of $\left( t_{r} \right)_{r\in \mathbb{N}_{0}}$ and \eqref{shi-liangcengbizhi} imply
\begin{equation}
1\leqslant\Lambda
\leqslant\left(\frac{t_r}{t_{r-2}}\right)^{|J|}
\leqslant\left(\frac{t_r}{t_{r-2}}\right)^{m-1}
\leqslant\left(\frac{r+1}{r-1}\right)^{m-1}.
\label{shi-jiangcengxishu}
\end{equation}
Put
\begin{equation}
U\triangleq\bigcup\limits_{j=1}^{m-1}\mathrm{supp}\textbf{v}^{(j)},
\qquad |U|\leqslant(m-1)a_{r-2}.
\label{shi-guinazhichi}
\end{equation}
The space $P_UY$ has complex dimension
$|U|\geqslant1$ and carries the
norm inherited from $Y$. For $k\in F_m$, define
\begin{equation}
B_k(\textbf{z}^{(1)},\cdots,\textbf{z}^{(m-1)})
\triangleq
B\left(P_U\textbf{z}^{(1)},\cdots,
P_U\textbf{z}^{(m-1)},e_k\right).
\label{shi-gudingzuobiao}
\end{equation}
Each $B_k$ is continuous, symmetric, and depends only on the
coordinates in $U$. By \eqref{shi-touyingsuosuo}, its norm on $Y$
equals the norm of its restriction to $\left(P_UY\right)^{m-1}$. Since
$\|\textbf{v}^{(j)}\|_Y=1$ by \eqref{shi-yuanzifanshu}, the coefficient
formula \eqref{shi-zhixishuxingshi} and the induction hypothesis give
\begin{equation}
\begin{aligned}
B^{\#}(\textbf{v}^{(1)},\cdots,\textbf{v}^{(m)})
&=\frac{t_r}{a_r}\sum\limits_{k\in F_m}
B_k^{\#}(\textbf{v}^{(1)},\cdots,\textbf{v}^{(m-1)})\\
&\leqslant\frac{t_r}{a_r}\sum\limits_{k\in F_m}\|B_k^{\#}\|
\leqslant\frac{t_r}{a_r}K_{m-1}
\sum\limits_{k\in F_m}\|B_k\|.
\end{aligned}
\label{shi-diciguina}
\end{equation}

\proofstep{3}

Choose $\mathcal N\subset\overline{B_{P_UY}}$ as in
\namedref{Lemma}{yinli-youxianwang}. For each $k\in F_m$, choose
$\Theta(k)\in\mathcal N^{m-1}$ at which $|B_k|$ attains its maximum
on this finite set. Equation \eqref{shi-youxianwangguji} gives
\begin{equation}
\|B_k\|\leqslant\frac{2m}{m+1}|B_k(\Theta(k))|.
\label{shi-wangdianqufan}
\end{equation}
Enumerate the distinct values of $\Theta$ as
$\Theta_\nu=(\xi_\nu^{(1)},\cdots,\xi_\nu^{(m-1)})$,
$1\leqslant\nu\leqslant M$, and put
$D_\nu\triangleq\{k\in F_m:\Theta(k)=\Theta_\nu\}$.
These sets are nonempty and form a partition of $F_m$. By
\eqref{shi-guinazhichi} and \eqref{shi-youxianwangguji},
\begin{equation}
\begin{aligned}
M\leqslant|\mathcal N|^{m-1}
\leqslant(1+4m)^{2|U|(m-1)}\leqslant(1+4m)^{2(m-1)^2a_{r-2}},\qquad
\sum\limits_{\nu=1}^{M}|D_\nu|=a_r.
\end{aligned}
\label{shi-wangdianfenzu}
\end{equation}
For $k\in D_\nu$, choose
$$
\omega_k\triangleq
\begin{cases}
\frac{\overline{B_k(\Theta_\nu)}}{|B_k(\Theta_\nu)|},
&B_k(\Theta_\nu)\neq0,\\[2pt]
1,&B_k(\Theta_\nu)=0,
\end{cases}
\qquad
\textbf{w}_\nu\triangleq\sum\limits_{k\in D_\nu}\omega_ke_k.
$$
Each $\xi_\nu^{(j)}$ belongs to $P_UY$ and has norm at most $1$.
By \eqref{shi-xiangweifanshu},
$\|\textbf{w}_\nu\|_Y=\varphi(|D_\nu|)$. Hence
\eqref{shi-gudingzuobiao}, \eqref{shi-wangdianqufan}, and multilinearity give
\begin{equation}
\begin{aligned}
\sum\limits_{k\in D_\nu}\|B_k\|
&\leqslant\frac{2m}{m+1}
\sum\limits_{k\in D_\nu}|B_k(\Theta_\nu)|=\frac{2m}{m+1}
\left|B(\xi_\nu^{(1)},\cdots,\xi_\nu^{(m-1)},\textbf{w}_\nu)\right|\\& \leqslant\frac{2m}{m+1}\|B\|\varphi(|D_\nu|).
\end{aligned}
\label{shi-fenzuxiangwei}
\end{equation}
 \eqref{shi-canshuyujibenhanshu} gives $\varphi(n)\leqslant\frac{n}{t_{r-2}}$ for $n\geqslant a_{r-2}$.
For $1\leqslant n<a_{r-2}$, \eqref{shi-jiruoshoulian} and the triangle
inequality give $\varphi(n)\leqslant n<a_{r-2}$. Thus
\begin{equation}
	\varphi(n)\leqslant\frac{n}{t_{r-2}}+a_{r-2},
	\qquad n\in\mathbb N.
	\label{shi-jibenhanshuyuxiang}
\end{equation}
Summing \eqref{shi-fenzuxiangwei}  over $\nu=1,2,\cdots,M$ and using
\eqref{shi-wangdianfenzu} and \eqref{shi-jibenhanshuyuxiang}, we obtain
\begin{equation}
	\begin{aligned}
		\sum\limits_{k\in F_m}\|B_k\|
		&\leqslant\frac{2m}{m+1}\|B\|
		\sum\limits_{\nu=1}^{M}\varphi(|D_\nu|)\leqslant\frac{2m}{m+1}\|B\|
		\left(\frac{a_r}{t_{r-2}}+Ma_{r-2}\right).
	\end{aligned}
	\label{shi-fenzuzonghe}
\end{equation}
By \eqref{shi-cengjicanshu},
\begin{equation}
	a_{r-1}=2^{a_{r-2}+1},\qquad a_r=2^{\,2^{a_{r-2}+1}+1}.
	\label{shi-liangcengditui}
\end{equation}
Define 
\begin{equation}
	\varepsilon_{m,r}\triangleq
	\frac{t_ra_{r-2}}{a_r}(1+4m)^{2(m-1)^2a_{r-2}}
	=\frac{t_ra_{r-2}(1+4m)^{2(m-1)^2a_{r-2}}}
	{2^{\,2^{a_{r-2}+1}+1}}.
	\label{shi-cengjiwucha}
\end{equation}
Then \eqref{shi-wangdianfenzu} implies
\begin{equation}
	\begin{aligned}
		\frac{t_r}{a_r}\left(\frac{a_r}{t_{r-2}}+Ma_{r-2}\right)
		&=\frac{t_r}{t_{r-2}}+\frac{t_rMa_{r-2}}{a_r}\leqslant\frac{t_r}{t_{r-2}}+\varepsilon_{m,r}
		\leqslant\frac{r+1}{r-1}+\varepsilon_{m,r}.
	\end{aligned}
	\label{shi-cengjixishu}
\end{equation}
Combining \eqref{shi-jiangcengguji}, \eqref{shi-diciguina},
\eqref{shi-fenzuzonghe}, and \eqref{shi-cengjixishu}, we obtain
\begin{equation}
	\begin{aligned}
		B^{\#}(\textbf{u}^{(1)},\cdots,\textbf{u}^{(m)})
		&\leqslant\frac{2m}{m+1}\Lambda
		\frac{t_r}{a_r}\left(\frac{a_r}{t_{r-2}}+Ma_{r-2}\right)
		K_{m-1}\|B\|\\
		&\leqslant\frac{2m}{m+1}\Lambda
		\left(\frac{t_r}{t_{r-2}}+\varepsilon_{m,r}\right)K_{m-1}\|B\|\\
		&\leqslant\frac{2m}{m+1}\left(\frac{r+1}{r-1}\right)^{m-1}
		\left(\frac{r+1}{r-1}+\varepsilon_{m,r}\right)K_{m-1}\|B\|,
	\end{aligned}
	\label{shi-dacengguji}
\end{equation}
where the last inequality uses \eqref{shi-jiangcengxishu}.

\proofstep{4}

It is easy to see that
$$
r+1\leqslant 2r^2\leqslant a_{r-2},r=4,5,\cdots,\qquad1+4m\leqslant2^{m^2}, m=2,3,\cdots.
$$
Since $r\geqslant m^2$, we obtain
\begin{equation}
(1+4m)^{2(m-1)^2a_{r-2}}
\leqslant2^{2m^2(m-1)^2a_{r-2}}
\leqslant2^{2r^2a_{r-2}}
\leqslant2^{a_{r-2}^2}.
\label{shi-wangdianbijiao}
\end{equation}
$2^{a_{r-2}+1}\geqslant2a_{r-2}^2$ and
$2^{a_{r-2}^2+1}\geqslant2a_{r-2}^4\geqslant16a_{r-2}^2$.
Together with $t_r\leqslant r+1$,
\eqref{shi-cengjiwucha}, and \eqref{shi-wangdianbijiao}, this yields
\begin{equation}
\begin{aligned}
0\leqslant\varepsilon_{m,r}
&\leqslant(r+1)a_{r-2}\,2^{a_{r-2}^2-2^{a_{r-2}+1}-1}\leqslant\frac{a_{r-2}^2}{2^{a_{r-2}^2+1}}
\leqslant\frac{1}{16}.
\end{aligned}
\label{shi-wuchajiexian}
\end{equation}
It remains to compare the coefficient in \eqref{shi-dacengguji}
with $\frac{K_m}{K_{m-1}}$. If $m=2$, then $r\geqslant4$ and
$\frac{r+1}{r-1}\leqslant\frac{5}{3}$, so
\begin{equation}
\frac{2m}{m+1}\left(\frac{r+1}{r-1}\right)^{m-1}\left(\frac{r+1}{r-1}+\varepsilon_{m,r}\right)
\leqslant\frac{4}{3}\frac{5}{3}
\left(\frac{5}{3}+\frac{1}{16}\right)
=\frac{415}{108}<4.
\label{shi-erciguinabijiao}
\end{equation}
Suppose $m\geqslant3$. For $0\leqslant x<1$,
$$
e^x=\sum\limits_{j=0}^{\infty}\frac{x^j}{j!}
\leqslant\sum\limits_{j=0}^{\infty}x^j
=\frac{1}{1-x}.
$$
Together with $\log(1+x)\leqslant x$ for $x\geqslant0$ and
$r\geqslant m^2$, this gives
\begin{equation}
\begin{aligned}
\left(\frac{r+1}{r-1}\right)^{m-1}
&=\left(1+\frac{2}{r-1}\right)^{m-1}
\leqslant e^{\frac{2(m-1)}{r-1}}\leqslant e^{\frac{2}{m+1}}
\leqslant\frac{m+1}{m-1},\\
\frac{2m}{m+1}\left(\frac{r+1}{r-1}\right)^{m-1}
&\leqslant\frac{2m}{m-1}\leqslant3,
\qquad
\frac{r+1}{r-1}\leqslant\frac{m^2+1}{m^2-1}\leqslant\frac{5}{4}.
\end{aligned}
\label{shi-gaociguinabijiao}
\end{equation}
Equations \eqref{shi-wuchajiexian} and \eqref{shi-gaociguinabijiao} imply
$$
\frac{2m}{m+1}\left(\frac{r+1}{r-1}\right)^{m-1}\left(\frac{r+1}{r-1}+\varepsilon_{m,r}\right)
\leqslant3\left(\frac{5}{4}+\frac{1}{16}\right)
=\frac{63}{16}<4.
$$
Together with \eqref{shi-erciguinabijiao} and
\eqref{shi-changshuzengzhang}, this shows that the coefficient in
\eqref{shi-dacengguji} is at most $\frac{K_m}{K_{m-1}}$. Consequently,
$B^{\#}(\textbf{u}^{(1)},\cdots,\textbf{u}^{(m)})\leqslant K_m\|B\|$ when
$r\geqslant m^2$. Combining this with \eqref{shi-xiaocengguji} and
taking the supremum in \eqref{shi-yuanziqufan} proves
\eqref{shi-duoxianxingguji} in degree $m$, completing the induction.
\end{proof}

Define
\begin{equation}
C_0\triangleq1,\qquad C_1\triangleq1,\qquad
C_m\triangleq\frac{m^m}{m!}K_m
=\left(\frac{m^m}{m!}\right)^3
\binom{3m-1}{m}^{\frac{1}{2}}\theta_m^m
\quad(m\geqslant2).
\label{shi-duoxiangshichangshu}
\end{equation}

\begin{proposition}\label{mingti-duoxiangshiguji}
For every $m\in\mathbb{N}_0$, every $P\in\mathcal{P}^m(Y)$, and every
$\textbf{z}\in Y$, one has
\begin{equation}
\sum\limits_{\substack{\alpha\in\mathbb{N}_0^{(\mathbb{N})}\\|\alpha|=m}}
|c_\alpha(P)\textbf{z}^{\alpha}|
\leqslant C_m\|P\|_{\mathcal{P}^m(Y)}\|\textbf{z}\|_Y^m.
\label{shi-fanlixishuguji}
\end{equation}
If $P$ depends on finitely many coordinates, then
\begin{equation}
\|P^{\#}\|_{\mathcal{P}^m(Y)}\leqslant C_m\|P\|_{\mathcal{P}^m(Y)}.
\label{shi-youxianxishuguji}
\end{equation}
\end{proposition}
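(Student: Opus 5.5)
The plan is to treat first polynomials that depend on finitely many coordinates, where \eqref{shi-youxianxishuguji} will follow from \namedref{Proposition}{mingti-duoxianxingguji} and the polarization inequality. The general estimate \eqref{shi-fanlixishuguji} will then follow by truncating with $P_n$ and passing to the limit. The cases $m=0$ and $m=1$ need no separate treatment beyond what is said below: for $m=0$ both sides equal $|P(\textbf{0})|$, and for $m=1$ the polarization constant is $1=C_1$.

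\emph{Step 1 (finitely many coordinates).} Let $m\geqslant1$ and let $P\in\mathcal{P}^m(Y)$ satisfy $P=P\circ P_W$ for a finite nonempty $W\subset\mathbb{N}$. Let $B$ be the symmetric $m$-linear form associated with $P$, that is, $B=\frac{1}{m!}D^mP(\textbf{0})$, which is given by the polarization formula. Since $P=P\circ P_W$, the formula shows that $B(\textbf{z}^{(1)},\cdots,\textbf{z}^{(m)})=B(P_W\textbf{z}^{(1)},\cdots,P_W\textbf{z}^{(m)})$, so $B$ depends on finitely many coordinates. The classical polarization inequality gives
$$
\|B\|_{\mathcal{L}^m(Y)}\leqslant\frac{m^m}{m!}\|P\|_{\mathcal{P}^m(Y)}.
$$
By \eqref{shi-duijiaoxishu} we have $P^{\#}(\textbf{z})=B^{\#}(\textbf{z},\cdots,\textbf{z})$. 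Because $B^{\#}$ has nonnegative coefficients, \eqref{shi-qumobaofan} yields, for $\|\textbf{z}\|_Y\leqslant1$,
$$
|P^{\#}(\textbf{z})|\leqslant P^{\#}(|\textbf{z}|)=B^{\#}(|\textbf{z}|,\cdots,|\textbf{z}|)\leqslant\|B^{\#}\|_{\mathcal{L}^m(Y)}.
$$
\namedref{Proposition}{mingti-duoxianxingguji} then gives
$$
\|P^{\#}\|_{\mathcal{P}^m(Y)}\leqslant K_m\|B\|_{\mathcal{L}^m(Y)}\leqslant\frac{m^m}{m!}K_m\|P\|_{\mathcal{P}^m(Y)}=C_m\|P\|_{\mathcal{P}^m(Y)}.
$$
For $m=1$ this reads $\|P^{\#}\|\leqslant K_1\|P\|=\|P\|$, since $B=P$ in that case. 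This proves \eqref{shi-youxianxishuguji}.

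By homogeneity together with $\||\textbf{z}|\|_Y=\|\textbf{z}\|_Y$ from \eqref{shi-qumobaofan}, we then obtain
$$
\sum_{|\alpha|=m}|c_\alpha(P)\textbf{z}^\alpha|=P^{\#}(|\textbf{z}|)\leqslant C_m\|P\|_{\mathcal{P}^m(Y)}\|\textbf{z}\|_Y^m,
$$
which is \eqref{shi-fanlixishuguji} in the finite-coordinate case.

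\emph{Step 2 (general $P$).} For an arbitrary $P\in\mathcal{P}^m(Y)$ and $n\in\mathbb{N}$, the polynomial $P\circ P_n$ is continuous and depends only on the first $n$ coordinates. Its coefficients are $c_\alpha(P)$ when $\mathrm{supp}\alpha\subset\{1,\cdots,n\}$ and $0$ otherwise. Moreover, $\|P\circ P_n\|_{\mathcal{P}^m(Y)}\leqslant\|P\|_{\mathcal{P}^m(Y)}$ by \eqref{shi-touyingsuosuo}. Applying Step 1 to $P\circ P_n$ bounds the partial sum of $|c_\alpha(P)\textbf{z}^\alpha|$ over $\mathrm{supp}\alpha\subset\{1,\cdots,n\}$ by $C_m\|P\|\|\textbf{z}\|_Y^m$. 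Letting $n\to\infty$, monotone convergence gives \eqref{shi-fanlixishuguji}.

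The only point needing care is checking that the form $B$ obtained by polarization really depends on finitely many coordinates, so that \namedref{Proposition}{mingti-duoxianxingguji} and \namedref{Lemma}{yinli-yuanziqufan} apply to it. This follows directly from the explicit polarization formula applied to $P\circ P_W$. All the substantive work is already contained in \namedref{Proposition}{mingti-duoxianxingguji}.
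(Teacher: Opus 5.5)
Your proposal is correct and follows the paper's proof essentially step for step: the polarization bound $\|B\|_{\mathcal{L}^m(Y)}\leqslant\frac{m^m}{m!}\|P\|_{\mathcal{P}^m(Y)}$, then the identity $P^{\#}(\textbf{z})=B^{\#}(\textbf{z},\cdots,\textbf{z})$ combined with the multilinear estimate $\|B^{\#}\|_{\mathcal{L}^m(Y)}\leqslant K_m\|B\|_{\mathcal{L}^m(Y)}$, and finally truncation by $P_n$ with $n\to\infty$. You also check explicitly, and correctly, that the polarized form $B$ depends on finitely many coordinates because $P=P\circ P_W$; the paper leaves this step implicit.
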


\begin{proof}
For $m=0$, the polynomial $P$ is constant and both assertions follow
from $P^{\#}=|P|$. Let $m\geqslant1$, suppose first that $P$ depends
on finitely many coordinates.
The polarization formula in \cite[Lemma~7.13\,(1), pp.~84--85]{KM97}
gives
\begin{equation}
B(\textbf{x}^{(1)},\cdots,\textbf{x}^{(m)})
\triangleq\frac{1}{2^m m!}
\sum\limits_{\epsilon\in\{-1,1\}^m}
\left(\prod\limits_{j=1}^{m}\epsilon_j\right)
P\left(\sum\limits_{j=1}^{m}\epsilon_j\textbf{x}^{(j)}\right),P\left( \textbf{x} \right) =B\left( \textbf{x} ,\cdots ,\textbf{x} \right)
\label{shi-jihuagongshi}
\end{equation}
For $\|\textbf{x}^{(j)}\|_Y\leqslant1,j=1,2,\cdots,m$,
\eqref{shi-jihuagongshi} gives
$$
\begin{aligned}
|B(\textbf{x}^{(1)},\cdots,\textbf{x}^{(m)})|
&\leqslant\frac{\|P\|_{\mathcal{P}^m(Y)}}{2^m m!}
\sum\limits_{\epsilon\in\{-1,1\}^m}
\left\|\sum\limits_{j=1}^{m}\epsilon_j\textbf{x}^{(j)}\right\|_Y^m\leqslant\frac{m^m}{m!}\|P\|_{\mathcal{P}^m(Y)}.
\end{aligned}
$$
Hence
\begin{equation}
\|B\|_{\mathcal{L}^m(Y)}
\leqslant\frac{m^m}{m!}\|P\|_{\mathcal{P}^m(Y)}.
\label{shi-jihuafanshu}
\end{equation}
By \eqref{shi-duijiaoxishu}, \eqref{shi-duoxianxingguji}, and \eqref{shi-jihuafanshu},
$$
\begin{aligned}
\|P^{\#}\|_{\mathcal{P}^m(Y)}
&\leqslant\|B^{\#}\|_{\mathcal{L}^m(Y)}
\leqslant K_m\|B\|_{\mathcal{L}^m(Y)}\leqslant\frac{m^m}{m!}K_m\|P\|_{\mathcal{P}^m(Y)}
=C_m\|P\|_{\mathcal{P}^m(Y)}.
\end{aligned}
$$
This proves \eqref{shi-youxianxishuguji}. If $P\in\mathcal{P}^m(Y)$ , by \eqref{shi-youxianxishuguji} and $\|P\circ P_n\|\leqslant\|P\|$, it holds that
$$
\begin{aligned}
&\sum\limits_{\substack{|\alpha|=m\\
\mathrm{supp}\alpha\subset\{1,\cdots,n\}}}
|c_\alpha(P)\textbf{z}^\alpha|=
\sum\limits_{|\alpha|=m}
|c_\alpha(P\circ P_n)\textbf{z}^\alpha|=\left( P\circ P_{n} \right)^{\#} \left( \left| \textbf{z} \right| \right)
\leqslant C_m\|P\|\|\textbf{z}\|_Y^m.
\end{aligned}
$$
Letting $n\to\infty$ proves \eqref{shi-fanlixishuguji} with the same
explicit coefficient $C_m$.
\end{proof}
\begin{remark}
The coefficients $C_m$ in \eqref{shi-duoxiangshichangshu} have at most
exponential growth. For $m\geqslant2$, $a_{m} > m^{2}-1$
gives $\kappa(m^2-1)\leqslant m-1$. By \eqref{shi-huanmancanshu},
\eqref{shi-huanmancanshujie}, and \eqref{shi-canshuyujibenhanshu},
$$
t_{m^2-1}\leqslant2t_{\kappa(m^2-1)}\leqslant2t_m
\leqslant\frac{4m}{\varphi(m)},
\qquad 1\leqslant \theta_{m} =1+\frac{\varphi \left( m \right)}{m} t_{m^{2}-1}\leqslant 1+\frac{\varphi \left( m \right)}{m} \frac{4m}{\varphi \left( m \right)} =5.
$$
Stirling's formula gives
\begin{equation}
	\begin{aligned}
		&\limsup\limits_{m\to \infty} C_{m}^{\frac{1}{m}}=\limsup\limits_{m\to \infty} \left[ \left( \frac{m^{m}}{m!} \right)^{\frac{3}{m}} \binom{3m-1}{m}^{\frac{1}{2m}} \theta_{m} \right]\\ &\  \leqslant 5\lim\limits_{m\to \infty} \left[ \left( \frac{m^{m}}{m!} \right)^{\frac{3}{m}} \binom{3m-1}{m}^{\frac{1}{2m}} \right] =\frac{15\sqrt{3}}{2} e^{3}\end{aligned}
	\label{shi-cishigenjie}
\end{equation}
\end{remark}

Combining \namedref{Proposition}{mingti-jishubiaoshi}, \namedref{Proposition}{mingti-jubuhua}, \namedref{Theorem}{dingli-shoulianpanju}, \namedref{Proposition}{mingti-duoxiangshiguji}, and \eqref{shi-cishigenjie}, we obtain one of the main results of this paper:

\begin{theorem}\label{dingli-fanliquanshoulian}
Let $Y$ be the complex Banach sequence space in \namedref{Definition}{dingyi-fanlikongjian-Y}.
Then

\begin{equation}
\mathrm{dom}H(O)=O
\qquad\text{for every Reinhardt domain }O\subset Y.
\label{shi-fanliquanyushoulian}
\end{equation}
For every $f\in H(O)$ and every $\textbf{z}\in O$,
\eqref{shi-juduishoulian} and \eqref{shi-hanshubiaoshi} hold.
In particular, $Y$ is a complex Banach sequence space
satisfying  \eqref{shi-fanliquanyushoulian}
which is not isomorphic to $\ell^1$.
\end{theorem}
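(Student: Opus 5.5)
The plan is to verify the uniform exponential coefficient estimate \eqref{shi-tongyizhishiguji} of \namedref{Theorem}{dingli-shoulianpanju} for $X=Y$. That theorem then gives $\mathrm{dom}H(Y)=Y$, and the rest follows from \namedref{Proposition}{mingti-jubuhua} and \namedref{Proposition}{mingti-jishubiaoshi}. Non-isomorphism with $\ell^1$ is already recorded in \namedref{Remark}{zhuji-butonggou}.

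\textbf{Step 1: the coefficient estimate.} By \eqref{shi-cishigenjie}, $\limsup_m C_m^{1/m}<\infty$. Since $C_0=C_1=1$, there is a constant $C\geqslant1$ with $C_m\leqslant C^m$ for every $m\in\mathbb{N}$. Enlarge $C$ if necessary to absorb the finitely many initial terms; all $C_m$ are finite.

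Now fix $m,n\in\mathbb{N}$, a polynomial $P\in\mathcal{P}^m(Y)$ with $P=P\circ P_n$, and $\textbf{z}\in B_Y$. Then \eqref{shi-fanlixishuguji} of \namedref{Proposition}{mingti-duoxiangshiguji} gives
$$
\sum_{|\alpha|=m}|c_\alpha(P)\textbf{z}^\alpha|\leqslant C_m\|P\|\,\|\textbf{z}\|_Y^m\leqslant C^m\sup_{\textbf{w}\in B_Y}|P(\textbf{w})|.
$$
This is exactly \eqref{shi-tongyizhishiguji}. \namedref{Theorem}{dingli-shoulianpanju} therefore yields $\mathrm{dom}H(Y)=Y$.

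\textbf{Step 2: passing to arbitrary Reinhardt domains.} By \namedref{Proposition}{mingti-fanlixingzhi}~\itemref{xiang-fanlikongjian}, $\mathbb{C}^{(\mathbb{N})}$ is dense in $Y$. Hence \namedref{Proposition}{mingti-jubuhua} applies and gives
$$
\mathrm{dom}H(O)=O\cap\mathrm{dom}H(Y)=O
$$
for every Reinhardt domain $O\subset Y$, which is \eqref{shi-fanliquanyushoulian}.

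\textbf{Step 3: representation of $f$.} Take $f\in H(O)$ and $\textbf{z}\in O$. Then $\textbf{z}\in O\cap\overline{\mathbb{C}^{(\mathbb{N})}}^{\,Y}$, and \eqref{shi-juduishoulianjiading} holds at $\textbf{z}_0=\textbf{z}$. So \namedref{Proposition}{mingti-jishubiaoshi}, applied with $\textbf{z}_0=\textbf{z}$, gives both \eqref{shi-juduishoulian} and \eqref{shi-hanshubiaoshi}.

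\textbf{Step 4: the remaining properties of $Y$.} That $Y$ is a complex Banach sequence space is \namedref{Proposition}{mingti-fanlixingzhi}. That $Y\not\cong\ell^1$ follows from \eqref{shi-jiruoshoulian} together with the Schur property of $\ell^1$, as in \namedref{Remark}{zhuji-butonggou}.

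The only point needing any care is Step 1, namely converting $\limsup_m C_m^{1/m}<\infty$ into a uniform bound $C_m\leqslant C^m$. This is routine, since every $C_m$ is finite.
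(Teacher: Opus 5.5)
Your proposal is correct and follows the paper's own route. The paper proves this theorem by combining the exponential bound \eqref{shi-cishigenjie} on $C_m$ with \namedref{Proposition}{mingti-duoxiangshiguji} to obtain \eqref{shi-tongyizhishiguji}, and then applying \namedref{Theorem}{dingli-shoulianpanju}, \namedref{Proposition}{mingti-jubuhua} and \namedref{Proposition}{mingti-jishubiaoshi}, with non-isomorphism taken from \namedref{Remark}{zhuji-butonggou}. The paper leaves implicit the conversion of $\limsup_m C_m^{1/m}<\infty$ into a uniform bound $C_m\leqslant C^m$, and you carry it out correctly.
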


\section[Exact monomial convergence domains in classical sequence spaces]
{Exact monomial convergence domains\texorpdfstring{\\}{ }in classical sequence spaces}
\label{jie-jingdiankongjian}

For $\textbf{z}\in c_0$, let $\textbf{z}^*=(z_j^*)_{j\geqslant1}$ be
the nonincreasing rearrangement of $(|z_j|)_{j\geqslant1}$.

For holomorphic functions on Reinhardt domains in $\ell^p$,
upper and lower inclusions for their monomial convergence
domains were obtained in
\cite[Theorem~4.7 and Example~4.9(a), pp.~32--33]{DMP09}.

\begin{theorem}\label{dingli-jingdianshoulian}
One has
\begin{equation}
\mathrm{dom}H(\ell^1)=\ell^1.
\label{shi-yikeheshoulian}
\end{equation}
For $1<p\leqslant2$,
\begin{equation}
\mathrm{dom}H(\ell^p)
=
\left\{\textbf{z}\in \ell^p:
\lim\limits_{n\to\infty}
\frac{\sum\limits_{j=1}^{n}z_j^*}
{(\log(n+1))^{1-\frac{1}{p}}}=0\right\}.
\label{shi-xiaozhishushoulian}
\end{equation}
For $2<p<\infty$,
\begin{equation}
\mathrm{dom}H(\ell^p)
=
\left\{\textbf{z}\in \ell^p:
\lim\limits_{n\to\infty}
\frac{\sum\limits_{j=1}^{n}(z_j^*)^{\frac{2p}{p+2}}}
{(\log(n+1))^{\frac{p}{p+2}}}=0\right\}.
\label{shi-dazhishushoulian}
\end{equation}
Moreover,
\begin{equation}
\mathrm{dom}H(c_0)
=
\left\{\textbf{z}\in c_0:
\lim\limits_{n\to\infty}
\frac{\sum\limits_{j=1}^{n}(z_j^*)^2}{\log(n+1)}=0\right\}.
\label{shi-lingjixianshoulian}
\end{equation}
\end{theorem}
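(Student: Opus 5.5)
The plan is to reduce everything to Theorem~\ref{dingli-shoulianpanju} and Proposition~\ref{mingti-jubuhua}, combined with known polynomial convergence sets. For $\ell^1$, identity \eqref{shi-yikeheshoulian} follows from Theorem~\ref{dingli-shoulianpanju}. In $\ell^1$ one has $\sup_{\textbf{z}\in B_{\ell^1}}\sum_{|\alpha|=m}|c_\alpha(P)\textbf{z}^\alpha|\leqslant \frac{m^m}{m!}\|P\|$, by polarization and the fact that the coefficients of the associated symmetric form are bounded by its norm. Since $\frac{m^m}{m!}\leqslant e^m$, condition \eqref{shi-tongyizhishiguji} holds with $C=e$; alternatively one can simply cite \cite{DMP09}.

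For $X=\ell^p$ ($1<p<\infty$) and $X=c_0$, the set $\mathbb{C}^{(\mathbb{N})}$ is dense, so Proposition~\ref{mingti-jubuhua} applies. By Lemma~\ref{yinli-bituguji}, $\textbf{z}\in\mathrm{dom}H(X)$ implies the estimate \eqref{shi-dianchuzhishijie} for all $m$ with a single exponential constant $K^m$. Conversely, the argument in Step~2 of the proof of Theorem~\ref{dingli-shoulianpanju} shows that \eqref{shi-weibupanju} follows from a \emph{tail} estimate: for every $\varepsilon>0$ there is $N$ with
\begin{equation*}
\sum_{|\alpha|=m}|c_\alpha(P)(Q_N\textbf{z})^\alpha|\leqslant \varepsilon^m\|P\|,\qquad m\geqslant1,\ P\in\mathcal{P}^m(X).
\end{equation*}
So the task is to show two things. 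First, the displayed ``little-o'' conditions on $\textbf{z}^*$ imply that this uniform tail bound holds with arbitrarily small constant $\varepsilon$ after cutting off finitely many coordinates. Second, failure of the little-o condition prevents any bound of the form $K^m$.

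For sufficiency I would prove a rearranged coefficient estimate (the Lemma~\ref{yinli-chongpaixishuguji} alluded to in the introduction). For decreasing $\textbf{w}$ and $P\in\mathcal{P}^m(X)$,
\begin{equation*}
\sum_{|\alpha|=m}|c_\alpha(P)\textbf{w}^\alpha|\leqslant (c\,\Phi(\textbf{w}))^m\|P\|,
\end{equation*}
where $\Phi(\textbf{w})$ is the relevant quantity. In the $c_0$ case this is $\sup_n\bigl(\sum_{j\leqslant n}w_j^2/\log(n+1)\bigr)^{1/2}$, plus an $\ell^\infty$-type term; in the $\ell^p$ cases one uses the analogous sup. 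The proof would split the indices into an initial block $\{1,\dots,n_0\}$ and dyadic blocks $\{2^k n_0<j\leqslant 2^{k+1}n_0\}$, and classify each $\alpha$ by the degrees it spends on the blocks. Within a fixed block pattern, one deletes one occurrence of the largest index, as in the Bohnenblust--Hille/Bayart--Defant--Schlüters approach, so that \cite[Theorem~3.2]{BDS19} bounds the number of monomials and the Cauchy--Schwarz/hypercontractive coefficient bound. The logarithm in the denominators matches the $\log$ factors appearing in \cite{BDS19}. Summing over block patterns must give only an exponential factor $c^m$; this uses the little-o condition, which makes the block contributions geometrically summable. Applying the estimate to $\textbf{w}=(Q_N\textbf{z})^*$ and using $\Phi((Q_N\textbf{z})^*)\to0$ (from the little-o hypothesis) then yields the tail bound.

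For necessity, suppose the limit in the relevant condition is not $0$. By the known results on polynomial monomial domains (\cite{BDS19}, \cite{BDFMS17}), the constants $\sup\sum_{|\alpha|=m}|c_\alpha(P)\textbf{z}^\alpha|/\|P\|$ can be made to grow superexponentially at such $\textbf{z}$, or at least more than $K^m$ for every fixed $K$. I would use random (Kahane--Salem--Zygmund) polynomials supported on $\{1,\dots,n\}$ with $n\approx e^{m}$-type choices tied to $\sum_{j\leqslant n}z_j^*$. A lower bound of the form $(\delta\,\sum_{j\leqslant n}z_j^*/(\log n)^{1-1/p})^m$ times subexponential factors would contradict Lemma~\ref{yinli-bituguji} once $\limsup>0$, with the degree chosen so that $m\sim\log n$. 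Here the permutation invariance of the norms allows one to replace $\textbf{z}$ by $\textbf{z}^*$.

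The main obstacle is the sufficiency estimate. One needs the sum over block patterns and the counting from \cite{BDS19} to lose only $c^m$ uniformly in $m$, rather than the $e^{o(m)}\cdot\mathrm{poly}$ losses that are harmless for fixed-degree results but fatal here. I would first handle $c_0$, where the coefficient estimate reduces to the $\ell^2$-type bound via Bohnenblust--Hille-style hypercontractivity with constant $C^m$. I would then transfer to $\ell^p$ by Hölder interpolation in the block sums.
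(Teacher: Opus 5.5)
The necessity half of your argument does not work. Lemma~\ref{yinli-bituguji} gives only one fixed bound $\sum_{|\alpha|=m}|c_\alpha(P)\textbf{z}^\alpha|\leqslant K^m\|P\|$. Information of this kind can never force the little-o condition, because it holds at every point of $\mathrm{dom}H_b(X)=\{\textbf{z}\in X:M(\textbf{z})<\infty\}$ (Proposition~\ref{mingti-youjiexingshoulian}), and that set is strictly larger than the one claimed.

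Concretely, take $\textbf{z}=(j^{-1/2})_{j\in\mathbb{N}}\in c_0$. Then $\sum_{j\leqslant n}(z_j^*)^2/\log(n+1)\to1$, yet $M(\textbf{z})<\infty$, so Lemma~\ref{yinli-chongpaixishuguji} bounds all the coefficient sums by $(CM(\textbf{z}))^m\|P\|$. Your claim that the constants outgrow every $K^m$ once the limit is nonzero is therefore false. If you run the Kahane--Salem--Zygmund test polynomials against a fixed $K^m$ with $m\sim\log n$, you get only $\limsup_n\bigl(\sum_{j\leqslant n}(z_j^*)^q\bigr)^{1/q}/(\log(n+1))^\sigma\leqslant K$. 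That is the characterization of the bounded-type domain, not of $\mathrm{dom}H(X)$.

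The missing ingredient is that membership in $\mathrm{dom}H(X)$ (all entire functions, not only those of bounded type) gives tail estimates with arbitrarily small radius. By Lemma~\ref{yinli-weibuguji}, that is, Proposition~\ref{mingti-jubuhua}\,(iii), for every $r>0$ there are $N$ and $A$ with $\sum_{|\alpha|=m}|c_\alpha(G)\textbf{z}^\alpha|\leqslant Ar^m\|G\|$ for every $m$-homogeneous $G$ depending only on finitely many coordinates $x_j$ with $j>N$. Apply the random polynomials on coordinates beyond $N$ (Lemma~\ref{yinli-shangjixianbijiao}) to get a $\limsup$ at most $r$ for $Q_N\textbf{z}$. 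By \eqref{shi-shoubumodong} the discarded coordinates do not change the $\limsup$, and letting $r\downarrow0$ gives limit $0$.

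The rest of your plan is sound. The $\ell^1$ argument via Theorem~\ref{dingli-shoulianpanju} with $C=e$ is a valid self-contained alternative to quoting the known $\ell^1$ result. It uses polarization and the fact that an $m$-linear form on $\ell^1$ has norm $\sup|B(e_{i_1},\cdots,e_{i_m})|$. Your sufficiency half follows the paper's route, with one correction. The block summation in Lemma~\ref{yinli-chongpaixishuguji} needs only $M(\textbf{w})<\infty$; the geometric decay comes from the factor $2^{-\sigma L}$ produced by deleting the largest index. The little-o hypothesis enters only through $M(Q_N\textbf{z})\to0$, as in \eqref{shi-weibudengjia}.
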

\subsection{Auxiliary estimates for monomial convergence}

For homogeneous polynomials we use the norm
\eqref{shi-duoxiangshifanshu}, written as $\|G\|$
when its ambient space is clear.
For the auxiliary results below, fix $1<p\leqslant\infty$, take
$X=\ell^p$ when $p<\infty$ and $X=c_0$ when $p=\infty$, and put
\begin{equation}
\sigma\triangleq1-\frac{1}{\min\{p,2\}},
\qquad
\frac{1}{q}\triangleq\sigma+\frac{1}{p},
\qquad \frac{1}{\infty}=0.
\label{shi-fenleizhishu}
\end{equation}
Thus
$$
q=
\begin{cases}
1,&1<p\leqslant2,\\
\frac{2p}{p+2},&2<p<\infty,\\
2,&p=\infty.
\end{cases}
$$
For $\textbf{z}\in c_0$, define the possibly infinite quantity
\begin{equation}
M(\textbf{z})\triangleq
\sup\limits_{n\in\mathbb{N}}
\frac{\left(\sum\limits_{j=1}^{n}(z_j^*)^q\right)^{\frac{1}{q}}}
     {(\log(n+1))^\sigma}.
\label{shi-chongpaoliang}
\end{equation}
For $\textbf{z}\in c_0$ and $n\in\mathbb{N}_0$, we use the same
coordinate formulas as in \eqref{shi-shouweitouying},
$$
P_n\textbf{z}\triangleq\sum\limits_{j=1}^{n}z_je_j,
\qquad
Q_n\textbf{z}\triangleq\textbf{z}-P_n\textbf{z},
$$
with $P_0=0$ and $Q_0=I_{c_0}$.

\begin{lemma}\label{yinli-chongpaixingzhi}
    For every $\textbf{z}\in c_0$ with $M(\textbf{z})<\infty$, one has
    $\textbf{z}\in X$ and
    \begin{equation}
        \|\textbf{z}\|_X\leqslant C M(\textbf{z}),
        \label{shi-chongpaiqianru}
    \end{equation}
    where $C\geqslant1$ depends only on $p$.
    Moreover, whenever $\textbf{z}\in c_0$ and $M(\textbf{z})<\infty$,
\begin{equation}
        M(\lambda\textbf{z})=|\lambda|M(\textbf{z}),\qquad
    M\bigl((z_{\pi(j)})_{j\in\mathbb{N}}\bigr)=M(\textbf{z}),\qquad
    M\bigl((\varepsilon_jz_j)_{j\in\mathbb{N}}\bigr)=M(\textbf{z}),\label{shi-chongpaiduichen}
\end{equation}
    for every $\lambda\in\mathbb{C}$, every bijection
    $\pi:\mathbb{N}\to\mathbb{N}$, and every
    $(\varepsilon_j)_{j\in\mathbb{N}}\subset\mathbb{C}$ with
    $|\varepsilon_j|=1$ for all $j\in\mathbb{N}$. If
    $\textbf{w}\in c_0$ and $|\textbf{w}|\leqslant|\textbf{z}|$, then
    $M(\textbf{w})\leqslant M(\textbf{z})$.

    Furthermore, for every $\textbf{z}\in c_0$,
    \begin{equation}
        \lim\limits_{n\to\infty}
        \frac{\left(\sum\limits_{j=1}^{n}(z_j^*)^q\right)^{\frac{1}{q}}}
        {(\log(n+1))^\sigma}=0
        \quad\Longleftrightarrow\quad
        \lim\limits_{N\to\infty}M(Q_N\textbf{z})=0.
        \label{shi-weibudengjia}
    \end{equation}
    Finally, for every $\textbf{z}\in c_0$, $N\in\mathbb{N}_0$, and
    $n\in\mathbb{N}$,
    \begin{equation}
        0\leqslant
        \left(\sum\limits_{j=1}^{n}(z_j^*)^q\right)^{\frac{1}{q}}
        -\left(\sum\limits_{j=1}^{n}
        \bigl((Q_N\textbf{z})_j^*\bigr)^q\right)^{\frac{1}{q}}
        \leqslant\|P_N\textbf{z}\|_{\ell^q}.
        \label{shi-shoubumodong}
    \end{equation}
\end{lemma}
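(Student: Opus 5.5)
The plan is to reduce everything to elementary properties of the nonincreasing rearrangement, using two facts throughout. First, $q\geqslant1$ and $\sigma>0$, since $1<p\leqslant\infty$. Second, for every $\textbf{z}\in c_0$ and every $n\in\mathbb{N}$,
$$
\sum\limits_{j=1}^{n}(z_j^*)^q=\sup\Bigl\{\sum\limits_{j\in A}|z_j|^q: A\subset\mathbb{N},\ |A|=n\Bigr\}.
$$
Write $S_n(\textbf{z})\triangleq\bigl(\sum_{j=1}^{n}(z_j^*)^q\bigr)^{1/q}$, so that $M(\textbf{z})=\sup_n S_n(\textbf{z})(\log(n+1))^{-\sigma}$.

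\textbf{Symmetry and monotonicity.} The identities \eqref{shi-chongpaiduichen} are immediate, because $(\lambda\textbf{z})^*=|\lambda|\textbf{z}^*$, and $\textbf{z}^*$ depends only on the multiset $\{|z_j|\}$. This multiset is unchanged by a bijection $\pi$ or by unimodular multipliers $\varepsilon_j$. If $|\textbf{w}|\leqslant|\textbf{z}|$, then the displayed sup formula gives $S_n(\textbf{w})\leqslant S_n(\textbf{z})$ for every $n$. Hence $M(\textbf{w})\leqslant M(\textbf{z})$.

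\textbf{Embedding \eqref{shi-chongpaiqianru}.} For $p=\infty$, take $n=1$: then $\|\textbf{z}\|_{\ell^\infty}=z_1^*\leqslant(\log2)^{\sigma}M(\textbf{z})$. For $p<\infty$, monotonicity of $\textbf{z}^*$ gives
$$
n(z_n^*)^q\leqslant S_n(\textbf{z})^q,
\qquad\text{so}\qquad
z_n^*\leqslant n^{-1/q}(\log(n+1))^{\sigma}M(\textbf{z}).
$$
Therefore
$$
\|\textbf{z}\|_{\ell^p}^p=\sum_n (z_n^*)^p\leqslant M(\textbf{z})^p\sum_n n^{-p/q}(\log(n+1))^{\sigma p}.
$$
Since $p/q=\sigma p+1>1$, the series converges. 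Its $p$-th root, maxed with $1$ and $(\log 2)^\sigma$, gives a constant $C$ depending only on $p$.

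\textbf{Inequality \eqref{shi-shoubumodong}.} The lower bound follows from $|Q_N\textbf{z}|\leqslant|\textbf{z}|$ and the monotonicity just proved. For the upper bound, take any $A$ with $|A|=n$ and split it as $A\cap\{1,\cdots,N\}$ and $A\setminus\{1,\cdots,N\}$. The first part contributes at most $\|P_N\textbf{z}\|_{\ell^q}^q$. The second part has at most $n$ elements, so it contributes at most $S_n(Q_N\textbf{z})^q$. Taking the sup over $A$ gives $S_n(\textbf{z})^q\leqslant \|P_N\textbf{z}\|_{\ell^q}^q+S_n(Q_N\textbf{z})^q$. Since $q\geqslant1$, we have $(a+b)^{1/q}\leqslant a^{1/q}+b^{1/q}$, and this yields the claim.

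\textbf{Equivalence \eqref{shi-weibudengjia}.} This is the only step that needs a little care.

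For ($\Leftarrow$), use \eqref{shi-shoubumodong}:
$$
\frac{S_n(\textbf{z})}{(\log(n+1))^{\sigma}}\leqslant M(Q_N\textbf{z})+\frac{\|P_N\textbf{z}\|_{\ell^q}}{(\log(n+1))^\sigma}.
$$
Let $n\to\infty$ with $N$ fixed, using $\sigma>0$. This gives $\limsup_n S_n(\textbf{z})(\log(n+1))^{-\sigma}\leqslant M(Q_N\textbf{z})$, and then let $N\to\infty$.

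For ($\Rightarrow$), fix $\varepsilon>0$ and choose $n_0$ with $S_n(\textbf{z})(\log(n+1))^{-\sigma}<\varepsilon$ for all $n\geqslant n_0$. For $n\geqslant n_0$, monotonicity gives $S_n(Q_N\textbf{z})\leqslant S_n(\textbf{z})$, so these ratios for $Q_N\textbf{z}$ are also below $\varepsilon$, uniformly in $N$. For $n<n_0$, we have $S_n(Q_N\textbf{z})\leqslant n_0^{1/q}\|Q_N\textbf{z}\|_{\ell^\infty}$. This tends to $0$ as $N\to\infty$ because $\textbf{z}\in c_0$. Hence $\limsup_N M(Q_N\textbf{z})\leqslant\varepsilon$. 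The possibility that $M(Q_N\textbf{z})=\infty$ for small $N$ causes no problem, since only large $N$ matter.
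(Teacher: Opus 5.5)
Your proposal is correct and follows essentially the same route as the paper. The formula expressing $\sum_{j\le n}(z_j^*)^q$ as a supremum over $n$-element index sets gives the symmetry and monotonicity. The bound $z_n^*\leqslant n^{-1/q}(\log(n+1))^{\sigma}M(\textbf{z})$ gives the embedding. The equivalence \eqref{shi-weibudengjia} is proved with the same split into $n<n_0$ and $n\geqslant n_0$. The only cosmetic difference is in \eqref{shi-shoubumodong}: you split the index set and use subadditivity of $t\mapsto t^{1/q}$, whereas the paper applies Minkowski's inequality to $\textbf{z}=P_N\textbf{z}+Q_N\textbf{z}$ on each $n$-element set, and the two are equivalent here.
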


\begin{proof}
    For every $\textbf{z}\in c_0$ and $n\in\mathbb{N}$,
    \begin{equation}
        \left(\sum\limits_{j=1}^{n}(z_j^*)^q\right)^{\frac{1}{q}}
        =
        \sup\limits_{\substack{F\subset\mathbb{N}\\|F|=n}}
        \left(\sum\limits_{j\in F}|z_j|^q\right)^{\frac{1}{q}}.
        \label{shi-chongpaishangquejie}
    \end{equation}
    Hence \eqref{shi-chongpaishangquejie} yields \eqref{shi-chongpaiduichen}
    and
$
    |\textbf{w}|\leqslant|\textbf{z}|
    \Longrightarrow
    M(\textbf{w})\leqslant M(\textbf{z}).
$
    In particular,
    \begin{equation}
        M(P_N\textbf{z}),\ M(Q_N\textbf{z})
        \leqslant M(\textbf{z}).
        \label{shi-chongpaitouying}
    \end{equation}

    If $M(\textbf{z})<\infty$, then, for every $j\in\mathbb{N}$,
    $$
    j^{\frac{1}{q}}z_j^*
    \leqslant
    \left(\sum\limits_{k=1}^{j}(z_k^*)^q\right)^{\frac{1}{q}}
    \leqslant
    M(\textbf{z})(\log(j+1))^\sigma,
    $$
    and therefore
    \begin{equation}
        z_j^*\leqslant
        M(\textbf{z})
        \frac{(\log(j+1))^\sigma}{j^{\frac{1}{q}}}.
        \label{shi-chongpaidianjie}
    \end{equation}
    If $p<\infty$, then \eqref{shi-chongpaidianjie}
    and \eqref{shi-fenleizhishu} give
    $$
    \|\textbf{z}\|_X^p
    =
    \sum\limits_{j=1}^{\infty}(z_j^*)^p
    \leqslant
    M(\textbf{z})^p
    \sum\limits_{j=1}^{\infty}
    \frac{(\log(j+1))^{\sigma p}}{j^{\frac{p}{q}}}
    =
    M(\textbf{z})^p
    \sum\limits_{j=1}^{\infty}
    \frac{(\log(j+1))^{\sigma p}}{j^{1+\sigma p}}
    <\infty.
    $$
    Consequently,
    $$
    \|\textbf{z}\|_X
    \leqslant
    \left(
    \sum\limits_{j=1}^{\infty}
    \frac{(\log(j+1))^{\sigma p}}{j^{1+\sigma p}}
    \right)^{\frac{1}{p}}
    M(\textbf{z}).
    $$
    If $p=\infty$, then \eqref{shi-chongpaidianjie}
    gives
    $$
    \|\textbf{z}\|_X
    =z_1^*
    \leqslant
    (\log2)^\sigma M(\textbf{z})
    \leqslant M(\textbf{z}).
    $$
    Thus \eqref{shi-chongpaiqianru} follows.

    Let $N\in\mathbb{N}_0$ and $n\in\mathbb{N}$. For every
    $F\subset\mathbb{N}$ with $|F|=n$, Minkowski's inequality gives
    $$
    \left(\sum\limits_{j\in F}|z_j|^q\right)^{\frac{1}{q}}
    \leqslant
    \left(\sum\limits_{j\in F}|(P_N\textbf{z})_j|^q\right)^{\frac{1}{q}}
    +
    \left(\sum\limits_{j\in F}|(Q_N\textbf{z})_j|^q\right)^{\frac{1}{q}}
    $$
    and hence
    $$
    \left(\sum\limits_{j\in F}|z_j|^q\right)^{\frac{1}{q}}
    \leqslant
    \|P_N\textbf{z}\|_{\ell^q}
    +
    \left(\sum\limits_{j\in F}|(Q_N\textbf{z})_j|^q\right)^{\frac{1}{q}}.
    $$
    Taking the supremum over all such $F$ and using
    \eqref{shi-chongpaishangquejie}, we obtain
    $$
    \left(\sum\limits_{j=1}^{n}(z_j^*)^q\right)^{\frac{1}{q}}
    \leqslant
    \|P_N\textbf{z}\|_{\ell^q}
    +
    \left(\sum\limits_{j=1}^{n}
    \bigl((Q_N\textbf{z})_j^*\bigr)^q\right)^{\frac{1}{q}}.
    $$
    Moreover,
$
    |Q_N\textbf{z}|\leqslant|\textbf{z}|,
    $
    so \eqref{shi-chongpaishangquejie} gives
    $$
    \left(\sum\limits_{j=1}^{n}
    \bigl((Q_N\textbf{z})_j^*\bigr)^q\right)^{\frac{1}{q}}
    \leqslant
    \left(\sum\limits_{j=1}^{n}(z_j^*)^q\right)^{\frac{1}{q}}.
    $$
    Thus \eqref{shi-shoubumodong} follows.

    Suppose
$
    \lim\limits_{n\to\infty}
    \frac{\left(\sum\limits_{j=1}^{n}(z_j^*)^q\right)^{\frac{1}{q}}}
    {(\log(n+1))^\sigma}
    =0.
$
    For every $\varepsilon>0$, there exists $n_0\geqslant2,n_0\in \mathbb{N}$ such that
    $$
    \frac{\left(\sum\limits_{j=1}^{n}(z_j^*)^q\right)^{\frac{1}{q}}}
    {(\log(n+1))^\sigma}
    \leqslant\varepsilon,
    \qquad n\geqslant n_0.
    $$
    Since $|Q_N\textbf{z}|\leqslant|\textbf{z}|$,
    \eqref{shi-chongpaishangquejie} gives
    $$
    \frac{\left(\sum\limits_{j=1}^{n}
        \bigl((Q_N\textbf{z})_j^*\bigr)^q\right)^{\frac{1}{q}}}
    {(\log(n+1))^\sigma}
    \leqslant\varepsilon,
    \qquad n\geqslant n_0.
    $$
    For $1\leqslant n<n_0$,
    $$
    \frac{\left(\sum\limits_{j=1}^{n}
        \bigl((Q_N\textbf{z})_j^*\bigr)^q\right)^{\frac{1}{q}}}
    {(\log(n+1))^\sigma}
    \leqslant
    \frac{n^{\frac{1}{q}}}{(\log(n+1))^\sigma}
    \|Q_N\textbf{z}\|_{\ell^\infty}
    \leqslant
    \frac{n_0^{\frac{1}{q}}}{(\log2)^\sigma}
    \|Q_N\textbf{z}\|_{\ell^\infty}.
    $$
    Therefore
    $$
    M(Q_N\textbf{z})
    \leqslant
    \max\left\{
    \varepsilon,
    \frac{n_0^{\frac{1}{q}}}{(\log2)^\sigma}
    \|Q_N\textbf{z}\|_{\ell^\infty}
    \right\}.
    $$
    Since $\textbf{z}\in c_0$,
$
    \lim\limits_{N\to\infty}
    \|Q_N\textbf{z}\|_{\ell^\infty}=0,
$
    and hence
$
    \limsup\limits_{N\to\infty}M(Q_N\textbf{z})
    \leqslant\varepsilon.
$
    Letting $\varepsilon\to0$ gives
$
    \lim\limits_{N\to\infty}M(Q_N\textbf{z})=0.
$

Conversely, \eqref{shi-shoubumodong} gives, for every
    $N\in\mathbb{N}_0$ and $n\in\mathbb{N}$,
    $$
    \frac{\left(\sum\limits_{j=1}^{n}(z_j^*)^q\right)^{\frac{1}{q}}}
    {(\log(n+1))^\sigma}
    \leqslant
    \frac{\|P_N\textbf{z}\|_{\ell^q}}{(\log(n+1))^\sigma}
    +
    \frac{\left(\sum\limits_{j=1}^{n}
        \bigl((Q_N\textbf{z})_j^*\bigr)^q\right)^{\frac{1}{q}}}
    {(\log(n+1))^\sigma}.
    $$
    By \eqref{shi-chongpaoliang},
    $$
    \frac{\left(\sum\limits_{j=1}^{n}(z_j^*)^q\right)^{\frac{1}{q}}}
    {(\log(n+1))^\sigma}
    \leqslant
    \frac{\|P_N\textbf{z}\|_{\ell^q}}{(\log(n+1))^\sigma}
    +
    M(Q_N\textbf{z}).
    $$
Therefore
    $$
    \limsup\limits_{n\to\infty}
    \frac{\left(\sum\limits_{j=1}^{n}(z_j^*)^q\right)^{\frac{1}{q}}}
    {(\log(n+1))^\sigma}
    \leqslant M(Q_N\textbf{z}).
    $$
    Thus
$
    \lim\limits_{N\to\infty}M(Q_N\textbf{z})=0
$
    implies
    $$
    \lim\limits_{n\to\infty}
    \frac{\left(\sum\limits_{j=1}^{n}(z_j^*)^q\right)^{\frac{1}{q}}}
    {(\log(n+1))^\sigma}=0.
    $$
    This proves \eqref{shi-weibudengjia}.
\end{proof}

\begin{lemma}\label{yinli-fenkuaiqiuhe}
For $X,p,q,\sigma$ as in \eqref{shi-fenleizhishu}, let
$C_0\geqslant1$ satisfy
\begin{equation}
\begin{aligned}
2^{\frac{1}{q}}(\log2)^\sigma&\leqslant C_0,
& e^2 2^{4\sigma+\frac{1}{p}}&\leqslant C_0,\\
u^\sigma(1+u)^{1-\frac{1}{q}}2^{-\frac{\sigma u}{2}}
&\leqslant C_0\quad(u\geqslant0),
&2^\sigma\max\left\{1,
\frac{2^{-\frac{\sigma}{2}}}{1-2^{-\frac{\sigma}{2}}}\right\}
&\leqslant C_0.
\end{aligned}
\label{shi-fuzhuchangshu}
\end{equation}
Let $1\leqslant k_0\leqslant K$ be integers and let
$\omega_{k_0},\cdots,\omega_K\geqslant0$ satisfy
\begin{equation}
\left(\sum\limits_{k=k_0}^{L}\omega_k^q\right)^{\frac{1}{q}}
\leqslant C_0L^\sigma,
\qquad k_0\leqslant L\leqslant K.
\label{shi-kuaibufenhe}
\end{equation}
Then, for every $d\in\mathbb{N}$ and $k_0\leqslant L\leqslant K$,
\begin{equation}
d^{\frac{d}{p}}
\sum\limits_{\substack{\beta_{k_0},\cdots,\beta_L\in\mathbb{N}_0\\
\beta_{k_0}+\cdots+\beta_L=d}}
\prod\limits_{k=k_0}^{L}\frac{\omega_k^{\beta_k}}{(\beta_k!)^{\frac{1}{q}}}
\leqslant(3C_0)^d\left(\frac{L}{d}\right)^{\sigma d}
\left(1+\frac{L}{d}\right)^{(1-\frac{1}{q})d}.
\label{shi-duocishuqiuhe}
\end{equation}
Moreover, for every $d\in\mathbb{N}$,
\begin{equation}
d^\sigma\sum\limits_{L=1}^{\infty}2^{-\sigma L}
\left(\frac{L}{d}\right)^{\sigma d}
\left(1+\frac{L}{d}\right)^{(1-\frac{1}{q})d}
\leqslant C_0^{2d}.
\label{shi-zhishishuaijian}
\end{equation}
\end{lemma}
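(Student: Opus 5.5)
The plan is to prove \eqref{shi-duocishuqiuhe} by Hölder's inequality over the finite index set of compositions combined with the multinomial theorem, and to prove \eqref{shi-zhishishuaijian} by a termwise bound coming from the third condition in \eqref{shi-fuzhuchangshu}. Throughout, $p>1$ gives $\sigma>0$, and the relation $\frac1p-\frac1q=-\sigma$ from \eqref{shi-fenleizhishu} is what converts the factor $d^{d/p}$ into $d^{-\sigma d}$.

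\emph{Step 1: the first inequality.} Fix $d$ and $L$, and let $\mathcal{B}$ be the set of $(\beta_{k_0},\cdots,\beta_L)\in\mathbb{N}_0^{L-k_0+1}$ with $\sum_k\beta_k=d$. Apply Hölder with exponents $q$ and $q'=\frac{q}{q-1}$; when $q=1$ this step is just an equality. This gives
$$
\sum_{\beta\in\mathcal{B}}\prod_{k}\frac{\omega_k^{\beta_k}}{(\beta_k!)^{1/q}}
\leqslant|\mathcal{B}|^{1-\frac1q}\Bigl(\sum_{\beta\in\mathcal{B}}\prod_k\frac{(\omega_k^q)^{\beta_k}}{\beta_k!}\Bigr)^{\frac1q}.
$$
By the multinomial theorem the inner sum equals $\frac{1}{d!}\bigl(\sum_{k=k_0}^L\omega_k^q\bigr)^d$. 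By \eqref{shi-kuaibufenhe} this is at most $\frac{(C_0L^\sigma)^{qd}}{d!}$. For the cardinality,
$$
|\mathcal{B}|=\binom{L-k_0+d}{d}\leqslant\binom{L+d}{d}\leqslant\Bigl(\frac{e(L+d)}{d}\Bigr)^d .
$$
For the factorial we use $d!\geqslant(d/e)^d$, which gives
$$
d^{\frac dp}(d!)^{-\frac1q}\leqslant e^{\frac dq}d^{d(\frac1p-\frac1q)}=e^{\frac dq}d^{-\sigma d}.
$$
Multiplying these bounds yields
$$
\text{LHS of \eqref{shi-duocishuqiuhe}}\leqslant e^{d}\,C_0^d\Bigl(\frac Ld\Bigr)^{\sigma d}\Bigl(1+\frac Ld\Bigr)^{(1-\frac1q)d},
$$
and since $e<3$ this gives \eqref{shi-duocishuqiuhe}.

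\emph{Step 2: the second inequality.} Put $u=L/d\geqslant0$. Then the $L$-th term can be rewritten as
$$
2^{-\sigma L}\Bigl(\frac Ld\Bigr)^{\sigma d}\Bigl(1+\frac Ld\Bigr)^{(1-\frac1q)d}
=\Bigl[u^\sigma(1+u)^{1-\frac1q}2^{-\frac{\sigma u}{2}}\Bigr]^d\,2^{-\frac{\sigma L}{2}}
\leqslant C_0^d\,2^{-\frac{\sigma L}{2}},
$$
where the inequality is the third condition in \eqref{shi-fuzhuchangshu}. Summing the geometric series over $L\geqslant1$ gives the factor $\frac{2^{-\sigma/2}}{1-2^{-\sigma/2}}$. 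For the prefactor, $d\leqslant2^d$ gives $d^\sigma\leqslant2^{\sigma d}$. Writing $\mu=\max\{1,\frac{2^{-\sigma/2}}{1-2^{-\sigma/2}}\}$, we have $\mu\leqslant\mu^d$ because $d\geqslant1$. Hence the left side of \eqref{shi-zhishishuaijian} is at most $C_0^d(2^\sigma\mu)^d$. The last condition in \eqref{shi-fuzhuchangshu} then bounds this by $C_0^{2d}$.

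\emph{Expected obstacle.} No step is deep. The point needing most care is the bookkeeping of exponents in Step 1: matching $d^{d/p}$ against $(d!)^{1/q}$ via $\frac1q=\sigma+\frac1p$, and handling the degenerate case $q=1$, where the cardinality factor disappears. The remaining conditions in \eqref{shi-fuzhuchangshu} are not needed here and are presumably used later in the paper.
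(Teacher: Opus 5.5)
Your proposal is correct and follows essentially the same route as the paper: H\"older over the compositions plus the multinomial theorem for \eqref{shi-duocishuqiuhe}, then the termwise bound from the third condition in \eqref{shi-fuzhuchangshu} followed by a geometric sum and $d\leqslant 2^d$ for \eqref{shi-zhishishuaijian}. The only difference is cosmetic: you use $d!\geqslant(d/e)^d$ and $\binom{L+d}{d}\leqslant\bigl(e(L+d)/d\bigr)^d$ to get the constant $e^d$, whereas the paper proves $d!\geqslant(d/3)^d$ directly and gets $3^d$.
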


\begin{proof}
    Fix $d\in\mathbb{N}$ and $k_0\leqslant L\leqslant K$. By \eqref{shi-fenleizhishu}, $1\leqslant q\leqslant2$ and $\frac{1}{p}-\frac{1}{q}=-\sigma<0$. We have
    \begin{equation}
        \#\left\{
        (\beta_{k_0},\cdots,\beta_L)\in\mathbb{N}_0^{L-k_0+1}:
        \sum\limits_{k=k_0}^{L}\beta_k=d
        \right\}
        =\binom{L-k_0+d}{d}\leqslant\binom{L+d}{d}.
        \label{shi-duocishujishu}
    \end{equation}
    By \eqref{shi-duocishujishu}, H\"older's inequality, and the multinomial theorem,
    \begin{equation}
        \begin{aligned}
            \sum\limits_{\substack{\beta_{k_0},\cdots,\beta_L\in\mathbb{N}_0\\
                    \beta_{k_0}+\cdots+\beta_L=d}}
            \prod\limits_{k=k_0}^{L}\frac{\omega_k^{\beta_k}}{(\beta_k!)^{\frac{1}{q}}}
            &\quad\leqslant
            \binom{L+d}{d}^{1-\frac{1}{q}}
            \left(
            \sum\limits_{\substack{\beta_{k_0},\cdots,\beta_L\in\mathbb{N}_0\\
                    \beta_{k_0}+\cdots+\beta_L=d}}
            \prod\limits_{k=k_0}^{L}\frac{\omega_k^{q\beta_k}}{\beta_k!}
            \right)^{\frac{1}{q}}\\
            &\quad=
            \binom{L+d}{d}^{1-\frac{1}{q}}
            \frac{\left(\sum\limits_{k=k_0}^{L}\omega_k^q\right)^{\frac{d}{q}}}
            {(d!)^{\frac{1}{q}}}.
        \end{aligned}
        \label{shi-duochongheguji}
    \end{equation}
    By \begin{equation}
  \begin{aligned}d!&=e^{\sum\limits_{k=1}^{d} \log k}\geqslant e^{\sum\limits_{k=2}^{d} \int\limits_{k-1}^{k} \log x\mathrm{d} x}=e^{d\log d-d+1}\geqslant e^{d\log d-\log 3\cdot d}=\left( \frac{d}{3} \right)^{d} ,\end{aligned}
    \end{equation}we have
    \begin{equation}
\binom{L+d}{d}\leqslant\frac{(L+d)^d}{d!}
\leqslant\left(3\left(1+\frac{L}{d}\right)\right)^d.
        \
        \label{shi-fenkuaijicheng}
    \end{equation}
    By \eqref{shi-duochongheguji}, \eqref{shi-fenkuaijicheng}, \eqref{shi-kuaibufenhe}, and \eqref{shi-fenleizhishu},
    $$
    \begin{aligned}
        &d^{\frac{d}{p}}
        \sum\limits_{\substack{\beta_{k_0},\cdots,\beta_L\in\mathbb{N}_0\\
                \beta_{k_0}+\cdots+\beta_L=d}}
        \prod\limits_{k=k_0}^{L}\frac{\omega_k^{\beta_k}}{(\beta_k!)^{\frac{1}{q}}}\leqslant
        d^{\frac{d}{p}}
        \left(3\left(1+\frac{L}{d}\right)\right)^{(1-\frac{1}{q})d}
        (C_0L^\sigma)^d\left(\frac{3}{d}\right)^{\frac{d}{q}}\\
        &\quad=
        (3C_0)^dL^{\sigma d}d^{(\frac{1}{p}-\frac{1}{q})d}
        \left(1+\frac{L}{d}\right)^{(1-\frac{1}{q})d}
        =
        (3C_0)^d\left(\frac{L}{d}\right)^{\sigma d}
        \left(1+\frac{L}{d}\right)^{(1-\frac{1}{q})d}.
    \end{aligned}
    $$
    This proves \eqref{shi-duocishuqiuhe}.

    Finally, $d\leqslant2^d$ and \eqref{shi-fuzhuchangshu} give
    $$
    \begin{aligned}
        &d^\sigma\sum\limits_{L=1}^{\infty}
        2^{-\sigma L}
        \left(\frac{L}{d}\right)^{\sigma d}
        \left(1+\frac{L}{d}\right)^{(1-\frac{1}{q})d}=
        d^\sigma\sum\limits_{L=1}^{\infty}2^{-\frac{\sigma L}{2}}
        \left[
        \left(\frac{L}{d}\right)^\sigma
        \left(1+\frac{L}{d}\right)^{1-\frac{1}{q}}
        2^{-\frac{\sigma L}{2d}}
        \right]^d\\
        &\quad\leqslant
        (2^\sigma C_0)^d
        \sum\limits_{L=1}^{\infty}2^{-\frac{\sigma L}{2}}=
        (2^\sigma C_0)^d
        \frac{2^{-\frac{\sigma}{2}}}{1-2^{-\frac{\sigma}{2}}}
\leqslant
        C_0^d\left[
        2^\sigma\max\left\{1,
        \frac{2^{-\frac{\sigma}{2}}}{1-2^{-\frac{\sigma}{2}}}
        \right\}
        \right]^d
        \leqslant C_0^{2d}.
    \end{aligned}
    $$
    This proves \eqref{shi-zhishishuaijian}.
\end{proof}
\begin{lemma}\label{yinli-chongpaixishuguji}
	For $X,p,q,\sigma$ as in \eqref{shi-fenleizhishu}, there exists
	a constant $C\geqslant1$, depending only on $p$, such that, for every
	$m\in\mathbb{N}$, every $G\in\mathcal{P}^m(X)$, and every
	$\textbf{z}\in c_0$ with $M(\textbf{z})<\infty$,
	\begin{equation}
		\sum\limits_{\substack{\alpha\in\mathbb{N}_0^{(\mathbb{N})}\\
				|\alpha|=m}}
		|c_\alpha(G)\textbf{z}^\alpha|
		\leqslant
		C^m M(\textbf{z})^m\|G\|.
		\label{shi-chongpaixishuguji}
	\end{equation}
\end{lemma}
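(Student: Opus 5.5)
We reduce to the case where $\textbf{z}$ is nonnegative, nonincreasing and finitely supported. For a bijection $\pi$ of $\mathbb{N}$, the composition of $G$ with the corresponding coordinate permutation has the same norm. By \eqref{shi-chongpaiduichen}, $M$ is invariant under permutations and unimodular multipliers, and the left side of \eqref{shi-chongpaixishuguji} depends only on $|\textbf{z}|$. So we may assume $z_j=z_j^*\geqslant 0$ for all $j$. Replacing $G$ by $G\circ P_n$ does not increase $\|G\|$ and does not increase $M$; the sum in \eqref{shi-chongpaixishuguji} then increases monotonically in $n$. Hence it suffices to prove the bound for $G=G\circ P_n$ with a constant independent of $n$. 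By homogeneity we also normalize $M(\textbf{z})=1$. From \eqref{shi-chongpaidianjie} we then get $z_j\leqslant (\log(j+1))^\sigma j^{-1/q}$.

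Next comes the block decomposition announced in the introduction. Let $I_0=\{1,\cdots,2^{k_0}\}$ be an initial block, with $k_0$ fixed depending only on $p$, and let $I_k=\{2^{k-1}+1,\cdots,2^k\}$ for $k>k_0$ be the dyadic blocks. Put $\omega_k\triangleq 2^{k/q}\max_{j\in I_k}z_j$, up to a fixed normalizing constant. Since
$$
\Bigl(\sum_{j\leqslant 2^L}z_j^q\Bigr)^{1/q}\leqslant (L\log 2+1)^\sigma,
$$
this choice gives the partial-sum hypothesis \eqref{shi-kuaibufenhe}, $\bigl(\sum_{k=k_0}^L\omega_k^q\bigr)^{1/q}\leqslant C_0L^\sigma$, with $C_0$ as in \eqref{shi-fuzhuchangshu}. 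Every $\alpha$ with $|\alpha|=m$ and $\textbf{z}^\alpha\neq0$ has a largest index $n=n(\alpha)$, lying in some block $I_L$. Write $\alpha=\beta+e_n$, where $\beta$ has degree $m-1$ and support in $\{1,\cdots,n\}$. Then
$$
\sum_{|\alpha|=m}|c_\alpha(G)\textbf{z}^\alpha|=\sum_{L}\sum_{n\in I_L}z_n\sum_{\beta}|c_{\beta+e_n}(G)|\textbf{z}^\beta .
$$
For the inner sum over $\beta$ with $n$ fixed, we invoke the counting/coefficient estimate of \cite[Theorem~3.2]{BDS19}. It controls $\sum_\beta |c_{\beta+e_n}(G)|\,\textbf{w}^\beta$ by $C^m\|G\|$, times $d^{d/p}$-type and $(\beta_k!)^{-1/q}$-type weights once $\beta$ is grouped according to its degrees $\beta_k$ on the blocks. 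The degree on the initial block is handled separately by a crude bound of the form $C^m$, using $|I_0|\leqslant 2^{k_0}$. For the dyadic part, grouping $\beta$ by the block degrees $(\beta_{k_0},\cdots,\beta_L)$ and using $z_j\leqslant \omega_k2^{-k/q}$ on $I_k$ should produce exactly the left side of \eqref{shi-duocishuqiuhe}. The powers $2^{-k\beta_k/q}$ are compensated by the number of monomials in each block, $\binom{2^{k-1}+\beta_k-1}{\beta_k}\lesssim 2^{k\beta_k}/\beta_k!$, after the H\"older step inside the BDS estimate. Lemma~\ref{yinli-fenkuaiqiuhe} then bounds this by $(3C_0)^d(L/d)^{\sigma d}(1+L/d)^{(1-1/q)d}$ with $d\leqslant m-1$.

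It remains to sum over $n\in I_L$ and over $L$. The deleted variable contributes $\sum_{n\in I_L} z_n$, weighted appropriately. After the normalisation by block size in the BDS estimate, this gives a factor comparable to $2^{-\sigma L}$ times $d^\sigma$: indeed $z_n\leqslant L^\sigma 2^{-L/q}$ while $|I_L|^{1/p}$-type factors appear, and $1/q-1/p=\sigma$. Summing over $L$ with \eqref{shi-zhishishuaijian} gives $C_0^{2d}$. The total is then at most $C^m\|G\|$ with $C$ depending only on $p$.

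The main obstacle is step three: stating \cite[Theorem~3.2]{BDS19} in a form adapted to a fixed largest index $n$ and to block-grouped $\beta$, so that the exponents $d^{d/p}$ and $(\beta_k!)^{1/q}$ match \eqref{shi-duocishuqiuhe}. I would first try the form
$$
\sum_{\beta}|c_{\beta+e_n}(G)|\,\textbf{w}^\beta\leqslant C^m\|G\|\,n^{1/p}\,(m-1)^{(m-1)/p}\Bigl(\sum_{\beta}\frac{\textbf{w}^{q'\beta}}{(\beta!)^{q'/q}}\Bigr)^{1/q'},
$$
and then check the bookkeeping.
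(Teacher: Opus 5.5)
\textbf{There is a genuine gap in your central step, and a second, fixable error in the block choice.}

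\textbf{The central step fails.} You split off the largest index $j$ of $\alpha$ and bound $\sum_\beta|c_{\beta+e_j}(G)|\textbf{z}^\beta$ for each fixed $j$ by $\|G\|$ times a weight $F_j(\textbf{z})$. You then sum $z_jF_j(\textbf{z})$ over $j\in I_L$. No bound of this shape can give a uniform estimate. Take $G(\textbf{x})=x_1^{m-1}x_j$. Then $\|G\|$ is the same for all $j\geqslant2$, and the inner sum equals $z_1^{m-1}$. So $F_j(\textbf{z})\gtrsim z_1^{m-1}$ with a constant independent of $j$, and your final bound is at least a multiple of $\sum_{j\leqslant N}z_j$, where $N$ is the truncation level. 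This is not bounded in $N$, because $\{M<\infty\}\not\subset\ell^1$ for $p>1$; for example $z_j=j^{-1/q}(\log(j+1))^{-1/p}$.

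Your heuristic ``$z_j|I_L|^{1/p}\approx L^\sigma2^{-\sigma L}$'' forgets that there are about $2^L$ values of $j$ in $I_L$. With your trial factor $j^{1/p}$ one actually gets $\sum_{j\in I_L}j^{1/p}z_j\approx2^{(1-\sigma)L}L^\sigma$, which grows exponentially in $L$.

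\textbf{How the paper handles the largest index.} It must be summed \emph{inside} \cite[Theorem~3.2]{BDS19}. That result is an estimate over a whole set $A$ in which the largest index is free:
$\sum_{\alpha\in A}|c_\alpha(G)\textbf{u}^\alpha|\leqslant e^{2m}|A^-|^\sigma\|\textbf{u}\|_X^m\|G\|$.
Deleting the largest index enters only through the cardinality $|A^-|$; the last variable is still controlled in $\ell^p$ through $\|\textbf{u}\|_X^m$. The paper applies this to $A_{h,\beta}$, the set of all $\alpha$ with degree $h$ on $I_0$ and $\beta_k$ on each $I_k$. The test vector is normalized blockwise, $P_{I_k}\textbf{u}=(\beta_k/m)^{1/p}P_{I_k}\textbf{z}/\|P_{I_k}\textbf{z}\|_X$, so that $\|\textbf{u}\|_X=1$. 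The decay $\beta_L^\sigma2^{-\sigma L}$ then comes from counting $A^-_{h,\beta}$, which has only $\beta_L-1$ indices in the top block. The weights $d^{d/p}$ and $(\beta_k!)^{-1/q}$ in \eqref{shi-duocishuqiuhe} come from the normalizing factor $m^{m/p}h^{-h/p}\prod_k\beta_k^{-\beta_k/p}$, together with $v!\leqslant v^v$ and $m^m\leqslant2^mh^hd^d$.

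\textbf{The block choice.} With $k_0$ depending only on $p$, your count $\binom{2^{k-1}+\beta_k-1}{\beta_k}\lesssim2^{k\beta_k}/\beta_k!$ is false once $\beta_k$ is large compared with $2^k$. For fixed $k$ and $\beta_k=m\to\infty$, the left side is at least $1$ while the right side tends to $0$, and such blocks do occur for large $m$. The initial block must grow with $m$. The paper takes $k_0=\lceil\log_2m\rceil$, which gives:
\begin{itemize}
\item $|I_0|<2m$, so the crude count on $I_0$ is at most $2^{3m}$, and $\|P_{I_0}\textbf{z}\|_X\leqslant C_0$ by \eqref{shi-chongpaiqianru};
\item $|I_k|=2^k\geqslant m$ for every later block, which is what makes $\binom{2^k+v-1}{v}\leqslant2^{(k+1)v}/v!$ valid for $v\leqslant m$.
\end{itemize}
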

\begin{proof}
    Fix $p$ and choose $C_0\geqslant1$, depending only on $p$, such that \eqref{shi-fuzhuchangshu} holds and \eqref{shi-chongpaiqianru} holds with $C=C_0$. We first assume $G=G\circ P_n$. 

    \proofstep{1}
For $m\geqslant2$ and $\emptyset\neq A\subset\{\alpha\in\mathbb{N}_0^{(\mathbb{N})}:|\alpha|=m,\ \mathrm{supp}\alpha\subset\{1,\cdots,n\}\}$, put
    \begin{equation}
        A^-\triangleq\{\alpha-e_{\max\mathrm{supp}\alpha}:\alpha\in A\}.
        \label{shi-shanjianzhiji}
    \end{equation}
    By \eqref{shi-duoxiangshifanshu} and \eqref{shi-shouweisuosuo}, $\|G\|=\sup\limits_{\|\textbf{u}\|_{\ell^p}\leqslant1}|G(\sum\limits_{j=1}^{n}u_je_j)|$, where $\textbf{u}\in\mathbb{C}^n$. Moreover,
    \begin{equation}
        \begin{gathered}
            \alpha=\sum\limits_{\nu=1}^{m}e_{j_\nu}
            \quad\Longrightarrow\quad
            \alpha-e_{\max\mathrm{supp}\alpha}=\sum\limits_{\nu=1}^{m-1}e_{j_\nu},
            \qquad 1\leqslant j_1\leqslant\cdots\leqslant j_m\leqslant n,\\
            em\max\left\{e^{\frac{m-1}{p}},2^{\frac{m-1}{2}}\right\}
            \leqslant me^m\leqslant e^{2m}.
        \end{gathered}
        \label{shi-zhijiduiying}
    \end{equation}
    Thus \cite[Theorem~3.2]{BDS19} and \eqref{shi-zhijiduiying} give
    \begin{equation}
        \sum\limits_{\alpha\in A}|c_\alpha(G)\textbf{u}^\alpha|
        \leqslant e^{2m}|A^-|^\sigma\|\textbf{u}\|_X^m\|G\|,
        \qquad \textbf{u}\in X.
        \label{shi-zhijixishuguji}
    \end{equation}

    Take $\pi\in S_n$ with $|z_{\pi(1)}|\geqslant\cdots\geqslant|z_{\pi(n)}|$ and define
    \begin{equation}
        \begin{aligned}
            \varepsilon_j&\triangleq
            \begin{cases}
                \displaystyle\frac{\overline{z_{\pi(j)}}}{|z_{\pi(j)}|},&z_{\pi(j)}\neq0,\\
                1,&z_{\pi(j)}=0,
            \end{cases}
            \qquad 1\leqslant j\leqslant n,\\
            T\textbf{x}&\triangleq\sum\limits_{j=1}^{n}\varepsilon_jx_{\pi(j)}e_j+Q_n\textbf{x},
            \qquad \textbf{x}\in X.
        \end{aligned}
        \label{shi-chongpaibianhuan}
    \end{equation}
    Then \eqref{shi-chongpaibianhuan} yields
    \begin{equation}
        \begin{gathered}
            T^{-1}\textbf{x}=\sum\limits_{j=1}^{n}\overline{\varepsilon_j}x_je_{\pi(j)}+Q_n\textbf{x},
            \qquad \|T\textbf{x}\|_X=\|\textbf{x}\|_X,\\
            TP_n=P_nT,
            \qquad TP_n\textbf{z}=\sum\limits_{j=1}^{n}|z_{\pi(j)}|e_j=(P_n\textbf{z})^*.
        \end{gathered}
        \label{shi-chongpaidengju}
    \end{equation}
    By \eqref{shi-chongpaidengju}, \eqref{shi-youxianzhankai}, \eqref{shi-chongpaiduichen}, and \eqref{shi-chongpaitouying},
    \begin{equation}
        \begin{gathered}
            \|G\circ T^{-1}\|=\|G\|,
            \qquad M(TP_n\textbf{z})=M(P_n\textbf{z})\leqslant M(\textbf{z}),\\
            \sum\limits_{|\alpha|=m}|c_\alpha(G)\textbf{z}^\alpha|
            =\sum\limits_{|\alpha|=m}|c_\alpha(G\circ T^{-1})(TP_n\textbf{z})^\alpha|.
        \end{gathered}
        \label{shi-xishuhebudong}
    \end{equation}
For proving \eqref{shi-chongpaixishuguji}, the cases
$G=0$ and $P_n\textbf{z}=\textbf{0}$ are immediate.
By \eqref{shi-xishuhebudong} and homogeneity,
it therefore suffices to assume
    \begin{equation}
        \|G\|=M(\textbf{z})=1,
        \qquad z_1\geqslant\cdots\geqslant z_n\geqslant0,
        \qquad z_j=0\quad(j>n).
        \label{shi-jingdianguifan}
    \end{equation}
    For $m=1$, \eqref{shi-youxianzhankai} and \eqref{shi-chongpaiqianru} give
    \begin{equation}
        \sum\limits_{j=1}^{n}|c_{e_j}(G)|z_j
        =\sup\limits_{|w_j|\leqslant z_j\ (j\in\mathbb{N})}|G(\textbf{w})|
        \leqslant\|G\|\|\textbf{z}\|_X\leqslant C_0.
        \label{shi-yicixishuguji}
    \end{equation}

    \proofstep{2}
Henceforth $m\geqslant2$. Put
    \begin{gather}
        k_0\triangleq\left\lceil\frac{\log m}{\log2}\right\rceil,
        \quad I_0\triangleq\{1,\cdots,2^{k_0}-1\},
        \quad I_k\triangleq\{2^k,\cdots,2^{k+1}-1\}\quad(k\geqslant k_0).
        \notag\\
        1\leqslant|I_0|<2m,
        \qquad |I_k|=2^k\geqslant m\quad(k\geqslant k_0).
        \label{shi-shuangjinfenkuai}
    \end{gather}
    Increasing $n$ if necessary, take $n=2^{K+1}-1$ with $K\geqslant k_0$. By \eqref{shi-fenleizhishu} and \eqref{shi-jingdianguifan},
    \begin{equation}
        \omega_k\triangleq2^{\sigma k}\|P_{I_k}\textbf{z}\|_X
        \leqslant2^{k(\sigma+\frac{1}{p})}z_{2^k}
        =2^{\frac{k}{q}}z_{2^k},
        \qquad k_0\leqslant k\leqslant K.
        \label{shi-kuaiquanzhong}
    \end{equation}
    For $k_0\leqslant L\leqslant K$, \eqref{shi-jingdianguifan}, \eqref{shi-kuaiquanzhong} and \eqref{shi-chongpaoliang} yield
    \begin{equation}
        \begin{aligned}
            \sum\limits_{k=k_0}^{L}\omega_k^q
            &\leqslant\sum\limits_{k=k_0}^{L}2^kz_{2^k}^q
            \leqslant2\sum\limits_{k=k_0}^{L}\sum\limits_{j=2^{k-1}}^{2^k-1}z_j^q\\
            &=2\sum\limits_{j=2^{k_0-1}}^{2^L-1}z_j^q
            \leqslant2\sum\limits_{j=1}^{2^L-1}z_j^q
            \leqslant2(L\log2)^{\sigma q}.
        \end{aligned}
        \label{shi-quanzhongbufenhe}
    \end{equation}
    By \eqref{shi-chongpaiqianru}, \eqref{shi-quanzhongbufenhe}, and \eqref{shi-fuzhuchangshu},
    \begin{equation}
        \|P_{I_0}\textbf{z}\|_X\leqslant C_0,
        \qquad
        \left(\sum\limits_{k=k_0}^{L}\omega_k^q\right)^{\frac{1}{q}}
        \leqslant2^{\frac{1}{q}}(\log2)^\sigma L^\sigma
        \leqslant C_0L^\sigma.
        \label{shi-kuaitongyijie}
    \end{equation}

    \proofstep{3}
For $\beta=(\beta_{k_0},\cdots,\beta_K)\in\mathbb{N}_0^{K-k_0+1}$, write $|\beta|\triangleq\sum\limits_{k=k_0}^{K}\beta_k$. For $h\in\mathbb{N}_0$ with $h+|\beta|=m$, set
    \begin{equation}
        A_{h,\beta}\triangleq\left\{\alpha\in\mathbb{N}_0^{(\mathbb{N})}:
        \begin{gathered}
            |\alpha|=m,\quad\mathrm{supp}\alpha\subset\{1,\cdots,n\},\\
            \sum\limits_{j\in I_0}\alpha_j=h,\quad
            \sum\limits_{j\in I_k}\alpha_j=\beta_k\quad(k_0\leqslant k\leqslant K)
        \end{gathered}\right\}.
        \label{shi-fenkuaizhiji}
    \end{equation}
    By \eqref{shi-youxianzhankai} and \eqref{shi-fenkuaizhiji},
    \begin{equation}
        \sum\limits_{|\alpha|=m}|c_\alpha(G)\textbf{z}^\alpha|
        =\sum\limits_{h+|\beta|=m}\sum\limits_{\alpha\in A_{h,\beta}}
        |c_\alpha(G)\textbf{z}^\alpha|.
        \label{shi-xishuhefenjie}
    \end{equation}
    For $h<m$, put
    $$
        d\triangleq m-h=|\beta|\geqslant1,
        \qquad L\triangleq\max\{k:\beta_k>0\},
        \qquad t\triangleq\max I_L.
    $$
    For $\eta\in\mathbb{N}_0^{(\mathbb{N})}$, \eqref{shi-shanjianzhiji} and \eqref{shi-fenkuaizhiji} give
    \begin{equation}
        \begin{gathered}
            \alpha\in A_{h,\beta},\quad j=\max\mathrm{supp}\alpha
            \quad\Longrightarrow\quad j\in I_L,\quad\alpha-e_j+e_t\in A_{h,\beta},\\
            \eta+e_t\in A_{h,\beta}
            \quad\Longrightarrow\quad\max\mathrm{supp}(\eta+e_t)=t,\quad
            \eta=(\eta+e_t)-e_t\in A_{h,\beta}^-.
        \end{gathered}
        \label{shi-zuidazhibiao}
    \end{equation}
    Consequently, \eqref{shi-zuidazhibiao} gives
    \begin{equation}
        \begin{aligned}
            \eta\in A_{h,\beta}^-
            &\Longleftrightarrow\eta+e_t\in A_{h,\beta}\\
            &\Longleftrightarrow
            \begin{cases}
                \mathrm{supp}\eta\subset\{1,\cdots,t\},\\
                \sum\limits_{j\in I_0}\eta_j=h,\quad
                \sum\limits_{j\in I_L}\eta_j=\beta_L-1,\\
                \sum\limits_{j\in I_k}\eta_j=\beta_k\quad(k_0\leqslant k<L).
            \end{cases}
        \end{aligned}
        \label{shi-shanjianjikehua}
    \end{equation}
   By \eqref{shi-shanjianjikehua}, the coordinates of $\eta$ in the different blocks may be chosen independently. The numbers of nonnegative integer solutions on $I_0$, $I_k$ $(k_0\leqslant k<L)$, and $I_L$ are respectively
   $$
   \binom{|I_0|+h-1}{h},\qquad
   \binom{2^k+\beta_k-1}{\beta_k},\qquad
   \binom{2^L+\beta_L-2}{\beta_L-1}.
   $$
   Therefore,
   \begin{equation}
   	|A_{h,\beta}^-|
   	=\binom{|I_0|+h-1}{h}
   	\binom{2^L+\beta_L-2}{\beta_L-1}
   	\prod\limits_{k=k_0}^{L-1}
   	\binom{2^k+\beta_k-1}{\beta_k}.
   	\label{shi-shanjianjijishu}
   \end{equation}
    By \eqref{shi-shuangjinfenkuai},
    \begin{equation}
        \begin{aligned}
            \binom{|I_0|+h-1}{h}
            &\leqslant2^{|I_0|+h-1}\leqslant2^{3m},
            &&0\leqslant h\leqslant m,\\
            \binom{2^k+v-1}{v}
            &=\frac{\prod\limits_{\nu=0}^{v-1}(2^k+\nu)}{v!}
            \leqslant\frac{2^{(k+1)v}}{v!},
            &&k\geqslant k_0,\quad 0\leqslant v\leqslant m.
        \end{aligned}
        \label{shi-kuaizuhejie}
    \end{equation}
    Combining \eqref{shi-shanjianjijishu} and \eqref{shi-kuaizuhejie}, we obtain
    \begin{equation}
        \begin{aligned}
            |A_{h,\beta}^-|
            &\leqslant2^{3m}\frac{2^{(L+1)(\beta_L-1)}}{(\beta_L-1)!}
            \prod\limits_{k=k_0}^{L-1}\frac{2^{(k+1)\beta_k}}{\beta_k!}\\
            &=2^{3m+d-1}\frac{\beta_L}{2^L}
            \prod\limits_{k=k_0}^{L}\frac{2^{k\beta_k}}{\beta_k!}
            \leqslant2^{4m}\frac{\beta_L}{2^L}
            \prod\limits_{k=k_0}^{L}\frac{2^{k\beta_k}}{\beta_k!}.
        \end{aligned}
        \label{shi-shanjianjishangjie}
    \end{equation}

    \proofstep{4}
By \eqref{shi-fenkuaizhiji},
    \begin{equation}
        \|P_{I_0}\textbf{z}\|_X^h
        \prod\limits_{k=k_0}^{K}\|P_{I_k}\textbf{z}\|_X^{\beta_k}=0
        \quad\Longrightarrow\quad
        \sum\limits_{\alpha\in A_{h,\beta}}|c_\alpha(G)\textbf{z}^\alpha|=0.
        \label{shi-lingkuaichuli}
    \end{equation}
    Otherwise, define $\textbf{u}$ by
    \begin{equation}
        \begin{aligned}
            P_{I_0}\textbf{u}
            &\triangleq\left(\frac{h}{m}\right)^{\frac{1}{p}}
            \frac{P_{I_0}\textbf{z}}{\|P_{I_0}\textbf{z}\|_X}
            &&\text{if }h>0,\\
            P_{I_k}\textbf{u}
            &\triangleq\left(\frac{\beta_k}{m}\right)^{\frac{1}{p}}
            \frac{P_{I_k}\textbf{z}}{\|P_{I_k}\textbf{z}\|_X}
            &&\text{if }\beta_k>0,\quad k_0\leqslant k\leqslant K,
        \end{aligned}
        \label{shi-kuaiguifanhua}
    \end{equation}
    and set all remaining coordinates of $\textbf{u}$ equal to zero. By \eqref{shi-kuaiguifanhua},
    \begin{equation}
        \|\textbf{u}\|_X^p=\frac{h}{m}+\sum\limits_{k=k_0}^{K}\frac{\beta_k}{m}=1
        \quad(p<\infty),\qquad
        \|\textbf{u}\|_X=1\quad(p=\infty).
        \label{shi-kuaixiangliangfanshu}
    \end{equation}
    For $\alpha\in A_{h,\beta}$, \eqref{shi-fenkuaizhiji} and \eqref{shi-kuaiguifanhua} give
    \begin{equation}
        \textbf{z}^\alpha
        =\frac{m^{\frac{m}{p}}}{h^{\frac{h}{p}}}\|P_{I_0}\textbf{z}\|_X^h
        \prod\limits_{k=k_0}^{K}\frac{\|P_{I_k}\textbf{z}\|_X^{\beta_k}}{\beta_k^{\frac{\beta_k}{p}}}
        \,\textbf{u}^\alpha.
        \label{shi-kuaidanxiangshi}
    \end{equation}
    By \eqref{shi-zhijixishuguji}, \eqref{shi-kuaixiangliangfanshu}, and \eqref{shi-kuaidanxiangshi},
    \begin{equation}
        \begin{aligned}
            \sum\limits_{\alpha\in A_{h,\beta}}|c_\alpha(G)\textbf{z}^\alpha|
            &\leqslant e^{2m}|A_{h,\beta}^-|^\sigma
            \frac{m^{\frac{m}{p}}}{h^{\frac{h}{p}}}\|P_{I_0}\textbf{z}\|_X^h
            \prod\limits_{k=k_0}^{K}\frac{\|P_{I_k}\textbf{z}\|_X^{\beta_k}}{\beta_k^{\frac{\beta_k}{p}}}.
        \end{aligned}
        \label{shi-dankuaixishuguji}
    \end{equation}
    Since $h+d=m$, the identity $\max\limits_{0\leqslant u\leqslant1}\{-u\log u-(1-u)\log(1-u)\}=\log2$ gives
    \begin{equation}
        \frac{m^m}{h^hd^d}
        =e^{m\left(-\frac{h}{m}\log\frac{h}{m}-\frac{d}{m}\log\frac{d}{m}\right)}
        \leqslant2^m.
        \label{shi-eryuanshang}
    \end{equation}
    Also, $v!\leqslant v^v$ and \eqref{shi-fenleizhishu} give
    \begin{equation}
        \frac{1}{(v!)^\sigma v^{\frac{v}{p}}}
        \leqslant\frac{1}{(v!)^{\sigma+\frac{1}{p}}}
        =\frac{1}{(v!)^{\frac{1}{q}}},
        \qquad v\in\mathbb{N}_0.
        \label{shi-jichenghebing}
    \end{equation}
    Substitute \eqref{shi-shanjianjishangjie} and \eqref{shi-kuaiquanzhong} into \eqref{shi-dankuaixishuguji}. Using $\beta_L\leqslant d$, \eqref{shi-eryuanshang}, and \eqref{shi-jichenghebing}, we obtain
    \begin{equation}
        \begin{aligned}
            \sum\limits_{\alpha\in A_{h,\beta}}|c_\alpha(G)\textbf{z}^\alpha|
            &\leqslant e^{2m}2^{4\sigma m}\beta_L^\sigma2^{-\sigma L}
            \|P_{I_0}\textbf{z}\|_X^h
            \frac{m^{\frac{m}{p}}}{h^{\frac{h}{p}}}
            \prod\limits_{k=k_0}^{L}
            \frac{\omega_k^{\beta_k}}{(\beta_k!)^\sigma\beta_k^{\frac{\beta_k}{p}}}\\
            &\leqslant\left(e^2 2^{4\sigma+\frac{1}{p}}\right)^m
            d^\sigma2^{-\sigma L}\|P_{I_0}\textbf{z}\|_X^{m-d}d^{\frac{d}{p}}
            \prod\limits_{k=k_0}^{L}\frac{\omega_k^{\beta_k}}{(\beta_k!)^{\frac{1}{q}}}\\
            &\leqslant C_0^m d^\sigma2^{-\sigma L}
            \|P_{I_0}\textbf{z}\|_X^{m-d}d^{\frac{d}{p}}
            \prod\limits_{k=k_0}^{L}\frac{\omega_k^{\beta_k}}{(\beta_k!)^{\frac{1}{q}}},
        \end{aligned}
        \label{shi-kuaishuaijianguji}
    \end{equation}
    where the last inequality follows from \eqref{shi-fuzhuchangshu}. By \eqref{shi-lingkuaichuli}, \eqref{shi-kuaishuaijianguji} also holds if $h>0$ and $\| P_{I_0}\textbf z\|_X=0$, or if $\beta_k>0$ and $\| P_{I_k}\textbf z\|_X=0$ for some $k_0\leqslant k\leqslant L$.

    \proofstep{5}
By \eqref{shi-kuaitongyijie}, \namedref{Lemma}{yinli-fenkuaiqiuhe} applies to $(\omega_k)_{k=k_0}^{K}$. For $1\leqslant d\leqslant m$, \eqref{shi-kuaishuaijianguji} gives
    \begin{equation}
        \begin{aligned}
            \sum\limits_{|\beta|=d}\sum\limits_{\alpha\in A_{m-d,\beta}}
            |c_\alpha(G)\textbf{z}^\alpha|
            &=\sum\limits_{L=k_0}^{K}
            \sum\limits_{\substack{|\beta|=d\\\max\{k:\beta_k>0\}=L}}
            \sum\limits_{\alpha\in A_{m-d,\beta}}|c_\alpha(G)\textbf{z}^\alpha|\\
            &\leqslant C_0^m\|P_{I_0}\textbf{z}\|_X^{m-d}d^\sigma
            \sum\limits_{L=k_0}^{K}2^{-\sigma L}d^{\frac{d}{p}}
            \sum\limits_{\substack{\beta_{k_0},\cdots,\beta_L\in\mathbb{N}_0\\
                    \beta_{k_0}+\cdots+\beta_L=d}}
            \prod\limits_{k=k_0}^{L}\frac{\omega_k^{\beta_k}}{(\beta_k!)^{\frac{1}{q}}}\\
            &\leqslant C_0^m\|P_{I_0}\textbf{z}\|_X^{m-d}(3C_0)^d d^\sigma
            \sum\limits_{L=k_0}^{K}2^{-\sigma L}
            \left(\frac{L}{d}\right)^{\sigma d}
            \left(1+\frac{L}{d}\right)^{(1-\frac{1}{q})d}\\
            &\leqslant3^dC_0^{2m}d^\sigma\sum\limits_{L=1}^{\infty}2^{-\sigma L}
            \left(\frac{L}{d}\right)^{\sigma d}
            \left(1+\frac{L}{d}\right)^{(1-\frac{1}{q})d}\\
            &\leqslant3^dC_0^{2m+2d},
        \end{aligned}
        \label{shi-tongciweikuaiqiuhe}
    \end{equation}
    by \eqref{shi-duocishuqiuhe}, \eqref{shi-kuaitongyijie}, and \eqref{shi-zhishishuaijian}.

    For $d=0$, $\beta=0$. By \eqref{shi-shanjianzhiji}, \eqref{shi-fenkuaizhiji}, and \eqref{shi-shuangjinfenkuai},
    \begin{equation}
        \begin{aligned}
            A_{m,0}^-&=\{\eta\in\mathbb{N}_0^{(\mathbb{N})}:|\eta|=m-1,\ \mathrm{supp}\eta\subset I_0\},\\
            |A_{m,0}^-|&=\binom{|I_0|+m-2}{m-1}
            \leqslant2^{|I_0|+m-2}\leqslant2^{3m}.
        \end{aligned}
        \label{shi-shoukuaizhijishu}
    \end{equation}
    Apply \eqref{shi-zhijixishuguji} with $\textbf{u}=P_{I_0}\textbf{z}$. By \eqref{shi-shoukuaizhijishu}, \eqref{shi-kuaitongyijie}, and \eqref{shi-fuzhuchangshu},
    \begin{equation}
        \begin{aligned}
            \sum\limits_{\alpha\in A_{m,0}}|c_\alpha(G)\textbf{z}^\alpha|
            &\leqslant e^{2m}|A_{m,0}^-|^\sigma\|P_{I_0}\textbf{z}\|_X^m\|G\|\leqslant e^{2m}2^{3\sigma m}C_0^m\leqslant C_0^{2m}.
        \end{aligned}
        \label{shi-shoukuaixishuguji}
    \end{equation}
    Combining \eqref{shi-xishuhefenjie}, \eqref{shi-tongciweikuaiqiuhe}, and \eqref{shi-shoukuaixishuguji}, we obtain
    \begin{equation}
        \begin{aligned}
            \sum\limits_{|\alpha|=m}|c_\alpha(G)\textbf{z}^\alpha|
            &\leqslant C_0^{2m}+\sum\limits_{d=1}^{m}3^dC_0^{2m+2d}\leqslant(m+1)3^mC_0^{4m}\leqslant(6C_0^4)^m.
        \end{aligned}
        \label{shi-jingdianqiuhe}
    \end{equation}
    Put $C\triangleq6C_0^4$. By \eqref{shi-xishuhebudong}, \eqref{shi-yicixishuguji}, \eqref{shi-jingdianqiuhe}, and homogeneity, for every $m,n\in\mathbb{N}$ and $G=G\circ P_n$,
    \begin{equation}
        \sum\limits_{|\alpha|=m}|c_\alpha(G)\textbf{z}^\alpha|
        \leqslant C^mM(P_n\textbf{z})^m\|G\|
        \leqslant C^mM(\textbf{z})^m\|G\|.
        \label{shi-youxianchongpaiguji}
    \end{equation}
    Finally, for arbitrary $G\in\mathcal{P}^m(X)$, \eqref{shi-shouweisuosuo}, and \eqref{shi-youxianchongpaiguji} yield
    $$
    \begin{aligned}
        \sum\limits_{|\alpha|=m}|c_\alpha(G)\textbf{z}^\alpha|
        &=\lim\limits_{n\to\infty}
        \sum\limits_{\substack{|\alpha|=m\\\mathrm{supp}\alpha\subset\{1,\cdots,n\}}}
        |c_\alpha(G)\textbf{z}^\alpha|=\lim\limits_{n\to\infty}
        \sum\limits_{|\alpha|=m}|c_\alpha(G\circ P_n)\textbf{z}^\alpha|\\
        &\leqslant C^mM(\textbf{z})^m\sup\limits_{n\in\mathbb{N}}\|G\circ P_n\|
        \leqslant C^mM(\textbf{z})^m\|G\|.
    \end{aligned}
    $$
    This proves \eqref{shi-chongpaixishuguji}.
\end{proof}

\begin{lemma}\label{yinli-duoxiangshiwang}
    Let $n,m\in\mathbb{N}$ with $m\geqslant2$. There exists a finite set
    $\mathcal{N}\subset\{\textbf{u}\in\mathbb{C}^n:
    \|\textbf{u}\|_{\ell^2}\leqslant1\}$ with
    $|\mathcal{N}|\leqslant(1+12m)^{2n}$ such that
    $$
    \sup\limits_{\|\textbf{u}\|_{\ell^2}\leqslant1}|P(\textbf{u})|
    \leqslant2\max\limits_{\textbf{u}\in\mathcal{N}}|P(\textbf{u})|
    $$
    for every $m$-homogeneous polynomial $P$ on $\mathbb{C}^n$.
\end{lemma}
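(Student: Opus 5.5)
The plan is to reuse the volumetric packing argument from \namedref{Lemma}{yinli-youxianwang}, but applied to the Euclidean ball of $\mathbb{C}^n$ with a finer separation. The polynomial will then be handled through a one-variable Bernstein-type (Lipschitz) estimate instead of multilinear telescoping, since for an $m$-homogeneous polynomial the polarization constant $\frac{m^m}{m!}$ would be too costly.

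\proofstep{1}

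Let $\mathcal{N}=\{\xi^{(1)},\cdots,\xi^{(q)}\}$ be a maximal family in $\overline{B}=\{\textbf{u}\in\mathbb{C}^n:\|\textbf{u}\|_{\ell^2}\leqslant1\}$ with pairwise distances $>\frac{1}{6m}$. The balls of radius $\frac{1}{12m}$ around these points are disjoint and lie in $(1+\frac{1}{12m})B$. Comparing $2n$-dimensional volumes exactly as in \namedref{Lemma}{yinli-youxianwang} gives $q\leqslant(1+12m)^{2n}$. By maximality, every $\textbf{x}\in\overline{B}$ has some $\xi\in\mathcal{N}$ with $\|\textbf{x}-\xi\|_{\ell^2}\leqslant\frac{1}{6m}$.

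\proofstep{2}

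This step gives the Lipschitz estimate. Put $\|P\|=\sup_{\overline{B}}|P|$. I claim that
$$
|P(\textbf{x})-P(\textbf{y})|\leqslant m\|P\|\,\|\textbf{x}-\textbf{y}\|_{\ell^2}\cdot\max\{1,\|\textbf{x}\|,\|\textbf{y}\|\}^{m-1},
$$
and in particular $|P(\textbf{x})-P(\textbf{y})|\leqslant m\|P\|\|\textbf{x}-\textbf{y}\|_{\ell^2}$ for $\textbf{x},\textbf{y}\in\overline{B}$. To prove it, write $P(\textbf{x})-P(\textbf{y})=\int_0^1DP(\textbf{y}+s(\textbf{x}-\textbf{y}))[\textbf{x}-\textbf{y}]\,\mathrm{d}s$. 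The points on the segment stay in $\overline{B}$ by convexity, so it suffices to show $\|DP(\textbf{w})\|\leqslant m\|P\|$ for $\|\textbf{w}\|\leqslant1$.

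This is the Harris/Sarantopoulos-type bound that holds in Hilbert space. The case $\textbf{h}\parallel\textbf{w}$ follows from homogeneity, since $DP(\textbf{w})[\textbf{w}]=mP(\textbf{w})$. For general $\textbf{h}$ I would use the Hilbert-space fact that the derivative in the unit ball satisfies $|DP(\textbf{w})\textbf{h}|\leqslant m\|P\|\|\textbf{h}\|$, which is classical and follows from Bernstein's inequality on the circle through $\textbf{w}$ in the complex plane spanned by $\textbf{w}$ and $\textbf{h}$. Concretely, I would restrict $P$ to the two-dimensional subspace $\mathrm{span}\{\textbf{w},\textbf{h}\}$ and reduce to $n=2$ with the Euclidean norm.

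If this sharp bound proves awkward, a weaker but sufficient fallback is the Cauchy estimate on a disc. For $|\lambda|\leqslant1/m$ one has $\|\textbf{w}+\lambda\textbf{h}\|\leqslant1+1/m$ when $\|\textbf{h}\|=1$, so $|P|\leqslant(1+1/m)^m\|P\|\leqslant e\|P\|$ there, and hence $|DP(\textbf{w})\textbf{h}|\leqslant em\|P\|$. I would then shrink the separation to $\frac{1}{2em}$, which changes the count to $(1+4em)^{2n}$. Since $4e<12$, this still gives $|\mathcal{N}|\leqslant(1+12m)^{2n}$. So I will actually use this fallback: take separation $\frac{1}{2em}$, note the count $(1+4em)^{2n}\leqslant(1+12m)^{2n}$, and use the Lipschitz constant $em\|P\|$ along segments inside $\overline{B}$.

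\proofstep{3}

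Choose $\textbf{x}\in\overline{B}$ with $|P(\textbf{x})|=\|P\|$, which exists by compactness, and pick $\xi\in\mathcal{N}$ with $\|\textbf{x}-\xi\|\leqslant\frac{1}{2em}$. Then
$$
\|P\|\leqslant|P(\xi)|+em\|P\|\tfrac{1}{2em}=|P(\xi)|+\tfrac12\|P\|,
$$
so $\|P\|\leqslant2\max_{\mathcal{N}}|P|$.

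The main obstacle is the derivative bound in Step 2. The Cauchy-estimate version avoids Hilbert-space subtleties, and the factor $12$ in the statement leaves enough room for the resulting loss of $e$.
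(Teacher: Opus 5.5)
Your argument is correct and is essentially the paper's proof: a maximal separated set in the closed Euclidean ball counted by volume comparison, a Cauchy estimate on the circle $|\lambda|=\frac1m$ giving a Lipschitz bound for $P$ on the ball, and the absorption step $K\leqslant\max_{\mathcal N}|P|+\frac K2$. The only difference is bookkeeping. The paper bounds $(1+\frac1m)^m\leqslant3$ and keeps the separation $\frac{1}{6m}$. You keep the factor $e$, take separation $\frac{1}{2em}$, and use $4e<12$ for the count; the sharp Hilbert-space derivative bound you mention is not needed.
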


\begin{proof}
        \proofstep{1}

    Set $\delta=\frac{1}{6m}$. For $\textbf{a}\in\mathbb{C}^n$
    and $r>0$, write
    $$
    B(\textbf{a},r)
    \triangleq
    \{\textbf{w}\in\mathbb{C}^n:
    \|\textbf{w}-\textbf{a}\|_{\ell^2}<r\}.
    $$
    Consider the largest $q\in\mathbb{N}$ for which there exist
    $\textbf{u}^{(1)},\cdots,\textbf{u}^{(q)}\in\mathbb{C}^n$
    such that
    $$
    \|\textbf{u}^{(j)}\|_{\ell^2}\leqslant1,
    \qquad 1\leqslant j\leqslant q,
    $$
    and
    $$
    \|\textbf{u}^{(i)}-\textbf{u}^{(j)}\|_{\ell^2}>\delta,
    \qquad 1\leqslant i<j\leqslant q.
    $$
    The balls
    $B(\textbf{u}^{(j)},\frac{\delta}{2})$,
    $1\leqslant j\leqslant q$, are pairwise disjoint. For
    $\textbf{w}\in B(\textbf{u}^{(j)},\frac{\delta}{2})$,
    $$
    \|\textbf{w}\|_{\ell^2}
    \leqslant
    \|\textbf{w}-\textbf{u}^{(j)}\|_{\ell^2}
    +\|\textbf{u}^{(j)}\|_{\ell^2}
    <\frac{\delta}{2}+1.
    $$
    Thus
    $$
    \bigcup\limits_{j=1}^q
    B\left(\textbf{u}^{(j)},\frac{\delta}{2}\right)
    \subset
    B\left(\textbf{0},1+\frac{\delta}{2}\right).
    $$

    Let $\mathrm{vol}_{2n}$ denote real $2n$-dimensional
    Lebesgue measure on $\mathbb{C}^n$. We have
    $$
    \begin{aligned}
        &q\left(\frac{\delta}{2}\right)^{2n}
        \mathrm{vol}_{2n}(B(\textbf{0},1))=
        \sum\limits_{j=1}^q
        \mathrm{vol}_{2n}
        \left(B\left(\textbf{u}^{(j)},\frac{\delta}{2}\right)\right)\\
        &\quad=
        \mathrm{vol}_{2n}
        \left(
        \bigcup\limits_{j=1}^q
        B\left(\textbf{u}^{(j)},\frac{\delta}{2}\right)
        \right)\leqslant
        \mathrm{vol}_{2n}
        \left(B\left(\textbf{0},1+\frac{\delta}{2}\right)\right)\\
        &\quad=
        \left(1+\frac{\delta}{2}\right)^{2n}
        \mathrm{vol}_{2n}(B(\textbf{0},1)).
    \end{aligned}
    $$

    We conclude that
    $$
    q
    \leqslant
    \left(
    \frac{1+\frac{\delta}{2}}{\frac{\delta}{2}}
    \right)^{2n}
    =\left(1+\frac{2}{\delta}\right)^{2n}
    =(1+12m)^{2n}.
    $$

    Let $$\mathcal{N} \triangleq \left\{ \textbf{u}^{(1)} ,\cdots ,\textbf{u}^{(q)} \right\}.$$In particular,
    $$
    |\mathcal{N}|\leqslant(1+12m)^{2n}.
    $$

    \proofstep{2}

    Let $P$ be any $m$-homogeneous polynomial on $\mathbb{C}^n$, and put
$
    K\triangleq
    \sup\limits_{\|\textbf{u}\|_{\ell^2}\leqslant1}|P(\textbf{u})|<\infty.
$   Fix $\textbf{u},\textbf{v}\in\mathbb{C}^n$ with
    $\|\textbf{u}\|_{\ell^2}\leqslant1$ and
    $\|\textbf{v}\|_{\ell^2}=1$. For every
    $\lambda\in\mathbb{C}$ with $|\lambda|=\frac{1}{m}$,
    $$
    \left\|
    \frac{\textbf{u}+\lambda\textbf{v}}{1+\frac{1}{m}}
    \right\|_{\ell^2}
    \leqslant
    \frac{\|\textbf{u}\|_{\ell^2}
        +|\lambda|\|\textbf{v}\|_{\ell^2}}
    {1+\frac{1}{m}}
    \leqslant1.
    $$
    Consequently, the homogeneity of $P$ gives
    $$
    \begin{aligned}
        |P(\textbf{u}+\lambda\textbf{v})|
        &=\left(1+\frac{1}{m}\right)^m
        \left|
        P\left(
        \frac{\textbf{u}+\lambda\textbf{v}}{1+\frac{1}{m}}
        \right)
        \right|\leqslant\left(1+\frac{1}{m}\right)^mK.
    \end{aligned}
    $$
    Applying the Cauchy formula to the entire function
    $\lambda\mapsto P(\textbf{u}+\lambda\textbf{v})$, we obtain
    $$
    DP(\textbf{u})[\textbf{v}]
    =\frac{1}{2\pi \mathrm{i}}
    \int\limits_{|\lambda|=\frac{1}{m}}
    \frac{P(\textbf{u}+\lambda\textbf{v})}{\lambda^2}
    \,\mathrm{d}\lambda,
    $$
 Hence
    $$
    \begin{aligned}
    &|DP(\textbf{u})[\textbf{v}]|
        \leqslant
        \frac{1}{2\pi}
        \int\limits_{|\lambda|=\frac{1}{m}}
        \frac{|P(\textbf{u}+\lambda\textbf{v})|}{|\lambda|^2}
        \,|\mathrm{d}\lambda|\\
        &\quad \leqslant
        \frac{1}{2\pi}\frac{2\pi}{m}\,
        m^2\left(1+\frac{1}{m}\right)^mK=m\left(1+\frac{1}{m}\right)^mK
        \leqslant3mK.
    \end{aligned}
    $$
 By the linearity of
    $DP(\textbf{u})$, for every nonzero $\textbf{h}\in\mathbb{C}^n$,
    $$
    \begin{aligned}
        |DP(\textbf{u})[\textbf{h}]|
        &=\|\textbf{h}\|_{\ell^2}
        \left|
        DP(\textbf{u})
        \left[
        \frac{\textbf{h}}{\|\textbf{h}\|_{\ell^2}}
        \right]
        \right|\leqslant3mK\|\textbf{h}\|_{\ell^2}.
    \end{aligned}
    $$
    The same inequality holds for $\textbf{h}=\textbf{0}$.

    Now let $\textbf{u},\textbf{v}\in\mathbb{C}^n$ satisfy
    $\|\textbf{u}\|_{\ell^2},\|\textbf{v}\|_{\ell^2}\leqslant1$, we have
    \begin{equation}
        \begin{aligned}
            |P(\textbf{u})-P(\textbf{v})|
            &\leqslant 3mK\|\textbf{u}-\textbf{v}\|_{\ell^2}.
        \end{aligned}
        \label{shi-duoxiangshicha}
    \end{equation}

    Hence, for every
$\textbf{u}\in\mathbb{C}^n$ with
$\|\textbf{u}\|_{\ell^2}\leqslant1$, there exists
$\textbf{v}\in\mathcal{N}$ such that
$\|\textbf{u}-\textbf{v}\|_{\ell^2}\leqslant\delta$.
    \eqref{shi-duoxiangshicha} yields
    $$
    \begin{aligned}
        |P(\textbf{u})|
        &\leqslant
        |P(\textbf{v})|+|P(\textbf{u})-P(\textbf{v})|\\
        &\leqslant
        \max\limits_{\textbf{w}\in\mathcal{N}}|P(\textbf{w})|
        +3mK\|\textbf{u}-\textbf{v}\|_{\ell^2}\\
        &\leqslant
        \max\limits_{\textbf{w}\in\mathcal{N}}|P(\textbf{w})|
        +3mK\delta\\
        &=
        \max\limits_{\textbf{w}\in\mathcal{N}}|P(\textbf{w})|
        +\frac{K}{2}.
    \end{aligned}
    $$
    Taking the supremum over
    $\|\textbf{u}\|_{\ell^2}\leqslant1$, we obtain
    $$
    K\leqslant
    \max\limits_{\textbf{w}\in\mathcal{N}}|P(\textbf{w})|
    +\frac{K}{2},
    $$
    and therefore
    \begin{equation}
        K\leqslant
        2\max\limits_{\textbf{u}\in\mathcal{N}}|P(\textbf{u})|.
        \label{shi-duoxiangshiwangjie}
    \qedhere
    \end{equation}
\end{proof}
The following proof is inspired by
\cite[Section~5.2, pp.~329--334]{Boas00} and
\cite[Theorem~3.1 and Corollary~3.2]{Bayart12}
\begin{lemma}\label{yinli-suijiduoxiangshi}
Let $n,m\in\mathbb{N}$ with $m\geqslant2$. There are 
$\left( \varepsilon_{\alpha} \right)_{\alpha \in \mathbb{N}_{0}^{n} ,\left| \alpha \right| =m} \in \left\{ -1,1 \right\}^{\left| \left\{ \alpha \in \mathbb{N}_{0}^{n}:\left| \alpha \right| =m \right\} \right|}$ such that the polynomial
$$
R(\textbf{u})\triangleq
\sum\limits_{\substack{\alpha\in\mathbb{N}_0^n\\|\alpha|=m}}
\varepsilon_\alpha\frac{m!}{\alpha!}\textbf{u}^\alpha
$$
satisfies, simultaneously for $1\leqslant r\leqslant2$,
\begin{equation}
\sup\limits_{\|\textbf{u}\|_{\ell^r}\leqslant1}|R(\textbf{u})|
\leqslant16^{2(1-\frac{1}{r})}
(nm!\log m)^{1-\frac{1}{r}}.
\label{shi-suijiduoxiangshiguji}
\end{equation}
\end{lemma}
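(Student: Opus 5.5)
The plan is to establish two endpoint estimates for a suitably chosen random sign polynomial and then interpolate between them. At $r=1$ every choice of signs works. For $\|\textbf{u}\|_{\ell^1}\leqslant1$ the multinomial theorem gives
$$
|R(\textbf{u})|\leqslant\sum_{|\alpha|=m}\frac{m!}{\alpha!}|\textbf{u}|^\alpha=\|\textbf{u}\|_{\ell^1}^m\leqslant1,
$$
which is the right-hand side of \eqref{shi-suijiduoxiangshiguji} at $r=1$. The real work is at $r=2$. There I will show, by a probabilistic argument over independent Rademacher signs $\varepsilon_\alpha$, that some choice gives
$$
\sup_{\|\textbf{u}\|_{\ell^2}\leqslant1}|R(\textbf{u})|\leqslant16\,(n\,m!\log m)^{\frac{1}{2}}.
$$

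For fixed $\textbf{u}$, the value $R(\textbf{u})$ is a Rademacher sum with coefficients $c_\alpha=\frac{m!}{\alpha!}\textbf{u}^\alpha$. Since $\frac{m!}{\alpha!}\leqslant m!$, we have
$$
\sum_\alpha|c_\alpha|^2\leqslant m!\sum_\alpha\frac{m!}{\alpha!}|\textbf{u}|^{2\alpha}=m!\,\|\textbf{u}\|_{\ell^2}^{2m}\leqslant m!.
$$
I will split into real and imaginary parts and apply the Hoeffding/Khintchine subgaussian tail to each, giving
$$
\mathbb{P}\bigl(|R(\textbf{u})|>t\bigr)\leqslant4e^{-t^2/(4m!)}.
$$
Next I apply Lemma~\ref{yinli-duoxiangshiwang}, which supplies a net $\mathcal{N}$ in the $\ell^2$ ball with $|\mathcal{N}|\leqslant(1+12m)^{2n}$ and $\sup|R|\leqslant2\max_{\mathcal{N}}|R|$. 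A union bound over $\mathcal{N}$ succeeds once
$$
4(1+12m)^{2n}e^{-t^2/(4m!)}<1.
$$
Using $m\geqslant2$, so that $\log(1+12m)\leqslant c\log m$, it suffices to take $t^2$ of order $n\,m!\log m$. Tracking constants, $t=8\sqrt{n\,m!\log m}$ should work, and the factor $2$ from the net then yields the constant $16$.

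Finally, to get all $1\leqslant r\leqslant2$ simultaneously for the same signs, I will interpolate between the two endpoints. Write $\frac1r=\frac{1-\theta}{1}+\frac{\theta}{2}$, so that $\theta=2(1-\frac1r)$. I want
$$
\sup_{B_{\ell^r}}|R|\leqslant\Bigl(\sup_{B_{\ell^1}}|R|\Bigr)^{1-\theta}\Bigl(\sup_{B_{\ell^2}}|R|\Bigr)^{\theta}\leqslant\bigl(16^2\,n\,m!\log m\bigr)^{1-\frac1r},
$$
which is exactly \eqref{shi-suijiduoxiangshiguji}. This bound comes from Riesz--Thorin/Hadamard three-lines. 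For $\textbf{u}$ with $\|\textbf{u}\|_{\ell^r}=1$, set
$$
\textbf{u}(\zeta)_j=\operatorname{sgn}(u_j)\,|u_j|^{r(\frac{1-\zeta}{1}+\frac{\zeta}{2})}.
$$
On $\operatorname{Re}\zeta=0$ this vector has $\ell^1$ norm $1$, and on $\operatorname{Re}\zeta=1$ it has $\ell^2$ norm $1$. The function $\zeta\mapsto R(\textbf{u}(\zeta))$ is bounded and holomorphic on the strip, and evaluating at $\zeta=\theta$ recovers $R(\textbf{u})$.

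The main obstacle is the constant bookkeeping at the $r=2$ endpoint. I need the subgaussian tail with an explicit constant for complex coefficients, together with the elementary bounds $\log 4+2n\log(1+12m)\leqslant c\,n\log m$ for all $m\geqslant2$, $n\geqslant1$. These must produce a constant no larger than $16$. If $8$ is too tight at small $m$, I will check $m=2,3$ separately; the value $m=2$ is tight because $\log2$ is small.
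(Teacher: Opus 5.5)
Your proposal is correct and follows the paper's proof essentially step for step: Hoeffding tails for the real and imaginary parts giving $4e^{-t^2/(4m!)}$, a union bound over the net of Lemma~\ref{yinli-duoxiangshiwang}, the multinomial bound at $r=1$, and Hadamard's three-lines theorem applied to the same analytic family $u_j\mapsto\frac{u_j}{|u_j|}|u_j|^{r(1-\zeta/2)}$ with $\theta=2(1-\frac1r)$. Your choice $t=8\sqrt{n\,m!\log m}$ works for every $m\geqslant2$ with no separate small-$m$ check, because $1+12m\leqslant m^5$ and $\log4\leqslant 2n\log m$ give $\log4+2n\log(1+12m)\leqslant 12\,n\log m<16\,n\log m$.
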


\begin{proof}

Choose $\mathcal{N}$ as in
\namedref{Lemma}{yinli-duoxiangshiwang}.

\proofstep{1}

Let $(\varepsilon_\alpha)_{\alpha\in\mathbb{N}_0^n,\ |\alpha|=m}$
be a family of independent Rademacher random variables; that is,
$$
\mathbb{P}(\varepsilon_\alpha=1)
=
\mathbb{P}(\varepsilon_\alpha=-1)
=
\frac{1}{2}
$$
for every $\alpha\in\mathbb{N}_0^n$ with $|\alpha|=m$. For a fixed
$\|\textbf{u}\|_{\ell^2}\leqslant1$, put
$b_\alpha=\frac{m!}{\alpha!}\textbf{u}^\alpha$.
Since $\frac{m!}{\alpha!}\leqslant m!$,
\begin{equation}
\sum\limits_{|\alpha|=m}|b_\alpha|^2
\leqslant m!\sum\limits_{|\alpha|=m}
\frac{m!}{\alpha!}|\textbf{u}|^{2\alpha}
=m!\left(\sum\limits_{j=1}^{n}|u_j|^2\right)^m\leqslant m!.
\label{shi-suijifangcha}
\end{equation}
Let $a_\alpha\in\mathbb{R}$, $|\alpha|=m$. For $|\alpha|=m$, set
$
X_\alpha\triangleq\varepsilon_\alpha a_\alpha.
$
Then the random variables $X_\alpha$ are independent and satisfy
$$
\mathbb{E}X_\alpha=0,
\qquad
-|a_\alpha|\leqslant X_\alpha\leqslant|a_\alpha|.
$$
Since there are only finitely many multi-indices $\alpha$ with
$|\alpha|=m$, \cite[Theorem~2, p.~16]{Hoeffding63} applies and gives,
for every $t>0$,
$$
\begin{aligned}
    \mathbb{P}\left(
    \sum\limits_{|\alpha|=m}\varepsilon_\alpha a_\alpha>t
    \right)
    &\leqslant
    e^{
    -\frac{2t^2}
    {\sum\limits_{|\alpha|=m}(2|a_\alpha|)^2}
    }=
    e^{
    -\frac{t^2}
    {2\sum\limits_{|\alpha|=m}a_\alpha^2}
    }.
\end{aligned}
$$
Applying the same estimate to the independent random variables
$-X_\alpha$ yields
$$
\mathbb{P}\left(
\sum\limits_{|\alpha|=m}\varepsilon_\alpha a_\alpha<-t
\right)
\leqslant
e^{
-\frac{t^2}
{2\sum\limits_{|\alpha|=m}a_\alpha^2}
}.
$$
Hence
\begin{equation}
    \mathbb{P}\left(
    \left|\sum\limits_{|\alpha|=m}
    \varepsilon_\alpha a_\alpha\right|>t
    \right)
    \leqslant
    2e^{
    -\frac{t^2}
    {2\sum\limits_{|\alpha|=m}a_\alpha^2}
    }.
    \label{shi-suijiweigailv}
\end{equation}
Applying \eqref{shi-suijiweigailv} with
$a_\alpha=\mathrm{Re}b_\alpha$ and with
$a_\alpha=\mathrm{Im}b_\alpha$, respectively, and using
\eqref{shi-suijifangcha}, yields
\begin{equation}
	\begin{aligned}
		\mathbb{P}\bigl(|R(\textbf{u})|>t\bigr)
		&\leqslant
		\mathbb{P}\left(
		\left|\mathrm{Re}R(\textbf{u})\right|
		>\frac{t}{\sqrt{2}}
		\right)
		+
		\mathbb{P}\left(
		\left|\mathrm{Im}R(\textbf{u})\right|
		>\frac{t}{\sqrt{2}}
		\right)
		\\
		&\leqslant
		2e^{
			-\frac{(\frac{t}{\sqrt{2}})^2}
			{2\sum\limits_{\left| \alpha \right| =m}(\mathrm{Re}b_\alpha)^2}
		}
		+
		2e^{
			-\frac{(\frac{t}{\sqrt{2}})^2}
			{2\sum\limits_{\left| \alpha \right| =m}(\mathrm{Im}b_\alpha)^2}
		}
		\\
		&\leqslant
		2e^{
			-\frac{t^2}{4\sum\limits_{\left| \alpha \right| =m}|b_\alpha|^2}
		}
		+
		2e^{
			-\frac{t^2}{4\sum\limits_{\left| \alpha \right| =m}|b_\alpha|^2}
		}
		\\
		&\leqslant
		4e^{-\frac{t^2}{4m!}}.
	\end{aligned}\label{shi-cheche}
\end{equation}

Set
$$
t=2\sqrt{m!\bigl(\log8+2n\log(1+12m)\bigr)}.
$$
$\left| \mathcal{N} \right| \leqslant \left( 1+12m \right)^{2n}$ shows that
$$
\mathbb{P}\left(\max\limits_{\textbf{u}\in\mathcal{N}}|R(\textbf{u})|>t\right)
\leqslant4(1+12m)^{2n}e^{-\frac{t^2}{4m!}}
=\frac{1}{2}.
$$
Consequently, there is a choice of signs for which
\eqref{shi-duoxiangshiwangjie} gives
\begin{equation}
    \sup\limits_{\|\textbf{u}\|_{\ell^2}\leqslant1}|R(\textbf{u})|
    \leqslant4\sqrt{m!\bigl(\log8+2n\log(1+12m)\bigr)}
    \leqslant B\triangleq 16\sqrt{nm!\log m}.\label{shi-suijierfanshu}
\end{equation}
For the last inequality, $m\geqslant2$ gives
$1+12m\leqslant m^5$ and $\log8\leqslant3n\log m$;
thus the quantity inside the square root is at most $13nm!\log m$,
and $4\sqrt{13}<16$.

\proofstep{2}

For the same choice of signs, the multinomial theorem gives
$$
|R(\textbf{u})|
\leqslant
\sum\limits_{|\alpha|=m}\frac{m!}{\alpha!}|\textbf{u}^{\alpha}|
=
\left(\sum\limits_{j=1}^{n}|u_j|\right)^m,
$$
and hence
$$
\sup\limits_{\|\textbf{u}\|_{\ell^1}\leqslant1}|R(\textbf{u})|
\leqslant1.
$$
Together with the estimate \eqref{shi-suijierfanshu}, this proves
\eqref{shi-suijiduoxiangshiguji} for $r=1$ and $r=2$.

Now let $1<r<2$ and set
$
\theta=2\left(1-\frac{1}{r}\right)\in(0,1).
$
Fix $\textbf{u}\in\mathbb{C}^n$ with $\|\textbf{u}\|_{\ell^r}=1$, and define
$$
\textbf{v}(w)=\bigl(v_1(w),\cdots,v_n(w)\bigr),
\qquad
0\leqslant\mathrm{Re}w\leqslant1,
$$
where
$$
v_j(w)=
\begin{cases}
    \displaystyle
    \frac{u_j}{|u_j|}
    e^{r\left(1-\frac{w}{2}\right)\log|u_j|},
    & u_j\neq0,\\[1ex]
    0,&u_j=0.
\end{cases}
$$
Since $r(1-\frac{\theta}{2})=1$, we have
$\textbf{v}(\theta)=\textbf{u}$. Moreover, for every $t\in\mathbb{R}$,
$$
\sum\limits_{j=1}^{n}|v_j(\mathrm{i}t)|
=
\sum\limits_{j=1}^{n}|u_j|^r
=
1,
\qquad
\sum\limits_{j=1}^{n}|v_j(1+\mathrm{i}t)|^2
=
\sum\limits_{j=1}^{n}|u_j|^r
=
1.
$$
By the cases $r=1$ and $r=2$ of
\eqref{shi-suijiduoxiangshiguji}, it follows that
$$
\sup\limits_{t\in\mathbb{R}}
|R(\textbf{v}(\mathrm{i}t))|
\leqslant1,
\qquad
\sup\limits_{t\in\mathbb{R}}
|R(\textbf{v}(1+\mathrm{i}t))|
\leqslant B.
$$
Since $\|\textbf{u}\|_{\ell^r}=1$, we have $|u_j|\leqslant1$ for
$1\leqslant j\leqslant n$, and therefore
$|v_j(w)|\leqslant1$ whenever
$0\leqslant\mathrm{Re}w\leqslant1$. Thus the function
$w\mapsto R(\textbf{v}(w))$ is bounded and continuous on the closed
strip $0\leqslant\mathrm{Re}w\leqslant1$ and holomorphic in its
interior. By Hadamard's three-lines theorem
\cite[Theorem~12.8, p.~257]{Rudin87},
$$
|R(\textbf{u})|
=
|R(\textbf{v}(\theta))|
\leqslant
B^\theta
=
16^{2(1-\frac{1}{r})}
(nm!\log m)^{1-\frac{1}{r}}.
$$
Since $\textbf{u}$ was arbitrary on the $\ell^r$ unit sphere, the
$m$-homogeneity of $R$ yields
$$
\sup\limits_{\|\textbf{u}\|_{\ell^r}\leqslant1}|R(\textbf{u})|
\leqslant
16^{2(1-\frac{1}{r})}
(nm!\log m)^{1-\frac{1}{r}}.
$$
Hence \eqref{shi-suijiduoxiangshiguji} holds for every
$1\leqslant r\leqslant2$.
\end{proof}

The following proof is inspired by
\cite[proof of Theorem~2.2\,(b)]{BDFMS17} and
\cite[Section~4A, pp.~953--954]{GMMS21}.
For the case $2<p<\infty$, see also
\cite[Section~6.3.1, pp.~99--101]{Mansilla19}.
\begin{lemma}\label{yinli-shangjixianbijiao}
Fix $1<p\leqslant\infty$, $\textbf{z}\in X$, and
$N\in\mathbb{N}_0$. Suppose there exist $A,R>0$ such that
\begin{equation}
\sum\limits_{|\alpha|=m}|c_\alpha(G)\textbf{z}^\alpha|
\leqslant AR^m\|G\|
\label{shi-weibuzhishijiading}
\end{equation}
for every $m\geqslant2$ and every $G\in\mathcal{P}^m(X)$ depending only on finitely many coordinates $x_j$ with $j>N$.
Then
\begin{equation}
\limsup\limits_{n\to\infty}
\frac{\left(\sum\limits_{j=1}^{n}
\bigl((Q_N\textbf{z})_j^*\bigr)^{q}\right)^{\frac{1}{q}}}
{(\log(n+1))^{\sigma}}
\leqslant R.
\label{shi-weibushangjixian}
\end{equation}
\end{lemma}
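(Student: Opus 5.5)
The plan is to test the hypothesis \eqref{shi-weibuzhishijiading} against the random polynomials of \namedref{Lemma}{yinli-suijiduoxiangshi}, with weights and phases adapted to $\textbf{w}\triangleq Q_N\textbf{z}$, and then optimize the degree $m$ as a function of $n$. Fix $n\in\mathbb{N}$ and a set $F\subset\{N+1,N+2,\cdots\}$ with $|F|=n$ such that $(|w_j|)_{j\in F}$ are the $n$ largest values of $|\textbf{w}|$. Write
$$
S_n\triangleq\sum\limits_{j=1}^{n}(w_j^*)^q=\sum\limits_{j\in F}|w_j|^q .
$$
We may assume $S_n>0$.

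\textbf{Construction.} Put $\frac1s\triangleq1-\frac1q$, so that $s=\infty$ for $1<p\leqslant2$. By \eqref{shi-fenleizhishu}, $\frac1s=\frac12-\frac1p$ when $2<p<\infty$, and $\frac1s=\frac12$ when $p=\infty$. Set $y_j\triangleq|w_j|^{q-1}$ for $j\in F$, so that $y_j=1$ when $q=1$. Let $\omega_j$ be unimodular with $\omega_jw_j=|w_j|$. For $m\geqslant2$, take $R_m$ from \namedref{Lemma}{yinli-suijiduoxiangshi}, in the $n$ variables indexed by $F$. Define
$$
G(\textbf{x})\triangleq R_m\bigl((y_j\omega_jx_j)_{j\in F}\bigr).
$$
Then $G\in\mathcal{P}^m(X)$ depends only on coordinates in $F$, all of which exceed $N$. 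Since $|c_\alpha(G)|=\frac{m!}{\alpha!}\prod_j(y_j)^{\alpha_j}$, the multinomial theorem gives
$$
\sum\limits_{|\alpha|=m}|c_\alpha(G)\textbf{z}^\alpha|=\Bigl(\sum\limits_{j\in F}y_j|w_j|\Bigr)^m=S_n^m .
$$

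\textbf{Norm bound.} If $1<p\leqslant2$, apply \eqref{shi-suijiduoxiangshiguji} with $r=p$; this gives $\|G\|\leqslant16^{2\sigma}(n\,m!\log m)^{\sigma}$. If $2<p\leqslant\infty$, then for $\|\textbf{u}\|_X\leqslant1$, H\"older's inequality gives
$$
\|(y_ju_j)_{j\in F}\|_{\ell^2}\leqslant\|y\|_{\ell^s}\|\textbf{u}\|_{\ell^p}\leqslant\|y\|_{\ell^s}.
$$
Since $(q-1)s=q$, we have $\|y\|_{\ell^s}=S_n^{1/s}$. Applying \eqref{shi-suijiduoxiangshiguji} with $r=2$ and using homogeneity, we get $\|G\|\leqslant S_n^{m/s}\,16(n\,m!\log m)^{1/2}$. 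For $p=\infty$, the supremum over $B_{c_0}$ of a polynomial in finitely many coordinates is the supremum over the $\ell^\infty$ unit ball of $\mathbb{C}^n$, so the same bound applies. In every case, inserting these estimates into \eqref{shi-weibuzhishijiading} and using $1-\frac1s=\frac1q$ yields
$$
S_n^{\frac1q}\leqslant R\,\bigl(16^{2\sigma}A\bigr)^{\frac1m}\,(\log m)^{\frac{\sigma}{m}}\;n^{\frac{\sigma}{m}}\,(m!)^{\frac{\sigma}{m}},\qquad m\geqslant2 .
$$

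\textbf{Optimization.} Take $m=m(n)\triangleq\max\{2,\lceil\log(n+1)\rceil\}$. Then $(16^{2\sigma}A)^{1/m}\to1$ and $(\log m)^{\sigma/m}\to1$. Since $\log n/m\to1$ with $\log n/m\leqslant1$, we have $n^{\sigma/m}\leqslant e^{\sigma}$. By Stirling's formula, $(m!)^{1/m}=\frac{m}{e}(1+o(1))$ and $\frac{m}{\log(n+1)}\to1$. Hence
$$
\frac{n^{\sigma/m}(m!)^{\sigma/m}}{(\log(n+1))^{\sigma}}\leqslant e^\sigma\Bigl(\frac{m}{e\log(n+1)}\Bigr)^{\sigma}(1+o(1))\longrightarrow1 .
$$
Dividing by $(\log(n+1))^\sigma$ and letting $n\to\infty$ gives \eqref{shi-weibushangjixian}.

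The main obstacle is this last step: one must see that the choice $m\approx\log n$ makes the constant exactly $1$ rather than some $c>1$. Indeed, $n^{1/m}\frac{m}{e}$ with $m=\lambda\log n$ equals $\lambda e^{1/\lambda-1}\log n$, which is minimized at $\lambda=1$ with value $\log n$. A secondary check is the exponent bookkeeping $(q-1)s=q$ and $1-\frac1s=\frac1q$ in the case $p>2$.
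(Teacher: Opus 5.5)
Your proposal is correct and follows essentially the same route as the paper: test the hypothesis on the random polynomials of \namedref{Lemma}{yinli-suijiduoxiangshi}, weighted by the H\"older-dual weights $|w_j|^{q-1}$ on the $n$ largest coordinates beyond $N$; use the multinomial identity; and then take $m=\max\{2,\lceil\log(n+1)\rceil\}$ so that Stirling's formula makes the constant exactly $1$. The differences are only cosmetic: you choose the extremal index set directly and use unnormalized weights together with homogeneity, whereas the paper normalizes $\|\textbf{a}\|_{\ell^s}\leqslant1$ and takes a supremum over index sets.
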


\begin{proof}
	Put $C\triangleq16$, $r\triangleq\min\{p,2\}$, and $\frac{1}{s}\triangleq\frac{1}{r}-\frac{1}{p}$, with $s=\infty$ when $p=r$. By \eqref{shi-fenleizhishu}, $0<\sigma=1-\frac{1}{r}\leqslant\frac{1}{2}$, $\frac{1}{r}=\frac{1}{s}+\frac{1}{p}$, and $\frac{1}{q}+\frac{1}{s}=1$. Fix $n,m\in\mathbb{N}$ with $m\geqslant2$. By \eqref{shi-suijiduoxiangshiguji} in \namedref{Lemma}{yinli-suijiduoxiangshi}, there exists a family
$
	(\varepsilon_\alpha)_{\substack{\alpha\in\mathbb{N}_0^n\\|\alpha|=m}}
	\in
	\{-1,1\}^{\{\alpha\in\mathbb{N}_0^n:\,|\alpha|=m\}}
$
	such that
	\begin{equation}
		\sup\limits_{\|\textbf{u}\|_{\ell^r}\leqslant1}
		\left|\sum\limits_{\substack{\alpha\in\mathbb{N}_0^n\\|\alpha|=m}}
		\varepsilon_\alpha\frac{m!}{\alpha!}\textbf{u}^\alpha\right|
		\leqslant16^{2\sigma}(nm!\log m)^\sigma
		\leqslant C(nm!\log m)^\sigma.
		\label{shi-suijicanshuguji}
	\end{equation}
	Choose $N<j_1<\cdots<j_n$, put $J\triangleq\{j_1,\cdots,j_n\}$, and let $\textbf{a}=(a_1,\cdots,a_n)\in\mathbb{C}^n$ satisfy $\|\textbf{a}\|_{\ell^s}\leqslant1$. H\"older's inequality, including the cases $s=\infty$ and $p=\infty$, gives
	$$
	\left\|(a_kx_{j_k})_{k=1}^n\right\|_{\ell^r}
	\leqslant\|\textbf{a}\|_{\ell^s}
	\left\|(x_{j_k})_{k=1}^n\right\|_{\ell^p}
	\leqslant\|\textbf{x}\|_X,
	\qquad \textbf{x}\in X.
	$$
	Define
	$$
	G_{\textbf{a}}(\textbf{x})\triangleq
	\sum\limits_{\substack{\alpha\in\mathbb{N}_0^n\\|\alpha|=m}}
	\varepsilon_\alpha\frac{m!}{\alpha!}
	\prod\limits_{k=1}^n(a_kx_{j_k})^{\alpha_k},
	\qquad \textbf{x}\in X.
	$$
	Then $G_{\textbf{a}}\in\mathcal{P}^m(X)$ depends only on the coordinates indexed by $J$, and \eqref{shi-suijicanshuguji} gives $\|G_{\textbf{a}}\|\leqslant C(nm!\log m)^\sigma$. For every $\beta\in\mathbb{N}_0^{(\mathbb{N})}$ with $|\beta|=m$,
	$$
	c_\beta(G_{\textbf{a}})=
	\begin{cases}
		\displaystyle
		\varepsilon_{(\beta_{j_1},\cdots,\beta_{j_n})}
		\frac{m!}{\beta!}\prod\limits_{k=1}^n a_k^{\beta_{j_k}},
		&\mathrm{supp}\beta\subset J,\\[5pt]
		0,&\mathrm{supp}\beta\not\subset J.
	\end{cases}
	$$
	The multinomial theorem and \eqref{shi-weibuzhishijiading} therefore yield
	\begin{equation}
		\begin{aligned}
		&\left(\sum\limits_{k=1}^n|a_kz_{j_k}|\right)^m
			=\sum\limits_{\substack{\alpha\in\mathbb{N}_0^n\\|\alpha|=m}}
			\frac{m!}{\alpha!}\prod\limits_{k=1}^n|a_kz_{j_k}|^{\alpha_k}\\
			&\quad =\sum\limits_{\substack{\beta\in\mathbb{N}_0^{(\mathbb{N})}\\|\beta|=m}}
			|c_\beta(G_{\textbf{a}})\textbf{z}^\beta| \leqslant AR^m\|G_{\textbf{a}}\|
			\\&\quad\leqslant AC R^m(nm!\log m)^\sigma.
		\end{aligned}
		\label{shi-duijiaoceshiguji}
	\end{equation}
	
	Put $S\triangleq\left\|(z_{j_k})_{k=1}^n\right\|_{\ell^q}$. By H\"older's inequality, $$\sum\limits_{k=1}^n|a_kz_{j_k}|\leqslant\|\textbf{a}\|_{\ell^s}S\leqslant S.$$If $q=1$, the choice $\textbf{a}=(1,\cdots,1)$ gives $\|\textbf{a}\|_{\ell^\infty}=1$ and $\sum\limits_{k=1}^n|a_kz_{j_k}|=S$. If $q>1$ and $S>0$, choose $a_k=\frac{|z_{j_k}|^{q-1}}{S^{q-1}}$, $1\leqslant k\leqslant n$. Since $s=\frac{q}{q-1}$,
	$$
	\|\textbf{a}\|_{\ell^s}^s
	=\frac{\sum\limits_{k=1}^n|z_{j_k}|^q}{S^q}=1,
	\qquad
	\sum\limits_{k=1}^n|a_kz_{j_k}|
	=\frac{\sum\limits_{k=1}^n|z_{j_k}|^q}{S^{q-1}}=S.
	$$
	For $S=0$, both sides of the following equality vanish. Thus \eqref{shi-duijiaoceshiguji} implies
	$$
	\begin{aligned}
		\left(\sum\limits_{k=1}^n|z_{j_k}|^q\right)^{\frac{1}{q}}
		&=\sup\limits_{\substack{\textbf{a}\in\mathbb{C}^n\\\|\textbf{a}\|_{\ell^s}\leqslant1}}
		\sum\limits_{k=1}^n|a_kz_{j_k}|\leqslant R(AC)^{\frac{1}{m}}n^{\frac{\sigma}{m}}
		(m!)^{\frac{\sigma}{m}}(\log m)^{\frac{\sigma}{m}}.
	\end{aligned}
	$$
	
	For every $F\subset\mathbb{N}$ with $|F|=n$, choose $H\subset\{N+1,N+2,\cdots\}\setminus F$ with $|H|=|F\cap\{1,\cdots,N\}|$, and put $\widetilde F\triangleq(F\setminus\{1,\cdots,N\})\cup H$. Then $\widetilde F\subset\{N+1,N+2,\cdots\}$, $|\widetilde F|=n$, and
	$$
	\sum\limits_{j\in F}|(Q_N\textbf{z})_j|^q
	=\sum\limits_{j\in F\setminus\{1,\cdots,N\}}|z_j|^q
	\leqslant\sum\limits_{j\in\widetilde F}|z_j|^q.
	$$
	Since $\textbf{z}\in X\subset c_0$, \eqref{shi-chongpaishangquejie} yields
	\begin{equation}
		\begin{aligned}
			&\left(\sum\limits_{j=1}^n\bigl((Q_N\textbf{z})_j^*\bigr)^q\right)^{\frac{1}{q}}
			=\sup\limits_{\substack{F\subset\mathbb{N}\\|F|=n}}
			\left(\sum\limits_{j\in F}|(Q_N\textbf{z})_j|^q\right)^{\frac{1}{q}}\\
			&\quad=\sup\limits_{N<j_1<\cdots<j_n}
			\left(\sum\limits_{k=1}^n|z_{j_k}|^q\right)^{\frac{1}{q}}\leqslant R(AC)^{\frac{1}{m}}n^{\frac{\sigma}{m}}
			(m!)^{\frac{\sigma}{m}}(\log m)^{\frac{\sigma}{m}}.
		\end{aligned}
		\label{shi-weibuchongpaijie}
	\end{equation}
Finally, set $m=\max\{2,\lceil\log(n+1)\rceil\}$. By the Stolz--Ces\`aro theorem,
$$
\lim\limits_{k\to\infty}
\frac{\log(k!)-k\log k}{k}
=-\lim\limits_{k\to\infty}
k\log\left(1+\frac{1}{k}\right)=-1.
$$
Hence $\frac{(k!)^{\frac{1}{k}}}{k}\longrightarrow e^{-1}$. Since $\frac{m}{\log(n+1)}\longrightarrow1$, we obtain
$$
n^{\frac{1}{m}}\longrightarrow e,
\qquad
\frac{(m!)^{\frac{1}{m}}}{\log(n+1)}
\longrightarrow e^{-1},
\qquad
(AC)^{\frac{1}{m}}(\log m)^{\frac{\sigma}{m}}
\longrightarrow1.
$$
Dividing \eqref{shi-weibuchongpaijie} by $(\log(n+1))^\sigma$ and taking the upper limit as $n\to\infty$ therefore gives
$$
\limsup\limits_{n\to\infty}
\frac{\left(\sum\limits_{j=1}^{n}
	\bigl((Q_N\textbf{z})_j^*\bigr)^q\right)^{\frac{1}{q}}}
{(\log(n+1))^\sigma}
\leqslant R\left(e\cdot e^{-1}\right)^\sigma=R.
$$
This establishes \eqref{shi-weibushangjixian}.
\end{proof}

	For $1<p\leqslant2$, the characterization of
$\mathrm{dom}H_b(\ell^p)$ was established in
\cite[Theorem~4.1, p.~953]{GMMS21}.
For $2<p<\infty$, the inclusion
$$
\mathrm{dom}H_b(\ell^p)
\subseteq
\{\textbf{z}\in\ell^p:M(\textbf{z})<\infty\}
$$
follows from \cite[Theorem~6.3.2, p.~98]{Mansilla19}. We give a unified proof below.
\subsection{Monomial convergence for functions of bounded type}
\begin{proposition}\label{mingti-youjiexingshoulian}
For $X,p,q,\sigma$ as in \eqref{shi-fenleizhishu}, one has
\begin{equation}
\mathrm{dom}H_b(X)
=\{\textbf{z}\in X:M(\textbf{z})<\infty\}.
\label{shi-youjiexingshoulian}
\end{equation}
\end{proposition}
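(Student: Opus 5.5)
We prove the two inclusions in \eqref{shi-youjiexingshoulian} separately; each reduces to a coefficient estimate already available.

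\emph{Inclusion ``$\supseteq$''.} Let $\textbf{z}\in X$ with $M(\textbf{z})<\infty$ and fix $f\in H_b(X)$. Write $f=\sum_{m\geqslant0}H_m$, with $H_m$ the homogeneous Taylor polynomials from \eqref{shi-qicitaile}. By \eqref{shi-qicixishuyizhi}, $c_\alpha(f)=c_\alpha(H_m)$ for $|\alpha|=m$. Lemma~\ref{yinli-chongpaixishuguji} then gives
$$
\sum_{|\alpha|=m}|c_\alpha(f)\textbf{z}^\alpha|\leqslant C^mM(\textbf{z})^m\|H_m\|.
$$
Apply the Cauchy estimate \eqref{shi-qicikexiguji} to $h=f|_{\rho B_X}$, which is bounded because $f$ is of bounded type, with $\rho>CM(\textbf{z})$. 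This yields $\|H_m\|\leqslant\|f\|_\rho\rho^{-m}$. Summing over $m$ gives a geometric series with ratio $CM(\textbf{z})/\rho<1$, so $\textbf{z}\in\mathrm{dom}H_b(X)$. This direction needs nothing beyond already proved results.

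\emph{Inclusion ``$\subseteq$''.} Let $\textbf{z}\in\mathrm{dom}H_b(X)$. By Lemma~\ref{yinli-bituguji}, \eqref{shi-dianchuzhishijie} holds: $\sum_{|\alpha|=m}|c_\alpha(G)\textbf{z}^\alpha|\leqslant K^m\|G\|$ for every $m$ and every $G\in\mathcal{P}^m(X)$. In particular the hypothesis \eqref{shi-weibuzhishijiading} of Lemma~\ref{yinli-shangjixianbijiao} holds with $N=0$, $A=1$, $R=K$. That lemma gives
$$
\limsup_{n\to\infty}\frac{\bigl(\sum_{j=1}^n(z_j^*)^q\bigr)^{1/q}}{(\log(n+1))^\sigma}\leqslant K<\infty .
$$
It remains to pass from finiteness of this $\limsup$ to finiteness of the supremum $M(\textbf{z})$. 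Choose $n_0$ such that the ratio is at most $K+1$ for $n\geqslant n_0$. For $n<n_0$, the numerator is at most $n_0^{1/q}\|\textbf{z}\|_{\ell^\infty}$, which is finite since $X\subset c_0$, and the denominator is at least $(\log 2)^\sigma$. Hence $M(\textbf{z})<\infty$.

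Both steps are short consequences of the earlier results; the substantive work lies in Lemmas~\ref{yinli-chongpaixishuguji} and \ref{yinli-shangjixianbijiao}. The one point to check is that in the first step the bound from Lemma~\ref{yinli-chongpaixishuguji} holds for arbitrary $\textbf{z}\in c_0$ with $M(\textbf{z})<\infty$, and that $\textbf{z}\in X$ is guaranteed by Lemma~\ref{yinli-chongpaixingzhi}. Both are provided by the stated results.
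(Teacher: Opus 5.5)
Your proposal is correct and follows the paper's proof essentially step for step: the Cauchy estimate \eqref{shi-qicikexiguji} combined with Lemma~\ref{yinli-chongpaixishuguji} and a geometric series for ``$\supseteq$'', and Lemma~\ref{yinli-bituguji} followed by Lemma~\ref{yinli-shangjixianbijiao} with $N=0$ for ``$\subseteq$''. Your extra argument passing from the finite $\limsup$ to $M(\textbf{z})<\infty$ only makes explicit a step that the paper leaves implicit.
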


\begin{proof}

Let $C$ be the constant in
\namedref{Lemma}{yinli-chongpaixishuguji}.
We first record the estimate for a bounded holomorphic function.
Let $r>0$, $h\in H^\infty(rB_X)$, and let $H_m$ be its homogeneous
Taylor polynomials from
\eqref{shi-qicitaile}.
By \eqref{shi-qicixishuyizhi} and
\eqref{shi-qicikexiguji},
$c_\alpha(H_m)=c_\alpha(h)$ for $|\alpha|=m$ and
$\|H_m\|\leqslant\frac{\|h\|_\infty}{r^m}$.
Thus \eqref{shi-chongpaixishuguji} gives, whenever
$\textbf{v}\in X$ and $CM(\textbf{v})<r$,
\begin{equation}
\begin{aligned}
\sum\limits_{\alpha\in\mathbb{N}_0^{(\mathbb{N})}}
|c_\alpha(h)\textbf{v}^\alpha|
&=|h(\textbf{0})|+
\sum\limits_{m=1}^{\infty}\sum\limits_{|\alpha|=m}
|c_\alpha(H_m)\textbf{v}^\alpha|\\
&\leqslant\|h\|_\infty
\sum\limits_{m=0}^{\infty}
\left(\frac{CM(\textbf{v})}{r}\right)^m
=\frac{\|h\|_\infty}{1-\frac{CM(\textbf{v})}{r}}<\infty.
\end{aligned}
\label{shi-youjiehanshuqiuhe}
\end{equation}
For $\textbf{z}\in X$ with $M(\textbf{z})<\infty$ and
$f\in H_b(X)$, choose $r>CM(\textbf{z})$.
By \eqref{shi-youjiexing},
$f|_{rB_X}$ is bounded. Applying
\eqref{shi-youjiehanshuqiuhe} at $\textbf{z}$
proves $\textbf{z}\in\mathrm{dom}H_b(X)$.

Conversely, let $\textbf{z}\in\mathrm{dom}H_b(X)$.
By \eqref{shi-dianchuzhishijie} in
\namedref{Lemma}{yinli-bituguji}, there exists $C\geqslant1$ such that
$$
\sum\limits_{|\alpha|=m}|c_\alpha(G)\textbf{z}^\alpha|
\leqslant C^m\|G\|,
\qquad
G\in\mathcal{P}^m(X),\quad m\in\mathbb{N}.
$$
Applying \namedref{Lemma}{yinli-shangjixianbijiao}  gives
$$
\limsup\limits_{n\to\infty}
\frac{\left(\sum\limits_{j=1}^{n}(z_j^*)^q\right)^{\frac{1}{q}}}
{(\log(n+1))^\sigma}
\leqslant C<\infty.
$$
Hence
$$
M(\textbf{z})
=
\sup\limits_{n\in\mathbb{N}}
\frac{\left(\sum\limits_{j=1}^{n}(z_j^*)^q\right)^{\frac{1}{q}}}
{(\log(n+1))^\sigma}
<\infty,
$$
which proves the reverse inclusion in \eqref{shi-youjiexingshoulian}.
\end{proof}

\subsection{Monomial convergence for holomorphic functions}

\begin{proof}[Proof of \namedref{Theorem}{dingli-jingdianshoulian}]\par\medskip\noindent

By \cite[Theorem~3.1, p.~19]{DMP09}, \eqref{shi-yikeheshoulian} follows directly.
We therefore fix $1<p\leqslant\infty$ and take $X,p,q,\sigma$ as in
\eqref{shi-fenleizhishu}. Then we prove
\begin{equation}
	\mathrm{dom}H(X)
	=\left\{\textbf{z}\in X:
	\lim\limits_{n\to\infty}
	\frac{\left(\sum\limits_{j=1}^{n}(z_j^*)^q\right)^{\frac{1}{q}}}
	{(\log(n+1))^\sigma}=0\right\}.
	\label{shi-jingdiantongyikehua}
\end{equation}
The assertion is immediate for $\textbf{z}=\textbf{0}$.
Let $\textbf{0}\neq\textbf{z}\in\mathrm{dom}H(X)$ and fix
$r>0$. By \eqref{shi-weibupanju} in
\namedref{Proposition}{mingti-jubuhua}, there exist $N\in\mathbb{N}$
and $A>0$ such that
$$
\sum\limits_\alpha|c_\alpha(G)\textbf{z}^\alpha|
\leqslant A\sup\limits_{\textbf{x}\in rB_X}|G(\textbf{x})|,
\qquad G\in\mathscr{P}_N.
$$
Hence, for every $m\in\mathbb{N}$ and
$G\in\mathcal{P}^m(X)\cap\mathscr{P}_N$,
$$
\sum\limits_{|\alpha|=m}|c_\alpha(G)\textbf{z}^\alpha|
\leqslant Ar^m\|G\|.
$$
Since $\overline{\mathbb{C}^{(\mathbb{N})}}^{\,X} =X$,
\namedref{Lemma}{yinli-shangjixianbijiao}, with $R=r$, gives
\eqref{shi-weibushangjixian}. By \eqref{shi-shoubumodong},
$$
\limsup\limits_{n\to\infty}
\frac{\left(\sum\limits_{j=1}^{n}(z_j^*)^q\right)^{\frac{1}{q}}}
{(\log(n+1))^\sigma}
\leqslant r.
$$
Since $r>0$ is arbitrary,
$$
\lim\limits_{n\to\infty}
\frac{\left(\sum\limits_{j=1}^{n}(z_j^*)^q\right)^{\frac{1}{q}}}
{(\log(n+1))^\sigma}
=0.
$$

Conversely, let $\textbf{z}\in X$ satisfy
$$
\lim\limits_{n\to\infty}
\frac{\left(\sum\limits_{j=1}^{n}(z_j^*)^q\right)^{\frac{1}{q}}}
{(\log(n+1))^\sigma}
=0.
$$
Fix $r>0$, and let $C$ be the constant in
\namedref{Lemma}{yinli-chongpaixishuguji}. By \eqref{shi-weibudengjia}, choose
$N\in\mathbb{N}$ such that
$
CM(Q_N\textbf{z})<\frac{r}{2}.
$
For $G\in\mathscr{P}_N$, write
$$
G=\sum\limits_{m=0}^{d}H_m,
\qquad
H_m\in\mathcal{P}^m(X)\cap\mathscr{P}_N,
\quad 0\leqslant m\leqslant d,
$$
for its homogeneous decomposition. By \eqref{shi-qicixishuyizhi},
\eqref{shi-qicikexiguji}, and \eqref{shi-chongpaixishuguji},
$$
\begin{aligned}
	&\sum\limits_\alpha|c_\alpha(G)\textbf{z}^\alpha|
	=\sum\limits_{m=0}^{d}
	\sum\limits_{|\alpha|=m}
	|c_\alpha(H_m)(Q_N\textbf{z})^\alpha|\\
	&\leqslant
	\|G\|_r
	\sum\limits_{m=0}^{d}
	\left(\frac{CM(Q_N\textbf{z})}{r}\right)^m\leqslant
	\|G\|_r\sum\limits_{m=0}^{\infty}\frac{1}{2^m}
	=2\|G\|_r.
\end{aligned}
$$
Thus \eqref{shi-weibupanju} holds for every $r>0$, and
\namedref{Proposition}{mingti-jubuhua}, with $O=X$, yields
$
\textbf{z}\in\mathrm{dom}H(X).
$
Therefore \eqref{shi-jingdiantongyikehua} holds, and
\eqref{shi-xiaozhishushoulian}, \eqref{shi-dazhishushoulian}, and \eqref{shi-lingjixianshoulian} follow from
\eqref{shi-fenleizhishu}.
\end{proof}
The characterization of $\mathrm{dom}H_b(\ell^\infty)$
was established in
\cite[Corollary~6.3.3, p.~98]{Mansilla19};
see also \cite[p.~947]{GMMS21}.
Moreover, the equality
$\mathrm{dom}H_b(c_0)=\mathrm{dom}H_b(\ell^\infty)$
was recorded in
\cite[Appendix~A, (A.8), p.~135]{Mansilla19}.
The following theorem shows that this common domain
also coincides with $\mathrm{dom}H(\ell^\infty)$.
\begin{theorem}\label{dingli-youjiexulieshoulian}
One has
\begin{equation}
\mathrm{dom}H(\ell^\infty)
=
\left\{\textbf{z}\in c_0:
\sup\limits_{n\geqslant1}
\frac{\sum\limits_{j=1}^{n}(z_j^*)^2}{\log(n+1)}<\infty\right\}.
\label{shi-youjiexulieshoulian}
\end{equation}
Moreover,
$$
\mathrm{dom}H(\ell^\infty)
=\mathrm{dom}H_b(c_0).
$$
\end{theorem}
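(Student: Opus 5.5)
The plan is to prove the two inclusions in \eqref{shi-youjiexulieshoulian} separately, with $q=2$ and $\sigma=\frac12$, the parameters attached to $X=c_0$ in \eqref{shi-fenleizhishu}. The set on the right of \eqref{shi-youjiexulieshoulian} is then exactly $\{\textbf{z}\in c_0:M(\textbf{z})<\infty\}$. By \namedref{Proposition}{mingti-youjiexingshoulian} with $X=c_0$, this set equals $\mathrm{dom}H_b(c_0)$, so the second assertion will follow from the first.

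\emph{Inclusion ``$\subset$''.} Since $H_b(\ell^\infty)\subset H(\ell^\infty)$, we have $\mathrm{dom}H(\ell^\infty)\subset\mathrm{dom}H_b(\ell^\infty)$. The closure of $\mathbb{C}^{(\mathbb{N})}$ in $\ell^\infty$ is $c_0$, so \namedref{Remark}{zhuji-bibaobaohan} gives $\mathrm{dom}H_b(\ell^\infty)\subset c_0$. Fix $\textbf{z}\in\mathrm{dom}H_b(\ell^\infty)$. Estimate \eqref{shi-dianchuzhishijie} of \namedref{Lemma}{yinli-bituguji}, applied with $O=\ell^\infty$, gives a constant $K\geqslant1$ with $\sum_{|\alpha|=m}|c_\alpha(P)\textbf{z}^\alpha|\leqslant K^m\|P\|$ for all $P\in\mathcal{P}^m(\ell^\infty)$. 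Every $G\in\mathcal{P}^m(c_0)$ depending on finitely many coordinates extends to $G\circ P_n\in\mathcal{P}^m(\ell^\infty)$, with the same coefficients and the same norm, because $P_n$ maps $B_{\ell^\infty}$ into $B_{c_0}$. Hence the hypothesis \eqref{shi-weibuzhishijiading} of \namedref{Lemma}{yinli-shangjixianbijiao} holds for $X=c_0$, $N=0$, $A=1$, $R=K$. That lemma gives a finite $\limsup$ of the ratio. Since every individual term of the ratio is finite, the supremum is finite, i.e.\ $M(\textbf{z})<\infty$.

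\emph{Inclusion ``$\supset$''.} Fix $\textbf{z}\in c_0$ with $M(\textbf{z})<\infty$ and $f\in H(\ell^\infty)$. I would follow Step~1 of the proof of \namedref{Proposition}{mingti-jubuhua}. That compactness argument only used $\|Q_m\textbf{z}\|\to0$, which holds in $\ell^\infty$ because $\textbf{z}\in c_0$. So $K_{\textbf{y}}=\{\textbf{x}\in\ell^\infty:|\textbf{x}|\leqslant|\textbf{y}|\}$ is compact for every $\textbf{y}\in c_0$. Consequently $f$ is bounded by some $M_0$ on $K_{\textbf{y}}+2rB_{\ell^\infty}$ for some $r>0$. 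Next I would form the head-averaged functions $g_\gamma$ of \eqref{shi-toubuxishuhanshu} over the first $n$ coordinates and obtain the coefficient identity \eqref{shi-xishutiqu}. The tail sum $\sum_\beta|c_\beta(g_\gamma)(Q_n\textbf{z})^\beta|$ is then to be controlled degree by degree. One applies \eqref{shi-qicikexiguji} to the homogeneous parts of $g_\gamma$ and then \namedref{Lemma}{yinli-chongpaixishuguji} on $c_0$ to their restrictions, which depend on finitely many coordinates after truncation. This yields a geometric series in $CM(Q_n\textbf{z})/r$.

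\emph{Main obstacle.} Unlike in \namedref{Theorem}{dingli-jingdianshoulian}, the quantity $M(Q_n\textbf{z})$ need not tend to $0$; it is only bounded. Meanwhile $r$ is dictated by $f$, so the series above need not converge for the fixed $\textbf{z}$. The remedy I would try first is to use the compactness with a dilated point rather than with $\textbf{z}$ itself. Choose weights $\rho_j\uparrow\infty$ growing slowly enough that $\textbf{y}=(\rho_jz_j)_j$ still lies in $c_0$. Then $f$ is bounded on $K_{\textbf{y}}+2rB_{\ell^\infty}$, and the tail satisfies $|Q_n\textbf{z}|\leqslant\rho_n^{-1}|Q_n\textbf{y}|$. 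The aim is to exploit the boundedness of $g_\gamma$ on the large Reinhardt set $\{|\textbf{v}|\leqslant|Q_n\textbf{y}|\}+rB$. Combined with polydisc Cauchy estimates in the directions of $Q_n\textbf{y}$, this should produce an effective radius much larger than $r$, growing with $\rho_n$. It would then suffice to choose $n$ with $CM(Q_n\textbf{z})<\frac12\cdot(\text{effective radius})$. Making this interplay between the $\ell^\infty$-ball and the polydisc directions precise, with constants that are uniform in the degree, is the step I expect to be hardest. Once it is done, summing over $\gamma$ as in \eqref{shi-jubuqiuhe} gives $\textbf{z}\in\mathrm{dom}H(\ell^\infty)$.
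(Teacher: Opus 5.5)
Your inclusion ``$\subset$'' is correct. It is the polynomial argument the paper uses in Corollary~\ref{tuilun-youjiexuliedengjia}, and it avoids the extension half of the Aron--Berner theorem. Your reduction of the second identity to Proposition~\ref{mingti-youjiexingshoulian} is also fine.

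The gap is in ``$\supset$'', and it is not a matter of making constants uniform: the scheme you outline cannot succeed. Everything it uses is available, with $B_{\ell^\infty}$ replaced by $B_{c_0}$, for an arbitrary $f\in H(c_0)$, because Step~1 of Proposition~\ref{mingti-jubuhua} works in $c_0$. That includes the boundedness of $f$ near the compact set $K_{\textbf{y}}\subset c_0$, the head averages $g_\gamma$, the Cauchy estimates for their homogeneous parts, and Lemma~\ref{yinli-chongpaixishuguji} applied to their restrictions to $c_0$. So if the scheme succeeded, it would give $\{\textbf{z}\in c_0:M(\textbf{z})<\infty\}\subset\mathrm{dom}H(c_0)$. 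This contradicts \eqref{shi-lingjixianshoulian}: $z_j=j^{-1/2}$ has $M(\textbf{z})<\infty$, yet $\sum_{j\leqslant n}(z_j^*)^2/\log(n+1)\to1$.

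The dilation does not help, for three reasons.
\begin{itemize}
\item Since $\textbf{y}\in c_0$, the tail set $\{|\textbf{v}|\leqslant|Q_n\textbf{y}|\}+rB$ lies in $(r+\|Q_n\textbf{y}\|_{\ell^\infty})B$ with $\|Q_n\textbf{y}\|_{\ell^\infty}\to0$. So no effective radius grows with $\rho_n$.
\item The bound $M(Q_n\textbf{z})\leqslant\rho_n^{-1}M(Q_n\textbf{y})$ is vacuous: for $z_j=j^{-1/2}$, every admissible choice $\rho_j\to\infty$ gives $M(Q_n\textbf{y})=\infty$.
\item Polydisc Cauchy estimates alone give a finite bound for $\sum_\beta|c_\beta(g_\gamma)(Q_n\textbf{z})^\beta|$ only when $\textbf{z}\in\ell^1$.
\end{itemize}

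The missing idea is that holomorphy on all of $\ell^\infty$, including at points outside $c_0$, forces $F|_{c_0}\in H_b(c_0)$ for every $F\in H(\ell^\infty)$. This is the restriction half of the Aron--Berner identity \eqref{shi-zhenghanshuxianzhi}, on which the paper's proof rests. Granting it, ``$\supset$'' is immediate and needs no localization. For $\textbf{z}\in c_0$ with $M(\textbf{z})<\infty$, choose $r>CM(\textbf{z})$ with $C$ from Lemma~\ref{yinli-chongpaixishuguji}. Then $F$ is bounded on $rB_{c_0}$, and restriction to $c_0$ does not change the $c_\alpha$. So \eqref{shi-youjiehanshuqiuhe} yields $\sum_\alpha|c_\alpha(F)\textbf{z}^\alpha|<\infty$. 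Bounded type lets you take the radius as large as you like, which the local argument of Proposition~\ref{mingti-jubuhua} cannot provide when $M(Q_n\textbf{z})\not\to0$.
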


\begin{proof}

Aron and Berner \cite[Proposition~1.1, p.~8]{AB78} prove that \begin{equation}
\{F|_{c_0}:F\in H(\ell^\infty)\}=H_b(c_0).
\label{shi-zhenghanshuxianzhi}
\end{equation}

Since
$\overline{\mathbb{C}^{(\mathbb{N})}}^{\,\ell^\infty}=c_0$,
\eqref{shi-bibaobaohan} gives
\begin{equation}
\mathrm{dom}H(\ell^\infty)\subset c_0.
\label{shi-shoulianlingjixian}
\end{equation}
Restriction from $\ell^\infty$ to $c_0$ leaves each $c_\alpha$
unchanged, because the finite-dimensional restrictions defining it
coincide. Together with \eqref{shi-shoulianlingjixian} and
\eqref{shi-youjiexingshoulian} for $p=\infty$,
this gives \eqref{shi-youjiexulieshoulian}.
\end{proof}

\begin{corollary}\label{tuilun-youjiexuliedengjia}
One has
\begin{equation}
\mathrm{dom}H_b(\ell^\infty)
=\mathrm{dom}H(\ell^\infty)
=\mathrm{dom}H_b(c_0).
\label{shi-sangeshoulianyudengjia}
\end{equation}
In particular,
$\mathrm{dom}H_b(\ell^\infty)$ is the set on the right-hand side of
\eqref{shi-youjiexulieshoulian}.
\end{corollary}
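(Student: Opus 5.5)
The plan is to prove the chain of inclusions
$$
\mathrm{dom}H(\ell^\infty)\subset\mathrm{dom}H_b(\ell^\infty)\subset\mathrm{dom}H_b(c_0)=\mathrm{dom}H(\ell^\infty).
$$
The last equality is \namedref{Theorem}{dingli-youjiexulieshoulian}, so only the first two inclusions need arguments.

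For the first inclusion, note that $H_b(\ell^\infty)\subset H(\ell^\infty)$ holds by definition. Enlarging the family of functions can only shrink the domain in \eqref{shi-shoulianyu}, so $\mathrm{dom}H(\ell^\infty)\subset\mathrm{dom}H_b(\ell^\infty)$. This step needs no further work.

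For the second inclusion, fix $\textbf{z}\in\mathrm{dom}H_b(\ell^\infty)$. We first place $\textbf{z}$ in $c_0$: since $\mathcal{P}^2(\ell^\infty)\subset H_b(\ell^\infty)$, \namedref{Remark}{zhuji-bibaobaohan} gives $\textbf{z}\in\overline{\mathbb{C}^{(\mathbb{N})}}^{\,\ell^\infty}=c_0$. Next take any $g\in H_b(c_0)$. By the Aron--Berner result \eqref{shi-zhenghanshuxianzhi} there is $F\in H(\ell^\infty)$ with $F|_{c_0}=g$. We need $F\in H_b(\ell^\infty)$, not merely $F\in H(\ell^\infty)$. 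The Aron--Berner extension is bounded on bounded sets, with $\sup_{r B_{\ell^\infty}}|F|=\sup_{rB_{c_0}}|g|$. This follows from the Davie--Gamelin norm-preserving property, or more simply by extending each Taylor component through its canonical extension to the bidual and using Cauchy estimates.

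To avoid relying on that fine point, there is an alternative route. Since $\textbf{z}\in c_0$ and $c_\alpha$ depends only on finite-dimensional restrictions, it suffices to control coefficients through polynomial estimates. By \eqref{shi-dianchuzhishijie} of \namedref{Lemma}{yinli-bituguji} applied on $\ell^\infty$, there is $K\geqslant 1$ with
$$
\sum_{|\alpha|=m}|c_\alpha(P)\textbf{z}^\alpha|\leqslant K^m\|P\|_{\mathcal{P}^m(\ell^\infty)}
$$
for every $P\in\mathcal{P}^m(\ell^\infty)$. Any $G\in\mathcal{P}^m(c_0)$ depending on finitely many coordinates is $G'\circ P_n$ with $P_n$ defined on all of $\ell^\infty$. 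This extension has the same norm by \eqref{shi-touyingsuosuo}, and has the same coefficients. Hence the same bound holds for such $G$ on $c_0$. Then \namedref{Lemma}{yinli-shangjixianbijiao} with $X=c_0$, $N=0$, $R=K$ gives
$$
\limsup_n\frac{\sum_{j\leqslant n}(z_j^*)^2}{\log(n+1)}\leqslant K^2.
$$
Since $\textbf{z}\in c_0$, every individual quotient is finite, so $M(\textbf{z})<\infty$. By \namedref{Proposition}{mingti-youjiexingshoulian}, $\textbf{z}\in\mathrm{dom}H_b(c_0)$. This closes the chain, and the last sentence of the corollary then follows from \eqref{shi-youjiexulieshoulian}.

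The expected obstacle is the membership $\textbf{z}\in c_0$ in the second inclusion, together with the finite-coordinate transfer. Both are handled by \namedref{Theorem}{dingli-ercizhangai} and the observation that $P_n$ is defined on $\ell^\infty$ with norm at most $1$. The polynomial route therefore avoids the Aron--Berner boundedness question entirely.
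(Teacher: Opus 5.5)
Your polynomial route is correct and is essentially the paper's proof: the quadratic obstruction puts $\textbf{z}$ in $c_0$, and Lemma~\ref{yinli-bituguji} on $\ell^\infty$, transferred via $G\mapsto G\circ P_n$, feeds Lemma~\ref{yinli-shangjixianbijiao}. The only cosmetic difference is that you pass through Proposition~\ref{mingti-youjiexingshoulian} to land in $\mathrm{dom}H_b(c_0)$ before invoking Theorem~\ref{dingli-youjiexulieshoulian}, whereas the paper applies the characterization \eqref{shi-youjiexulieshoulian} directly; the Aron--Berner detour is not needed.
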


\begin{proof}
Since $H_b(\ell^\infty)\subset H(\ell^\infty)$, the definition
\eqref{shi-shoulianyu} gives
$$
\mathrm{dom}H(\ell^\infty)
\subseteq\mathrm{dom}H_b(\ell^\infty).
$$
To prove the reverse inclusion, fix
$\textbf{z}\in\mathrm{dom}H_b(\ell^\infty)$.
Every continuous quadratic polynomial on $\ell^\infty$ belongs to
$H_b(\ell^\infty)$, so \namedref{Theorem}{dingli-ercizhangai} implies
$
\textbf{z}\in
\overline{\mathbb{C}^{(\mathbb{N})}}^{\,\ell^\infty}=c_0.
$
By \namedref{Lemma}{yinli-bituguji},  there is $K\geqslant1$ such that
\begin{equation}
\sum\limits_{|\alpha|=m}|c_\alpha(P)\textbf{z}^\alpha|
\leqslant K^m\|P\|_{\mathcal{P}^m(\ell^\infty)},
\qquad P\in\mathcal{P}^m(\ell^\infty),\quad m\in\mathbb{N}.
\label{shi-youjiexuliexishuguji}
\end{equation}
The constant $K$ is independent of $m$ and $P$.

For $m,n\in\mathbb{N}$ and $G\in\mathcal{P}^m(c_0)$ satisfying
$G=G\circ P_n$, define
$$
\widetilde G(\textbf{x})\triangleq G(P_n\textbf{x}),
\qquad\textbf{x}\in\ell^\infty.
$$
Then $\widetilde G\in\mathcal{P}^m(\ell^\infty)$ and
\begin{equation}
\|\widetilde G\|_{\mathcal{P}^m(\ell^\infty)}
=\|G\|_{\mathcal{P}^m(c_0)},
\qquad
c_\alpha(\widetilde G)=c_\alpha(G),\qquad \alpha\in\mathbb{N}_0^{(\mathbb{N})}.
\label{shi-youxianzuobiaoyantuo}
\end{equation}
Applying
\eqref{shi-youjiexuliexishuguji} to $\widetilde G$
and using \eqref{shi-youxianzuobiaoyantuo}, we obtain
$$
\sum\limits_{|\alpha|=m}|c_\alpha(G)\textbf{z}^\alpha|
\leqslant K^m\|G\|_{\mathcal{P}^m(c_0)}
$$
for every continuous homogeneous polynomial on $c_0$ depending on
finitely many coordinates. Thus \namedref{Lemma}{yinli-shangjixianbijiao} yields
$$
\limsup\limits_{n\to\infty}
\frac{\left(\sum\limits_{j=1}^{n}(z_j^*)^2\right)^{\frac{1}{2}}}
{(\log(n+1))^{\frac{1}{2}}}
\leqslant K.$$
Together with $\textbf{z}\in c_0$,
\namedref{Theorem}{dingli-youjiexulieshoulian} therefore gives
$\textbf{z}\in\mathrm{dom}H(\ell^\infty)$.
This proves the reverse inclusion
$
\mathrm{dom}H_b(\ell^\infty)
\subseteq
\mathrm{dom}H(\ell^\infty).
$
The remaining equality in \eqref{shi-sangeshoulianyudengjia}
follows from \namedref{Theorem}{dingli-youjiexulieshoulian}.
\end{proof}

Combining \namedref{Proposition}{mingti-jubuhua},
\namedref{Theorem}{dingli-jingdianshoulian}, and
\namedref{Proposition}{mingti-jishubiaoshi}, we obtain the following corollary.
\begin{corollary}\label{tuilun-jingdianquanyu}
Let $X=\ell^p$ with $1\leqslant p<\infty$, or let $X=c_0$,
and let $O\subset X$ be a Reinhardt domain. If $X=\ell^1$, then
\begin{equation}
\mathrm{dom}H(O)=O.
\label{shi-yikehequanyu}
\end{equation}
If $X=\ell^p$ with $1<p\leqslant2$, then
\begin{equation}
\mathrm{dom}H(O)
=
\left\{\textbf{z}\in O:
\lim\limits_{n\to\infty}
\frac{\sum\limits_{j=1}^{n}z_j^*}
{(\log(n+1))^{1-\frac{1}{p}}}=0\right\}.
\label{shi-xiaozhishuquanyu}
\end{equation}
If $X=\ell^p$ with $2<p<\infty$, then
\begin{equation}
\mathrm{dom}H(O)
=
\left\{\textbf{z}\in O:
\lim\limits_{n\to\infty}
\frac{\sum\limits_{j=1}^{n}(z_j^*)^{\frac{2p}{p+2}}}
{(\log(n+1))^{\frac{p}{p+2}}}=0\right\}.
\label{shi-dazhishuquanyu}
\end{equation}
If $X=c_0$, then
\begin{equation}
\mathrm{dom}H(O)
=
\left\{\textbf{z}\in O:
\lim\limits_{n\to\infty}
\frac{\sum\limits_{j=1}^{n}(z_j^*)^2}{\log(n+1)}=0\right\}.
\label{shi-lingjixianquanyu}
\end{equation}
In each case, for every $f\in H(O)$ and
$\textbf{z}\in\mathrm{dom}H(O)$,
\eqref{shi-juduishoulian} and \eqref{shi-hanshubiaoshi} hold.
\end{corollary}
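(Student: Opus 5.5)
The corollary is a direct assembly of three earlier results. I will first invoke \namedref{Proposition}{mingti-jubuhua}, which needs $\overline{\mathbb{C}^{(\mathbb{N})}}^{\,X}=X$. This holds for $X=\ell^p$ with $1\leqslant p<\infty$ and for $X=c_0$. Indeed, for $\textbf{z}$ in these spaces, $\|Q_n\textbf{z}\|_X\to0$: for $\ell^p$ this is the tail of a convergent series, and for $c_0$ it is the definition of the space. For every Reinhardt domain $O\subset X$ the proposition then gives
$$
\mathrm{dom}H(O)=O\cap\mathrm{dom}H(X).
$$
Thus the whole problem reduces to the entire-function case.

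Next I will substitute the descriptions of $\mathrm{dom}H(X)$ from \namedref{Theorem}{dingli-jingdianshoulian}. For $X=\ell^1$ the theorem gives $\mathrm{dom}H(\ell^1)=\ell^1$, so $O\cap\mathrm{dom}H(X)=O$, which is \eqref{shi-yikehequanyu}. In the remaining three cases, intersecting $O$ with the sets in \eqref{shi-xiaozhishushoulian}, \eqref{shi-dazhishushoulian} and \eqref{shi-lingjixianshoulian} yields \eqref{shi-xiaozhishuquanyu}, \eqref{shi-dazhishuquanyu} and \eqref{shi-lingjixianquanyu}. This works because each of those sets is written as $\{\textbf{z}\in X:\cdots\}$ and $O\subset X$.

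Finally I will prove the representation claim. Let $f\in H(O)$ and $\textbf{z}\in\mathrm{dom}H(O)$. Absolute convergence \eqref{shi-juduishoulianjiading} holds at $\textbf{z}_0=\textbf{z}$ by the definition \eqref{shi-shoulianyu}. Moreover $\textbf{z}\in O\cap\overline{\mathbb{C}^{(\mathbb{N})}}^{\,X}$, by the density established above. Hence \namedref{Proposition}{mingti-jishubiaoshi} applies and gives \eqref{shi-juduishoulian} and \eqref{shi-hanshubiaoshi} at $\textbf{z}$, and indeed at every $\textbf{w}$ with $|\textbf{w}|\leqslant|\textbf{z}|$.

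There is no real obstacle. The only point needing care is verifying the density hypothesis of \namedref{Proposition}{mingti-jubuhua}, which is immediate for these spaces.
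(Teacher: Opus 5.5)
Your proposal is correct and follows the paper's own route. The paper obtains this corollary by combining Proposition~\ref{mingti-jubuhua} (whose density hypothesis holds for $\ell^p$, $1\leqslant p<\infty$, and for $c_0$), the descriptions of $\mathrm{dom}H(X)$ in Theorem~\ref{dingli-jingdianshoulian}, and Proposition~\ref{mingti-jishubiaoshi} for the representation \eqref{shi-juduishoulian}--\eqref{shi-hanshubiaoshi}, exactly as you propose.
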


\subsection{Holomorphic functions on the unit ball of \texorpdfstring{$\ell^\infty$}{ell-infinity}}

\begin{proposition}\label{mingti-youjiexulieqiu}
One has
\begin{equation}
\mathrm{dom}H(B_{\ell^\infty})
=
\left\{\textbf{z}\in B_{c_0}:
\limsup\limits_{n\to\infty}
\frac{\sum\limits_{j=1}^{n}(z_j^*)^2}{\log(n+1)}<1\right\}.
\label{shi-youjiexulieqiu}
\end{equation}
Moreover, for every $f\in H(B_{\ell^\infty})$ and every
$\textbf{z}$ in the set on the right of
\eqref{shi-youjiexulieqiu},
\eqref{shi-juduishoulian} and \eqref{shi-hanshubiaoshi} hold.
\end{proposition}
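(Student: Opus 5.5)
The plan is to prove the two inclusions in \eqref{shi-youjiexulieqiu} separately, working throughout with
$$S(\textbf{z})\triangleq\limsup_{n\to\infty}\Bigl(\frac{\sum_{j=1}^{n}(z_j^*)^2}{\log(n+1)}\Bigr)^{\frac12},$$
that is, $q=2$ and $\sigma=\frac12$ in \eqref{shi-fenleizhishu}. The condition in \eqref{shi-youjiexulieqiu} reads $S(\textbf{z})<1$. The representation statement \eqref{shi-juduishoulian}--\eqref{shi-hanshubiaoshi} will then follow from \namedref{Proposition}{mingti-jishubiaoshi}, because the domain lies in $c_0=\overline{\mathbb{C}^{(\mathbb{N})}}^{\,\ell^\infty}$.

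\emph{Inclusion ``$\subset$''.} Let $\textbf{z}\in\mathrm{dom}H(B_{\ell^\infty})$. Continuous quadratic polynomials restrict to elements of $H(B_{\ell^\infty})$, so \eqref{shi-bibaobaohan} gives $\textbf{z}\in c_0\cap B_{\ell^\infty}=B_{c_0}$. To prove $S(\textbf{z})<1$, I would first establish a ball version of \namedref{Lemma}{yinli-weibuguji}: there exist $\rho<1$, $N\in\mathbb{N}$ and $A>0$ such that
$$\sum_\alpha|c_\alpha(p)\textbf{z}^\alpha|\leqslant A\sup_{\rho B_{\ell^\infty}}|p|\qquad\text{for all } p\in\mathscr{P}_N.$$
The proof is by contradiction, as in that lemma. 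I would take radii $\rho_j\uparrow1$ and polynomials $p_j$ on disjoint coordinate blocks with $\sup_{\rho_jB}|p_j|\leqslant2^{-j}$ and $S_{\textbf{z}}(p_j)>1$. The series $F=\sum_j p_j$ then converges locally uniformly on $B_{\ell^\infty}$. Indeed, if $\|\textbf{a}\|<\rho<1$, then on a small ball around $\textbf{a}$ every block projection has norm below $\rho_j$ once $j$ is large, because the sup norm controls all block projections. Hence $F\in H(B_{\ell^\infty})$, but $\sum_\alpha|c_\alpha(F)\textbf{z}^\alpha|=+\infty$, a contradiction. Applying the estimate to homogeneous $G$ gives $\sum_{|\alpha|=m}|c_\alpha(G)\textbf{z}^\alpha|\leqslant A\rho^m\|G\|$. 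Transferring from $c_0$ to $\ell^\infty$ via $G\mapsto G\circ P_n$, as in \eqref{shi-youxianzuobiaoyantuo}, I can invoke \namedref{Lemma}{yinli-shangjixianbijiao} with $R=\rho$. Together with \eqref{shi-shoubumodong}, this yields $S(\textbf{z})\leqslant\rho<1$.

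\emph{Inclusion ``$\supset$''.} Let $\textbf{z}\in B_{c_0}$ with $S(\textbf{z})<1$ and $f\in H(B_{\ell^\infty})$. The first step is a ball analogue of the Aron--Berner fact \eqref{shi-zhenghanshuxianzhi}: $f|_{B_{c_0}}$ is bounded on $\rho B_{c_0}$ for every $\rho<1$. I would prove this by contradiction. If $f$ were unbounded on $\rho B_{c_0}$, I would pick points of $\rho B_{c_0}$ that essentially live on successive disjoint blocks and combine them into a single point of $\rho' B_{\ell^\infty}$ with $\rho'<1$, near which $f$ would fail to be locally bounded. Granting this, \eqref{shi-qicikexiguji} bounds the homogeneous parts $H_m$ of $f$ by $\|H_m\|\leqslant\|f\|_\rho\,\rho^{-m}$. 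It then remains to sum $\sum_m\sum_{|\alpha|=m}|c_\alpha(H_m)\textbf{z}^\alpha|$.

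The main obstacle is sharpness. \namedref{Lemma}{yinli-chongpaixishuguji} only gives $C^mM(\textbf{z})^m$ with a large $C$, which is useless at the threshold $S(\textbf{z})<1$. I would aim instead for the asymptotically sharp bound
$$\sum_{|\alpha|=m}|c_\alpha(G)\textbf{z}^\alpha|\leqslant A_{\varepsilon,\textbf{z}}\bigl((1+\varepsilon)S(\textbf{z})\bigr)^m\|G\|,\qquad G\in\mathcal{P}^m(c_0).$$
The plan for this bound has three parts. I would split off a finite head $P_N\textbf{z}$ and control it by the Cauchy-integral and head-extraction argument of \namedref{Proposition}{mingti-jubuhua}; this costs only a factor $(1-\frac1t)^{-N}$ independent of $m$. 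On the tail I would redo the dyadic-block count of \namedref{Lemma}{yinli-chongpaixishuguji}, now using the subexponential hypercontractive Bohnenblust--Hille constants in place of $e^{2m}$, and the exact multinomial/Stirling asymptotics in place of the crude bounds $3^d$ and $2^m$; this is the reverse of the extremal computation in \namedref{Lemma}{yinli-shangjixianbijiao}. Finally, choosing $\rho$ with $(1+\varepsilon)S(\textbf{z})<\rho<1$ makes the series converge geometrically, which proves $\textbf{z}\in\mathrm{dom}H(B_{\ell^\infty})$.
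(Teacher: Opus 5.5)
Your necessity half is correct. You rebuild a ball version of \namedref{Lemma}{yinli-weibuguji} by a gliding hump. The paper instead observes $\mathrm{dom}H(B_{\ell^\infty})\subset\mathrm{dom}H_b(B_{\ell^\infty})$ and applies the closed-graph estimate \eqref{shi-bituguji} of \namedref{Lemma}{yinli-bituguji} in the Fr\'echet space $H_b(B_{\ell^\infty})$, which gives $A>0$ and $\rho<1$ at once. From there the two routes coincide: extend $G$ by $G\circ P_n$ and apply \namedref{Lemma}{yinli-shangjixianbijiao}.

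The sufficiency half has a genuine gap. Your sharp homogeneous estimate is the whole difficulty, and it is essentially the deep input the paper imports rather than proves: \eqref{shi-yuzhichongfentiaojian}, i.e.\ \cite[Theorem~2.2\,(a)]{BDFMS17}. Combined with \cite[p.~455]{GMZ12} and the rescaling $h(\textbf{w})=f(r\textbf{w})$, this gives the inclusion in a few lines. Your proposed way of proving it cannot work, whatever constants you insert. The dyadic argument of \namedref{Lemma}{yinli-chongpaixishuguji} sees $\textbf{z}$ only through $\|P_{I_0}\textbf{z}\|_X$ and $\|P_{I_k}\textbf{z}\|_X=z^*_{2^k}$. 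So it yields the same bound for $\textbf{z}$ as for the nonincreasing step sequence $\widetilde{\textbf{z}}$ equal to $z^*_{2^k}$ on $I_k$. For $z_j=Rj^{-1/2}$ one has $S(\textbf{z})=R$ but $S(\widetilde{\textbf{z}})=R/\sqrt{\log 2}$. By \namedref{Lemma}{yinli-shangjixianbijiao}, no estimate $AR'^m\|G\|$ can hold at $\widetilde{\textbf{z}}$ with $R'<S(\widetilde{\textbf{z}})$. Hence that scheme gives sufficiency at best under $S(\textbf{z})<\sqrt{\log 2}$, not under $S(\textbf{z})<1$. Finer blocks reduce this discretization loss but multiply the number of block profiles $\beta$. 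The H\"older step over profiles (for $q=2$, the factor $\binom{L+d}{d}^{1/2}$ in \eqref{shi-duochongheguji}) then costs an exponential factor that grows as the blocks shrink. Reaching the threshold $1$ needs the genuinely different argument of \cite{BDFMS17}.

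There are two smaller points. First, your displayed estimate is false as written. For $\textbf{z}=\frac12 e_1$ and $G(\textbf{x})=x_1^m$ the left side is $2^{-m}$, while $S(\textbf{z})=0$. The rate must be some $r>\max\{\|\textbf{z}\|_{\ell^\infty},S(\textbf{z})\}$, with the head absorbed at the level of $f$ as in \namedref{Proposition}{mingti-jubuhua}; this repair is harmless here, since both quantities are $<1$. Second, the ball analogue of \eqref{shi-zhenghanshuxianzhi} is only gestured at. Your sketch does not explain why points of $\rho B_{c_0}$ where $|f|$ is large can be taken to live essentially on disjoint blocks, since their components on an initial block need not be negligible. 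That is precisely what \cite{GMZ12} supplies.
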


\begin{proof}

\proofstep{1}

Since
$\overline{\mathbb{C}^{(\mathbb{N})}}^{\,\ell^\infty}=c_0$,
\eqref{shi-bibaobaohan}, applied to
$O=B_{\ell^\infty}$, gives
\begin{equation}
\mathrm{dom}H(B_{\ell^\infty})\subset B_{c_0}.
\label{shi-qiushoulianlingjixian}
\end{equation}
For every $f\in H(B_{\ell^\infty})$, we have
\begin{equation}
c_\alpha(f|_{B_{c_0}})=c_\alpha(f),
\qquad \alpha\in\mathbb{N}_0^{(\mathbb{N})}.
\label{shi-xianzhixishuyizhi}
\end{equation}
Consequently, whenever the monomial series of $f$ is absolutely
convergent at $\textbf{z}\in B_{c_0}$,
\namedref{Proposition}{mingti-jishubiaoshi} identifies its sum with $f(\textbf{z})$
by \eqref{shi-hanshubiaoshi}. 

By \cite[Theorem~2.2\,(a)]{BDFMS17}, together with
\cite[(14) and (16)]{BDFMS17}, we have, for
$X\in\{c_0,\ell^\infty\}$ and $\textbf{w}\in B_{c_0}$,
\begin{equation}
	\limsup\limits_{n\to\infty}
	\frac{\sum\limits_{j=1}^{n}(w_j^*)^2}{\log(n+1)}<1
	\quad\Longrightarrow\quad
	\textbf{w}\in\mathrm{dom}H^\infty(B_X).
	\label{shi-yuzhichongfentiaojian}
\end{equation}

\proofstep{2}

Let $\textbf{z}\in\mathrm{dom}H(B_{\ell^\infty})$. By \eqref{shi-qiushoulianlingjixian}, $\textbf{z}\in B_{c_0}$. Since $H_b(B_{\ell^\infty})\subset H(B_{\ell^\infty})$, \namedref{Lemma}{yinli-bituguji} gives $A>0$ and $0<\rho<1$ such that
\begin{equation}
\sum\limits_{\alpha\in\mathbb{N}_0^{(\mathbb{N})}}
|c_\alpha(f)\textbf{z}^\alpha|
\leqslant A\|f\|_\rho,\qquad f\in H_b(B_{\ell^\infty}).
\label{shi-qiubituguji}
\end{equation}
Fix $m\geqslant2$, $n\in\mathbb{N}$, and $G\in\mathcal{P}^m(c_0)$ with $G=G\circ P_n$. Put $\widetilde G(\textbf{x})\triangleq G(P_n\textbf{x})$ for $\textbf{x}\in\ell^\infty$. Then $\widetilde G|_{B_{\ell^\infty}}\in H_b(B_{\ell^\infty})$. By \eqref{shi-youxianzuobiaoyantuo}, \eqref{shi-qiubituguji}, and homogeneity,
\begin{equation}
\begin{aligned}
\sum\limits_{|\alpha|=m}|c_\alpha(G)\textbf{z}^\alpha|
&=\sum\limits_{|\alpha|=m}|c_\alpha(\widetilde G)\textbf{z}^\alpha|
\leqslant A\|\widetilde G\|_\rho\\
&=A\rho^m\|\widetilde G\|_{\mathcal{P}^m(\ell^\infty)}
=A\rho^m\|G\|_{\mathcal{P}^m(c_0)}.
\end{aligned}
\label{shi-qiucishujie}
\end{equation}
By \eqref{shi-qiucishujie} and \namedref{Lemma}{yinli-shangjixianbijiao},
$$
\limsup\limits_{n\to\infty}
\frac{\sum\limits_{j=1}^{n}(z_j^*)^2}{\log(n+1)}
\leqslant\rho^2<1.
$$
This proves the necessary condition in \eqref{shi-youjiexulieqiu}.

\proofstep{3}

By \cite[p.~455]{GMZ12}, a
function in $H(B_{c_0})$ extends holomorphically to $B_{\ell^\infty}$
if and only if it belongs to $H_b(B_{c_0})$. Therefore, for each $f\in H(B_{\ell^\infty})$ and $0<r<1$, we have
\begin{equation}
\sup\limits_{\textbf{w}\in rB_{c_0}}|f(\textbf{w})|<\infty.
\label{shi-xianzhijubuyoujie}
\end{equation}
Let $\textbf{z}\in B_{c_0}$ satisfy the upper-limit condition in
\eqref{shi-youjiexulieqiu}. Choose $r$ such that
\begin{equation}
\|\textbf{z}\|_{\ell^\infty}<r<1,
\qquad
\limsup\limits_{n\to\infty}
\frac{\sum\limits_{j=1}^{n}(z_j^*)^2}{\log(n+1)}<r^2.
\label{shi-suofangbanjing}
\end{equation}
Fix $f\in H(B_{\ell^\infty})$, and define
$$
h(\textbf{w})\triangleq f(r\textbf{w}),\qquad\textbf{w}\in B_{c_0}.
$$
By \eqref{shi-xianzhijubuyoujie},
$h\in H^\infty(B_{c_0})$.
It is easy to see that
\begin{equation}
c_\alpha(h)=r^{|\alpha|}c_\alpha(f),
\qquad\alpha\in\mathbb{N}_0^{(\mathbb{N})}.
\label{shi-suofangxishu}
\end{equation}
Also, \eqref{shi-suofangbanjing} implies
$\frac{\textbf{z}}{r}\in B_{c_0}$ and
$$
\limsup\limits_{n\to\infty}
\frac{\sum\limits_{j=1}^{n}\left(\frac{z_j^*}{r}\right)^2}
{\log(n+1)}
=\frac{1}{r^2}\limsup\limits_{n\to\infty}
\frac{\sum\limits_{j=1}^{n}(z_j^*)^2}{\log(n+1)}<1.
$$
By \eqref{shi-yuzhichongfentiaojian},
$\frac{\textbf{z}}{r}\in\mathrm{dom}H^\infty(B_{c_0})$.
Consequently, \eqref{shi-suofangxishu} yields
$$
\sum\limits_{\alpha\in\mathbb{N}_0^{(\mathbb{N})}}
|c_\alpha(f)\textbf{z}^\alpha|
=\sum\limits_{\alpha\in\mathbb{N}_0^{(\mathbb{N})}}
\left|c_\alpha(h)\left(\frac{\textbf{z}}{r}\right)^\alpha\right|
<\infty.
$$
Since $f$ was arbitrary, this proves the reverse inclusion in
\eqref{shi-youjiexulieqiu}. Equation
\eqref{shi-xianzhixishuyizhi} and
\namedref{Proposition}{mingti-jishubiaoshi}, applied to
$f|_{B_{c_0}}$ at $\textbf{z}$, now give
\eqref{shi-hanshubiaoshi}.
\end{proof}

\begin{remark}\label{zhuji-linjieqingxing}
The strict inequality in \eqref{shi-youjiexulieqiu}
characterizes $\mathrm{dom}H(B_{\ell^\infty})$.
For bounded holomorphic functions, however, a complete characterization
is still not known. The available result is due to
\cite[Theorem~2.2\,(a)--(b)]{BDFMS17}:
$$
	\begin{aligned}
		\left\{\textbf{z}\in B_{c_0}:
		\limsup\limits_{n\to\infty}
		\frac{\sum\limits_{j=1}^{n}(z_j^*)^2}{\log(n+1)}<1\right\}
		&\subseteq\mathrm{dom}H^\infty(B_{\ell^\infty})\\
		&\subsetneq
		\left\{\textbf{z}\in B_{c_0}:
		\limsup\limits_{n\to\infty}
		\frac{\sum\limits_{j=1}^{n}(z_j^*)^2}{\log(n+1)}\leqslant1\right\}.
	\end{aligned}
$$
\end{remark}

\begin{remark}\label{zhuji-jubuhuashixiao}
It is easy to see that
$$
\mathrm{dom}H(O)
\subset O\cap\mathrm{dom}H(\ell^\infty).
$$
However, the equality \eqref{shi-jubuhua} cannot be
applied with $X=\ell^\infty$, since its hypothesis fails:
$\overline{\mathbb{C}^{(\mathbb{N})}}^{\,\ell^\infty}
=c_0\neq\ell^\infty$. Equations~\eqref{shi-youjiexulieshoulian} and~\eqref{shi-youjiexulieqiu} also provide a concrete example showing that \eqref{shi-jubuhua} cannot be applied when $X=\ell^\infty$.
\end{remark}

\subsection{Sharp comparison of convergence radii on \texorpdfstring{$\ell^1$}{ell-1}}

For $m\in\mathbb{N}_0$ and $P\in\mathcal{P}^m(\ell^1)$,
we extend the notation in \eqref{shi-zhixishuduoxiangshi} by writing
$$
P^{\#}(\textbf{z})\triangleq
\sum\limits_{\substack{\alpha\in\mathbb{N}_0^{(\mathbb{N})}\\|\alpha|=m}}
|c_\alpha(P)|\textbf{z}^{\alpha},\qquad\|P^{\#}\|=
\sup\limits_{\substack{\textbf{x}\in\ell^1\cap[0,+\infty)^{\mathbb{N}}\\
		\|\textbf{x}\|_{\ell^1}\leqslant1}}
\sum\limits_{|\alpha|=m}|c_\alpha(P)|\textbf{x}^{\alpha}.
$$By \eqref{shi-yikeheshoulian}, $P^{\#}(\textbf{z})$ is well defined
for every $\textbf{z}\in\ell^1$.

Polynomial norms are those of \eqref{shi-duoxiangshifanshu},
with ambient space $\ell^1$. A polynomial on $\mathbb{C}^n$
is identified with the corresponding polynomial on $\ell^1$
depending on its first $n$ coordinates.

If $f$ is holomorphic in a neighbourhood of $\textbf{0}$ in
$\ell^1$, let $H_m$ be its homogeneous Taylor polynomials, as in
\eqref{shi-qicitaile}:
$$
H_0(\textbf{z})\triangleq f(\textbf{0}),
\qquad
H_m(\textbf{z})\triangleq
\frac{1}{m!}D^mf(\textbf{0})[\textbf{z},\cdots,\textbf{z}],
\qquad m\geqslant1.
$$
Following \cite[Section~3]{BRS25}, set
\begin{equation}
	\begin{aligned}
		r(f,\textbf{0})
		&\triangleq
		\left(\limsup\limits_{m\to\infty}\|H_m\|^{\frac{1}{m}}\right)^{-1},\\
		|r|(f,\textbf{0})
		&\triangleq
		\left(\limsup\limits_{m\to\infty}\|H_m^{\#}\|^{\frac{1}{m}}\right)^{-1}.
	\end{aligned}
	\label{shi-l1-liangzhongbanjing}
\end{equation}

As a further application of our methods, we determine the optimal constant in the comparison between the radii of regular convergence and convergence of homogeneous Taylor expansions on $\ell^1$. In \cite[p.~5919]{BRS25}, the authors established
$$
e^{-1}r(f,\textbf{0})\leqslant |r|(f,\textbf{0})\leqslant r(f,\textbf{0})
$$
and left open whether the constant $e^{-1}$ is optimal. We resolve this question by determining the sharp lower bound.
\begin{theorem}\label{dingli-l1-zuijiabanjing}
	Let $t\in\left(\frac{1}{2},1\right)$ be the unique solution of
	\begin{equation}
		-\log(1-t)=2t.
		\label{shi-l1-zuijiacanshu}
	\end{equation}
	Every function $f$ holomorphic in a neighbourhood of $\textbf{0}$
	in  $\ell^1$ satisfies
	\begin{equation}
		\sqrt{t(1-t)}\,r(f,\textbf{0})
		\leqslant |r|(f,\textbf{0})
		\leqslant r(f,\textbf{0}).
		\label{shi-l1-zuijiabanjing}
	\end{equation}
	Both constants are optimal. 
\end{theorem}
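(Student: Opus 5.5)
The plan is to reduce \eqref{shi-l1-zuijiabanjing} to an asymptotic statement about homogeneous polynomials on $\ell^1$. Put
$$
\mathcal{C}_m\triangleq\sup\left\{\|P^{\#}\|:P\in\mathcal{P}^m(\ell^1),\ \|P\|\leqslant1\right\}.
$$
By \eqref{shi-l1-liangzhongbanjing}, both the inequality $\sqrt{t(1-t)}\,r\leqslant|r|$ and its optimality follow once we show
$$
\lim\limits_{m\to\infty}\mathcal{C}_m^{\frac{1}{m}}=\frac{1}{\sqrt{t(1-t)}}.
$$
Optimality of the lower constant also needs a single holomorphic $f$ attaining the ratio; this is obtained as $f=\sum_m H_m$, where the $H_m$ are near-extremal polynomials with $\|H_m\|=1$ supported on pairwise disjoint coordinate blocks. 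Then $\|f\|$ is controlled on small balls, so $r(f,\textbf{0})=1$, while $|r|(f,\textbf{0})=\bigl(\limsup\limits_{m\to\infty}\|H_m^{\#}\|^{\frac1m}\bigr)^{-1}$. The upper inequality $|r|\leqslant r$ is immediate from $|P(\textbf{z})|\leqslant P^{\#}(|\textbf{z}|)$ and $\|\,|\textbf{z}|\,\|_{\ell^1}=\|\textbf{z}\|_{\ell^1}$. Its optimality is witnessed by any $f$ with nonnegative coefficients, for example $f(\textbf{z})=\sum_m z_1^m$.

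\textbf{Upper bound for $\mathcal{C}_m$.} By approximation with $P\circ P_n$, as in the proof of \namedref{Proposition}{mingti-duoxiangshiguji}, it suffices to treat $P$ depending on finitely many coordinates and $\textbf{x}\geqslant0$ with $\|\textbf{x}\|_{\ell^1}=1$. I will run a weighted version of the $L^2$ argument of \namedref{Lemma}{yinli-zhichijishu}. For any $\textbf{y}\geqslant0$, averaging $|P|^2$ over the torus orbit of $\textbf{y}$ gives
$$
\sum_\alpha|c_\alpha(P)|^2\textbf{y}^{2\alpha}\leqslant\|P\|^2\|\textbf{y}\|_{\ell^1}^{2m}.
$$
The Cauchy--Schwarz inequality then yields
$$
P^{\#}(\textbf{x})\leqslant\|P\|\,\|\textbf{y}\|_{\ell^1}^m\Bigl(\sum_{|\alpha|=m}\textbf{x}^{2\alpha}\textbf{y}^{-2\alpha}\Bigr)^{\frac12}.
$$
A pure $L^2$ bound of this kind loses against the trivial $\ell^\infty$ coefficient bound $|c_\alpha|\leqslant\frac{m^m}{\alpha^\alpha}\|P\|$ on large coordinates. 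Following \namedref{Lemma}{yinli-suijixishuhua}, I will therefore split each $\alpha$ into a part carried by ``large'' coordinates and a part carried by ``small'' ones. On the large part I use the Cauchy-type estimate; on the small part I use the $L^2$ bound with $\textbf{y}$ proportional to $\sqrt{\textbf{x}}$ on those coordinates. The small part then contributes a complete homogeneous symmetric sum bounded by a binomial of the form $\binom{n+k-1}{k}$. Optimizing over the fraction $t$ of the degree placed on the $L^2$ side, and over the effective number of coordinates, reduces everything to an entropy maximization. Its Lagrange condition I expect to be exactly $-\log(1-t)=2t$, with maximal value $-\tfrac12\log(t(1-t))$ per degree. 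This gives $\limsup\limits_{m\to\infty}\mathcal{C}_m^{\frac1m}\leqslant\bigl(t(1-t)\bigr)^{-\frac12}$.

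\textbf{Lower bound for $\mathcal{C}_m$.} Here I will adapt \namedref{Lemma}{yinli-suijiduoxiangshi}. The test polynomial is a random Rademacher polynomial in $n\approx\lambda m$ variables, with coefficients $\varepsilon_\alpha\frac{m!}{\alpha!}$ restricted to a suitable class of $\alpha$ (a prescribed number of unit entries). Hoeffding's inequality together with the net from \namedref{Lemma}{yinli-duoxiangshiwang} bounds $\|R\|$ on $\ell^1$ by roughly the square root of the variance, up to factors that are subexponential in $m$. Evaluating $R^{\#}$ at the uniform vector $\frac1n\sum_{j\leqslant n}e_j$ then gives a lower bound of the same entropy form. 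Choosing $\lambda$ optimally should recover $\bigl(t(1-t)\bigr)^{-\frac12}$.

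The main obstacle is matching the two variational problems exactly. One must find the correct splitting in the upper bound so that its extremal parameter coincides with the one attained by the random construction. My first attempt will be to compute both sides at the uniform $n$-point vector, where all quantities are explicit binomials. I will then check that Stirling asymptotics produce the same function of the ratio, namely the one whose critical point satisfies \eqref{shi-l1-zuijiacanshu}. Only after that will I extend the upper bound to general $\textbf{x}$ by a symmetrization/averaging argument, as in the proof of \eqref{shi-jibenhanshugongshi}.
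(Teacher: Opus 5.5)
\noindent There is a genuine gap in your upper bound for $\mathcal{C}_m$. A Cauchy estimate on ``large'' coordinates combined with a $\sqrt{\textbf{x}}$-weighted Parseval bound on ``small'' ones cannot give any dimension-free bound better than $e$.

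Test it at $\textbf{x}=\frac1n\sum_{j\leqslant n}e_j$ with $n\to\infty$. Any upper bound must handle this point, since it is exactly where $R^{\#}$ is evaluated in the lower bound. Passing from such uniform vectors to a general $\textbf{x}$ is not the difficulty. Replacing a coordinate of mass $\frac{a_j}{N}$ by $a_j$ coordinates via $Q(\textbf{u})=P\bigl(\sum_j(\sum_{i\in B_j}u_i)e_j\bigr)$ gives $\|Q\|\leqslant\|P\|$ and $Q^{\#}\bigl(\frac1N\sum_{i\leqslant N}e_i\bigr)=P^{\#}(\textbf{x})$. So uniform vectors with many coordinates are the worst case. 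Note that the permutation averaging behind \eqref{shi-jibenhanshugongshi} points the wrong way here.

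At this $\textbf{x}$ all coordinates are equal, so your split has nothing to act on:
\begin{itemize}
\item If all coordinates are on the $L^2$ side, then $\textbf{y}=\textbf{x}$ and your bound is $\binom{n+m-1}{m}^{\frac12}\|P\|\to\infty$. Your binomial $\binom{n+k-1}{k}$ counts monomials in the small coordinates, and their number is unbounded.
\item If the $L^2$ group carries a fixed share of the mass, it still has $\gtrsim n$ coordinates and its factor diverges.
\item If its share tends to $0$, the Cauchy part alone contributes $\sum_{\alpha}\frac{m^m}{\prod_j\alpha_j^{\alpha_j}}n^{-m}\to\frac{m^m}{m!}$, which only recovers the known constant $e^{-1}$.
\end{itemize}
So no optimization over the split or over a ``fraction of the degree'' can produce $\sqrt{t(1-t)}$. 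Nor is this what \namedref{Lemma}{yinli-suijixishuhua} does: it replaces the small coordinates by random sparse vectors and applies Parseval only at sparse points.

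That sampling is the missing idea, and it should be applied to all of $\textbf{x}$, with no Cauchy part at all. For $|\beta|=k\geqslant m$, the point $\frac{\beta}{k}$ has $\ell^1$-norm $1$ and at most $k$ nonzero coordinates. Parseval on the torus through it gives $\sum_{\alpha}|c_\alpha(P)|^2\prod_j\bigl(\frac{\beta_j}{k}\bigr)^{2\alpha_j}\leqslant\|P\|^2$. Average over $\beta$ with the multinomial weights $\frac{k!}{\beta!}\textbf{x}^{\beta}$ and use $\beta_j^{2\alpha_j}\geqslant(\alpha_j!)^2\binom{\beta_j}{\alpha_j}$. This yields $\sum_{\alpha}\alpha!\,|c_\alpha(P)|^2\textbf{x}^{\alpha}\leqslant\frac{k^{2m}(k-m)!}{k!}\|P\|^2$. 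Cauchy--Schwarz against $\sum_{|\alpha|=m}\frac{\textbf{x}^{\alpha}}{\alpha!}=\frac{1}{m!}$ then gives $\|P^{\#}\|\leqslant\frac{k^m}{m!\binom{k}{m}^{1/2}}\|P\|$, uniformly in the dimension and in $\textbf{x}$. Thus $t=\lim\frac mk$ is a sampling ratio, not a fraction of the degree. The equation $-\log(1-t)=2t$ is the critical-point equation of $s\mapsto\lim_{m\to\infty}\frac1m\log\frac{k^m}{m!\binom{k}{m}^{1/2}}$ with $k=\frac ms$.

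Your lower-bound plan is essentially the paper's, but two points are off. First, for multilinear $R$ the optimum is not attained at a finite $\lambda=\frac nm$. At fixed $\lambda$ one has $\bigl(m!\binom{n}{m}n^{-m}\bigr)^{\frac1m}\to e^{-1}\bigl(\frac{\lambda}{\lambda-1}\bigr)^{\lambda-1}<1$, so you need $\frac nm\to\infty$; the paper takes $n=m^2$. Second, $t$ enters not through the choice of $\lambda$ but through the sharp variance bound
\[
\sup_{\|\textbf{u}\|_{\ell^1}\leqslant1}\sum_{|F|=m}\prod_{j\in F}|u_j|^2\leqslant\Bigl(\frac{e\sqrt{t(1-t)}}{m}\Bigr)^{2m},
\]
which follows from $1+s^2\leqslant e^{2s\sqrt{t(1-t)}}$. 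This step is missing from your sketch.
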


\begin{proof}
By $-\log(1-t)=2t$ and
$$
\sup\limits_{s>0}\frac{\log(1+s^2)}{2s}
=\frac{-\log(1-t)}{2\sqrt{\frac{t}{1-t}}}
=\sqrt{t(1-t)},
$$
we have
	\begin{equation}
		1+s^2\leqslant e^{2s\sqrt{t(1-t)}},
		\qquad s\geqslant0.
		\label{shi-l1-biaoliangzhishijie}
	\end{equation}
	We shall also use
	\begin{equation}
b^{2a}\geqslant a!b^{a}\geqslant a!b\left( b-1 \right) \cdots \left( b-a+1 \right) =\left( a! \right)^{2} \binom{b}{a},
		\qquad a,b\in\mathbb{N}_0,b\geqslant a.
		\label{shi-l1-zhengshujuguji}
	\end{equation}
	Finally, Stirling's formula and
	\eqref{shi-l1-zuijiacanshu} give
	\begin{equation}
\begin{aligned}
	&\begin{aligned}\lim\limits_{m\to \infty} \frac{\binom{m^{2}}{m}^{\frac{1}{m}}}{4^{\frac{1}{m}}e\, m\sqrt{t(1-t)} \left( \log 8+2m^{2}\log (1+24m) \right)^{\frac{1}{2m}}}&\end{aligned}\\ &\  =\lim\limits_{m\to \infty} \frac{\left\lceil \frac{m}{t} \right\rceil}{\left( m! \right)^{\frac{1}{m}} \binom{\left\lceil \frac{m}{t} \right\rceil}{m}^{\frac{1}{2m}}} =\frac{1}{\sqrt{t\left( 1-t \right)}}.\end{aligned}
		\label{shi-l1-caiyanggenjixian}
	\end{equation}

	\proofstep{1}
	
	We prove that, for all integers $k\geqslant m\geqslant2$,
	\begin{equation}
		\|P\|\leqslant\|P^{\#}\|
		\leqslant
		\frac{k^m}{m!\binom{k}{m}^{\frac{1}{2}}}\|P\|,
		\qquad P\in\mathcal{P}^m(\ell^1).
		\label{shi-l1-jingquexishushangjie}
	\end{equation}
The lower bound in \eqref{shi-l1-jingquexishushangjie} is immediate from
$$
|P(\textbf{z})|
\leqslant P^{\#}(|\textbf{z}|),
\qquad \textbf{z}\in B_{\ell^1}.
$$
To prove the upper bound, by the same truncation argument as in
\namedref{Lemma}{yinli-gujituiguang}, it suffices to consider
$P\in\mathcal{P}^m(\ell^1)$ satisfying $P=P\circ P_n$ for some
$n\in\mathbb{N}$. Fix
	$\textbf{x}=(x_1,\cdots,x_n)\in[0,+\infty)^n$ with
	$\sum\limits_{j=1}^n x_j=1$.
	For every $\beta\in\mathbb{N}_0^n$ with $|\beta|=k$, 
\begin{equation}
	\begin{aligned}
		&\frac{1}{k^{2m}}
		\sum\limits_{\substack{\alpha\in\mathbb{N}_0^n\\|\alpha|=m}}
		|c_\alpha(P)|^2
		\prod\limits_{j=1}^n\beta_j^{2\alpha_j}=
		\sum\limits_{\substack{\alpha\in\mathbb{N}_0^n\\|\alpha|=m}}
		\left|
		c_\alpha(P)
		\prod\limits_{j=1}^n
		\left(\frac{\beta_j}{k}\right)^{\alpha_j}
		\right|^2\\
		&\quad=
		\frac{1}{(2\pi)^n}
		\int\limits_{[0,2\pi]^n}
		\left|
		\sum\limits_{\substack{\alpha\in\mathbb{N}_0^n\\|\alpha|=m}}
		c_\alpha(P)
		\prod\limits_{j=1}^n
		\left(
		\frac{\beta_j}{k}e^{\mathrm{i}\theta_j}
		\right)^{\alpha_j}
		\right|^2
		\,\mathrm{d}\theta_1\cdots\mathrm{d}\theta_n\\
		&\quad=
		\frac{1}{(2\pi)^n}
		\int\limits_{[0,2\pi]^n}
		\left|
		P\left(
		\sum\limits_{j=1}^n
		\frac{\beta_j}{k}e^{\mathrm{i}\theta_j}e_j
		\right)
		\right|^2
		\,\mathrm{d}\theta_1\cdots\mathrm{d}\theta_n\\
		&\quad\leqslant
		\frac{1}{(2\pi)^n}
		\int\limits_{[0,2\pi]^n}
		\|P\|^2
		\left\|
		\sum\limits_{j=1}^n
		\frac{\beta_j}{k}e^{\mathrm{i}\theta_j}e_j
		\right\|_{\ell^1}^{2m}
		\,\mathrm{d}\theta_1\cdots\mathrm{d}\theta_n\\
		&\quad=
		\frac{1}{(2\pi)^n}
		\int\limits_{[0,2\pi]^n}
		\|P\|^2
		\left(
		\sum\limits_{j=1}^n\frac{\beta_j}{k}
		\right)^{2m}
		\,\mathrm{d}\theta_1\cdots\mathrm{d}\theta_n\\
		&\quad=
		\frac{1}{(2\pi)^n}
		\int\limits_{[0,2\pi]^n}
		\|P\|^2
		\left(\frac{|\beta|}{k}\right)^{2m}
		\,\mathrm{d}\theta_1\cdots\mathrm{d}\theta_n
		=\|P\|^2,
	\end{aligned}\label{shi-714}
\end{equation}
	where the second equality follows from Parseval's identity.	For $|\alpha|=m$, inequality
	\eqref{shi-l1-zhengshujuguji} implies
\begin{equation}
	\begin{aligned}
		&\sum\limits_{\substack{\beta\in\mathbb{N}_0^n\\|\beta|=k}}
		\frac{k!}{\beta!}\textbf{x}^{\beta}
		\prod\limits_{j=1}^n\beta_j^{2\alpha_j}\geqslant
		(\alpha!)^2
		\sum\limits_{\substack{\beta\in\mathbb{N}_0^n\\
				|\beta|=k,\ \beta\geqslant\alpha}}
		\frac{k!}{\beta!}\textbf{x}^{\beta}
		\prod\limits_{j=1}^n\binom{\beta_j}{\alpha_j}\\
		&\quad=
		\alpha!k!\textbf{x}^{\alpha}
		\sum\limits_{\substack{\gamma\in\mathbb{N}_0^n\\|\gamma|=k-m}}
		\frac{\textbf{x}^{\gamma}}{\gamma!}
		=
		\alpha!\frac{k!}{(k-m)!}\textbf{x}^{\alpha}.\label{shi-713}
	\end{aligned}
\end{equation}
	Here $\beta\geqslant\alpha$ is understood $\beta_{j} \geqslant \alpha_{j} ,1\leqslant j\leqslant n$.
	
By \eqref{shi-713}, \eqref{shi-714} and $\sum\limits_{\substack{\beta\in\mathbb{N}_0^n\\|\beta|=k}}
\frac{k!}{\beta!}\textbf{x}^{\beta}=\| \textbf{x} \|_{\ell^{{}1}}^{k} =1$, we have
	$$
\begin{aligned}
	\|P\|^2
	&=
	\|P\|^2
	\sum\limits_{\substack{\beta\in\mathbb{N}_0^n\\|\beta|=k}}
	\frac{k!}{\beta!}\textbf{x}^{\beta}\geqslant
	\frac{1}{k^{2m}}
	\sum\limits_{\substack{\beta\in\mathbb{N}_0^n\\|\beta|=k}}
	\frac{k!}{\beta!}\textbf{x}^{\beta}
	\sum\limits_{\substack{\alpha\in\mathbb{N}_0^n\\|\alpha|=m}}
	|c_\alpha(P)|^2
	\prod\limits_{j=1}^n\beta_j^{2\alpha_j}\\
	&=
	\frac{1}{k^{2m}}
	\sum\limits_{\substack{\alpha\in\mathbb{N}_0^n\\|\alpha|=m}}
	|c_\alpha(P)|^2
	\sum\limits_{\substack{\beta\in\mathbb{N}_0^n\\|\beta|=k}}
	\frac{k!}{\beta!}\textbf{x}^{\beta}
	\prod\limits_{j=1}^n\beta_j^{2\alpha_j}\geqslant
	\frac{k!}{k^{2m}(k-m)!}
	\sum\limits_{\substack{\alpha\in\mathbb{N}_0^n\\|\alpha|=m}}
	\alpha!|c_\alpha(P)|^2\textbf{x}^{\alpha},
\end{aligned}
	$$
Then the Cauchy--Schwarz inequality gives
	$$
	\begin{aligned}
		\bigl(P^{\#}(\textbf{x})\bigr)^2
		&\leqslant
		\left(
		\sum\limits_{\substack{\alpha\in\mathbb{N}_0^n\\|\alpha|=m}}
		\alpha!|c_\alpha(P)|^2\textbf{x}^{\alpha}
		\right)
		\left(
		\sum\limits_{\substack{\alpha\in\mathbb{N}_0^n\\|\alpha|=m}}
		\frac{\textbf{x}^{\alpha}}{\alpha!}
		\right)\leqslant
		\frac{k^{2m}(k-m)!}{m!k!}\|P\|^2,
	\end{aligned}
	$$
	which gives the upper
	bound in \eqref{shi-l1-jingquexishushangjie}.

	\proofstep{2}
	
	Let $f$ be holomorphic in a neighbourhood of $\textbf{0}$,
	and let $H_m$ be its homogeneous Taylor polynomials.
	By local boundedness of $f$ and \eqref{shi-qicikexiguji},
	$$
	\limsup\limits_{m\to\infty}\|H_m\|^{\frac{1}{m}}<+\infty.
	$$
	Apply \eqref{shi-l1-jingquexishushangjie} to $H_m$ with
	$k=\left\lceil\frac{m}{t}\right\rceil$ and using
	\eqref{shi-l1-caiyanggenjixian}, gives
	$$
	\begin{aligned}
		\limsup\limits_{m\to\infty}\|H_m\|^{\frac{1}{m}}
		&\leqslant
		\limsup\limits_{m\to\infty}\|H_m^{\#}\|^{\frac{1}{m}}\leqslant
		\frac{1}{\sqrt{t(1-t)}}
		\limsup\limits_{m\to\infty}\|H_m\|^{\frac{1}{m}}.
	\end{aligned}
	$$
Thus we have completed the proof of \eqref{shi-l1-zuijiabanjing}.
	
	\proofstep{3}
	
To prove the optimality of the lower bound, fix $ m\geqslant2$.
	For $\textbf{u}=\left( u_{i} \right)_{1\leqslant i\leqslant m^{2}}\in\mathbb{C}^{m^{2}}$ with
	$\|\textbf{u}\|_{\ell^1}\leqslant1$,
	\eqref{shi-l1-biaoliangzhishijie} give
	$$
	\begin{aligned}
		&\left(\frac{m^2}{t(1-t)}\right)^m
		\sum\limits_{\substack{F\subset\{1,\cdots,m^{2}\}\\|F|=m}}
		\prod\limits_{j\in F}|u_j|^2\leqslant
		\prod\limits_{j=1}^{m^{2}}
		\left(1+\frac{m^2|u_j|^2}{t(1-t)}\right)\leqslant
		e^{2m\sum\limits_{j=1}^{m^{2}}|u_j|}
		\leqslant e^{2m}.
	\end{aligned}
	$$
	Consequently,
	\begin{equation}
		(m!)^2
		\sum\limits_{\substack{F\subset\{1,\cdots,m^{2}\}\\|F|=m}}
		\prod\limits_{j\in F}|u_j|^2
		\leqslant
		(m!)^2
		\left(\frac{e\sqrt{t(1-t)}}{m}\right)^{2m}.
		\label{shi-l1-suijifangcha}
	\end{equation}
	
	Apply \namedref{Lemma}{yinli-duoxiangshiwang} and let $\mathcal{N}$ be the resulting finite subset
	of the closed $\ell^2$ unit ball in $\mathbb{C}^{m^{2}}$.
	For every $m$-homogeneous polynomial $P$ on $\mathbb{C}^{m^{2}}$,
	the polynomial
	$\textbf{u}\mapsto P(u_1^2,\cdots,u_{m^{2}}^2)$
	is $2m$-homogeneous. Moreover,
	$$
	\left\{
	(u_1^2,\cdots,u_{m^{2}}^2):
	\textbf{u}\in\mathbb{C}^{m^{2}},\
	\|\textbf{u}\|_{\ell^2}\leqslant1
	\right\}
	=
	\left\{
	\textbf{v}\in\mathbb{C}^{m^{2}}:
	\|\textbf{v}\|_{\ell^1}\leqslant1
	\right\}.
	$$

	The cited lemma therefore gives
	\begin{equation}
		\begin{gathered}
			|\mathcal{N}|\leqslant(1+24m)^{2m^{2}},\qquad	\|P\|
			=
			\sup\limits_{\|\textbf{u}\|_{\ell^2}\leqslant1}
			|P(u_1^2,\cdots,u_{m^{2}}^2)|
			\leqslant
			2\max\limits_{\textbf{u}\in\mathcal{N}}
			|P(u_1^2,\cdots,u_{m^{2}}^2)|.
		\end{gathered}
		\label{shi-l1-danchunxingwang}
	\end{equation}
	
	Let
	$
	(\varepsilon_F)_{\substack{F\subset\{1,\cdots,m^{2}\}\\|F|=m}}
	$
	be independent random variables satisfying
	$$
	\mathbb{P}(\varepsilon_F=1)
	=
	\mathbb{P}(\varepsilon_F=-1)
	=
	\frac{1}{2},
	\qquad
	F\subset\{1,\cdots,m^{2}\},\quad |F|=m.
	$$and put
	$$
	R_m(\textbf{u})\triangleq
	m!\sum\limits_{\substack{F\subset\{1,\cdots,m^{2}\}\\|F|=m}}
	\varepsilon_F\prod\limits_{j\in F}u_j.
	$$
	By \eqref{shi-suijiweigailv} and 
	\eqref{shi-l1-suijifangcha}, for each $\|\textbf{u}\|_{\ell^1}\leqslant1,\lambda>0$,
	$$
	\begin{aligned}
		\mathbb{P}\bigl(|R_m(\textbf{u})|>\lambda\bigr)
		&\leqslant
		\mathbb{P}\left(
		|\mathrm{Re}\,R_m(\textbf{u})|>\frac{\lambda}{\sqrt{2}}
		\right)
		+
		\mathbb{P}\left(
		|\mathrm{Im}\,R_m(\textbf{u})|>\frac{\lambda}{\sqrt{2}}
		\right)\\
		&\leqslant
		2e^{
			-\frac{(\frac{\lambda}{\sqrt{2}})^2}
			{2	\sum\limits_{\substack{F\subset\{1,\cdots,m^{2}\}\\|F|=m}}
	\left(
				\mathrm{Re}\left(
				m!\prod\limits_{j\in F}u_j
				\right)
				\right)^2}
		}
		+
		2e^{
			-\frac{(\frac{\lambda}{\sqrt{2}})^2}
			{2	\sum\limits_{\substack{F\subset\{1,\cdots,m^{2}\}\\|F|=m}}
				\left(
				\mathrm{Im}\left(
				m!\prod\limits_{j\in F}u_j
				\right)
				\right)^2}
		}\\
		&\leqslant
		4e^{-\frac{\lambda^2}
			{4(m!)^2
				\left(
				\frac{e\sqrt{t(1-t)}}{m}
				\right)^{2m}}}.
	\end{aligned}
	$$
	
	Take
	$$
	\lambda\triangleq
	2m!\left(\frac{e\sqrt{t(1-t)}}{m}\right)^m
	\sqrt{\log8+2m^{2}\log(1+24m)}.
	$$
Hence, by \eqref{shi-l1-danchunxingwang}, 
$$
\begin{aligned}\mathbb{P} \left( \frac{\| R_m\|}{2} >\lambda \right)&\leqslant \mathbb{P} \left( \max\limits_{\textbf{u} \in \mathcal{N}} |R_m(u_{1}^{2},\cdots ,u_{m^{2}}^{2})|>\lambda \right)\\ &\leqslant 4|\mathcal{N} |e^{-\log 8-2m^{2}\log (1+24m)}\\ &\leqslant 4(1+24m)^{2m^{2}}e^{-\log 8-2m^{2}\log (1+24m)}\\ &=\frac{1}{2} ,\end{aligned}
$$
	Thus there is a $	
	(\varepsilon_F)_{\substack{F\subset\{1,\cdots,m^{2}\}\\|F|=m}}
	$ such that
	$$
	\|R_m\|
	\leqslant
	4m!\left(\frac{e\sqrt{t(1-t)}}{m}\right)^m
	\sqrt{\log8+2m^{2}\log(1+24m)}.
	$$
	Note that
	$$
\| R_{m}^{\#}\| \geqslant R_{m}^{\#}\left( \left( \frac{1}{m^{2}} \right)_{1\leqslant i\leqslant m^{2}} \right) =\frac{m!\binom{m^{2}}{m}}{(m^{2})^{m}}.
	$$
	Together with \eqref{shi-l1-jingquexishushangjie}, this gives
	\begin{equation}
	\begin{aligned}\sqrt[m]{\frac{\frac{m!\binom{m^{2}}{m}}{(m^{2})^{m}}}{4m!\left( \frac{e\sqrt{t(1-t)}}{m} \right)^{m} \sqrt{\log 8+2m^{2}\log (1+24m)}}}&\leqslant \frac{\| R_{m}^{\#}\|^{\frac{1}{m}}}{\| R_{m}\|^{\frac{1}{m}}} \leqslant \frac{\left\lceil \frac{m}{t} \right\rceil}{\left( m! \right)^{\frac{1}{m}} \binom{\left\lceil \frac{m}{t} \right\rceil}{m}^{\frac{1}{2m}}} .\end{aligned}
		\label{shi-l1-jingqueliangbian}
	\end{equation}
Applying
	\eqref{shi-l1-caiyanggenjixian}, we conclude that
	\begin{equation}
		\lim\limits_{m\to\infty}
		\left(
		\frac{\|R_m^{\#}\|}{\|R_m\|}
		\right)^{\frac{1}{m}}
		=
		\frac{1}{\sqrt{t(1-t)}}.
		\label{shi-l1-jingquezengzhang}
	\end{equation}
	
	\proofstep{4}
	
	Put $n_1=0$ and $n_m=n_{m-1}+m^2$ for $m\geqslant2$,
	and define
	$$
	H_0=H_1=0,
	\qquad
	H_m(\textbf{z})\triangleq
	\frac{R_m(z_{n_{m-1}+1},\cdots,z_{n_m})}{\|R_m\|},
	\qquad m\geqslant2.
	$$
Note that
	$$
	\|H_m\|=1,
	\qquad
	\|H_m^{\#}\|=\frac{\|R_m^{\#}\|}{\|R_m\|},
	\qquad
	|H_m(\textbf{z})|
	\leqslant
	\|(P_{n_m}-P_{n_{m-1}})\textbf{z}\|_{\ell^1}^m.
	$$
	Set
	$$
	f(\textbf{z})\triangleq
	\sum\limits_{m=2}^{\infty}H_m(\textbf{z}).
	$$
	
	Fix $\textbf{x}\in\ell^1$. By \eqref{shi-weibushoulian},
	choose $M\geqslant2$ such that
	$\|Q_{n_{M-1}}\textbf{x}\|_{\ell^1}\leqslant\frac{1}{4}$.
	For $\|\textbf{y}-\textbf{x}\|_{\ell^1}<\frac{1}{4}$
	and $m\geqslant M$, 
$$
\begin{aligned}
	\|(P_{n_m}-P_{n_{m-1}})\textbf{y}\|_{\ell^1}
	&\leqslant
	\|(P_{n_m}-P_{n_{m-1}})\textbf{x}\|_{\ell^1}
	+\|(P_{n_m}-P_{n_{m-1}})(\textbf{y}-\textbf{x})\|_{\ell^1}\\
	&\leqslant
	\|Q_{n_{m-1}}\textbf{x}\|_{\ell^1}
	+\|\textbf{y}-\textbf{x}\|_{\ell^1}\leqslant
	\|Q_{n_{M-1}}\textbf{x}\|_{\ell^1}
	+\|\textbf{y}-\textbf{x}\|_{\ell^1}\leqslant\frac{1}{2}.
\end{aligned}
$$
	Hence $\sum\limits_{m=M}^{\infty} \left| H_{m}(\textbf{y} ) \right|\leqslant \sum\limits_{m=M}^{\infty} \frac{1}{2^{m}} <\infty$
	and \eqref{shi-quanchunjixian} gives $f\in H(\ell^1)$. One readily checks that  $H_m$ is the $m$-homogeneous Taylor polynomial of $f$ at $\textbf{0}$. By \eqref{shi-l1-jingquezengzhang},
	$$
	r(f,\textbf{0})=1,
	\qquad
	|r|(f,\textbf{0})=\sqrt{t(1-t)}.
	$$
	Thus the lower constant in \eqref{shi-l1-zuijiabanjing}
	cannot be increased.
	
		\proofstep{5}
		
To prove the optimality of the upper bound, consider
$$
g(\textbf{z})\triangleq
\sum\limits_{m=2}^{\infty}z_m^m\in H(\ell^1),
\qquad
\sup\limits_{\|\textbf{z}\|_{\ell^1}\leqslant1}
|g(\textbf{z})|\leqslant1.
$$
For $m\geqslant2$, the $m$-homogeneous Taylor polynomial of $g$
at $\textbf{0}$ is $H_m(\textbf{z})=z_m^m$. Hence
$
\|H_m\|=\|H_m^{\#}\|=1,
$
which gives
$
r(g,\textbf{0})=|r|(g,\textbf{0})=1.
$
This completes the proof of
\namedref{Theorem}{dingli-l1-zuijiabanjing}.
\end{proof}
\subsection{Monomial convergence of homogeneous polynomials}
As a further application of the methods developed above, we obtain a simple and complete characterization of the monomial convergence domains of continuous $m$-homogeneous polynomials on the classical sequence spaces, for each fixed $m\in\mathbb{N}$.

For $1<q<\infty$ and $1\leqslant r<\infty$, write
$$
\begin{aligned}
	\ell_{q,r}
	&\triangleq
	\left\{
	\textbf{z}=\left( z_{i} \right)_{i\in \mathbb{N}}\in c_0:
	\sum\limits_{n=1}^{\infty}
	n^{\frac{r}{q}-1}(z_n^*)^r<\infty
	\right\},\\
	\ell_{q,\infty}
	&\triangleq
	\left\{
	\textbf{z}=\left( z_{i} \right)_{i\in \mathbb{N}}\in c_0:
	\sup\limits_{n\in\mathbb{N}}n^{\frac{1}{q}}z_n^*<\infty
	\right\}.
\end{aligned}
$$

\begin{theorem}\label{dingli-duoxiangshishoulianyu}
	For every $1\leqslant p\leqslant\infty$,
	\begin{equation}
		\mathrm{dom}\mathcal{P}^{1}(\ell^p)=\ell^p,
		\qquad
		\mathrm{dom}\mathcal{P}^{1}(c_0)=c_0.
		\label{shi-yiciduoxiangshishoulian}
	\end{equation}
	For every $m\in\mathbb{N}$,
	\begin{equation}
		\mathrm{dom}\mathcal{P}^{m}(\ell^1)=\ell^1.
		\label{shi-yikeduoxiangshishoulian}
	\end{equation}
	For every $m\geqslant2$,
	\begin{equation}
		\mathrm{dom}\mathcal{P}^{m}(c_0)
		=
		\mathrm{dom}\mathcal{P}^{m}(\ell^\infty)
		=
		\ell_{\frac{2m}{m-1},\infty}.
		\label{shi-youjieduoxiangshishoulian}
	\end{equation}
	For $1<p<\infty$ and $m\geqslant2$, define
	\begin{equation}
		\frac{1}{q_m}
		\triangleq
		\frac{1}{p}+
		\frac{m-1}{m}
		\left(1-\frac{1}{\min\{p,2\}}\right).
		\label{shi-duoxiangshiqm}
	\end{equation}
	Then
	\begin{equation}
		\begin{aligned}
			\mathrm{dom}\mathcal{P}^{m}(\ell^p)
			&=
			\ell_{q_m,\max\{p,m\}}
		\end{aligned}
		\label{shi-duoxiangshishoulianyu}
	\end{equation}
\end{theorem}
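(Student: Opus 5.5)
The plan has three parts: the trivial cases, a sufficiency estimate obtained from the machinery of Lemma~\ref{yinli-chongpaixishuguji} with the degree frozen, and a necessity argument via random test polynomials on disjoint dyadic blocks.

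\emph{Trivial cases.} For \eqref{shi-yiciduoxiangshishoulian}, let $P$ be a continuous linear functional and let $\textbf{z}$ lie in the space. Choose unimodular $\omega_j$ with $\omega_jc_{e_j}(P)z_j=|c_{e_j}(P)z_j|$. Then
$$\sum_{j\leqslant n}|c_{e_j}(P)z_j|=P\Bigl(\sum_{j\leqslant n}\omega_jz_je_j\Bigr)\leqslant\|P\|\|\textbf{z}\|$$
by \eqref{shi-fanshudandiao}. For $\ell^\infty$ this works even at points outside $c_0$, because a linear $P$ has at most the coefficients $c_{e_j}$. The identity \eqref{shi-yikeduoxiangshishoulian} is contained in \eqref{shi-yikeheshoulian}. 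In \eqref{shi-youjieduoxiangshishoulian}, Theorem~\ref{dingli-ercizhangai} forces $\mathrm{dom}\mathcal{P}^m(\ell^\infty)\subset c_0$. Extending $G\circ P_n$ from $c_0$ to $\ell^\infty$ as in \eqref{shi-youxianzuobiaoyantuo}, and arguing with the closed-graph bound exactly as in Corollary~\ref{tuilun-youjiexuliedengjia}, reduces everything to $X=\ell^p$ with $1<p<\infty$, or $X=c_0$. For $c_0$ we read $p=\infty$ and $q_m=\frac{2m}{m-1}$, and the second Lorentz index $\max\{p,m\}$ becomes $\infty$.

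\emph{Sufficiency.} Fix $m\geqslant2$ and $\sigma=1-\frac{1}{\min\{p,2\}}$, so that $\frac{1}{q_m}=\frac1p+\frac{m-1}{m}\sigma$. I would rerun the proof of Lemma~\ref{yinli-chongpaixishuguji} with $m$ fixed, reducing as there to $G=G\circ P_n$ and a nonincreasing nonnegative $\textbf{z}$. The first change is that the initial block $I_0$ is now a fixed finite set. The decomposition $A_{h,\beta}$ over dyadic blocks $I_k$ and the BDS-type estimate \eqref{shi-zhijixishuguji} are kept. The counting bound \eqref{shi-shanjianjishangjie} then shows that a configuration with top block $L$ has $|A^-_{h,\beta}|^\sigma\lesssim\prod_k 2^{k\beta_k\sigma}\cdot 2^{-L\sigma}$. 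Constants depending on $m$ are now harmless, so \eqref{shi-dankuaixishuguji} bounds the contribution of $(h,\beta)$ by
$$C_m\prod_k\bigl(2^{k\sigma}\|P_{I_k}\textbf{z}\|_{\ell^p}\bigr)^{\beta_k}2^{-L\sigma}.$$
Since $\textbf{z}$ is nonincreasing, $\|P_{I_k}\textbf{z}\|_{\ell^p}\approx 2^{k/p}z^*_{2^k}$. Summing over $\beta$ with $|\beta|=d\leqslant m$ amounts to a discrete estimate of the form
$$\sum_{L}\ \sum_{k_1\leqslant\cdots\leqslant k_{d-1}\leqslant L}\ \prod_{i}x_{k_i}\cdot x_L\,2^{-L\sigma},\qquad x_k=2^{k(\sigma+\frac1p)}z^*_{2^k}.$$
This has to be bounded by $C\|\textbf{z}\|_{q_m,\max\{p,m\}}^d$. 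The route I would try is to write $x_L2^{-L\sigma}=y_L\cdot 2^{-L\sigma}2^{L\sigma\frac{m-1}{m}}\cdots$, that is, to rebalance the exponents so that each factor carries $2^{k/q_m}z^*_{2^k}$, and then apply Hölder's inequality over the chain with exponent $\max\{p,m\}$.

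\emph{Necessity.} Let $\textbf{z}\in\mathrm{dom}\mathcal{P}^m(X)$. The closed graph argument of Lemma~\ref{yinli-bituguji}, applied on the Banach space $\mathcal{P}^m(X)$, gives $\sum|c_\alpha(G)\textbf{z}^\alpha|\leqslant A\|G\|$. For each dyadic block $J_k$ of the decreasing rearrangement of $\textbf{z}$ (so $|J_k|=2^k$), take the random polynomial $G_{\textbf{a}_k}$ of Lemma~\ref{yinli-shangjixianbijiao}, built from Lemma~\ref{yinli-suijiduoxiangshi} with $n=2^k$. Exactly as in \eqref{shi-duijiaoceshiguji}, its contribution is at least $\|(z_j)_{J_k}\|_{\ell^q}^m\approx(2^{k/q}z^*_{2^k})^m$, while $\|G_{\textbf{a}_k}\|\lesssim_m 2^{k\sigma}$. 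Now form $G=\sum_k\lambda_kG_{\textbf{a}_k}/\|G_{\textbf{a}_k}\|$ with $\lambda_k\geqslant0$. Since the blocks are disjoint, in $\ell^p$ we have $\|G\|\leqslant\sup\bigl\{\sum_k\lambda_kt_k^m:\sum_k t_k^p\leqslant1\bigr\}$, which equals $\|(\lambda_k)\|_{\ell^{s}}$ with $s=(p/m)'$ when $p>m$, and equals $\sup_k\lambda_k$ when $p\leqslant m$ (including $c_0$). Duality over all choices of $(\lambda_k)$ then yields
$$\bigl(2^{-k\sigma}(2^{k/q}z^*_{2^k})^m\bigr)_k\in\ell^{p/m}\ \ (p>m),\qquad\text{resp. } \ell^1\ \ (p\leqslant m).$$
Since $\frac{m}{q}-\sigma=\frac{m}{q_m}$, this is $(2^{k/q_m}z^*_{2^k})_k\in\ell^{\max\{p,m\}}$, which by monotonicity is exactly membership in $\ell_{q_m,\max\{p,m\}}$.

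The main obstacle I expect is the sufficiency step: the combinatorial sum over block configurations must close with exactly the index $\max\{p,m\}$ and with no leftover logarithm. I would first check the cases $p\leqslant m$ and $p>m$ separately on the two-block case $d=2$, and only then attempt the induction on $d$.
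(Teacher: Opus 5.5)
Your necessity argument is sound and is essentially the paper's Step~3: test polynomials supported on disjoint dyadic blocks, followed by duality. The gap is in sufficiency. It fails whenever $\max\{p,m\}>m$, that is, for $\ell^p$ with $p>m$ and for $c_0$.

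Once each block configuration has been estimated separately against the full $\|G\|$ via \eqref{shi-zhijixishuguji}, your rebalancing $y_k\triangleq2^{k/q_m}z^*_{2^k}$ turns the degree-$m$ part of the chain sum into exactly
\[
\sum_{k_1\leqslant\cdots\leqslant k_m}2^{-\frac{\sigma}{m}\sum_{j<m}(k_m-k_j)}\prod_{j=1}^{m}y_{k_j}.
\]
This uses $\frac1q-\frac1{q_m}=\frac{\sigma}{m}$. The diagonal configurations $k_1=\cdots=k_m=k$ alone contribute $\sum_k y_k^m$.

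Now take $z^*_n=n^{-1/q_m}$ for $n<2^{K+1}$. Then $y_k\approx1$ for $k\leqslant K$, so the sum is $\gtrsim K$. On the other hand $\|\textbf{z}\|_{q_m,p}^m\approx K^{m/p}\ll K$ when $p>m$, and $\|\textbf{z}\|_{q_m,\infty}^m\approx1$ for $c_0$. So no H\"older argument over the chain can close with index $\max\{p,m\}$.

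The loss is not in slack constants. Each single-configuration bound is attained up to constants by the block-random polynomials you use for necessity. The loss comes from summing these bounds against the same $\|G\|$. Your route therefore yields only $\ell_{q_m,m}\subset\mathrm{dom}\,\mathcal{P}^m$. That is exactly the paper's argument for $1<p<m$, via AM--GM in \eqref{shi-duoxiangshi-xianglinchaguji}. In particular, your plan to re-derive the $c_0$ (hence $\ell^\infty$) identity by setting $p=\infty$ also fails. The paper does not prove that identity; it takes it from the literature cited in Remark~\ref{rem-11}.

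The missing idea for $p\geqslant m$ is to get the index $p$ from an operator norm rather than from combinatorics.
\begin{itemize}
\item Since $\textbf{v}\triangleq(j^{-\frac{m-1}{2m}})_j\in\ell_{\frac{2m}{m-1},\infty}=\mathrm{dom}\,\mathcal{P}^m(c_0)$, the closed graph argument of Lemma~\ref{yinli-bituguji} gives $S_{\textbf{v}}(Q)\leqslant C_m\|Q\|$ for all $Q\in\mathcal{P}^m(c_0)$. Here $S_{\textbf{w}}(G)=\sum_{|\alpha|=m}|c_\alpha(G)\textbf{w}^\alpha|$.
\item For nonincreasing $\textbf{z}\geqslant0$ and $G\in\mathcal{P}^m(\ell^p)$, put $u_j\triangleq j^{\frac{m-1}{2m}}z_j$ and $\widetilde G(\textbf{x})\triangleq G((u_jx_j)_j)$ on $c_0$.
\item Then $c_\alpha(\widetilde G)=\textbf{u}^\alpha c_\alpha(G)$ and $\textbf{z}^\alpha=\textbf{u}^\alpha\textbf{v}^\alpha$, so
\[
S_{\textbf{z}}(G)=S_{\textbf{v}}(\widetilde G)\leqslant C_m\|\widetilde G\|\leqslant C_m\|G\|\,\|\textbf{u}\|_{\ell^p}^m,
\qquad
\|\textbf{u}\|_{\ell^p}^p=\sum_j j^{\frac{p}{q_m}-1}z_j^p .
\]
The last identity holds because $\sigma=\frac12$ when $p\geqslant m\geqslant2$.
\end{itemize}
To repair the proposal, keep your block argument for $p\leqslant m$. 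Take the $c_0$ identity from the literature; your reduction then gives the $\ell^\infty$ one. Use this diagonal factorization for $p>m$.

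Two small points elsewhere:
\begin{itemize}
\item On $c_0$ one has $\bigl\|\sum_k\lambda_kG_k/\|G_k\|\bigr\|\leqslant\sum_k\lambda_k$. So $c_0$ belongs with the case $p>m$ (dual exponent $1$), not with $\sup_k\lambda_k$. Your final conclusion $\ell_{q_m,\infty}$ is still right.
\item Theorem~\ref{dingli-ercizhangai} only supplies a quadratic polynomial. For $m>2$, multiply it by $z_{j_0}^{m-2}$ with $x_{j_0}\neq0$ to get $\mathrm{dom}\,\mathcal{P}^m(\ell^\infty)\subset c_0$.
\end{itemize}
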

\begin{remark}
If we additionally require that, at every point of monomial convergence, the corresponding monomial series converges to the original polynomial, then \namedref{Proposition}{mingti-jishubiaoshi} shows that all the above characterizations remain unchanged except for $\mathrm{dom}\mathcal{P}^{1}(\ell^{\infty})$. Indeed, by \namedref{Proposition}{mingti-jishubiaoshi}, every $\textbf{z}\in c_0$ belongs to the corresponding set of monomial convergence. Conversely, if $\textbf{z}\in\ell^\infty\setminus c_0$, the Hahn--Banach theorem gives $G\in(\ell^\infty)^*$ such that $G|_{c_0}=0$ and $G(\textbf{z})\neq0$. Since $e_i\in c_0$ for every $i\in\mathbb N$, we have
$
\sum\limits_{i=1}^{\infty}G(e_i)z_i=0.
$
Therefore, the corresponding set of monomial convergence for $\mathcal{P}^{1}(\ell^{\infty})$ is exactly $c_0$.
\end{remark}
\begin{remark}\label{rem-11}
Let $X=c_0$ or $X=\ell^p$ with $1\leqslant p\leqslant\infty$. The identities in \eqref{shi-yiciduoxiangshishoulian} are elementary, since
$$
\sum\limits_{i=1}^{\infty}
\left|c_{e_i}(G)z_i\right|
\leqslant
\|G\|\,\|\textbf{z}\|_X,
\qquad
G\in\mathcal{P}^{1}(X),\quad
\textbf{z}=(z_i)_{i\in\mathbb{N}}\in X.
$$
The identity \eqref{shi-yikeduoxiangshishoulian} follows directly
from \eqref{shi-yikeheshoulian}, since
$\mathcal{P}^m(\ell^1)\subset H(\ell^1)$; see also
\cite[Theorem~3.1, p.~19]{DMP09}.
For $m\geqslant2$, the identities in
\eqref{shi-youjieduoxiangshishoulian} were established in
\cite[Theorem~2.1 and equations~(15)--(16)]{BDFMS17}.
Moreover, when $2\leqslant m\leqslant p<\infty$, the sufficient
inclusion in \eqref{shi-duoxiangshishoulianyu} already follows
from \cite[Theorem~5.1\,(3)]{BDS19}.
\end{remark}

\begin{proof}
\raggedbottom
By \namedref{Remark}{rem-11}, it remains to prove
\eqref{shi-duoxiangshishoulianyu}.
Fix $1<p<\infty$ and $m\geqslant2$, and put
\begin{equation}
\sigma\triangleq1-\frac{1}{\min\{p,2\}},
\qquad r\triangleq\max\{p,m\}.
\label{shi-duoxiangshi-canshu}
\end{equation}
Thus 
\begin{equation}
\frac{1}{q_m}=\frac{1}{p}+\frac{m-1}{m}\sigma,
\qquad 1<q_m<p\leqslant r,
\qquad r\geqslant m.
\label{shi-duoxiangshi-q-sigma}
\end{equation}
For an $m$-homogeneous polynomial $G$, write
$$
S_{\textbf{z}}(G)\triangleq
\sum\limits_{\substack{\alpha\in\mathbb{N}_0^{(\mathbb{N})}\\|\alpha|=m}}
|c_\alpha(G)\textbf{z}^{\alpha}|,
$$
allowing the value $+\infty$.
We first prove that there exist $c,C>0$, depending only on
$m,p$, such that, for every $\textbf{z}\in\mathbb{C}^{(\mathbb{N})}$,
\begin{equation}
\begin{aligned}
c\left(\sum\limits_{n=1}^{\infty}
n^{\frac{r}{q_m}-1}(z_n^*)^r\right)^{\frac{m}{r}}
&\leqslant
\sup\limits_{\substack{
G\in\mathscr{P}_0\cap\mathcal{P}^{m}(\ell^p)\\\|G\|\leqslant1}}
S_{\textbf{z}}(G)&\leqslant
C\left(\sum\limits_{n=1}^{\infty}
n^{\frac{r}{q_m}-1}(z_n^*)^r\right)^{\frac{m}{r}}.
\end{aligned}
\label{shi-duoxiangshi-liangceguji}
\end{equation}

\proofstep{1}
Without loss of generality, we may assume
$z_1\geqslant z_2\geqslant\cdots\geqslant0$.
Put
$$
D_k\triangleq\{2^k,\cdots,2^{k+1}-1\},
\qquad k\in\mathbb{N}_0.
$$
Note that for every $k\in \mathbb{N}_0$, 
$$
2^{\frac{kr}{q_m}}z_{2^{k+1}}^r
\leqslant\sum\limits_{n\in D_k}n^{\frac{r}{q_m}-1}z_n^r
\leqslant2^{\frac{r}{q_m}-1}
\left(2^{\frac{k}{q_m}}z_{2^k}\right)^r.
$$
Summing the above inequalities over $k=0,1,2,\cdots$, we obtain
\begin{equation}
2^{1-\frac{r}{q_m}}
\sum\limits_{n=1}^{\infty}n^{\frac{r}{q_m}-1}z_n^r
\leqslant\sum\limits_{k=0}^{\infty}
\left(2^{\frac{k}{q_m}}z_{2^k}\right)^r
\leqslant\left(1+2^{\frac{r}{q_m}}\right)
\sum\limits_{n=1}^{\infty}n^{\frac{r}{q_m}-1}z_n^r.
\label{shi-duoxiangshi-erjinzhi}
\end{equation}

\proofstep{2}

We prove the upper estimate in \eqref{shi-duoxiangshi-liangceguji}.
Suppose first that $p\geqslant m$.
Put $\textbf{u} \triangleq \left( j^{\frac{m-1}{2m}}z_{j} \right)_{j\in \mathbb{N}}=\left( u_{j} \right)_{j\in \mathbb{N}}$ and
$\textbf{v} \triangleq \left( j^{-\frac{m-1}{2m}} \right)_{j\in \mathbb{N}}$.
By \eqref{shi-youjieduoxiangshishoulian},
$\textbf{v}\in\mathrm{dom}\mathcal{P}^m(c_0)$.
The closed graph argument in the proof of
\namedref{Lemma}{yinli-bituguji}, applied to $\mathcal{P}^m(c_0)$,
therefore gives $C_m>0$ such that
\begin{equation}
	S_{\textbf{v}}(Q)\leqslant C_m\|Q\|\label{shi-yy}
\end{equation} for every
$Q\in\mathcal{P}^m(c_0)$.

For $G\in\mathscr{P}_0\cap\mathcal{P}^m(\ell^p)$,
note that
$$
\widetilde{G}:c_0\to\mathbb{C},
\quad
\textbf{x}=(x_j)_{j\in\mathbb N}
\mapsto G\left((u_jx_j)_{j\in\mathbb N}\right)
$$
belongs to $\mathcal{P}^m(c_0)$.
Applying \eqref{shi-yy} and using
$c_\alpha(\widetilde{G})
=\textbf{u}^\alpha c_\alpha(G)$
for every $\alpha\in\mathbb{N}_0^{(\mathbb{N})}$,
we obtain
$$
\begin{aligned}S_{\textbf{z}}(G)&=S_{\textbf{v}}(\widetilde{G} )\leqslant C_{m}\| \widetilde{G} \| \leqslant C_{m}\sup\limits_{\textbf{x} \in B_{c_{0}}} \| G\| \left\| (u_{j}x_{j})_{j\in \mathbb{N}} \right\|_{\ell^{p}}^{m}\\ &\leqslant C_{m}\| G\| \| \textbf{u} \|_{\ell^{p}}^{m} =C_{m}\| G\| \left( \sum\limits_{j=1}^{\infty} j^{\frac{p\left( m-1 \right)}{2m}}z_{j}^{p} \right)^{\frac{m}{p}}\\ &=C_{m}\| G\| \left( \sum\limits_{j=1}^{\infty} j^{\frac{p}{q_{m}} -1}z_{j}^{p} \right)^{\frac{m}{p}} =C_{m}\| G\| \left( \sum\limits_{j=1}^{\infty} j^{\frac{r}{q_{m}} -1}z_{j}^{r} \right)^{\frac{m}{p}}\end{aligned}
$$

Now suppose that $1<p<m$.
Fix $G\in\mathscr{P}_0\cap\mathcal{P}^m(\ell^p)$ and choose
$N\in\mathbb{N}_0$ such that
$G=G\circ P_{2^{N+1}-1}$ and
$\mathrm{supp}\textbf{z}\subset\{1,\cdots,2^{N+1}-1\}$.
For $0\leqslant k_1\leqslant\cdots\leqslant k_m\leqslant N$, put
$$
A_{k_1,\cdots,k_m}\triangleq\left\{
\sum\limits_{j=1}^{m}e_{i_j}:
i_1\leqslant\cdots\leqslant i_m,\quad
i_j\in D_{k_j},\ 1\leqslant j\leqslant m
\right\}.\qquad \textbf{w}\triangleq
\sum\limits_{k\in\{k_1,\cdots,k_m\}}
2^{-\frac{k}{p}}\sum\limits_{i\in D_k}e_i.
$$
Then
$$
\left\{
\alpha\in\mathbb{N}_0^{(\mathbb{N})}:
|\alpha|=m,\quad
\mathrm{supp}\,\alpha\subset\{1,\cdots,2^{N+1}-1\}
\right\}
=
\bigsqcup\limits_{0\leqslant k_1\leqslant\cdots\leqslant k_m\leqslant N}
A_{k_1,\cdots,k_m}.
$$
Similarly to \eqref{shi-shanjianzhiji}, we define
$$
A_{k_1,\cdots,k_m}^-
\triangleq\left\{
\sum\limits_{j=1}^{m-1}e_{i_j}:
i_1\leqslant\cdots\leqslant i_{m-1},\quad
i_j\in D_{k_j},\ 1\leqslant j\leqslant m-1
\right\}.
$$
One has
$$
|A_{k_1,\cdots,k_m}^-|
\leqslant\prod\limits_{j=1}^{m-1}|D_{k_j}|
=2^{\sum\limits_{j=1}^{m-1}k_j},\qquad\| \textbf{w} \|_{\ell^{p}}^{p} =\sum\limits_{k\in \left\{ k_{1},k_{2},\cdots ,k_{m} \right\}} \frac{\left| D_{k} \right|}{2^{k}} =\left| \left\{ k_{1},\cdots ,k_{m} \right\} \right| \leqslant m.
$$

Applying \eqref{shi-zhijixishuguji}  and 
\eqref{shi-duoxiangshi-q-sigma}, we obtain
$$
\begin{aligned}\sum\limits_{\alpha \in A_{k_{1},\cdots ,k_{m}}} |c_{\alpha}(G)\textbf{z}^{\alpha} |&=\sum\limits_{\alpha \in A_{k_{1},\cdots ,k_{m}}} \left( |c_{\alpha}(G)|\prod\limits_{j=1}^{m} \left| z_{i_{j}} \right| \right)\leqslant \sum\limits_{\alpha \in A_{k_{1},\cdots ,k_{m}}} \left( |c_{\alpha}(G)|\prod\limits_{j=1}^{m} \left| z_{2^{k_{j}}} \right| \right)\\ &=\sum\limits_{\alpha \in A_{k_{1},\cdots ,k_{m}}} \left( |c_{\alpha}(G)|\cdot \prod\limits_{j=1}^{m} 2^{-\frac{k_{j}}{p}}\cdot \prod\limits_{j=1}^{m} \left( 2^{\frac{k_{j}}{p}}z_{2^{k_{j}}} \right) \right)\\ &=\sum\limits_{\alpha \in A_{k_{1},\cdots ,k_{m}}} \left( |c_{\alpha}(G)|\cdot \textbf{w}^{\alpha} \prod\limits_{j=1}^{m} \left( 2^{\frac{k_{j}}{p}}z_{2^{k_{j}}} \right) \right)\\ &\leqslant e^{2m}|A_{k_{1},\cdots ,k_{m}}^{-}|^{\sigma}\left\| \textbf{w} \right\|_{\ell^{p}}^{m} \| G\| \prod\limits_{j=1}^{m} \left( 2^{\frac{k_{j}}{p}}z_{2^{k_{j}}} \right)\\ &\leqslant e^{2m}m^{\frac{m}{p}}\| G\| 2^{\sigma \sum\limits_{j=1}^{m-1} k_{j}}\prod\limits_{j=1}^{m} \left( 2^{\frac{k_{j}}{p}}z_{2^{k_{j}}} \right)\\ &=e^{2m}m^{\frac{m}{p}}\| G\| 2^{\sigma \sum\limits_{j=1}^{m-1} k_{j}}\prod\limits_{j=1}^{m} \left( 2^{\frac{k_{j}}{q_{m}} -\frac{\sigma \left( m-1 \right) k_{j}}{m}}z_{2^{k_{j}}} \right)\\ &=e^{2m}m^{\frac{m}{p}}\| G\| 2^{-\frac{\sigma}{m} \sum\limits_{j=1}^{m-1} (k_{m}-k_{j})}\prod\limits_{j=1}^{m} \left( 2^{\frac{k_{j}}{q_{m}}}z_{2^{k_{j}}} \right) .\end{aligned}
$$
Summing the preceding inequality over
$0\leqslant k_1\leqslant\cdots\leqslant k_m\leqslant N$
and using $z_{2^k}=0$ for $k>N$, we obtain
\begin{equation}
S_{\textbf{z}}(G)\leqslant e^{2m}m^{\frac{m}{p}}\|G\|
\sum\limits_{0\leqslant k_1\leqslant\cdots\leqslant k_m}
2^{-\frac{\sigma}{m}\sum\limits_{j=1}^{m-1}(k_m-k_j)}
\prod\limits_{j=1}^{m}
\left(2^{\frac{k_j}{q_m}}z_{2^{k_j}}\right).
\label{shi-duoxiangshi-fenkuai}
\end{equation}
We have
\begin{equation}
\begin{aligned}
	&\sum\limits_{0\leqslant k_1\leqslant\cdots\leqslant k_m}
	2^{-\frac{\sigma}{m}\sum\limits_{j=1}^{m-1}(k_m-k_j)}
	\prod\limits_{j=1}^{m}
	\left(2^{\frac{k_j}{q_m}}z_{2^{k_j}}\right)
	\\
	&\qquad=
	\sum\limits_{0=k_1\leqslant\cdots\leqslant k_m}
	\prod\limits_{j=1}^{m-1}
	2^{-\frac{\sigma j}{m}(k_{j+1}-k_j)}
	\sum\limits_{k=0}^{\infty}
	\prod\limits_{j=1}^{m}
	\left(2^{\frac{k+k_j}{q_m}}z_{2^{k+k_j}}\right)
	\\
	&\qquad\leqslant
	\frac{1}{m}
	\sum\limits_{0=k_1\leqslant\cdots\leqslant k_m}
	\prod\limits_{j=1}^{m-1}
	2^{-\frac{\sigma j}{m}(k_{j+1}-k_j)}
	\sum\limits_{j=1}^{m}
	\sum\limits_{k=0}^{\infty}
	\left(2^{\frac{k+k_j}{q_m}}z_{2^{k+k_j}}\right)^m
	\\
	&\qquad\leqslant
	\sum\limits_{0=k_1\leqslant\cdots\leqslant k_m}
	\prod\limits_{j=1}^{m-1}
	2^{-\frac{\sigma j}{m}(k_{j+1}-k_j)}
	\sum\limits_{k=0}^{\infty}
	\left(2^{\frac{k}{q_m}}z_{2^k}\right)^m
	\\
	&\qquad=
	\prod\limits_{j=1}^{m-1}
	\left(\sum\limits_{k=0}^{\infty}2^{-\frac{\sigma j}{m}k}\right)
	\sum\limits_{k=0}^{\infty}
	\left(2^{\frac{k}{q_m}}z_{2^k}\right)^m
	\\
	&\qquad=
	\prod\limits_{j=1}^{m-1}
	\left(1-2^{-\frac{\sigma j}{m}}\right)^{-1}
	\sum\limits_{k=0}^{\infty}
	\left(2^{\frac{k}{q_m}}z_{2^k}\right)^m,
\end{aligned}
\label{shi-duoxiangshi-xianglinchaguji}
\end{equation}
where the first equality follows by subtracting the smallest index from each $k_j$ and using
$\sum\limits_{j=1}^{m-1}(k_m-k_j)
=\sum\limits_{j=1}^{m-1}j(k_{j+1}-k_j)$,
and the first inequality follows from the arithmetic--geometric mean inequality.

Combining \eqref{shi-duoxiangshi-fenkuai},
\eqref{shi-duoxiangshi-xianglinchaguji}, and
\eqref{shi-duoxiangshi-erjinzhi}, with $r=m$, proves the upper
estimate in \eqref{shi-duoxiangshi-liangceguji}.

\proofstep{3}

We prove the lower estimate in \eqref{shi-duoxiangshi-liangceguji}.
Put $C_0\triangleq16^{2\sigma}(m!\log m)^\sigma$.
For each $n\in\mathbb{N}$, let $R$ be the polynomial given by
\namedref{Lemma}{yinli-suijiduoxiangshi}, namely,
$$
R(\textbf{u})=
\sum\limits_{\substack{\alpha\in\mathbb{N}_0^n\\|\alpha|=m}}
\varepsilon_\alpha\frac{m!}{\alpha!}\textbf{u}^{\alpha},
\qquad\varepsilon_\alpha\in\{-1,1\}.
$$
Applying the classical inequality
$$
\|\textbf{u}\|_{\ell^{\min\{p,2\}}}
\leqslant
n^{\frac{1}{\min\{p,2\}}-\frac{1}{p}}
\|\textbf{u}\|_{\ell^p},
\quad \textbf{u}\in\mathbb{C}^n
$$
together with \eqref{shi-suijiduoxiangshiguji}, we obtain
$$
\begin{aligned}\sup\limits_{\| \textbf{u} \|_{\ell^{p}} \leqslant 1} |R(\textbf{u} )|&\leqslant \sup\limits_{\| \textbf{u} \|_{\ell^{\min\{p,2\}}} \leqslant n^{\frac{1}{\min\{p,2\}} -\frac{1}{p}}} |R(\textbf{u} )|\\ &=\sup\limits_{\| \textbf{u} \|_{\ell^{\min\{p,2\}}} \leqslant 1} \left| R\left( n^{\frac{1}{\min\{p,2\}} -\frac{1}{p}}\textbf{u} \right) \right|\\ &=n^{\frac{m}{\min\{p,2\}} -\frac{m}{p}} \sup\limits_{\| \textbf{u} \|_{\ell^{\min\{p,2\}}} \leqslant 1} \left| R\left( \textbf{u} \right) \right|\\ &\leqslant C_{0}n^{\frac{m}{\min\{p,2\}} -\frac{m}{p}}n^{\sigma}=C_{0}n^{m\left( 1-\frac{1}{q_{m}} \right)}.\end{aligned}
$$
For each $k\in\mathbb{N}_0$, choose such a polynomial $R_k$
in $n=2^k$ variables and define
$$
G_k(\textbf{x})\triangleq
\frac{R_k(x_{2^k},\cdots,x_{2^{k+1}-1})}
{C_0\,2^{km\left(1-\frac{1}{q_m}\right)}},
\qquad\textbf{x}\in\ell^p.
$$
Then $|G_k(\textbf{x})|\leqslant
\|P_{D_k}\textbf{x}\|_{\ell^p}^m$ and
\begin{equation}
	S_{\textbf{z}}(G_k)
	=\frac{\left(\sum\limits_{j\in D_k}z_j\right)^m}{C_0\,2^{km\left(1-\frac{1}{q_m}\right)}}
	\geqslant\frac{\left(2^kz_{2^{k+1}}\right)^m}{C_0\,2^{km\left(1-\frac{1}{q_m}\right)}}
	=\frac{1}{C_0}\left(2^{\frac{k}{q_m}}z_{2^{k+1}}\right)^m.
	\label{shi-duoxiangshi-kuai-zhixishu}
\end{equation}
and \begin{equation}
S_{\textbf{z}}(G_0)=\frac{z_1^m}{C_0}.
\label{shi-duoxiangshi-shoukuai}
\end{equation}

Let $\left(\frac{r}{m}\right)'=
\begin{cases}
	\frac{r}{r-m},&r>m,\\
	\infty,&r=m.
\end{cases}$.
For $N\in\mathbb{N}_0$ and
$\textbf{a}=(a_0,\cdots,a_N)\in\mathbb{C}^{N+1}$,
H\"older's inequality, $r\geqslant p$ and monotonicity of $q\mapsto \left\| \cdot \right\|_{\ell^{q}}$ gives
$$
\begin{aligned}\left|\sum\limits_{k=0}^{N} a_{k}G_{k}\left( \textbf{x} \right) \right|&\leqslant \sum\limits_{k=0}^{N} |a_{k}|\| P_{D_{k}}\textbf{x} \|_{\ell^{p}}^{m} \leqslant \| \textbf{a} \|_{\ell^{\left( \frac{r}{m} \right)^{\prime}}} \left( \sum\limits_{k=0}^{N} \| P_{D_{k}}\textbf{x} \|_{\ell^{p}}^{r} \right)^{\frac{m}{r}}\\ &\leqslant \| \textbf{a} \|_{\ell^{\left( \frac{r}{m} \right)^{\prime}}} \left( \sum\limits_{k=0}^{N} \| P_{D_{k}}\textbf{x} \|_{\ell^{p}}^{p} \right)^{\frac{m}{p}} \leqslant \| \textbf{a} \|_{\ell^{\left( \frac{r}{m} \right)^{\prime}}} \| \textbf{x} \|_{\ell^{p}}^{m}.\end{aligned}
$$

Thus 
$$\left\| \sum\limits_{k=0}^{N} a_{k}G_{k}\left( \cdot \right) \right\|\leqslant\|\textbf{a}\|_{\ell^{\left(\frac{r}{m}\right)'}},\qquad S_{\textbf{z}}\left(\sum\limits_{k=0}^{N} a_{k}G_{k}\left( \cdot \right) \right)=\sum\limits_{k=0}^{N} |a_{k}|S_{\textbf{z}}(G_{k}).$$
Finite-dimensional duality, followed by taking the supremum over
$N$, therefore yields
\begin{equation}
\begin{aligned}
	\sup\limits_{\substack{G\in\mathscr{P}_0\cap\mathcal{P}^{m}(\ell^p)\\\|G\|\leqslant1}}S_{\textbf{z}}(G)
	&\geqslant\sup\limits_{N\geqslant0}\sup\limits_{\|\textbf{a}\|_{\ell^{\left(\frac{r}{m}\right)'}}\leqslant1}S_{\textbf{z}}\left(\sum\limits_{k=0}^{N} a_{k}G_{k}\left( \cdot \right) \right)
	=\sup\limits_{N\geqslant0}\sup\limits_{\|\textbf{a}\|_{\ell^{\left(\frac{r}{m}\right)'}}\leqslant1}\sum\limits_{k=0}^{N}|a_k|S_{\textbf{z}}(G_k)\\
	&=\sup\limits_{N\geqslant0}\left(\sum\limits_{k=0}^{N}S^{\frac{r}{m}}_{\textbf{z}}(G_k)\right)^{\frac{m}{r}}
	=\left(\sum\limits_{k=0}^{\infty}S^{\frac{r}{m}}_{\textbf{z}}(G_k)\right)^{\frac{m}{r}}.
\end{aligned}
\label{shi-duoxiangshi-kuai-duiou}
\end{equation}
By \eqref{shi-duoxiangshi-kuai-zhixishu} and
\eqref{shi-duoxiangshi-shoukuai},
$$
\begin{aligned}
\sum\limits_{k=0}^{\infty}
\left(2^{\frac{k}{q_m}}z_{2^k}\right)^r
&=z_1^r+2^{\frac{r}{q_m}}
\sum\limits_{k=0}^{\infty}
\left(2^{\frac{k}{q_m}}z_{2^{k+1}}\right)^r\leqslant C_0^{\frac{r}{m}}\left(1+2^{\frac{r}{q_m}}\right)
\sum\limits_{k=0}^{\infty}S^{\frac{r}{m}}_{\textbf{z}}(G_k).
\end{aligned}
$$
Together with \eqref{shi-duoxiangshi-erjinzhi} and
\eqref{shi-duoxiangshi-kuai-duiou}, this proves the lower estimate
in \eqref{shi-duoxiangshi-liangceguji}.

\proofstep{4}

First, $\ell_{q_m,r}\subset\ell^p$. Indeed, for $\textbf{z}\in\ell_{q_m,r}$, since $r\geqslant p> q_m$ and $$\sum\limits_{j=1}^{n}j^{\frac{r}{q_m}-1}\geqslant\sum\limits_{j=1}^{n}\int\limits_{j-1}^{j}t^{\frac{r}{q_m}-1}\,\mathrm{d}t
=\int\limits_0^n t^{\frac{r}{q_m}-1}\,\mathrm{d}t
=\frac{q_m}{r}n^{\frac{r}{q_m}},$$ we have
$$
\begin{aligned}
	\sum\limits_{n=1}^{\infty}(z_n^*)^p
	&=\sum\limits_{n=1}^{\infty}n^{-\frac{p}{q_m}}
	\left(n^{\frac{r}{q_m}}(z_n^*)^r\right)^{\frac{p}{r}}
	\leqslant\sum\limits_{n=1}^{\infty}n^{-\frac{p}{q_m}}
	\left(\frac{r}{q_m}\sum\limits_{j=1}^{n}j^{\frac{r}{q_m}-1}(z_n^*)^r\right)^{\frac{p}{r}}\\
	&\leqslant\sum\limits_{n=1}^{\infty}n^{-\frac{p}{q_m}}
	\left(\frac{r}{q_m}\sum\limits_{j=1}^{n}j^{\frac{r}{q_m}-1}(z_j^*)^r\right)^{\frac{p}{r}}\\
	&\leqslant\left(\frac{r}{q_m}\sum\limits_{j=1}^{\infty}j^{\frac{r}{q_m}-1}(z_j^*)^r\right)^{\frac{p}{r}}
	\sum\limits_{n=1}^{\infty}n^{-\frac{p}{q_m}}<\infty.
\end{aligned}
$$

Arguing as in the proof of \namedref{Lemma}{yinli-gujituiguang},
we extend the upper estimate in \eqref{shi-duoxiangshi-liangceguji}
to $\textbf{z}\in\ell_{q_m,r}$ and
$G\in\mathcal{P}^m(\ell^p)$. Consequently,
$$
S_{\textbf{z}}(G)\leqslant C\|G\|
\left(\sum\limits_{j=1}^{\infty}
j^{\frac{r}{q_m}-1}(z_j^*)^r\right)^{\frac{m}{r}}<\infty,
$$
which proves
$\ell_{q_m,r}\subseteq\mathrm{dom}\mathcal{P}^m(\ell^p)$.

Conversely, let $\textbf{z}=\left( z_{i} \right)_{i\in \mathbb{N}}\in\mathrm{dom}\mathcal{P}^m(\ell^p)$.
By the same closed graph argument as in the proof of
\namedref{Lemma}{yinli-bituguji}, applied to the coefficient map
$G\mapsto(c_\alpha(G)\textbf{z}^{\alpha})_{|\alpha|=m}$
on the Banach space $\mathcal{P}^m(\ell^p)$,
there exists $K>0$ such that
\begin{equation}
	S_{\textbf{z}}(G)\leqslant K\|G\|,
	\qquad G\in\mathcal{P}^m(\ell^p).
	\label{shi-duoxiangshi-bitu}
\end{equation}

Since $S_{P_n\textbf{z}}(G)\leqslant S_{\textbf{z}}(G)$, the lower
estimate in \eqref{shi-duoxiangshi-liangceguji} implies
$$
c\left(\sum\limits_{j=1}^{\infty}
j^{\frac{r}{q_m}-1}\bigl((P_n\textbf{z})_j^*\bigr)^r
\right)^{\frac{m}{r}}\leqslant K.
$$
As $\textbf{z}\in\ell^p\subset c_0$,
$(P_n\textbf{z})_j^*\uparrow z_j^*$ for every $j\in \mathbb{N}$.
Monotone convergence yields
$$
\sum\limits_{j=1}^{\infty}j^{\frac{r}{q_m}-1}(z_j^*)^r
\leqslant\left(\frac{K}{c}\right)^{\frac{r}{m}}<\infty.
$$
Thus $\mathrm{dom}\mathcal{P}^m(\ell^p)\subseteq\ell_{q_m,r}$,
which completes the proof of \eqref{shi-duoxiangshishoulianyu}.
\end{proof}
\section*{Acknowledgements}
The author wishes to express his sincere gratitude to Professor Jiayang Yu for many valuable discussions, and to Professor Xu Zhang for his invaluable guidance throughout the author's doctoral studies. AI tools were used to assist in polishing the exposition of several proofs and improving the language of the manuscript. All research ideas, strategies, and mathematical arguments were developed independently by the author.

\end{document}